%\errorcontextlines10
\documentclass[11pt,a4paper,twoside,reqno]{amsart}      % Comments after  % are ignored
\usepackage[T1]{fontenc} %font encoding for stuff like Poincaré.
\usepackage[utf8]{inputenc}  %input encoding for stuff like Poincaré.
\usepackage{amsmath,amssymb,amsfonts,amsthm,amscd} % Typical maths resource packages
\usepackage{bigints}
\usepackage{graphicx}                 % Packages to allow inclusion of graphics
\usepackage{color}                    % For creating coloured text and background
\usepackage{ mathrsfs }
\usepackage{epstopdf}
\usepackage[margin=2.5cm]{geometry}
\usepackage{enumerate}
\usepackage[normalem]{ulem}
\usepackage{url}         %to include url's in citations
\usepackage{colonequals} %to use the := symbol
%\usepackage{authblk} % different authors with same affiliation
%\usepackage[notcite,notref]{showkeys}
% \usepackage{a4wide}
% \oddsidemargin 0cm
% \evensidemargin 0cm
% \pagestyle{myheadings}         % Option to put page headers
                               % Needed \documentclass[a4paper,twoside]{article}
\usepackage[foot]{amsaddr}

\numberwithin{equation}{section}

\parskip 0mm
\newtheorem{theorem}{Theorem}[section]
\newtheorem{proposition}[theorem]{Proposition}
\newtheorem{corollary}[theorem]{Corollary}
\newtheorem{lemma}[theorem]{Lemma}
\theoremstyle{definition}
\newtheorem{remark}[theorem]{Remark}

\newcommand{\smfrac}[2]{{\textstyle \frac{#1}{#2}}}

\def\div{\mathop{\mathrm{div}}\nolimits}
\def\!{\mathop{\mathrm{!}}}

\def\R{\mathbb{ R}}
\def\CN{\mathbb{ C}}
\def\N{\mathbb{ N}}

\def\A{\mathbb{A}}
\def\E{\mathcal{E}}
\def\F{\mathcal{F}}

\def\B{\mathcal{B}}

\def\rcut{{r_{\rm cut}}}

\def\brho{\boldsymbol\rho}

\def\L{\Lambda}
\def\Rc{\mathcal{R}}

\def\Rr{\mathscr{R}}

\def\Z{\mathbb{Z}}
\def\W{\mathcal{W}}
\def\Wc{\dot{\mathcal{W}}^{\rm c}}
\def\Wi{\dot{\mathcal{W}}^{1,2}}

\def\bsep{\,\big|\,}
\def\b{\big}
\def\bg{\bigg}

\def\<{\langle}
\def\>{\rangle}

\def\mA{{\sf A}}
\def\mB{{\sf B}}

\def\ktst{\mathcal{K}^{\rm TST}}
\def\khtst{\mathcal{K}^{\rm HTST}}

\def\E{\mathcal{E}}
\def\Ehom{\mathcal{E}^{\rm hom}}
\def\Edef{\mathcal{E}}

\def\Shom{\SS^{\rm hom}}
\def\SS{\mathcal{S}}

\def\Rdef{\Rr}

\def\RMt{\Rr^{M,t}}

\def\Rhom{\Rr^{\rm hom}}
\def\Hdef{H}
\def\Hhom{H^{\rm hom}}
\def\Hs{H^{\rm s}}

\def\HsN{H^{\rm s}_N}

\def\HhomN{H^{\rm hom}_N}

% mean integral

\def\XXint#1#2#3{{\setbox0=\hbox{$#1{#2#3}{\int}$ }
\vcenter{\hbox{$#2#3$ }}\kern-.6\wd0}}

\def\HM{H^{M}}
\def\HMt{H^{M,t}}

\def\Lr{\mathscr{L}}

\def\Fop{{\bf F}}
\def\Fopd{{\bf F}^*}

   % V^{\rm HOM}}

%\def\UsH{\dot{\matcal{H}}^1}        % finite-energy space
\def\Wper{\mathcal{W}^{\rm per}}   % periodic displacements
\def\Usper{\Wper}
\def\us{\bar{u}}                    % solution
\def\usa{\bar{u}^{\rm s}}

\def\usaN{\bar{u}^{\rm s}_{N}}

\def\kreg{p}  % regularity of V

\def\detp{{\rm det}^{+}}
\def\logp{\log^+ \hspace{-0.3em}}

\fboxsep5pt
\newlength{\boxwidth}
\setlength{\boxwidth}{\textwidth}
\addtolength{\boxwidth}{-2\fboxsep}
\addtolength{\boxwidth}{-2\fboxrule}
\addtolength{\boxwidth}{-6pt} % I don't know why, but the system seems to add about 6 pt to the widht of the box ...

\date \today
%old:
%\title[Thermodynamic Limit of Vibrational Entropy]{Thermodynamic Limit of Vibrational Entropy of Crystalline Defects in the Harmonic Approximation}

%open to discussion/change:
% \title[Transition Rates of Crystalline Defects]{The Thermodynamic Limit of Vibrational Entropies and Transition Rates of Crystalline Defects}
\title[Transition Rate of a Crystalline Defect]{Thermodynamic Limit of the Transition Rate \\
    of a Crystalline Defect}

\author{Julian Braun}
\author{Manh Hong Duong}
\author{Christoph Ortner}
\thanks{JB and CO are supported by ERC Starting Grant 335120 and by EPSRC Grant EP/R043612/1}
\thanks{MHD was supported by ERC Starting Grant 335120}

\address[JB, CO]{Mathematics Institute, University of Warwick, Coventry CV4 7AL, UK.}
\address[MHD]{School of Mathematics, University of Birmingham, Birmingham B15 2TT, UK.}

\subjclass[2010]{Primary: 82D25; Secondary: 70C20, 74E15, 82B20}
\keywords{Crystal defect, transition state theory, thermodynamic limit}

%82B20  Equilibrium statistical mechanics, lattice systems
%82D20  Statistical mechanics, structure of matter, solids
%82D25  Statistical mechanics, structure of matter, Crystals
%74E15  Mechanics of deformable solids, Material properties given special treatment, Crystalline structure
%70C20  Mechanics of particles and systems, statics

\begin{document}
\begin{abstract}
  We consider an isolated point defect embedded in a homogeneous crystalline
  solid. We show that, in the harmonic approximation, a periodic supercell
  approximation of the formation free energy as well as of the transition rate
  between two stable configurations converge as the cell size tends to infinity. We characterise the limits and establish sharp convergence rates. Both cases can be reduced to a careful renormalisation analysis of the vibrational entropy
  difference, which is achieved by identifying an underlying spatial
  decomposition.
\end{abstract}

\maketitle

% !TEX root = dffe.tex

\section{Introduction}
\label{sec:intro}
The presence of defects in crystalline materials significantly affects their
mechanical and chemical properties, hence determining defect geometry, energies,
and mobility is a fundamental problem of materials modelling. The inherent
discrete nature of defects requires that any ``ab initio'' theory should start
from an atomistic description. The purpose of the present work is to extend the
model of crystalline defects of~\cite{EOS2016} (cf.\ \S\,\ref{sec:results}) to
incorporate vibrational entropy, in order to describe the thermodynamic limit of
transition rates (mobility) of point defects. As an intermediate step we will
also discuss the thermodynamic limit of defect formation free energy.

Apart from being interesting in their own right, our results provide the
analytical foundations for a rigorous derivation of coarse-grained models
\cite{TadmorLegoll2013,Voter2007-nw,Boateng2014-ol,Hudson2017-bx}, and of
numerical and multi-scale models at finite temperature
\cite{hyperqc,LuskinShapeev2014TMP,TadmorLegoll2013,BlancBrisLegollPatz2010,BlancLegoll2013}
which entirely lack the solid foundations that static zero-temperature
multi-scale schemes enjoy \cite{LuskinOrtner2013-Acta,2014-bqce,Lu2013-xj}.

Precise definitions will be given in Section \ref{sec:results} but, for the
purpose of a purely formal motivation, we consider a crystalline solid with an
embedded defect described by an energy landscape {$\E_N : (\R^m)^{\L_N} \to
\R$}, based on a set of reference atoms $\L_N \subset \R^d$.
% The presence of a defect can be encoded simply through considering local
% instead of global energy minima, or through incorporating a spatially
% localised external potential in $\E_N$.
We then consider a local minimizer $\bar{u}^{\rm min}_N$ of $\E_N$
representing a defect state.

In transition state theory (TST) \cite{Eyring,Wigner}, the transition rate $\mathcal{K}_N$ from $\bar{u}^{\rm min}_N$ to a nearby state $\bar{u}^{\rm min2}_N$ is given by comparing the equilibrium density in a basin {$A \subset  (\R^m)^{\L_N} $} around $\bar{u}^{\rm min}_N$ to the density on a hyper-surface {$S \subset (\R^m)^{\L_N} $} separating $A$ from a similar basin around $\bar{u}^{\rm min2}_N$. That is,
\begin{equation*}
\mathcal{K}_N^{\rm TST} =
    \frac{
      \int_S e^{-\beta \E_N(u)} \, du
    }{
      \int_A e^{-\beta \E_N(u)} \, du
    },
\end{equation*}
with inverse temperature $\beta$.
The {\em transition state} is an index-1 saddle point $\bar{u}^{\rm saddle}_N \in S$ of $\E_N$ representing the most likely transition path between the two minima. For sufficiently large $\beta$, $\int_S e^{-\beta \E_N(u)} \, du$ is concentrated close to $\bar{u}^{\rm saddle}_N$. Similarly, $\int_A e^{-\beta \E_N(u)} \, du$ is concentrated around the local minimum $\bar{u}^{\rm min}_N$. Therefore, it is reasonable to consider the \emph{harmonic approximations}
\begin{align*}
\E_N(u) &\approx \E_N(\bar{u}^{\rm saddle}_N) + {\textstyle \frac12} \big\< \nabla^2 \E_N(\bar{u}^{\rm saddle}_N) (u-\bar{u}^{\rm saddle}_N), u - \bar{u}^{\rm saddle}_N \big\>\\
\E_N(u) &\approx \E_N(\bar{u}^{\rm min}_N) + {\textstyle \frac12} \big\< \nabla^2 \E_N(\bar{u}^{\rm min}_N) (u-\bar{u}^{\rm min}_N), u - \bar{u}^{\rm min}_N \big\>,
\end{align*}
and to integrate over all states instead of $A$ and the tangent space of $S$ at $\bar{u}^{\rm saddle}_N$ instead of $S$. The argument is classical, see \cite{GHVineyard:1957}, and evaluating the Gaussian integrals leads to the well known harmonic TST (HTST) with transition rate
\begin{equation} \label{eq:HTST}
\mathcal{K}^{\rm HTST}_N :=
   \bigg(\frac{
      \textstyle \prod \lambda_j^{\rm min}
   }{
      \textstyle \prod \lambda_j^{\rm saddle}
   }\bigg)^{1/2}
   \, \exp\Big( - \beta \big[
      \E_N(\bar{u}^{\rm saddle}_N) -
      \E_N(\bar{u}^{\rm min}_N) \big]
      \Big),
\end{equation}
where the $\lambda_j^{*}$ enumerate the positive eigenvalues of $\nabla^2 \E_N(\bar{u}^*_N)$, with $*= {\rm min}$ or $*= {\rm saddle}$.

Formally, $\beta^{-1} \log \mathcal{K}^{\rm TST}_N = \beta^{-1} \log
\mathcal{K}^{\rm HTST}_N + O(\beta^{-2})$, and indeed in materials modelling
applications far from the melting temperature, the harmonic approximation is
considered an excellent model \cite{HTB90, Voter2007-nw}. Making this statement
rigorous is an interesting question in its own right, especially in the limit as
$N \to \infty$, but will not be the purpose of the present work. Related results
in this direction, though with a very different setup, can be found, for example,
in \cite{Berglund2007-hk, BBM2010}.

Instead, the goal of this paper is to show that the thermodynamic limit
$\mathcal{K}_N^{\rm HTST}\to \mathcal{K}^{\rm HTST}$ exists as $\L_N$ tends to
an infinite lattice $\L$ and to characterise the limit $\mathcal{K}^{\rm HTST}$.
The interest in this result is two-fold: (1) it establishes that the
finite-domain model is meaningful in that increasingly large domains yield
consistent answers; and (2) it provides a benchmark against which various
numerical schemes to compute transition rates can be measured.

Our starting point in establishing the thermodynamic limit of $\mathcal{K}_N^{\rm
HTST}$ is a model for the equilibration of an isolated defect embedded in
a homogeneous crystalline solid introduced in \cite{EOS2016, Hudson2014}. Briefly,
it is shown under suitable conditions on the boundary condition that, as $\L_N
\to \L$, $\bar{u}_N^{\rm *}$ has a limit $\bar{u}^{\rm *}$ and moreover the
decay of $\bar{u}^{\rm *}$ away from the defect core is precisely quantified.
These results directly give a convergence result for the energy difference
$\E_N(\bar{u}^{\rm saddle}_N) - \E_N(\bar{u}^{\rm min}_N)$
and also supply us with structures that can be exploited in the analysis of the
Hessians $\nabla^2 \E(\bar{u}^{\rm *})$.

Still, the convergence of $\mathcal{K}_N^{\rm HTST}$ is a difficult problem. In
the limit, one would expect to find both a continuous spectrum as well as
infinitely many eigenvalues for the Hessian, hence the representation of $\lim_N
\mathcal{K}_N^{\rm HTST}$ will unlikely be in terms of the spectra of the
associated operators.

Mathematically, it turns out to be expedient to rewrite \eqref{eq:HTST} in terms of a free energy difference or an entropy difference. That is, we write
\begin{align*}
\mathcal{K}^{\rm HTST}_N &= \exp\Big( - \beta \Big(\big[
      \E_N^{\rm def}(\bar{u}^{\rm saddle}_N) -
      \E_N^{\rm def}(\bar{u}^{\rm min}_N) \big]-\beta^{-1}\big[
      \SS_N(\bar{u}^{\rm saddle}_N) -
      \SS_N(\bar{u}^{\rm min}_N) \big] \Big)
      \Big)\\
      &= \exp\Big( - \beta \big[
      \F_N(\bar{u}^{\rm saddle}_N) -
      \F_N(\bar{u}^{\rm min}_N) \big] \Big),
\end{align*}
and then consider the limiting behaviour of the difference of vibrational entropies $\SS_N(\bar{u}^{\rm saddle}_N) - \SS_N(\bar{u}^{\rm min}_N)$. A key idea in the analysis of the entropy difference is then to discard the spectral decomposition of the Hessians and instead work with a {\em spatial decomposition} that we will derive in \S~\ref{sec:results}. We then prove locality estimates in this spatial decomposition that allow us to renormalise before taking  the limit $N \to \infty$.

In our analysis of the free energy difference, i.e., differences of $\F_N$, one can also compare the homogeneous lattice with a defect state, allowing us to additionally get a result on the thermodynamic limit for the formation free energy of a defect in the harmonic approximation.

We point out that, for technical reasons and to simplify the presentation of our
main ideas, our paper admits only defects where the number of atoms is equal to
that in the reference configuration, including for example substitutional
impurities, Frenkel pairs, and the Stone-Wales defect. However, we expect that
it is possible to adapt our methods and results to the cases of vacancies and
interstitials, while extensions to long-ranged defects such as dislocations and
cracks may be more challenging; cf. \S~\ref{sec:results:conclusion}.

While there is a  substantial literature on the scaling limit (free energy
per particle), see e.g.\ \cite{DemboFunaki2005} and references therein, we are
aware of only two references that attempt to rigorously capture atomistic
details of the limit $N \to \infty$ of crystalline defects in a finite
temperature setting \cite{LuskinShapeev2014TMP,2016-defectFE1d}. While
\cite{LuskinShapeev2014TMP} considers the somewhat different setting of
observables rather than formation energies there is a close connection in that
those observables are localised. Moreover, an asymptotic series in $\beta$ is
derived instead of focusing only on leading terms. By contrast
\cite{2016-defectFE1d} addresses the finite $\beta$ regime, but severely
restricts the admissible interaction laws.  Both of these references are
restricted to one dimension, which yields significant simplifications
highlighted for example by the fact that discrete Green's functions decay
exponentially. Thus, treating the $d$-dimensional setting with $d > 1$, relevant
for applications, requires different techniques.

\subsection*{Outline}

In \S\,\ref{sec:results}, we will precisely define all relevant quantities and present our main results, namely, the construction of limit quantities $\F$ and $\mathcal{K}^{\rm HTST}$ on an infinite lattice $\L$, as well as the convergence results $\F_N \to \F$ and $\mathcal{K}^{\rm HTST}_N \to \mathcal{K}^{\rm HTST}$  with explicit convergence rates.

In the subsequent sections we will prove these results. Based on operator estimates in \S\,\ref{sec:resolventestimates}, we construct $\F$ in \S\,\ref{sec: locality}. In \S\,\ref{sec:limit}, we then prove the convergence $\F_N \to \F$. Finally, in \S\,\ref{sec:saddle}, we will discuss saddle points in the energy landscape and use the results from \S\S\,\ref{sec:resolventestimates}--\ref{sec:limit} to construct $\mathcal{K}^{\rm HTST}$ and show $\mathcal{K}^{\rm HTST}_N \to \mathcal{K}^{\rm HTST}$. In the appendix in \S\,\ref{sec: appendix}, we collect several auxiliary results and proofs used throughout the previous sections.

\subsection*{General Notation}
If $X$ is a (semi-)Hilbert space with dual $X^*$ then we denote the duality pairing by
$\< \cdot, \cdot \>$. The space of bounded linear operators from $X$ to another (semi-)Hilbert space $Y$ is denoted by $\mathcal{L}(X, Y)$. If $\E \in C^2(X)$ then $\delta \E(x) \in X^*$ denotes
the first variation, while $\< \delta \E(x), v\>$ with $v \in X$ denotes the
directional derivative. Further $\delta^2 \E(x) \in \mathcal{L}(X, X^*)$ denotes
the second variation (informally we may also call it the Hessian).

If $V \in C^p(\R^m)$ then we will denote its derivatives by $\nabla^j V(x)$ and
interpret them as multi-linear forms, which supplied with arguments read
$\nabla^j V(x)[a_1, \dots, a_j]$ for $a_i \in \R^m$.

If $\Lambda$ is a countable index-set (usually a Bravais lattice $\L = \mA \Z^d$
with $\mA\in\R^{d\times d}$ non-singular) then $\ell^2(\Lambda;\R^m)=\{u : \L
\to \R^m : \sum_{\ell \in \L} \lvert u \rvert^2 < \infty\}$. When the range
is clear from the context then we often just write $\ell^2(\Lambda)$ or $\ell^2$.

Given $A \in \mathcal{L}(\ell^2(\Lambda;\R^m), \ell^2(\Lambda;\R^m))$ we define
the components $A_{\ell i n j} = \big(A (\delta_\ell e_i), \delta_n e_j
\big)_{\ell^2(\Lambda;\R^d)}$ for $\ell,n \in \Lambda$ and $i,j \in
\{1,...,m\}$. We will also use the notation $A_{\ell n} = (A_{\ell i n j})_{ij} \in
\R^{m \times m}$ for the matrix blocks corresponding to atom sites. The identity is denoted by $(I_{\ell^2(\Lambda;\R^m)})_{\ell i n j} := \delta_{\ell n} \delta_{ij}$, sometimes shortened to $I_{\ell^2(\Lambda)}$ or just $I$, if the context is clear.

% !TEX root = dffe.tex

\section{Results}
\label{sec:results}
We consider a point defect embedded in a homogeneous lattice, following the
models in \cite{EOS2016}. To simplify the presentation, we consider a Bravais
lattice, a finite interaction radius, and a smooth interatomic potential.
Moreover, we only formulate the model for substitutional impurities, short-range
Frenkel defects, and other point defects that do not change the number of atoms.

On a Bravais lattice $\L= \mA \Z^d \subset \R^d$, lattice displacements are functions $u : \L \to \R^m$,
for some $m \in \N$, typically $m = d$. Let $\rcut > 0$ be an interaction cut-off radius, then $\Rc := (\L \setminus \{0\}) \cap B_\rcut$ is the interaction range and
\begin{equation*}
   Du(\ell) := \b( D_\rho u(\ell) \b)_{\rho\in\Rc}
             := \b( u(\ell+\rho) - u(\ell) \b)_{\rho\in\Rc}
\end{equation*}
a finite difference gradient. We assume $\rcut$ is large enough such that ${\rm
span}_{\Z}( \Rc) = \L$. For each $\ell \in \L$ let $V_\ell \in C^\kreg((\R^{m})^\Rc)$, $\kreg \geq 4$ be a site energy potential so that the total
energy contribution from site $\ell$ is given by $V_\ell(Du(\ell))$.

 We assume that the interaction is homogeneous away from the defect, i.e., $V_\ell \equiv V$ for all $\lvert \ell \rvert >  \rcut$, and that $V$ satisfies the natural point symmetry $V(A) = V((-A_{-\rho})_{\rho \in\Rc})$ for all $A \in (\R^m)^{\Rc}$. The presence of a substitutional
impurity defect can then be encoded in the fact that possibly $V_\ell \neq V$
when $|\ell| < \rcut$. (We also allow $V_\ell \equiv V$ for all $\ell$, for
example to model a short range Frenkel pair.)

To simplify the notation we assume that $V_\ell(0) = 0$ for all $\ell$, which is equivalent to considering a potential energy-difference.

\subsection{Supercell Model}
\label{sec:intro:pbc}
Take a non-singular $\mB \in \R^{d \times d}$ with columns in $\L$, i.e., $\mA^{-1}\mB \in \Z^{d \times d}$. For each $N \in \N$ we let
\begin{equation*}
   \L_N := \L \cap \mB (-N,N]^d =  \mA \Z^d \cap \mB (-N,N]^d.
\end{equation*}
denote the discrete periodic supercell. We assume throughout that $N$
is sufficiently large such that $B_\rcut \cap \L \subset \L_N$. The associated
space of periodic displacements is given by
\begin{equation*}
   \Wper_N := \b\{ u : \L \to \R^m \bsep u \text{ is $\L_N$-periodic}\b\},
\end{equation*}
that is $u \in \Wper_N$ if and only if $u(\ell + 2N\mB n) = u(\ell)$ for all $n
\in \Z^d$. An equilibrium defect geometry is obtained by solving
\begin{align} \label{eq:intro:pbc_equil}
   &\us_N \in \arg\min \b\{ \E_N(u) \bsep u \in \Wper_N \b\},  \\
   %
   % \notag
   % \text{or, more generally,} \quad
   % & \< \delta \E_N(\us_N), v \> = 0 \qquad \forall v \in \Wper_N, \\
   %
   \notag
   \text{where} \quad & \E_N(u)
   := \sum_{\ell \in \L_N} V_\ell(Du(\ell)) \qquad \text{for } u \in \Wper_N
\end{align}
is the potential energy functional for the periodic cell problem. In
\S~\ref{sec:results:tst} we will also consider more general critical points
$\delta \E_N(\us_N) = 0$.
For future reference, we also define the analogous functional for the
homogeneous (defect-free) supercell,
\begin{equation} \label{eq:intro:pbc_Ehom}
   \Ehom_N(u) := \sum_{\ell \in \L_N} V(Du(\ell))
    \qquad \text{for } u \in \Wper_N.
\end{equation}

Due to the assumption that $V_\ell(0) = 0$, the energy $\E_N(\us_N)$ can in fact
be interpreted as an energy difference, $\E_N(\us_N) - \Ehom_N(0)$, between the
defective and homogeneous crystal in the supercell approximation, called the
\emph{defect formation energy}. In \S~\ref{sec:intro:tdl_equil} we review the
limit, as $N \to \infty$, of \eqref{eq:intro:pbc_equil} and of the associated
energetics, which was established in \cite{EOS2016}.

\subsection{Supercell approximation of formation free energy}
\label{eq:intro:freeE}
The focus of the present work will be to incorporate vibrational entropy into
this model. Our first quantity of interest  is the \textit{defect-formation free
energy}, which is used, for example, to obtain the equilibrium defect
concentration~\cite{Putnis92, Walsh11} or to analyse defect
clustering~\cite{Seebauer09, Herbert2014}.

In the harmonic approximation model (thus incorporating only vibrational entropy
into the model) we approximate the nonlinear potential energy landscapes by
their respective quadratic expansions about the energy minima of interest,
\begin{align*}
   \Ehom_N(w) & \approx
         % \Ehom_N(0) + \b\<\delta\Ehom_N(0), w \b\> +
         %    \frac{1}{2} \b\<\delta^2\Ehom_N(0)w,w \b\>
         %    =
            {\textstyle \frac{1}{2}} \b\<\Hhom_N w,w \b\>,
            \quad \text{and} \\
   \E_N(\us_N + w) & \approx
      % \E_N(\us_N)+ \b\<\delta\E_N(\us_N), w\b\>
      % + {\textstyle \frac{1}{2}} \b\<\delta^2 \E_N(\us_N) w, w \b\>
      % =
      \E_N(\us_N) + {\textstyle \frac{1}{2}} \b\<\Hdef_N(\us_N) w, w \b\>,
\end{align*}
where we used $\delta \Ehom_N(0) = \delta\Edef_N(\us_N) = 0$. Here and in the following, we use the notation $\Hdef_N(u) := \delta^2 \Edef_N(u)$, $\Hhom_N(u) := \delta^2 \Ehom_N(u)$, and $\Hhom_N := \Hhom_N(0)$ for the Hessians as mappings $\Wper_N \to \Wper_N$.

The harmonic approximation of the partition function is then given by
\begin{align}
\int_{\Wper_{N,0}} e^{-\beta \frac{1}{2} \<\Hdef_N w,w \>}\,du &= \Big[{\rm det}_{\Wper_{N,0}}\big(\beta \Hdef_N/(2 \pi)\big)\Big]^{-1/2}
% \nonumber  \\ &
= C_{\beta, N} ( \detp\Hdef_N)^{-1/2} \label{eq: gaussian integral}
\end{align}
where $C_{\beta,N} = (2 \pi /\beta)^{((2N)^d-1)m/2}$ and we introduced the
notation $\Wper_{N,0} := \{ u \in \Wper_N : \sum u =0\}$, as well as $\detp (A)
:= \prod_{j} \lambda_j$, where $\lambda_j$ enumerates the positive eigenvalues of $A$ (with multiplicities). We also implicitly used an assumption that we will formulate below in \eqref{eq: stability condition} and
\eqref{eq:stab homogeneous}, that $\Hdef_N(\us_N)$ and $\Hhom_N$ have only
one non-positive eigenvalue, namely $\lambda =0$ with all translations making up
the associated eigenspace (cf. Lemma~\ref{thm:pbc stab}).

The resulting harmonic approximation of formation free energy (derived analogously
to \eqref{eq:HTST}) is then given by
\begin{align}
   \notag
   \mathcal{F}_N(\us_N)
   &:= \E_N(\us_N)
      - \beta^{-1} \Big(-\smfrac{1}{2}\log \detp \Hdef_N(\us_N) +\smfrac{1}{2} \log\detp\Hhom_N \Big)
   \\ &=: \E_N(\us_N) - \beta^{-1} \mathcal{S}_N(\us_N).
   \label{eq:defn_SN}
\end{align}
The limit of $\E_N(\us_N)$ is identified in \cite{EOS2016}, and will be reviewed
in \S~\ref{sec:intro:tdl_equil}. One of the main results of this work is the
identification of the limit of the entropy difference $\lim_{N \to \infty} \mathcal{S}_N$, which we summarize
in \S~\ref{sec:convofentropy}.

\subsection{Thermodynamic Limit of Energy}
\label{sec:intro:tdl_equil}
To establish the limit of $\us_N$ and $\E_N(\us_N)$, we review the results of \cite{EOS2016}. For $u : \L \to \R^m$ let
\begin{equation*}
   |Du(\ell)|^2 := \sum_{\rho \in \Rc} |D_\rho u(\ell)|^2
   \qquad \text{and} \qquad
   \|Du\|_{\ell^2} := \b\|\,|Du|\b\|_{\ell^2}.
\end{equation*}
This defines a semi-norm on the natural spaces of compact and finite energy displacements
\begin{equation}
   \label{eq: spaces}
   \begin{split}
   \Wc &:= \b\{u:\L\to\R^m \bsep {\rm supp}(Du)~\text{is compact}\b\}
         \qquad \text{and}
   \\ \Wi &:= \b\{u:\L\to\R^m \bsep Du \in \ell^2\b\}.
   \end{split}
\end{equation}
The homogeneous and defective energy functionals for the infinite lattice are
given, respectively, by
\begin{equation} \label{eq: energy functional} \begin{split}
   \Ehom(u) &= \sum_{\ell\in\L} V(Du(\ell)) \qquad\text{and}  \\
   \Edef(u) &= \sum_{\ell\in\L} V_\ell(Du(\ell)) \qquad \text{for}~~u\in\dot{\W}^{\rm c}.
\end{split} \end{equation}
\begin{lemma}\cite[Lemma 2.1]{EOS2016} \label{lem:energyregular} $\Ehom, \Edef: (\Wc,\|D \cdot\|_{\ell^2})\to \R$ are continuous. In particular, there exist unique continuous extensions of $\Ehom$ and $\Edef$ to $\Wi$ as $\Wc$ is dense in $\Wi$. The extension will still be denoted by $\Ehom$ and $\Edef$. These extended functionals
   $\Ehom, \Edef: \Wi\to \R$ are $\kreg$ times continuously Fréchet
   differentiable.
\end{lemma}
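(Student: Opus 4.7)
The plan is to reduce everything to the homogeneous functional $\Ehom$, exploit the point symmetry of $V$ to cancel the linear Taylor term that would otherwise obstruct absolute summability, and then extend from $\Wc$ to $\Wi$ by density. Since $V_\ell \equiv V$ for $|\ell| > \rcut$, the difference
\begin{equation*}
\Edef(u) - \Ehom(u) = \sum_{|\ell| \le \rcut}\bigl(V_\ell(Du(\ell)) - V(Du(\ell))\bigr)
\end{equation*}
is a finite sum depending smoothly on only finitely many entries of $u$, so it suffices to prove the claim for $\Ehom$ and add this $C^{\kreg}$ correction. For continuity on $\Wc$, I would work with the renormalised site energy $\tilde V(A) := V(A) - \nabla V(0)[A]$, for which $\tilde V(0) = 0$, $\nabla \tilde V(0) = 0$, and Taylor's theorem gives $|\tilde V(A)| \le C(R)|A|^2$ on $|A| \le R$ together with a matching Lipschitz bound on $\nabla \tilde V$. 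Differentiating the point symmetry $V(A) = V((-A_{-\rho})_\rho)$ at $A = 0$ yields the antisymmetry $(\nabla V(0))_\rho = -(\nabla V(0))_{-\rho}$; combined with the fact that every $u \in \Wc$ is eventually constant (since $\mathrm{span}_{\Z}\Rc = \L$), so that $\sum_\ell D_\rho u(\ell)$ is a finite telescoping sum equal to $0$, this gives $\sum_\ell \nabla V(0)[Du(\ell)] = 0$ and hence $\Ehom(u) = \sum_\ell \tilde V(Du(\ell))$ on $\Wc$. The quadratic bound and Cauchy--Schwarz then produce
\begin{equation*}
|\Ehom(u) - \Ehom(v)| \le C(R)\bigl(\|Du\|_{\ell^2} + \|Dv\|_{\ell^2}\bigr)\|Du - Dv\|_{\ell^2}
\end{equation*}
on $\{\|D\cdot\|_{\ell^2} \le R\}$, i.e.\ local Lipschitz continuity.

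Density of $\Wc$ in $\Wi$ is the main obstacle, since in low dimensions a displacement with $Du \in \ell^2$ need not be bounded or summable. I would handle it by truncation with a Poincaré-type correction: for $R \gg 1$, pick a cutoff $\phi_R$ with $\phi_R \equiv 1$ on $B_{R/2}$, $\phi_R \equiv 0$ outside $B_R$, and $|D\phi_R| \lesssim 1/R$, and set $u_R(\ell) := \phi_R(\ell)(u(\ell) - c_R)$ where $c_R$ is the mean of $u$ over the shell $S_R := \{\ell : D\phi_R(\ell) \ne 0\}$. Then $Du_R$ has compact support, the identity $D_\rho u_R(\ell) = \phi_R(\ell+\rho)D_\rho u(\ell) + (D_\rho \phi_R(\ell))(u(\ell) - c_R)$ gives
\begin{equation*}
\|D(u - u_R)\|_{\ell^2} \le \|(1 - \phi_R)Du\|_{\ell^2} + C\|(D\phi_R)(u - c_R)\|_{\ell^2(S_R)},
\end{equation*}
and both terms tend to zero: the first by dominated convergence, the second via the annular discrete Poincaré estimate $\sum_{S_R}|u - c_R|^2 \le C R^2 \sum_{S_R}|Du|^2$ combined with $|D\phi_R| \lesssim 1/R$ and the $\ell^2$ tail of $Du$. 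With the Lipschitz bound above this yields a unique continuous extension of $\Ehom$ (and therefore $\Edef$) to $\Wi$.

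For $C^{\kreg}$ Fréchet differentiability I would define, for $k \ge 2$,
\begin{equation*}
T_k(u)[v_1,\dots,v_k] := \sum_{\ell \in \L} \nabla^k V(Du(\ell))\bigl[Dv_1(\ell),\dots,Dv_k(\ell)\bigr],
\end{equation*}
and $T_1(u)[v] := \sum_\ell (\nabla V(Du(\ell)) - \nabla V(0))[Dv(\ell)]$, the renormalisation again removing the constant part of $\nabla V$ that otherwise fails to be summable. On an $\ell^2$-ball of radius $R$, boundedness of $\nabla^k V$ together with Hölder's inequality and the elementary embedding $\ell^2 \hookrightarrow \ell^{2k}$ give $|T_k(u)[v_1,\dots,v_k]| \le C(R)\prod_{i=1}^k \|Dv_i\|_{\ell^2}$ for $k \ge 2$, while for $k = 1$ the bound $|\nabla V(Du(\ell)) - \nabla V(0)| \le C(R)|Du(\ell)|$ and Cauchy--Schwarz give $|T_1(u)[v]| \le C(R)\|Du\|_{\ell^2}\|Dv\|_{\ell^2}$. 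A pointwise Taylor expansion of $\tilde V$ (respectively of $\nabla^{k-1}\tilde V$), summed and bounded by the same Hölder estimates, both identifies $T_k(u)$ with $\delta^k\Ehom(u)$ and shows that $u \mapsto T_k(u)$ is continuous into the space of bounded $k$-linear forms. Iterating for $k = 1,\dots,\kreg$ and restoring the smooth finite correction completes the proof.
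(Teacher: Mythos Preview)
The paper does not prove this lemma itself but cites it from \cite{EOS2016}; your argument is correct and is exactly the standard route taken there: reduce to $\Ehom$ via the finite-sum correction, renormalise by the linear Taylor term, obtain a quadratic local Lipschitz bound, prove density of $\Wc$ in $\Wi$ via a cutoff with an annular Poincar\'e correction, and build the higher derivatives as explicit multilinear forms bounded through H\"older and the embedding $\ell^2 \hookrightarrow \ell^{q}$ for $q \ge 2$.

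One genuine subtlety is worth flagging. Your key identity $\sum_\ell D_\rho u(\ell) = 0$ for $u \in \Wc$ rests on $u$ taking a \emph{single} constant value at infinity. This holds in $d \ge 2$, where the lattice complement of a ball is connected, but fails in $d = 1$, where $u$ may have different limits at $\pm\infty$; in that case the antisymmetry $(\nabla V(0))_\rho = -(\nabla V(0))_{-\rho}$ does not rescue the argument either (the summand $\rho\,(\nabla V(0))_\rho$ is even in $\rho$), and indeed $\Ehom$ is not continuous on $(\Wc,\|D\cdot\|_{\ell^2})$ without an additional zero-stress assumption on the reference lattice. Since the paper's setting is $d \ge 2$, this does not affect your proof in context.
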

We then set $\Hdef(u):=\delta^2 \Edef(u)$, $\Hhom(u):=\delta^2 \Ehom(u)$, and for convenience $\Hhom:=\Hhom(0)$.

\textbf{(STAB)}: We assume throughout that there exists a strongly stable equilibrium $\us \in\Wi$, i.e., $\delta \Edef (\us)=0$ and that there are constants $c_0,c_1>0$ such that
\begin{equation}
\label{eq: stability condition}
   c_0 \|D v\|_{\ell^2}^2 \leq \<\Hdef(\us)v,v\>
   \leq c_1 \|D v\|_{\ell^2}^2 \qquad \text{ for all } v \in \Wc.
\end{equation}
A necessary condition for \eqref{eq: stability condition} is that the
homogeneous lattice is stable, i.e.,
\begin{equation} \label{eq:stab homogeneous}
   c_0 \| D v \|_{\ell^2}^2  \leq \< \Hhom v,v\>
   \leq c_1 \|D v\|_{\ell^2}^2
    \qquad \text{ for all } v \in \Wc.
\end{equation}
(Note that the upper bounds in \eqref{eq: stability condition}, \eqref{eq:stab homogeneous} are immediate consequences of $\E \in C^p$ and are stated here only for the sake of convenience.)
%
% The reference \cite[Theorem 2.1]{EOS2016}  provide a decay estimate for a strongly stable equilibrium of $\Edef$ (actually \cite[Theorem 2.1]{EOS2016} requires only a weaker stability assumption)
%
\begin{theorem}{\cite[Thm 1]{EOS2016}}
\label{thm: EOS2016}
   Suppose that $u \in \dot{\W}^{1,2}$ is a critical point of $\Edef$, and that
   \eqref{eq:stab homogeneous} holds, then there exists a constant $C>0$ such
   that for $1\leq j\leq \kreg-2$ and for $|\ell|$ sufficiently large
   \begin{equation}
   \label{eq: decay estimate}
      |D^j u(\ell)|\leq C|\ell|^{1-d-j}.
   \end{equation}
\end{theorem}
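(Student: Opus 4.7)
The plan is to linearise the Euler--Lagrange equation $\delta\Edef(u) = 0$ around the homogeneous reference state and to invert the homogeneous Hessian $\Hhom$ using the lattice Green's function. Write
\begin{equation*}
   \Hhom u = \underbrace{-\bigl(\delta\Edef(u) - \delta\Ehom(u)\bigr)}_{=: f_{\rm def}(u)}
            \;+\; \underbrace{-\bigl(\delta\Ehom(u) - \Hhom u\bigr)}_{=: f_{\rm nl}(u)}.
\end{equation*}
Since $V_\ell = V$ whenever $|\ell|>\rcut$, the defect residual $f_{\rm def}(u)$ is supported in a fixed ball around the origin. By Taylor-expanding the site potential $V$ around $0$ one obtains the pointwise bound $|f_{\rm nl}(u)(\ell)| \lesssim \sum_{|\ell' - \ell|\leq 2\rcut} |Du(\ell')|^{2}$ together with higher-order analogues controlled by $|D^j u|\cdots|D^{j_k}u|$ (finitely many terms). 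Thus the problem reduces to understanding convolution against the Green's function $G$ of $\Hhom$ applied to a source that is compactly supported plus a quadratic tail of $Du$.

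The first technical input is pointwise decay of $G$ and its discrete derivatives. Using \eqref{eq:stab homogeneous}, the Bloch symbol $\widehat{\Hhom}(k)$ is Hermitian, smooth on the torus, and vanishes quadratically only at $k=0$ (from the translation invariance). Quotienting out translations and applying standard oscillatory integral/stationary phase estimates to the inverse symbol $\widehat{\Hhom}(k)^{-1}$ yields the classical bounds
\begin{equation*}
   |D^{j}G(\ell)| \;\leq\; C\,|\ell|^{2-d-j}, \qquad j\geq 1,\ |\ell|\text{ large},
\end{equation*}
where $j$ derivatives may be distributed on either Green's function argument. Combined with discrete analogues of the convolution/Young inequalities, one then obtains a quantitative inversion of $\Hhom$ on $\Wi$ modulo translations.

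The final step is a bootstrap. Starting from $u \in \Wi$, hence $Du \in \ell^2$, one applies the Green's function representation $D^j u = (D^j G) * f$ together with the decay of $f_{\rm def}$ (compact support) and the quadratic control of $f_{\rm nl}$ in terms of $Du$ itself, iteratively upgrading the integrability/pointwise decay of $Du$ through successive convolution estimates. Each iteration gains decay, and the fixed point of the iteration produces the sharp pointwise rate $|D^{j}u(\ell)|\leq C|\ell|^{1-d-j}$ for $1\leq j \leq \kreg -2$, where the regularity upper bound $\kreg - 2$ simply tracks how often $V$ can be Taylor-expanded. The main obstacle is the bootstrap in low dimensions ($d=2,3$), where $G$ decays only polynomially: each convolution against the quadratic residual $f_{\rm nl}$ improves the exponent by a definite but small amount, and one must carefully verify that the iteration converges to the optimal rate rather than saturating below it, and that the translation mode of $\Hhom$ is consistently handled (for instance via a normalisation fixing $u$ at infinity) so that the Green's function representation is unambiguous.
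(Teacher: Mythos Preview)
The paper does not actually prove this theorem; it is quoted verbatim from \cite[Thm~1]{EOS2016} and used as a black box. Your sketch is essentially the strategy of that reference: rewrite $\delta\Edef(u)=0$ as $\Hhom u = f$ with $f$ a compactly supported defect contribution plus a nonlinear remainder that is quadratic in $Du$, invoke the lattice Green's function decay $|D^jG(\ell)|\lesssim|\ell|^{2-d-j}$ obtained from the Fourier symbol, and bootstrap from $Du\in\ell^2$ to the sharp pointwise rate. So the approach matches.

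One point worth tightening: your decomposition writes $f_{\rm nl}(u)$ as a function of $\ell$, but $\delta\Ehom(u)-\Hhom u$ lives in $(\Wi)^*$ and is naturally paired with $Dv$ rather than with $v$. In \cite{EOS2016} the equation is written in divergence form, $\langle f,v\rangle = \sum_\ell g(\ell)\cdot Dv(\ell)$ with $|g(\ell)|\lesssim |Du(\ell)|^2$ (plus the compactly supported defect piece), and it is $D^jG$ convolved with $g$ that one estimates; this is exactly what makes the translation kernel harmless and removes the need for any ad hoc normalisation of $u$ at infinity. The present paper uses precisely this mechanism in the proof of Proposition~\ref{th:index-1-saddle-H1}, Step~3, where it reduces to \cite[Lemmas~13--14]{EOS2016}. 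Your concern about the bootstrap saturating below the optimal rate in $d=2,3$ is legitimate but is resolved in \cite{EOS2016} by first passing through an $\ell^p$ scale (Sobolev-type embedding on the lattice) before the pointwise iteration, so that the gain per step is large enough.
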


Strong stability \eqref{eq: stability condition} and
regularity \eqref{eq: decay estimate} imply convergence of the
supercell approximation:

\begin{theorem}{\cite[Thm 3]{EOS2016} and \cite[Thm 2.1]{2018-uniform}}
   \label{thm:pbc convergence}
   For $N$ sufficiently large, \eqref{eq:intro:pbc_equil} has a locally unique
   solution $\us_N$ (up to translations) satisfying
   \begin{align}
      \| D\us_N - D\us \|_{\ell^2(\L_N)} &\lesssim N^{-d/2}, \label{eq:strainerror} \\
      \| D\us_N - D\us \|_{\ell^\infty(\L_N)} &\lesssim N^{-d}, \label{eq:uniformstrainerror} \\
      \b| \Edef_N(\us_N) - \Edef(\us) \b| &\lesssim N^{-d}. \label{eq:energyerror}
   \end{align}
\end{theorem}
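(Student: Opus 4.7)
The plan is to follow the standard approach for such thermodynamic limits: construct an approximate supercell solution $\tilde{u}_N \in \Wper_N$ by projecting or periodizing the infinite-lattice minimizer $\us$, and then produce the exact minimizer $\us_N$ by a quantitative inverse function theorem. A natural choice is $\tilde{u}_N$ equal to the periodization of $\us$ after multiplication by a smooth cutoff supported in the supercell; the decay $|D^j\us(\ell)| \lesssim |\ell|^{1-d-j}$ supplied by Theorem \ref{thm: EOS2016} then allows quantitative control of the truncation error $\|D\tilde{u}_N - D\us\|_{\ell^2(\L_N)} \lesssim N^{-d/2}$, and of the residual forces coming from interaction across periodic images.

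The inverse function theorem requires two inputs: consistency and stability. For consistency, $\delta\Edef(\us) = 0$ on $\Wc$ combined with $V_\ell \equiv V$ away from the defect core implies that $\delta\E_N(\tilde{u}_N)$ is supported in the annulus $|\ell| \sim N$. A careful accounting of the cutoff-induced contributions and the image-image terms, using the decay estimates and an appropriate dual norm, yields
\begin{equation*}
   \|\delta \E_N(\tilde{u}_N)\|_{(\Wper_N)^*} \lesssim N^{-d/2}.
\end{equation*}
For stability, the defect Hessian on the supercell must inherit the lower bound from \eqref{eq: stability condition}. This is the step I expect to be the main obstacle: periodic test functions in $\Wper_N$ are not compactly supported, so \eqref{eq: stability condition} does not apply directly. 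My strategy is to decompose $w \in \Wper_{N,0}$ into a piece localised near the defect (which is controlled via the strong defect stability after a smooth cutoff) and a far-field piece (which is controlled via homogeneous stability \eqref{eq:stab homogeneous} in its periodic incarnation), exploiting that $\Hdef_N(\tilde{u}_N) - \Hhom_N$ is compactly supported near the defect core and that $\tilde{u}_N$ is small there.

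Once consistency and uniform stability hold, a Newton--Kantorovich iteration produces a locally unique $\us_N$ (up to translations) with $\|D\us_N - D\tilde{u}_N\|_{\ell^2(\L_N)} \lesssim N^{-d/2}$, which combined with the corresponding bound for $\tilde{u}_N - \us$ gives \eqref{eq:strainerror}. The sharper pointwise bound \eqref{eq:uniformstrainerror} is obtained by a discrete Green's function argument: since the residual is concentrated near $|\ell| \sim N$ and the Green's function associated to $\Hhom$ has sufficient decay together with its finite differences, convolving the residual against the Green's function yields a pointwise error of order $N^{-d}$ rather than $N^{-d/2}$. Finally, \eqref{eq:energyerror} follows from the Taylor expansion
\begin{equation*}
   \Edef_N(\us_N) - \Edef(\us) = \b[\Edef_N(\tilde{u}_N) - \Edef(\us)\b] + \<\delta\Edef_N(\tilde{u}_N), \us_N - \tilde{u}_N\> + O\b(\|D(\us_N - \tilde{u}_N)\|_{\ell^2}^2\b),
\end{equation*}
where the first bracket is $O(N^{-d})$ by direct estimation using the decay of $\us$, the linear term is $O(N^{-d})$ via consistency times the $\ell^2$-error, and the quadratic remainder is $O(N^{-d})$ by \eqref{eq:strainerror}.
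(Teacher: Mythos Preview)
The paper does not give its own proof of this theorem; it is quoted verbatim as a citation of \cite[Thm~3]{EOS2016} and \cite[Thm~2.1]{2018-uniform}, with the supercell stability ingredient recorded separately as Lemma~\ref{thm:pbc stab} (again a citation). Your sketch is the correct strategy and is essentially the one carried out in those references: truncate $\us$ to obtain a periodic quasi-minimiser, bound the residual in the dual norm using the decay from Theorem~\ref{thm: EOS2016}, transfer stability from \eqref{eq: stability condition} to the supercell, and close with the inverse function theorem; the $\ell^\infty$ upgrade via the lattice Green's function is precisely the content of \cite{2018-uniform}, and the energy estimate follows by the Taylor argument you outline. One minor remark: in \cite{EOS2016} the stability transfer is not done by your proposed near/far splitting of the test function but rather by writing $\Hdef_N(\tilde u_N) = \Hhom_N + (\Hdef_N(\tilde u_N)-\Hhom_N)$ and exploiting that the perturbation is localised near the defect core while $\Hhom_N$ is uniformly coercive by Fourier analysis; both routes work, and yours is a legitimate alternative.
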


A key ingredient in the proof of Theorem \ref{thm:pbc convergence} is the
stability of the supercell approximation, i.e., positivity of the Hessians
$\Hdef_N = \Hdef_N(\us_N)$ and $\Hhom_N$:

\begin{lemma}{\cite[Eq (18)]{EOS2016}}
   \label{thm:pbc stab}
   For $N$ sufficiently large and for all $v \in \Wper_N$,
   \begin{align*}
      \< \Hdef_N v,v\> &\geq \smfrac12 c_0 \|D v\|_{\ell^2(\L_N)}^2, \qquad \text{and} \\
      \< \Hhom_N v, v \> &\geq \smfrac12 c_0 \|Dv\|_{\ell^2(\L_N)}^2.
   \end{align*}
   In particular, for $N$ sufficiently large, \eqref{eq: gaussian integral} holds.
   % and $\mathcal{S}_N := \mathcal{S}_N(\us_N)$, given by \eqref{eq:defn_SN}, is
   % well-defined.
\end{lemma}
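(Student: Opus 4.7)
The two bounds decouple, and I would handle the homogeneous one first by a Bloch decomposition and then the defective one by transferring the infinite-lattice assumption \eqref{eq: stability condition} through a localised cut-off.

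For $\Hhom_N$, the key observation is that $\Hhom$ is translation invariant on $\L$, so its action on $\Wper_N$ diagonalises under the finite discrete Fourier transform into a direct sum $\bigoplus_{k \in \Lamhat_N} \hat H(k)$, where $\hat H(k) \in \bbC^{m\times m}$ is exactly the Bloch symbol of the infinite-lattice operator evaluated on the finite set of admissible supercell wavenumbers $\Lamhat_N$ (a subset of the full Brillouin zone). By Plancherel on $\Wi$, the hypothesis \eqref{eq:stab homogeneous} is equivalent to the pointwise estimate $\hat H(k) \succeq c_0 \hat G(k)$ on the full Brillouin zone, where $\hat G(k)$ is the symbol of the semi-definite form $v\mapsto \|Dv\|_{\ell^2}^2$. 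Restricting this pointwise inequality to $\Lamhat_N$ immediately gives $\<\Hhom_N v,v\>\geq c_0 \|Dv\|_{\ell^2(\L_N)}^2$ for every $N$, which is in fact stronger than the claimed bound with $c_0/2$.

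For $\Hdef_N(\us_N)$ a direct perturbation of $\Hhom_N$ fails because the defect-core contribution is $O(1)$, so I instead transfer \eqref{eq: stability condition} directly. Given $v \in \Wper_N$, I would lift it to a compactly supported $\tilde v \in \Wc$ by multiplying by a smooth cut-off $\eta_N$ that equals $1$ on a ball of radius $\sim N/2$ around the defect and vanishes before $\partial \L_N$. Applying \eqref{eq: stability condition} to $\tilde v$ reduces the problem to bounding the mismatch
\begin{equation*}
  \b|\<\Hdef_N(\us_N)v,v\> - \<\Hdef(\us)\tilde v,\tilde v\>\b|.
\end{equation*}
This mismatch splits into a defect-core piece, controlled by the $\ell^\infty$ comparison \eqref{eq:uniformstrainerror} together with $C^\kreg$-regularity of the site potentials $V_\ell$, and a far-field piece, which after using $V_\ell \equiv V$ outside $B_\rcut$ and the pointwise decay $|D\us(\ell)|\lesssim|\ell|^{-d}$ from \eqref{eq: decay estimate} reduces to a multiple of $\|D\tilde v - Dv\|_{\ell^2(\L_N)}^2$ with a vanishing prefactor, plus a summable tail coming from $\|D\us\|_{\ell^2(\L\setminus\L_N)}^2$.

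The main obstacle will be the cut-off error $\|D\tilde v - Dv\|_{\ell^2(\L_N)}^2$, which naively scales like $\|\nabla\eta_N\|_{\ell^\infty}^2 \|v\|_{\ell^2(\mathrm{annulus})}^2$ and is not obviously controlled by $\|Dv\|_{\ell^2(\L_N)}^2$. The standard remedy is to average over a family of nested cut-offs of widths $\sim N$, a discrete partition-of-unity / Poincaré device that converts the $\ell^2$ norm of $v$ on the annulus into the $\ell^2$ norm of $Dv$ with a prefactor of order $N^{-1}$. For $N$ sufficiently large this error is absorbed into the coercivity constant, which is why only $c_0/2$ survives instead of $c_0$. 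Combining the two stability bounds ensures $\Hdef_N$ and $\Hhom_N$ are positive definite on $\Wper_{N,0}$ (after factoring out the one-dimensional translation nullspace), which in turn makes the Gaussian integral in \eqref{eq: gaussian integral} convergent and justifies the formula.
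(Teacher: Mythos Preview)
Your Bloch/Fourier argument for $\Hhom_N$ is correct and in fact yields the full constant $c_0$, not just $c_0/2$.

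For $\Hdef_N(\us_N)$, however, the direct comparison does not close. The far-field piece of the mismatch does \emph{not} carry a vanishing prefactor: after you strip off the small contributions $\nabla^2 V(D\us_N)-\nabla^2 V(0)$ and $\nabla^2 V(D\us)-\nabla^2 V(0)$ using the decay of $D\us$, you are still left with the leading term $\nabla^2 V(0)[Dv,Dv]-\nabla^2 V(0)[D\tilde v,D\tilde v]$, whose coefficient is $O(1)$. Averaging over nested cut-offs cannot repair this: take $v\in\Wper_N$ with $Dv$ supported entirely in the outer annulus $\L_N\setminus\L_{N/2}$; then every cut-off $\tilde v$ localised near the core has $D\tilde v\equiv 0$, the inequality $\<\Hdef(\us)\tilde v,\tilde v\>\geq c_0\|D\tilde v\|^2$ is vacuous, and your mismatch equals $\<\Hdef_N v,v\>$ itself. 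The averaging/Poincar\'e device you invoke controls the \emph{transition-layer} error $\|\nabla\eta_N\|_\infty\|v\|_{\ell^2(\text{layer})}$, but it does nothing about the portion of $Dv$ that lies wholly outside the cut-off.

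The missing ingredient is precisely the periodic homogeneous bound you already proved. The argument in \cite{EOS2016} combines both stabilities: one writes $\<\Hdef_N v,v\>=\<\Hhom_N v,v\>+\sum_{\ell}\big(\nabla^2 V_\ell(D\us_N)-\nabla^2 V(0)\big)[Dv,Dv]$, observes that the second sum only ``sees'' $Dv$ near the core (up to $o(1)\|Dv\|^2$), and then transfers \emph{that localised piece} through a cut-off $\tilde v$ and \eqref{eq: stability condition}. The resulting inequality still contains the difference $\<\Hhom_N v,v\>-\<\Hhom\tilde v,\tilde v\>$, which is $O(1)$; the way out is a concentration--compactness extraction (contradiction argument with $v_j$ split into a strongly convergent core part and a locally vanishing remainder, the cross terms tending to zero), so that \eqref{eq: stability condition} handles the core part and the first paragraph's bound handles the remainder. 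Your outline uses only the first of these two mechanisms.
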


\subsection{Spatial decomposition of entropy}
\label{sec:results:decompositionS}
Our goal is to characterise the thermodynamic limit of the entropy
difference $\mathcal{S}_N \to \mathcal{S}$ as $N \to \infty$,
as an entropy difference, which \emph{formally} one might expect to be of the form $\mathcal{S}(u) = -\smfrac{1}{2}\log\detp \Hdef(u) +\smfrac{1}{2} \log\detp\Hhom$, but this expression is not well-defined.

In the following, let $\pi_N :  \Wper_N \to \Wper_N$ be the orthogonal projector
onto the space of constant displacements. This allows us to define an operator
that acts as $(\Hhom_N)^{-1/2}$ orthogonal to constant displacements:
\begin{lemma} \label{lem:FopN}
There exist linear operators $\Fop_N : \Wper_N \to \Wper_N $ such that
\begin{align}
&\Fop_N^* = \Fop_N, \label{eq:FopN1}\\
&\Fop_N \Hhom_N \Fop_N + \pi_N = I_{\Wper_N}, \label{eq:FopN2}\\\
&\Fop_N \pi_N = \pi_N \Fop_N =0. \label{eq:FopN3}\
\end{align}
\end{lemma}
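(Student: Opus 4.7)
The plan is to construct $\Fop_N$ by the functional calculus applied to $\Hhom_N$, restricted to the orthogonal complement of the constants. Since $\Hhom_N = \delta^2 \Ehom_N(0)$ is a symmetric bilinear form, the induced operator on the (finite-dimensional) Hilbert space $\Wper_N$, equipped with its natural $\ell^2$ inner product, is self-adjoint. Thus the spectrum is real and $\Hhom_N$ admits an orthonormal eigenbasis.

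First I would pin down the kernel. Because $\Ehom_N$ depends on $u$ only through $Du$, one has $\Hhom_N c = 0$ for every constant displacement $c$, so $\mathrm{range}(\pi_N) \subset \ker \Hhom_N$. Conversely, Lemma \ref{thm:pbc stab} gives $\langle \Hhom_N v, v\rangle \geq \tfrac{c_0}{2}\|Dv\|_{\ell^2(\L_N)}^2$, so any $v \in \ker \Hhom_N$ satisfies $Dv = 0$ and is therefore constant. Hence $\ker \Hhom_N = \mathrm{range}(\pi_N)$, and by self-adjointness $\Hhom_N$ leaves both $\mathrm{range}(\pi_N)$ and its orthogonal complement $\Wper_{N,0} := \mathrm{range}(I-\pi_N)$ invariant. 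Lemma \ref{thm:pbc stab} also shows that the restriction $\Hhom_N|_{\Wper_{N,0}}$ is strictly positive definite, since on $\Wper_{N,0}$ the condition $Dv = 0$ together with $\pi_N v = 0$ forces $v = 0$.

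Now I would define
\begin{equation*}
   \Fop_N := \bigl(\Hhom_N|_{\Wper_{N,0}}\bigr)^{-1/2} (I - \pi_N),
\end{equation*}
where the inverse square root is obtained via the spectral theorem on the finite-dimensional positive definite operator $\Hhom_N|_{\Wper_{N,0}}$, and is extended by zero to $\mathrm{range}(\pi_N)$. Self-adjointness \eqref{eq:FopN1} is immediate from the spectral representation. Property \eqref{eq:FopN3} holds because $\Fop_N$ vanishes on $\mathrm{range}(\pi_N)$ and maps into $\Wper_{N,0} = \ker \pi_N$. For \eqref{eq:FopN2}, note that on the invariant subspace $\Wper_{N,0}$ one has $\Fop_N \Hhom_N \Fop_N = (\Hhom_N|_{\Wper_{N,0}})^{-1/2} \Hhom_N|_{\Wper_{N,0}} (\Hhom_N|_{\Wper_{N,0}})^{-1/2} = I_{\Wper_{N,0}}$, while on $\mathrm{range}(\pi_N)$ the operator $\Fop_N$ annihilates everything; adding $\pi_N$ recovers the identity on all of $\Wper_N$.

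Since $\Wper_N$ is finite-dimensional and everything reduces to elementary spectral theory of a symmetric matrix with a one-dimensional kernel, I do not anticipate any genuine obstacle. The only point requiring care is the identification $\ker \Hhom_N = \mathrm{range}(\pi_N)$, which is the place where the stability estimate from Lemma \ref{thm:pbc stab} enters decisively.
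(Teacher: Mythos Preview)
Your proof is correct. Both constructions in fact yield the same operator, since $\Hhom_N$ is circulant and hence diagonalised by the discrete Fourier transform; the Fourier modes are exactly its eigenvectors, so the abstract square root you take coincides with the paper's Fourier-side definition. The paper, however, proceeds concretely: it sets $F_N(\ell) := |\B_N|^{-1}\sum_{k\in\B_N\setminus\{0\}} e^{-ik\cdot\ell}\hat h(k)^{-1/2}$ and defines $\Fop_N$ as convolution with this kernel, then verifies \eqref{eq:FopN1}--\eqref{eq:FopN3} via Fourier identities. Your spectral-theorem argument is shorter and cleaner for the existence statement as written. The payoff of the paper's explicit route is that the convolution kernel $F_N$ can be compared pointwise to the infinite-lattice kernel $F$, giving decay and approximation estimates (Lemma~\ref{th:error estimate FN}) that drive the convergence analysis of \S\ref{sec:limit}; your abstract definition does not supply these directly.

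One small slip: you call the kernel ``one-dimensional'', but the constant displacements form an $m$-dimensional subspace. This does not affect the argument.
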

These operators and additional properties will be discussed in detail in
\S\S\,\ref{sec:limit:estimateFN}--\ref{sec:limit:spectralpropertiesFN}. It follows that
\[(\Fop_N +\pi_N)(\Hhom_N+\pi_N) (\Fop_N +\pi_N)= I_{\Wper_N},\]
and we can rewrite the entropy difference as
\begin{align}
\label{eq:logdetminuslogdetcalc}
   \mathcal{S}_N(u) &= -\smfrac{1}{2}\log\detp  \Hdef_N(u) +\smfrac{1}{2}  \log\detp \Hhom_N\\ \nonumber
&= -\smfrac{1}{2} \log\det  \big(\Hdef_N(u) + \pi_N\big)+\smfrac{1}{2}  \log\det \big(\Hhom_N + \pi_N\big)\\ \nonumber
&= -\smfrac{1}{2}\log\det  \big(\Hdef_N(u) + \pi_N\big) -\log \det (\Fop_N + \pi_N\big)\\ \nonumber
&= -\smfrac{1}{2}\log\det  \big((\Fop_N + \pi_N) (\Hdef_N(u) + \pi_N)(\Fop_N + \pi_N)\big)\\ \nonumber
&= -\smfrac{1}{2}\log\det  \big(\Fop_N\Hdef_N(u)\Fop_N + \pi_N\big)\\ \nonumber
&= -\smfrac{1}{2}{\rm Trace} \log \big(\Fop_N\Hdef_N(u)\Fop_N + \pi_N\big).
\end{align}

While ``$\log\det$'' is a sum over eigenvalues, which are global objects,
the key observation is that ``${\rm Trace} \log$''  can be interpreted as a
sum over atoms. Thus, upon defining
\begin{equation} \label{eq:def SNell}
   \mathcal{S}_{N,\ell}(u) := -\smfrac{1}{2}{\rm Trace} \Big[\log \big(\Fop_N\Hdef_N(u)\Fop_N + \pi_N\big)\Big]_{\ell\ell},
\end{equation}
where $[L]_{\ell\ell}$ denotes the $3 \times 3$ block of $L$ corresponding to
an atomic site $\ell \in \L$, we obtain
\begin{equation}
   \mathcal{S}_N(u)  = \sum_{\ell \in \L_N} \mathcal{S}_{N,\ell}(u).
\end{equation}
This spatial decomposition of the entropy will play a central role throughout this
paper. Indeed, it is straightforward to write down a suitable limit quantity for each $\mathcal{S}_{N,\ell}$,
\begin{equation} \label{eq:defSell1}
\begin{split}
   \mathcal{S}_\ell(u) &:= -\smfrac{1}{2}{\rm Trace} \Big[\log \big(\Fopd \Hdef(u) \Fop\big)\Big]_{\ell\ell}, \\
   \Fop &:= \big( \Hhom \big)^{-1/2} \in \mathcal{L}\big(\ell^2, \dot{\W}^{1,2}\big).
\end{split}
\end{equation}
For a rigorous definition of $\Fop$ via Fourier transform, as well as
$\log\big(\Fopd \Hdef(u) \Fop\big): \ell^2(\Lambda) \to \ell^2(\Lambda)$ see
\S\S\,\ref{sec:defn_F}--\ref{sec:funccalc}. Since $\ell^2(\Lambda)$ does not
contain any constant displacements, there is no need for a projector analogous
to $\pi_N$ in the definition of $\Fop$.

We will call $\mathcal{S}_{N,\ell}$ and $\mathcal{S}_\ell$ \emph{site
entropies}, since they are contributions from individual lattice sites to the
global (vibrational) entropy. There is moreover a direct analogy with a
definition of site energies in the tight-binding model \cite{2015-qmtb1}.

To formulate our main results, we also define the corresponding
homogeneous local entropy
\begin{equation}
   \label{eq: Shom_l}
   \Shom_\ell (u) := -\smfrac{1}{2}{\rm Trace} \Big[\log \big(\Fopd \Hhom(u) \Fop\big) \Big]_{\ell\ell}.
\end{equation}

The next steps are to define the total entropy $\SS$ and show that it is the limit of $\SS_N$.

As we will see in Proposition \ref{prop: localitydecomposedentropy}, however, the operator $\log\big(\Fopd \Hdef(u) \Fop\big)$ cannot be expected to be of trace class. Consequently we cannot simply define  $\SS(u) := -\smfrac{1}{2}{\rm Trace} \log\big(\Fopd \Hdef(u) \Fop\big)$ which would be the sum of the site contributions $\SS_\ell(u)$, but
 a more careful definition of $\SS(u)$ is required.

In this analysis we heavily employ estimates quantifying the \emph{locality} of
the site entropies. This locality is twofold. First, the site entropies
$\SS_{\ell}$ become smaller as the distance to the defect $\lvert \ell \rvert$
grows larger, and, second, each individual $\SS_{\ell}$ only depends weakly on
far away atom sites which is quantifiable by the decay of derivatives such as
$\frac{\partial \SS_\ell(u)}{ \partial D u(n)} $ as $\lvert \ell - n \rvert$
grows. More precisely, one has estimates of the form
\begin{equation} \label{eq:localityintuition}
\Big\lvert \frac{\partial \SS_\ell(\us)}{ \partial D u(n) } - \frac{\partial \Shom_\ell(0)}{ \partial D u(n) } \Big\rvert \lesssim \lvert \ell - n \rvert^{-2d} \lvert n \rvert^{-d} + \text{higher order terms}.
\end{equation}
While we will not \emph{explicitly} use or prove it, \eqref{eq:localityintuition}
and similar statements for second derivatives are implicit in Proposition
\ref{prop: localitydecomposedentropy} and its proof. More importantly,
 \eqref{eq:localityintuition} gives a good first intuition about the locality
 of $\SS_\ell(u)$ and why one can hope that its sum over $\ell$ may be
 controlled.

\subsection{Definition and convergence of entropy}
\label{sec:convofentropy}
Let us come to the first main result of the present paper. The following theorem establishes a rigorously defined notion of the limit entropy difference $\SS(\us)$ and justifies this definition via a thermodynamic limit result.

\begin{theorem}
   \label{theo: main result}

   (1) $u \mapsto\SS_\ell(\us+u),\, u \mapsto \Shom_\ell(u)$ are well-defined and $C^{p-2}$ on $B_\delta(0) \subset
   \Wi$, $\delta>0$ sufficiently small.

   (2) The sequence $\ell \mapsto \SS_\ell(\us) - \big\< \delta \Shom_\ell(0), \us \big\>$ belongs to $\ell^1(\L)$ and hence

   \begin{equation} \label{eq:results:defn of S}
      \mathcal{S}(\us) := \sum_{\ell \in \L} \Big( \SS_\ell(\us) - \big\< \delta \Shom_\ell(0), \us \big\> \Big)
   \end{equation}
   is well-defined.
   %
   %  Decay,
   % \[
   %    \Big| S_\ell(\us) - \< \delta S^{\rm hom}_\ell, D\us \> \Big| \lesssim |\ell|^{-2d}
   % \]
   % In particular, if $u$ is compact, then

   (3) Let $\us_N \in \Usper_N$ denote the locally unique solution to
   \eqref{eq:intro:pbc_equil} identified in Theorem~\ref{thm:pbc convergence},
   then
   \begin{equation} \label{eq:results:convSNS}
      \big| \SS(\us) - \SS_N(\us_N) \big| \lesssim N^{-d} \log^5(N).
   \end{equation}
   In particular,
   \begin{equation} \label{eq:results:convFNF}
      \big| \F(\us) - \F_N(\us_N) \big| \lesssim N^{-d} + \beta^{-1} N^{-d} \log^5(N),
   \end{equation}
   where $\F(\us) := \E (\us)- \beta^{-1}\SS(\us)$.
\end{theorem}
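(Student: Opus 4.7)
The plan is to handle parts (1)--(3) in order, using a common toolkit: holomorphic functional calculus applied to the self-adjoint operator $\Fopd \Hdef(\us+u)\Fop$ and its periodic analogue $\Fop_N \Hdef_N(\us_N+u)\Fop_N + \pi_N$, combined with the spatial locality estimates (Proposition~\ref{prop: localitydecomposedentropy}) that will be developed in \S\,\ref{sec: locality} and the resolvent estimates of \S\,\ref{sec:resolventestimates}.

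For part (1), the strong stability \eqref{eq: stability condition} together with the definition of $\Fop$ in \S\,\ref{sec:defn_F} implies that $\Fopd \Hdef(\us)\Fop$ is bounded, self-adjoint and strictly positive on $\ell^2(\L)$, with spectrum contained in a compact interval $[c_0', c_1'] \subset (0,\infty)$. Lemma~\ref{lem:energyregular} (or rather the site-wise $C^p$-regularity of $V_\ell$) shows that $u \mapsto \Fopd \Hdef(\us+u)\Fop$ is $C^{p-2}$ from a neighbourhood of $0$ in $\Wi$ into $\mathcal{L}(\ell^2)$. Shrinking the neighbourhood to keep the spectrum in $[c_0'/2, 2c_1']$, a Cauchy-integral representation of $\log$ on a contour in the resolvent set gives smooth dependence of $\log(\Fopd \Hdef(\us+u)\Fop)$ on $u$, and taking the bounded $[\,\cdot\,]_{\ell\ell}$ block followed by the trace preserves $C^{p-2}$-regularity. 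The identical argument handles $\Shom_\ell$.

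For part (2), observe that $\Fopd \Hhom \Fop = I_{\ell^2}$ by construction, so $\Shom_\ell(0) = 0$; I split
\[
\SS_\ell(\us) - \langle \delta\Shom_\ell(0), \us\rangle
 = \underbrace{\bigl(\SS_\ell(\us) - \Shom_\ell(\us)\bigr)}_{T_\ell^{\rm def}}
 + \underbrace{\bigl(\Shom_\ell(\us) - \Shom_\ell(0) - \langle \delta\Shom_\ell(0), \us\rangle\bigr)}_{T_\ell^{\rm quad}}.
\]
The defect contribution $T_\ell^{\rm def}$ depends on $V_\ell - V$, which is supported in $|\ell|\leq\rcut$; combined with the locality of $\SS_\ell$ in the direction of both the defect core and of $\us$, and with the decay $|D^j\us(n)| \lesssim |n|^{1-d-j}$ from Theorem~\ref{thm: EOS2016}, one obtains polynomial decay of $|T_\ell^{\rm def}|$ at a rate better than $|\ell|^{-d}$, hence $\ell^1(\L)$-summability. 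The quadratic remainder $T_\ell^{\rm quad}$ is dominated, via locality, by a weighted convolution $\sum_n w(\ell-n) |D\us(n)|^2$ with polynomially decaying kernel $w$; summation against $|D\us(n)|^2 \lesssim |n|^{-2d}$ yields the required $\ell^1$ bound and the definition \eqref{eq:results:defn of S} is justified.

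For part (3), I compare term by term, writing
\[
\SS(\us) - \SS_N(\us_N)
 = \sum_{\ell \in \L_N}\bigl(\SS_\ell(\us) - \SS_{N,\ell}(\us_N) - \langle \delta\Shom_\ell(0), \us\rangle\bigr)
 + \sum_{\ell \in \L \setminus \L_N}\bigl(\SS_\ell(\us) - \langle\delta\Shom_\ell(0), \us\rangle\bigr).
\]
The tail sum is bounded by the $\ell^1$-decay proved in (2), contributing $N^{-d}$. For the bulk sum I would prove a quantitative supercell analogue of the locality estimate, enabling a replacement of each $\SS_{N,\ell}(\us_N)$ by $\SS_\ell(\us)$ up to a remainder controlled by the local strain error $\|D\us_N - D\us\|_{\ell^2(B_r(\ell))}$ and the distance $\mathrm{dist}(\ell, \partial \L_N)$; Theorem~\ref{thm:pbc convergence} then supplies the algebraic rate via \eqref{eq:strainerror}--\eqref{eq:uniformstrainerror}. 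The $\log^5 N$ factor I expect to come from expanding $\log$ of the operators into contour integrals of resolvents, where differences between $\Fop_N$ and $\Fop$ are estimated via discrete periodic vs.~infinite-lattice Green's functions, each convolution producing at most a logarithmic loss; the exponent $5$ matches the number of resolvent/Green's-function factors appearing in the Schatten-class estimate of the trace of $\log$-differences. The free energy bound \eqref{eq:results:convFNF} then follows by combining \eqref{eq:results:convSNS} with the energy convergence \eqref{eq:energyerror}. The main difficulty, as anticipated, lies in this last step: extracting the sharp polynomial rate (up to the $\log^5 N$) requires precise matching of periodic and infinite-lattice resolvents on the contour, careful tracking of the non-trace-class nature of $\log(\Fopd \Hdef \Fop)$ via the spatial decomposition, and accounting of the logarithmic losses inherent in discrete $d$-dimensional Green's function estimates.
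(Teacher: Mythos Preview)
Your treatment of Part~(1) matches the paper's argument. Your Part~(2) uses a different but workable decomposition: you Taylor-expand $\Shom_\ell$ in $u$ and then add the defect correction $\SS_\ell-\Shom_\ell$, whereas the paper linearly interpolates the Hessian, $H^t=(1-t)\Hhom+t\Hdef(\us)$, and expands $\SS_\ell^t$ to second order in $t$. The paper's route exploits that $\Rhom_z=(z-1)^{-1}I$ is scalar, so the first-order-in-$t$ term reduces algebraically to $-\tfrac12\,{\rm Trace}[\Fopd(\Hdef(\us)-\Hhom)\Fop]_{\ell\ell}$ with no resolvent estimates needed; your $T_\ell^{\rm quad}$ instead requires resolvent decay for $\Rhom_z(s\us)$, $s\in[0,1]$, which is extra (though routine) work. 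Also, your description of $T_\ell^{\rm quad}$ as a single convolution $\sum_n w(\ell-n)|D\us(n)|^2$ misses the bilinear piece $\sum_{n,m}w'(\ell,n,m)|D\us(n)||D\us(m)|$ coming from the first-order resolvent variation squared; this is harmless for the final estimate but should be tracked.

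There is a genuine gap in Part~(3). Your bulk term
\[
\sum_{\ell\in\L_N}\Bigl(\SS_\ell(\us)-\SS_{N,\ell}(\us_N)-\langle\delta\Shom_\ell(0),\us\rangle\Bigr)
\]
cannot be controlled by a pointwise bound on $|\SS_\ell(\us)-\SS_{N,\ell}(\us_N)|$: you are left with $-\sum_{\ell\in\L_N}\langle\delta\Shom_\ell(0),\us\rangle$, and by \eqref{eq: localitydecomposedentropy1} each summand is only $O(|\ell|_{l^0}^{-d})$, which is not absolutely summable. The full-lattice sum vanishes only conditionally (Remark~\ref{rem: main theorem}), so its truncation to $\L_N$ carries no a priori bound of order $N^{-d}$. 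The paper resolves this by introducing a compactly supported intermediate state $v_N=T_{N/2}\us$ and splitting
\[
\SS(\us)-\SS_N(\us_N)=\bigl(\SS(\us)-\SS(v_N)\bigr)+\bigl(\SS(v_N)-\SS_N(v_N^{\rm per})\bigr)+\bigl(\SS_N(v_N^{\rm per})-\SS_N(\us_N)\bigr).
\]
The first and third pieces are handled by Taylor expansion in the displacement, where the dangerous first-order terms $\langle\delta\Shom_\ell(0),w\rangle$ and $\langle\delta\Shom_{N,\ell}(0),w_N\rangle$ are dealt with structurally: in the infinite problem they are part of the definition of $\SS$, and in the periodic problem they sum to exactly zero over $\L_N$ (equation~\eqref{eq:periodfirstvarioanvanishes}). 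The middle piece isolates the comparison of $\Fop$ versus $\Fop_N$ at a fixed, compactly supported displacement, where the renormalisation is again controlled. Without some device of this kind to neutralise the $O(|\ell|^{-d})$ linearisation, your direct site-by-site comparison does not close.
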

\begin{proof}
The proofs of (1) and (2) are given in \S\,\ref{sec: locality}, the proof of (3)
in \S\,\ref{sec:limit}.
\end{proof}
\begin{remark} \label{rem: main theorem}
The definition of $\SS(\us)$ in the theorem can be interpreted as follows: One
can show (with the methods in the proof of Proposition \ref{prop:
localitydecomposedentropy}) that for $u \in B_{\delta'}(\us) \cap \Wc$, the sums
$\SS(u)=\sum_\ell \SS_\ell(u)$ and $\sum_{\ell \in \L} \< \delta \Shom_\ell(0),
u \>$ converge absolutely and $\sum_{\ell \in \L} \< \delta \Shom_\ell(0), u
\>=0$. As $\Wc \subset \Wi$ is dense, \eqref{eq:results:defn of S} then becomes
the unique continuous extension. The renormalised expression
\eqref{eq:results:defn of S} becomes necessary, as the separate sums do not
converge any longer for $\bar{u}$.

(2) There is no reason to believe that the logarithmic factor $\log^5(N)$ is sharp. However, we will discuss the sharpness of the rate $N^{-d}$ up to logarithmic terms in \S\,\ref{sec:results:conclusion}.
\end{remark}

\subsection{Application to defect migration}
\label{sec:results:tst}
Recall from \S~\ref{sec:intro} that transition state theory (TST) characterises
the transition rate from one stable defect configuration (energy minimum) to
another via the associated transition state, i.e., the lowest saddle point that
must be crossed. A free energy difference between saddle and minimum describes
the transition rate. Thus, our techniques to characterise the thermodynamic
limit of defect formation free energy are almost directly applicable to (harmonic) TST as well.

Suppose for the moment, that in addition to a sequence of energy minima $\us_N$
there exists a sequence of saddle points $\usaN \in \Wper_N$ with associated
unstable eigenpair $\bar\phi_N \in \Wper_N, \bar\lambda_N< 0$ such that
\begin{equation} \label{eq:index 1 saddle - supercell}
\begin{split}
   \delta \E_N(\usaN) &= 0, \\
   \HsN \bar\phi_N &= \bar\lambda_N \bar\phi_N, \\
   \bar\lambda_N & < 0,  \qquad \text{and} \\
   \< \HsN v, v \> & > 0 \qquad \text{for } v \in {\Usper_{N,0}}, \quad \text{with } ( v, \bar\phi_N )_{\ell^2(\Lambda_N)}= 0,\\
\end{split}
\end{equation}
where $\HsN := \delta^2 \E_N(\usaN)$. Then, the transition rate according to HTST is given by \eqref{eq:HTST}, i.e.,
\begin{equation} \label{eq:results:defn kN}
\mathcal{K}^{\rm HTST}_N :=
   \bigg(\frac{
      \textstyle \prod \lambda_j^{\rm min}
   }{
      \textstyle \prod \lambda_j^{\rm saddle}
   }\bigg)^{1/2}
   \, \exp\Big( - \beta \big[
      \E_N(\usaN) -
      \E_N(\us_N) \big]
      \Big),
\end{equation}
% \cco{worth writing $\lambda_j, \lambda_j^{\rm s}$?}
where the $\lambda_j^{\rm min}$ and $\lambda_j^{\rm saddle}$ enumerate the positive eigenvalues of, respectively, $\Hdef_N$ and $\HsN$ including multiplicities.
While \eqref{eq:results:defn kN} is the  common definition, it is more
convenient for our purpose to restate it as
\begin{align}
   \notag
   \khtst_N &:= \exp\Big( - \beta \Delta  \F_N \Big) :=
   \exp\Big( - \beta \big( \Delta \E_N - \beta^{-1} \Delta \SS_N \big) \Big),
   \qquad \text{where}
   \\
   \label{eq:results:defn kN v2}
   \Delta\E_N &:= \E_N(\usaN) - \E_N(\us_N), \qquad \text{and}
   \\
   \notag
   \Delta\SS_N &:= \SS_N(\usaN) - \SS_N(\us_N)\\
   \notag &= -\smfrac{1}{2} \log \detp \HsN + \smfrac{1}{2} \log \detp H_N \\
   \notag &=
   -\smfrac{1}{2} \sum \log\lambda_j^{\rm saddle}
   +\smfrac{1}{2} \sum \log\lambda_j^{\rm min},
\end{align}

This establishes the connection to the vibrational entropy functional analysed in Theorem~\ref{theo: main result}. Note that, $\SS_N(\usaN)$ is defined in the same way for the saddle point, as $\detp$ now also excludes the negative eigenvalue as well.

With the natural embeddings $( \dot{\W}^{1,2})' \hookrightarrow \ell^2
\hookrightarrow \dot{\W}^{1,2}$, the canonical thermodynamic limit of the saddle
point and natural analogue of {\bf (STAB)} can be formulated as
\begin{equation} \label{eq:index-1-saddle-ell2}
   \begin{split}
      \delta \E(\usa) &= 0, \\
      \Hs \bar\phi &= \bar\lambda \bar\phi, \\
      \< \Hs v, v \> & \geq c_0 \| D v \|_{\ell^2}^2 \qquad \text{for all } v \in \dot{\W}^{1,2} \text{ with } \< v, \bar\phi \>_{\dot{\W}^{1,2}, (\dot{\W}^{1,2})'} = 0, \\
      \bar\lambda &< 0 \quad \text{ and } c_0 > 0.
   \end{split}
\end{equation}

We now make \eqref{eq:index-1-saddle-ell2} our starting assumption and prove the existence of  a sequence of approximate saddle points in the supercell approximation. Moreover, we can establish the limit of the transition rate. In that part, we will also assume that naturally $\E(\usa) > \E(\us)$.

\begin{theorem}  \label{th:main thm saddle}
   (1) Suppose that \eqref{eq:index-1-saddle-ell2} holds, then for $N$ sufficiently
   large there exist $\usaN, \bar\phi_N, \bar\lambda_N$ satisfying
   \eqref{eq:index 1 saddle - supercell}, such that
   \begin{align*}
      \| D\usaN - D\usa \|_{\ell^\infty} + \| \bar\phi_N - \bar\phi \|_{\ell^2}
      + |\bar\lambda_N - \bar\lambda|
      + |\E_N(\usaN) - \E(\usa)|  &\lesssim N^{-d}.
   \end{align*}

   (2) The limit $\khtst := \lim_{N \to \infty} \khtst_N$ exists,
   with rate
   \[
      \big| \khtst_N - \khtst \big| \lesssim N^{-d} \log^5(N),
   \]
   and is characterised in \eqref{eq:defktst}.
\end{theorem}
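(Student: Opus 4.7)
The plan is an implicit-function-theorem argument parallel to the proof of Theorem~\ref{thm:pbc convergence} for minima, adapted to the index-1 character. First, construct a periodic trial state $\usaN^{(0)} \in \Wper_N$ by periodizing a smooth cut-off of $\usa$; the decay \eqref{eq: decay estimate} yields $\|D\usaN^{(0)} - D\usa\|_{\ell^\infty(\L_N)} \lesssim N^{-d}$ and consequently $\|\delta\E_N(\usaN^{(0)})\|_{(\Wper_{N,0})^*} \lesssim N^{-d}$. The trial Hessian $\HsN^{(0)} := \delta^2\E_N(\usaN^{(0)})$ inherits from \eqref{eq:index-1-saddle-ell2} the property that it is invertible on $\Wper_{N,0}$ with a uniform spectral gap (one negative eigenvalue bounded away from $0$ and positive-definite on the orthogonal complement of the associated eigenvector, uniformly in $N$). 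Hence $\|(\HsN^{(0)})^{-1}\|$ is uniformly bounded, and a Newton iteration in $\Wper_{N,0}$ converges to a locally unique $\usaN$ with $\delta\E_N(\usaN) = 0$ and $\|D\usaN - D\usa\|_{\ell^\infty(\L_N)} \lesssim N^{-d}$. Standard spectral perturbation theory applied to the isolated simple eigenvalue $\bar\lambda < 0$ of $\Hs$ then delivers $(\bar\phi_N, \bar\lambda_N)$ satisfying \eqref{eq:index 1 saddle - supercell} at the inherited rate, while the energy convergence $|\E_N(\usaN) - \E(\usa)| \lesssim N^{-d}$ follows verbatim from the arguments of \cite[Thm 3]{EOS2016} and \cite[Thm 2.1]{2018-uniform}, which use only critical-point status and the decay \eqref{eq: decay estimate}.

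\textbf{Part (2): convergence of the transition rate.}
Write $\khtst_N = \exp(-\beta\Delta\E_N + \Delta\SS_N)$ and treat the energy and entropy parts separately. For the energy, Part (1) combined with \eqref{eq:energyerror} gives $\Delta\E_N \to \Delta\E := \E(\usa) - \E(\us)$ at rate $N^{-d}$. For the entropies, $\SS_N(\us_N)\to\SS(\us)$ is Theorem~\ref{theo: main result}(3) applied verbatim. The analogous statement for $\SS_N(\usaN)$ requires a mild modification: since $\HsN$ has a single negative eigenvalue $\bar\lambda_N$, peel it off via the projector $Q_N := I_{\Wper_N} - \pi_N - \bar\phi_N\otimes\bar\phi_N$ and write
\begin{equation*}
   -\smfrac12\log\detp\HsN \;=\; -\smfrac12 \log|\bar\lambda_N| \;-\; \smfrac12\log\det\b(Q_N\HsN Q_N + (I - Q_N)\b).
\end{equation*}
The second term is a positive-definite log-determinant to which the spatial-decomposition/site-entropy machinery of \S\S\,\ref{sec: locality}--\ref{sec:limit} applies with $Q\HsN Q$ in place of $\Hdef$; this yields site entropies $\SS_\ell^{\rm s}(\usa)$ whose renormalised sum converges as in Theorem~\ref{theo: main result}. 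One then defines
\begin{equation*}
   \SS(\usa) := -\smfrac12 \log|\bar\lambda| + \sum_{\ell\in\L}\b(\SS_\ell^{\rm s}(\usa) - \<\delta\Shom_\ell(0),\usa\>\b), \qquad \khtst := \exp(-\beta\Delta\E + \Delta\SS),
\end{equation*}
with $\Delta\SS := \SS(\usa) - \SS(\us)$. The rate $N^{-d}\log^5 N$ for $\Delta\SS_N \to \Delta\SS$ is inherited from Theorem~\ref{theo: main result} together with the $N^{-d}$ scalar rate on $\log|\bar\lambda_N|$ from Part (1); local Lipschitz continuity of $\exp$ on bounded sets then converts the additive error bound into the claimed $|\khtst_N - \khtst| \lesssim N^{-d}\log^5 N$.

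\textbf{Main obstacle.}
The hardest step is the entropy analysis for the saddle. The spatial decomposition and locality estimates of \S\S\,\ref{sec: locality}--\ref{sec:limit} rely crucially on $\Hdef(\us) > 0$ so that $\Fopd\Hdef\Fop$ is a positive operator admitting the functional calculus and resolvent estimates of \S\,\ref{sec:resolventestimates}. One must carefully verify that the rank-one perturbation $\bar\phi_N\otimes\bar\phi_N$ is sufficiently localized in space --- which is expected, since $\bar\phi$ is an eigenvector for an isolated eigenvalue of an elliptic discrete operator and should enjoy algebraic or exponential decay --- to preserve the $\ell^1$-summability of the renormalised site entropies and the resolvent decay estimates used throughout. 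Once this locality is quantified, the remaining arguments track those of \S\S\,\ref{sec: locality}--\ref{sec:limit} with essentially notational overhead.
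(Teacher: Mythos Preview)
Your outline for Part~(1) is essentially the paper's approach: the paper invokes \cite[Theorem~3.14]{2018-uniform} for the critical point and then runs an inverse-function-theorem argument on the eigenvalue problem (Proposition~\ref{th:tst:convergence saddle}) exactly as you describe.

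For Part~(2), however, your route diverges from the paper's, and your displayed identity is incorrect: since $\detp\HsN$ already excludes $\bar\lambda_N$, and $\det\big(Q_N\HsN Q_N + (I-Q_N)\big)=\detp\HsN$, the term $-\tfrac12\log|\bar\lambda_N|$ should not appear. More substantively, the paper avoids projecting out the unstable eigenspace altogether. The functional calculus in \S\ref{sec:funccalc} was set up from the start with $\logp$ defined via a contour encircling only $[\underline\sigma,\overline\sigma]$, and the class $\mathcal{U}$ in \S\ref{sec:resolvent_estimates} requires only the weak spectral condition~\eqref{eq:logpluscondition}. Consequently $\usa\in\mathcal{U}$, and Proposition~\ref{prop:generalconvergencerate} applies verbatim with $u_\infty=\usa$, $u_N=\usaN$. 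The price is that $\logp(\Fop_N\HsN\Fop_N)$ drops the \emph{generalized} negative eigenvalue $\bar\mu_N$ of $\Hs_N\psi=\mu\Hhom_N\psi$ (not $\bar\lambda_N$), so the paper must introduce this second eigenvalue problem (Proposition~\ref{th:index-1-saddle-H1}) and track both $\bar\lambda_N\to\bar\lambda$ and $\bar\mu_N\to\bar\mu$; the calculation \eqref{eq:tst:SsN splitting} shows $\SS_N(\usaN)=\SS^+_N(\usaN)-\tfrac12\log\bar\mu_N+\tfrac12\log\bar\lambda_N$.

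Your projection approach could be made to work, but it is heavier: the operator $\Fop_N(Q_N\HsN Q_N + (I-Q_N))\Fop_N$ does not have the clean structure $\Fop_N H\Fop_N$ used throughout \S\S\ref{sec:resolventestimates}--\ref{sec:limit}, since $Q_N$ and $\Fop_N$ do not commute. You would need to carry the rank-one piece $\bar\phi_N\otimes\bar\phi_N$ through every resolvent and locality estimate, exploiting the exponential decay of $\bar\phi$ (which the paper does prove, Proposition~\ref{eq:prftst:decay phi}, but never needs for the entropy analysis). The paper's $\logp$ device is precisely what lets the saddle-point entropy be handled with no new locality work, reducing everything to the scalar convergence of two eigenvalues plus an already-proved proposition.
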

\begin{proof}
   The proof of (1) is an extension of \cite{2018-uniform} and is given in
   \S~\ref{sec:approx-saddle}. The proof of (2) is given in \S~\ref{sec:tst:k}.
\end{proof}

\begin{remark} \label{rem:relative_error}
For large $\beta$, the transition rate $\khtst$ becomes very small. In this case one might prefer to consider the relative error, which can be bounded by
\[
      \frac{\big| \khtst_N - \khtst \big|}{\khtst} \lesssim e^{C\beta N^{-d}} (\beta N^{-d} + N^{-d} \log^5(N)),
   \]
which follows from the estimates in the proof of Theorem \ref{th:main thm saddle}.
%
%\cco{Discuss how to state the error estimate, with correct $\beta$
%asymptotics. I am thinking maybe
%\[
%      \frac{| \khtst_N - \khtst |}{|\khtst|}
%      \lesssim \beta N^{-d} + N^{-d} \log^5(N).
%\]
%The point is that, for large $\beta$, $\khtst \approx \khtst_N
%   \approx e^{-\beta \Delta\E}$, and hence
%\begin{align*}
%   \frac{| \khtst_N - \khtst |}{|\khtst|}
%   &\approx
%   e^{\beta \Delta\E} \Big\{ e^{-\beta \Delta \E_N} e^{-\Delta \SS_N} - e^{-\beta \Delta E} e^{- \Delta \SS} \Big\}
%   \\
%   &= \big|e^{-\beta (\Delta \E_N - \Delta \E)} - 1\big|
%    e^{-\Delta \SS_N}
%      +  \big|
%       e^{-\Delta \SS_N} - e^{- \Delta \SS} \big|
%       \\
%   &\lesssim
%   |\beta| \,|\Delta \E_N - \Delta \E| + |\Delta \SS_N - \Delta \SS| \\
%   &\lesssim
%   \beta N^{-d} + N^{-d} \log^5 N.
%\end{align*}
%at least provided that $\beta N^{-d} \lesssim 1$. If $\beta N^{-d} \gg 1$
%the it is in fact more subtle and depends on the sign of the error. In fact,
%this is a little disturbing and worth another discussion?
%} \jb{hmm, well surely $\Delta \mathcal{E}>0$. But that is the only \emph{natural} sign we know.}
\end{remark}

\begin{remark} \label{rem:main thm saddle}
   The characterisation of the limit $\khtst = \lim_N \khtst_N$ is
   not as explicit as the limit $\SS(\us)$ in \eqref{eq:results:defn of S}, but
   is presented in full in \S\,\ref{sec:saddle}.
   %
   % (2) As in Theorem~\ref{theo: main result} we expect that the convergence rate
   % $N^{-d/2}$ for $\khtst_N$ is sub optimal. It can again be replaced with $|
   % \khtst_N - \khtst | \lesssim N^{-d} \log^k(N)$ if the optimal max-norm convergence rates
   % $\|D\us - D\us_N \|_{\ell^\infty(\L_N)}, \|D\usa-D\usaN\|_{\ell^\infty(\L_N)} \leq N^{-d}$ were available.
   %
\end{remark}

\begin{remark} \label{rem:saddle assumption}
   At first glance our assumption \eqref{eq:index-1-saddle-ell2}, which
   {\em postulates} the existence of a stable saddle, may seem very strong.
   This is made necessary due to our weak assumptions on
   the interatomic potential, aimed at including realistic models of interaction
   in our analysis.

   However, one can also show that \eqref{eq:index-1-saddle-ell2} are the
   only possible limits of a sequence of  index-1 saddle
   points $\usaN$ with uniform upper and lower bounds on the spectrum, giving
   at least a partial justification.
   %
   % it is straightforward to show that any sequence of has a weak accumulation point $\usa$, which satisfies a weaker
   % version of \eqref{eq:index-1-saddle-ell2}, possibly with $\bar\lambda = c_0 =
   % 0$.
\end{remark}

\subsection{Conclusions and Discussion}
\label{sec:results:conclusion}
We have developed a technique to analyse the vibrational entropy of a
crystalline defect in the limit of an infinite lattice. Two applications of this
technique are to characterise the limit of formation free energy as well as of
transition rate, both in the harmonic approximation. These results are
interesting in their own right in that they demonstrate that boundary effects
vanish in this limit, but more generally establish the mathematical techniques
to study existing and develop novel coarse-grained models and multi-scale
simulation schemes incorporating temperature effects.

We briefly outline three extensions that may require substantial additional work:
\begin{enumerate}
	\item Extension to interstitials and vacancies: We expect that our
	convergence results can be exetended to these cases, with only minor
	differences in the characterisation of the limit. This is supported by
	numerical evidence displayed in Figure~\ref{fig:convergence}. The main
	additional difficulty comes from the different number of degrees of freedom
	compared to the homogeneous lattice when treating the Hessians. A possible
	approach is to extend the smaller Hessian to the larger dimension and perform
	a calculation similar to \eqref{eq:logdetminuslogdetcalc}. The overall
	strategy then proceeds similarly to what we present here, however, there will
	be an additional finite rank perturbation. This term is of a different
	structure for interstitials and vacancies and requires additional work.

   \item Extension to topological defects such as dislocations and cracks: the key
   difficulty is that an inhomogeneous reference configuration must
   be used in the analysis, for which the Green's functions are more difficult
   to estimate.
   \item It is in general difficult to observe logarithmic contributions in
   numerical tests, hence our numerical tests in
   Figure~\ref{fig:convergence} should not be taken as evidence
   that the sharp convergence rate for the entropy is indeed $O(N^{-3})$.
   It is unclear to us, at present, whether or not the sharp rate should
   include logarithmic contributions.

   In the example shown in Figure~\ref{fig:convergence} we even observe the
   rate $O(N^{-4})$ for $\Delta \SS_N$. Since the rate for $\Delta
   \mathcal{E}_N$ is still $O(N^{-3})$ we speculate that this is a
   pre-asymptotic effect likely caused if the dipole moments of the defect in
   its minimum and saddle point states nearly coincide; see
   \cite{BHOdefectdevelopment} for a detailed discussion of  such cancellation
   and near-cancellation effects.
\end{enumerate}

\begin{figure}
   \includegraphics[width=0.8\textwidth]{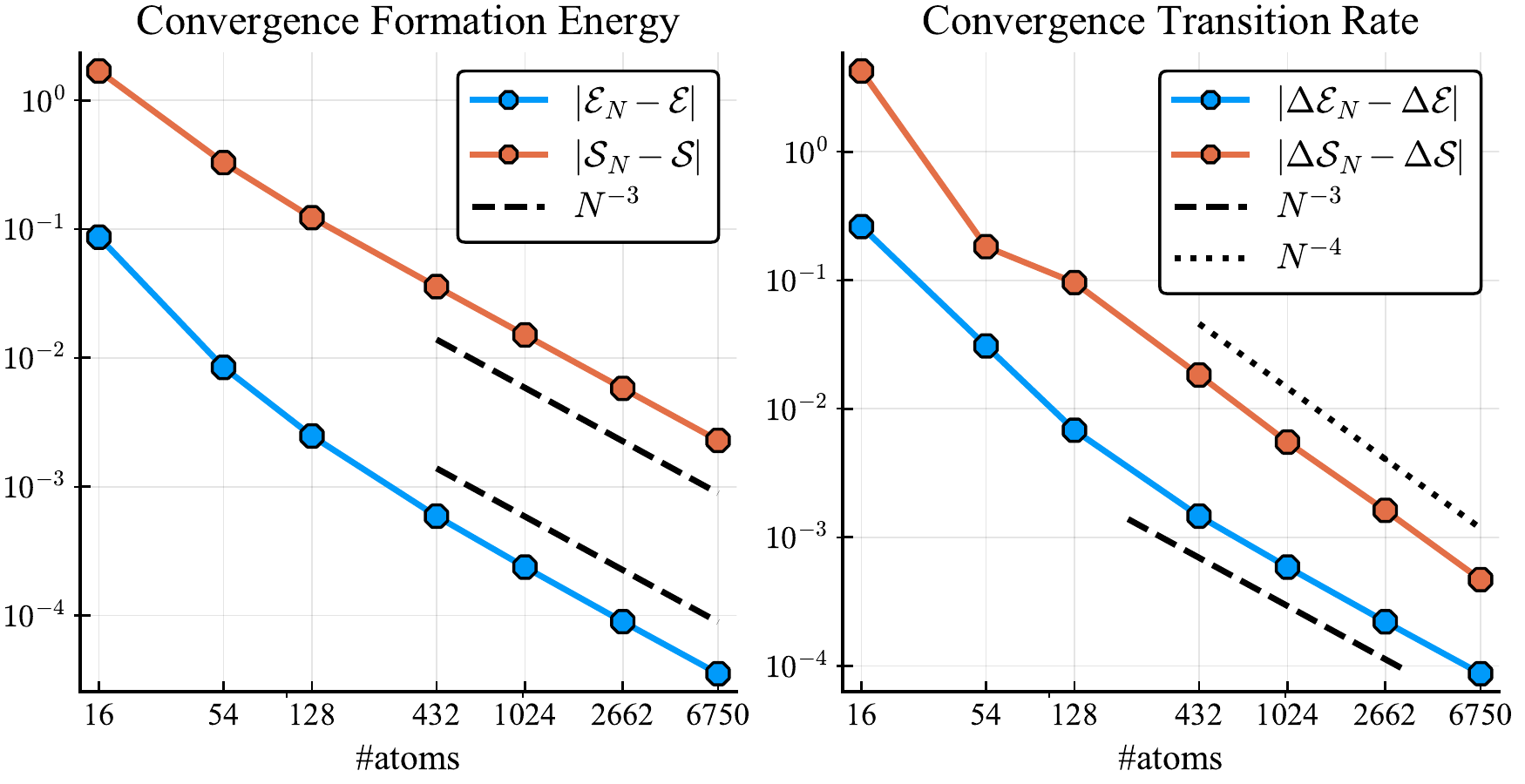}
   \caption{
      Convergence of energy and entropy contributions to formation energy
      $\mathcal{F}_N$ and transition rate $\mathcal{K}_N$, for a vacancy defect
      in bcc tungsten (W) modelled by a Finnis-Sinclair (EAM) potential
      \cite{FSWang2013}, employing a cubic computational cell, i.e., $\mB \propto
      I$.
   }
   \label{fig:convergence}
\end{figure}

% !TEX root = dffe.tex

\section{Resolvent Estimates}
\label{sec:resolventestimates}
\subsection{Notation / Preliminaries}

\label{sec:resolvents:notation}
Let us fix some more notation.
\begin{itemize}
\item  $|r|$ is the standard Euclidean norm and
\begin{equation} \label{eq:defn_lognorms}
|r|^{-n}_{l^k}:=(|r|+1)^{-n} \log^k(e+|r|),
\end{equation}
where $r$ can be a vector or scalar. For $M > 0$, we extend the definition by setting
\[
   \lvert n \rvert_{l^k, M}^{-d}
   = \min\big\{\lvert n \rvert_{l^k}^{-d}, \lvert M \rvert_{l^k}^{-d} \big\}.
\]
\item For $m,n \in \L$ and $M > 0,k \geq 0$ we then define
\begin{align}
\Lr_{k}(n,m) &:= |n|_{l^k}^{-d}|n-m|_{l^k}^{-d}+|m|_{l^k}^{-d}|n-m|_{l^k}^{-d}+|n|_{l^k}^{-d}|m|_{l^k}^{-d}, \label{eq:defLr}\\
\Lr_{k}^M (n,m) &:= \lvert n \rvert_{l^k, M}^{-d} \lvert n-m \rvert_{l^k}^{-d} + \lvert m \rvert_{l^k, M}^{-d} \lvert n-m \rvert_{l^k}^{-d} + \lvert n \rvert_{l^k, M}^{-d} \lvert m \rvert_{l^k,M}^{-d}. \label{eq:defLrM}
\end{align}
\item We use the semi-discrete Fourier transform
\begin{equation}
\label{eq: sdFourier}
\hat u (k):=\sum_{\ell \in \Lambda} e^{i k\cdot \ell} u(\ell),\quad\text{with inverse}\quad u(\ell)=\frac{1}{\lvert \B \rvert}\int_{\B} e^{-i k\cdot \ell} \hat u(k)\,dk,
\end{equation}
where $\B = \pi \mA^{-T} (-1,1)^d$ is a fundamental domain of reciprocal space
(equivalent to the first Brillouin zone) and has the volume $\lvert \B \rvert =
\frac{(2\pi)^d}{\lvert \det \mA \rvert}$.
\end{itemize}

\subsection{Estimate of $\Fop$}
\label{sec:defn_F}
We begin by defining and establishing decay estimates for the operator ${\bf
F}$. Since $\Hhom$ is circulant, it is natural to formally represent $\Fop w = F \ast
w$ and define $F$ via its Fourier transform. First, recall that
\begin{equation*}
   % \label{eq: H}
   \<\Hhom u,v\> =
   \sum_{\ell\in \Z^d}\nabla^2 V (0) [Du(\ell), Dv(\ell)],
\end{equation*}
then applying the SDFT we obtain
\begin{align} \label{eq:F-transform_Hhom}
   \<\Hhom u,v\>
   &= \frac{1}{\lvert \B \rvert} \int_{\B} \hat{u}(k)^* \hat{h}(k) \hat{v}(k) \, dk,\\
    a^T \hat{h}(k) b &:=\nabla^2V(0)[((e^{-ik\cdot\rho}-1) a)_{\rho \in \Rc},((e^{ik\cdot\rho}-1)b)_{\rho \in \Rc}]. \nonumber
\end{align}
One can also reduce $\hat{h}(k)$ to the simpler form
\begin{equation}
\hat{h}(k) = 4 \sum_{\rho \in \Rc'} A_\rho
                 \sin^2\big( \smfrac{k \cdot \rho}{2} \big), \label{eq:fourierhamiltoniansin}
\end{equation}
with $\Rc' = (\Rc \cup \{0\}) + (\Rc \cup \{0\})$, see \cite[Sec. 6.2]{EOS2016}.
Furthermore, {\bf (STAB)} implies that $c_0 |k|^2 I \leq \hat{h}(k) \leq c_1 |k|^2 I$
in the matrix sense for all $k \in \B$, see \cite{2012-M2AN-CBstab}. We observe that
$|\hat{h}(k)^{-1/2}| \lesssim |k|^{-1}$ as $|k| \to 0$, hence
we can define
\begin{align} \label{eq:defn_F_fourier}
   F(\ell) &:= \frac{1}{\lvert \B \rvert} \int_\B e^{-i k\cdot \ell} \hat{F}(k) \, dk,
   \qquad \text{where} \quad \hat{F}(k) = \hat{h}(k)^{-1/2}, \\
    \label{eq:defn_Fop}
   (\Fop u)(\ell) &:=
      \sum_{m \in \L} \big( F(\ell-m) - F(-m) \big) u(m).
\end{align}
The constant shift $\sum_m F(-m) u(m)$ in the definition of $\Fop u$ ensures
that $\Fop u$ is well-defined (when $d = 2$ the separate sums need not converge).

\begin{lemma} \label{th:properties_F}
   Let $F : \L \to \R^{m \times m}$ be defined by \eqref{eq:defn_F_fourier} and $\Fop$ by \eqref{eq:defn_Fop}, then
   \begin{enumerate}
   \item[(i)]  For any $\brho\in\Rc^j$, $j \geq 0$, there exists a constant $C$ such that
   \begin{equation*}
      |D_{\brho} F(\ell)|\leq C |\ell|_{l^0}^{1-d-j}\quad \forall \ell\in\Z^d.
   \end{equation*}
   \item[(ii)] $\Fop \in \mathcal{L}(\ell^2,\Wi)$.
   \item[(iii)] $\Fopd \Hhom \Fop = I$, understood as operators $\ell^2 \to \ell^2$.
   \end{enumerate}
\end{lemma}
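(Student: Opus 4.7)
\smallskip

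\noindent\textbf{Proof plan.} The three assertions are treated in turn, with (i) being the main analytical work and (ii)--(iii) following from Plancherel and some care with the constant shift in \eqref{eq:defn_Fop}.

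\smallskip

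\noindent\emph{Part (i).} By \eqref{eq:fourierhamiltoniansin} and the stability bound $c_0|k|^2 I\le \hat h(k)\le c_1|k|^2 I$, the symbol $\hat F(k)=\hat h(k)^{-1/2}$ is real-analytic on $\B\setminus\{0\}$, with $|\hat F(k)|\lesssim|k|^{-1}$ and $|\partial_k^\alpha \hat F(k)|\lesssim|k|^{-1-|\alpha|}$ near the origin (using that the positive square root inherits the relevant smoothness/bounds from $\hat h$). For $\brho=(\rho_1,\dots,\rho_j)\in\Rc^j$, the Fourier representation reads
\begin{equation*}
   D_{\brho}F(\ell) \;=\; \frac{1}{|\B|}\int_\B e^{-ik\cdot\ell}\,\prod_{\alpha=1}^{j}(e^{-ik\cdot\rho_\alpha}-1)\,\hat F(k)\,dk,
\end{equation*}
whose integrand is $O(|k|^{j-1})$ near $k=0$ and smooth elsewhere. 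The plan is the standard split--cutoff estimate: on $\{|k|\le|\ell|^{-1}\}$ we bound the integrand directly by $|k|^{j-1}$ and multiply by the volume $|\ell|^{-d}$ to get contribution $O(|\ell|^{1-d-j})$; on $\{|k|>|\ell|^{-1}\}$ we perform $d+1$ integrations by parts via $e^{-ik\cdot\ell}=(-i\ell)^{-\alpha}\partial_k^\alpha e^{-ik\cdot\ell}$, the extra $k$-derivatives degrading $\hat F$ only polynomially near $0$, producing again $O(|\ell|^{1-d-j})$ after polar-coordinate integration. This is entirely analogous to the Green-function estimate for $\hat h(k)^{-1}$ carried out in \cite[\S 6--7]{EOS2016}; only the exponent $-1/2$ in place of $-1$ changes, and no ingredient of the argument is altered.

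\smallskip

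\noindent\emph{Part (ii).} The constant term $\sum_m F(-m)u(m)$ in \eqref{eq:defn_Fop} is annihilated by any finite difference, so
\begin{equation*}
   D_\rho(\Fop u)(\ell) \;=\; \sum_{m\in\L} D_\rho F(\ell-m)\,u(m),
\end{equation*}
a genuine convolution whose SDFT equals $(e^{-ik\cdot\rho}-1)\hat F(k)\hat u(k)$. The multiplier $(e^{-ik\cdot\rho}-1)\hat F(k)$ is bounded uniformly on $\B$ (the $|k|$-factor exactly cancels the $|k|^{-1}$-singularity), so Plancherel yields $\|D_\rho\Fop u\|_{\ell^2}\lesssim\|u\|_{\ell^2}$ and therefore $\Fop\in\mathcal L(\ell^2,\Wi)$. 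Pointwise well-definedness of $\Fop u$ for $u\in\ell^2$ is separate from the $\ell^2$-gradient bound: using (i) with $j=1$ and summing telescopically one has $|F(\ell-m)-F(-m)|\lesssim(1+|\ell|)(1+|m|)^{-d}$ for $|m|\gtrsim|\ell|$, and Cauchy--Schwarz against $u\in\ell^2$ gives absolute convergence of the defining series.

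\smallskip

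\noindent\emph{Part (iii).} Compactly supported sequences are dense in $\ell^2$ and all operators are bounded, so it suffices to verify the identity on such test sequences. For $u,v\in\ell^2$ of compact support,
\begin{equation*}
   \langle\Fopd\Hhom\Fop u,v\rangle_{\ell^2}
   \;=\; \langle\Hhom\Fop u,\Fop v\rangle
   \;=\; \sum_{\ell\in\L}\nabla^2V(0)\bigl[D\Fop u(\ell),\,D\Fop v(\ell)\bigr].
\end{equation*}
Expanding $\nabla^2V(0)[\cdot,\cdot]$ componentwise in the $\rho\in\Rc$ direction and applying Plancherel to each pair, using $\widehat{D_\rho\Fop u}(k)=(e^{-ik\cdot\rho}-1)\hat F(k)\hat u(k)$ together with \eqref{eq:F-transform_Hhom}, collapses the sum to
\begin{equation*}
   \frac{1}{|\B|}\int_\B \hat v(k)^{*}\,\hat F(k)^{*}\,\hat h(k)\,\hat F(k)\,\hat u(k)\,dk.
\end{equation*}
Since $\hat h(k)$ is Hermitian positive definite and commutes with its inverse square root, $\hat F(k)^{*}\hat h(k)\hat F(k)=\hat h(k)^{-1/2}\hat h(k)\hat h(k)^{-1/2}=I$, and the integral reduces to $\frac{1}{|\B|}\int_\B\hat v(k)^{*}\hat u(k)\,dk=\langle u,v\rangle_{\ell^2}$.

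\smallskip

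\noindent\emph{Main obstacle.} The heart of the lemma is (i): the singularity of $\hat F$ at the origin forces the split-region/integration-by-parts argument, and one must track bounds on $\partial_k^\alpha\hat h(k)^{-1/2}$ near $k=0$ cleanly (the non-commutative matrix square root requires an additional functional-calculus observation compared to the scalar case). Once (i) is in hand, (ii) is a one-line Plancherel estimate plus a Cauchy--Schwarz check for the constant shift, and (iii) is a routine symbol calculation in which the constant shift is harmless because $\hat h(0)=0$.
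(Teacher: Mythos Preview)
Your arguments for (ii) and (iii) coincide with the paper's: Plancherel for the bounded multiplier $(e^{-ik\cdot\rho}-1)\hat F(k)$, a telescoping/Cauchy--Schwarz check for pointwise convergence of the series defining $\Fop u(\ell)$, and the symbol identity $\hat F(k)^*\hat h(k)\hat F(k)=I$ for (iii).

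For (i) your route is genuinely different. You propose the direct oscillatory-integral estimate: split at $|k|\sim|\ell|^{-1}$, bound the inner piece by the volume and the $|k|^{j-1}$ size of the integrand, and integrate by parts $d+1$ times on the outer piece using $|\partial_k^\alpha\hat F(k)|\lesssim|k|^{-1-|\alpha|}$. The paper instead goes through a continuum comparison: it introduces $\hat F^{\rm c}(k):=[\hat h^{\rm c}(k)]^{-1/2}$ with $\hat h^{\rm c}$ the second-order Taylor expansion of $\hat h$ at $0$; since $\hat F^{\rm c}$ is $(-1)$-homogeneous, Morrey's theorem yields a $(1-d)$-homogeneous $F^{\rm c}$ with the exact decay $|\nabla^jF^{\rm c}(x)|\lesssim|x|^{1-d-j}$, and then one shows $|\nabla^m(\hat F-\hat F^{\rm c})(k)|\lesssim|k|^{1-m}$, so that $F-\eta\ast F^{\rm c}$ decays one order faster via a cited result from \cite{OO2016}. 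Your approach is more self-contained (no Morrey, no \cite{OO2016}) and perfectly adequate here; the paper's approach is shorter given those references and, as a by-product, isolates the leading continuum behaviour $F^{\rm c}$ explicitly. One small remark: the method in \cite{EOS2016} that the paper adapts is in fact this continuum-comparison argument rather than the direct split/IBP you describe, so your citation is slightly off even though your argument stands on its own.
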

\begin{proof}
   Our argument closely follows the Green's function estimate of \cite{EOS2016},
   adapted to the fact that $F$ is the square-root of a Green's function.
   The details are given in \S~\ref{sec:proof_properties_F}.
\end{proof}

\subsection{Functional calculus}
\label{sec:funccalc}
Suppose that $A : \ell^2 \to \ell^2$ is a bounded, self-adjoint operator with $\sigma(A) \subset [\underline\sigma, \overline\sigma]$,
 $0 < \underline\sigma < 1 < \overline\sigma$, and $\mathcal{C}$ is a contour
that encloses $[\underline\sigma, \overline\sigma]$, but not the origin,
then \cite[Ch. VII.3]{Dunford1958-qc}
% jb{cite Dunford, Schwartz, Linear Operators Part I: General Theory, Chapter VII.3}
\[
   \log A := \frac{1}{2\pi i} \oint_{\mathcal{C}} \log z (z - A)^{-1} \, dz
\]
defines a bounded, self-adjoint operator on $\ell^2$. More generally, let
$A : \ell^2 \to \ell^2$ be bounded, self-adjoint with
$\sigma(A) \cap (0, \infty) \subset [\underline\sigma, \overline\sigma]$, we can use the \emph{same
contour} to define
\begin{equation}  \label{eq:defn-log+}
   \logp A := \frac{1}{2\pi i} \oint_{\mathcal{C}} \log z (z - A)^{-1} \, dz.
\end{equation}
This generalisation will be crucial to be able to apply the subsequent analysis
not only to the formation free energy (Theorem~\ref{theo: main result}), but
also to the analysis of transition rates (Theorem~\ref{th:main thm saddle}). As clearly $\log A = \logp A$ in the case that $\sigma(A) \subset [\underline\sigma, \overline\sigma]$, it suffices to consider $\logp A$ in the following.

In order to apply this in our setting we substitute $A = \Fopd H^t(u) \Fop$, where
\[
   H^t(u) := (1-t) \Hhom + t \Hdef(u), \qquad t \in [0, 1],
\]
for $u$ in a neighbourhood of $\us$.
Our first step is therefore to show that these operators remain uniformly
bounded above and below.

\begin{lemma} \label{th:spectrum_bound}
   Let $\us$ be a stable minimiser of $\E$, then there exist $\epsilon,
   \underline\sigma, \overline\sigma > 0$ such that,
   for all $u \in B_\epsilon(\us) \subset \Wi$ and $t \in [0, 1]$,
   \begin{equation} \label{eq:logcondition}
       \sigma\big[ \Fopd H^t(u) \Fop \big] \subset [\underline\sigma, \overline\sigma].
   \end{equation}

   More generally, assume $u_\infty \in \Wi$ satisfies $\sigma(\Fopd H^t(u_\infty) \Fop) \cap (-\underline\sigma, \infty) \subset [2\underline\sigma, \overline\sigma/2]$ for some $0 < \underline\sigma < \overline\sigma$, then
      \begin{equation} \label{eq:logpluscondition}
      \sigma\big[ \Fopd H^t(u) \Fop \big] \cap (0, \infty) \subset [\underline\sigma, \overline\sigma].
   \end{equation}
   for all $u \in B_\epsilon(u_\infty) \subset \Wi$ and $t \in [0, 1]$.
\end{lemma}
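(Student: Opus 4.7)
The plan is to use the identity $\Fopd \Hhom \Fop = I_{\ell^2}$ from Lemma~\ref{th:properties_F}(iii) to translate spectral statements about $\Fopd H^t(u) \Fop$ on $\ell^2$ into coercivity statements about the Hessian $H^t(u)$ with respect to the semi-norm $\|D\cdot\|_{\ell^2}$. Precisely, for $w \in \ell^2$ and $v := \Fop w \in \Wi$ one has $\langle \Hhom v, v\rangle = \|w\|_{\ell^2}^2$; combined with \eqref{eq:stab homogeneous} this yields $c_1^{-1}\|w\|^2 \leq \|Dv\|_{\ell^2}^2 \leq c_0^{-1}\|w\|^2$. Consequently, for any symmetric bounded bilinear form $A : \Wi \to (\Wi)^*$,
\begin{equation*}
\|\Fopd A \Fop\|_{\mathcal{L}(\ell^2)} \leq c_0^{-1} \|A\|_{\mathcal{L}(\Wi,(\Wi)^*)},
\end{equation*}
and any lower bound $\langle A v, v\rangle \geq \alpha \|Dv\|^2$ transfers into the operator inequality $\Fopd A \Fop \geq (\alpha/c_1) I$ on $\ell^2$.

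For part (1), combining \eqref{eq: stability condition} and \eqref{eq:stab homogeneous} yields the two-sided bound $c_0\|Dv\|^2 \leq \langle H^t(\us)v,v\rangle \leq c_1\|Dv\|^2$ uniformly for $t \in [0,1]$, since $H^t(\us)$ is a convex combination of $\Hhom$ and $\Hdef(\us)$. By the reduction above, $\sigma(\Fopd H^t(\us) \Fop) \subset [c_0/c_1,\, c_1/c_0]$. To extend this to a small $\Wi$-neighbourhood of $\us$, I invoke Lemma~\ref{lem:energyregular}, which ensures that $u \mapsto \Hdef(u) = \delta^2 \Edef(u)$ is continuous from $\Wi$ into $\mathcal{L}(\Wi,(\Wi)^*)$. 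Choosing $\epsilon$ so that $\|\Hdef(u) - \Hdef(\us)\|_{\mathcal{L}(\Wi,(\Wi)^*)} \leq c_0/2$ on $B_\epsilon(\us)$, a triangle inequality preserves coercivity and continuity with slightly weakened constants uniformly in $t$, and \eqref{eq:logcondition} follows with, say, $\underline\sigma := c_0/(2c_1)$ and $\overline\sigma := 3c_1/(2c_0)$.

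For part (2), the same continuity argument bounds
\begin{equation*}
\big\|\Fopd(H^t(u)-H^t(u_\infty))\Fop\big\|_{\mathcal{L}(\ell^2)} \leq c_0^{-1}\|\Hdef(u)-\Hdef(u_\infty)\|_{\mathcal{L}(\Wi,(\Wi)^*)},
\end{equation*}
which can be made arbitrarily small by shrinking $\epsilon$. The Weyl perturbation inequality for bounded self-adjoint operators then forces every point of $\sigma(\Fopd H^t(u)\Fop)$ to lie within $\underline\sigma$ of $\sigma(\Fopd H^t(u_\infty)\Fop) \subset (-\infty,-\underline\sigma]\cup[2\underline\sigma,\overline\sigma/2]$, confining the perturbed spectrum to $(-\infty,0)\cup[\underline\sigma,\overline\sigma]$, which is exactly \eqref{eq:logpluscondition}.

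The only real obstacle is keeping the functional-analytic setting consistent: the Hessians naturally live on the semi-Hilbert space $(\Wi,\|D\cdot\|_{\ell^2})$ while the operator of interest acts on the honest Hilbert space $\ell^2$, and the norm comparisons induced by $\Fop$ must be tracked carefully across the two settings. Once this bookkeeping is done, the statement reduces to a routine application of Weyl's perturbation inequality.
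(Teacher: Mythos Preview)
Your proposal is correct and follows essentially the same route as the paper: both use $\Fopd \Hhom \Fop = I$ to convert the $\Wi$-coercivity bounds \eqref{eq: stability condition}--\eqref{eq:stab homogeneous} into the spectral inclusion $\sigma(\Fopd H^t(\us)\Fop)\subset[c_0/c_1,c_1/c_0]$, and then perturb. The only cosmetic difference is that the paper controls $H^t(u)-H^t(\us)$ via a direct pointwise $C^3$ bound $|\langle(H^t(u)-H^t(\us))\Fop w,\Fop w\rangle|\lesssim\|Du-D\us\|_{\ell^\infty}\|D\Fop w\|_{\ell^2}^2$, whereas you invoke the abstract continuity of $u\mapsto\delta^2\E(u)$ from Lemma~\ref{lem:energyregular} and then Weyl's inequality; these are equivalent packagings of the same estimate.
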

\begin{proof}
	According to {\bf (STAB)} we have
	\begin{equation}\label{eq:prf:spectrum_bound:1}
      c_0 \|Dv\|_{\ell^2}^2
      \leq \< H^t(\bar{u}) v, v \>
      \leq c_1 \|Dv\|_{\ell^2}^2.
   \end{equation}
 Hence, $H^t(\bar{u}) \in \mathcal{L}(\Wi, (\Wi)^*)$ and Lemma~\ref{th:properties_F}
   implies that $\Fopd H^t(\bar{u}) \Fop \in \mathcal{L}(\ell^2,\ell^2)$. Since $\Fopd \Hhom \Fop = I$ (see again Lemma~\ref{th:properties_F}) it
   follows that
   \begin{equation*}
      c_0 \|D \Fop w\|_{\ell^2}^2 \leq
         \< \Fopd \Hhom \Fop w, w \> = \| w \|_{\ell^2}^2
      \leq c_1 \|D \Fop w\|_{\ell^2}^2.
   \end{equation*}
Substituting $v = \Fop w$ into \eqref{eq:prf:spectrum_bound:1} we obtain
   \[
      \smfrac{c_0}{c_1} \|w\|_{\ell^2}^2
         \leq \< H^t(\bar{u}) \Fop w, \Fop w \>
         = \< \Fopd H^t(\bar{u})\Fop w, w \>
         \leq \smfrac{c_1}{c_0} \|w\|_{\ell^2}^2.
   \]
If now $u \in B_\epsilon(\us)$, we use the assumption $V_{\ell} \in C^3$ to estimate
\begin{align*}
\Big\lvert \big\< (H^t(u)-H^t(\bar{u})) \Fop w, \Fop w \big\>\Big\rvert &\lesssim \lVert D \Fop w \rVert_{\ell^2}^2 \lVert Du -D\bar{u} \rVert_{\ell^\infty}
% \\ &
\lesssim \epsilon \lVert w \rVert_{\ell^2}^2,
\end{align*}
which proves the remaining claims.
\end{proof}

In light of  the foregoing lemma, there  exists a contour
$\mathcal{C}$ encircling $[\underline\sigma, \overline\sigma]$ but not the
origin, such that, for $u \in B_\epsilon(u_\infty)$ and for all $t \in [0, 1]$,
\begin{equation} \label{eq:log-FHtF-contour}
   \log^+ \hspace{-0.3em}\big[ \Fopd H^t(u) \Fop \big]
   = \frac{1}{2\pi i} \oint_{\mathcal{C}} \log z\, \Rr^t_z(u) \,dz,
      \quad \text{where } \Rr^t_z = \Rr^t_z(u)
            := (z-\Fopd H^t(u) \Fop)^{-1}.
\end{equation}
From now on, we will fix this contour and always have $z \in \mathcal{C}$ and $t
\in [0, 1]$. We will also use the notation $\Rhom_z := \Rr^0_z(u)$ and $\Rr_z(u):= \Rr^1_z(u)$. We remark that, since $\Fopd\Hhom\Fop = I$, we have
$\Rhom_z=(z-I)^{-1} = (z-1)^{-1} I$.

To exploit the representation \eqref{eq:log-FHtF-contour} we will analyse the
resolvents $\Rr_z^t$. Specifically, we will estimate how $[\Rr^t_z]_{\ell n}$
decays as $|\ell|, |n| \to \infty$.

\subsection{Finite-rank corrections}
A basic technique that we will employ in the resolvent decay estimates  is to
decompose a Hessian operator $H$ into two components $H = H_{\rm r} + H_{\rm h}$
where  $H_{\rm r}$ has finite rank while $H_{\rm h}$ is close to $\Hhom$. To
estimate the correction to the resolvent due to $H_{\rm r}$, the following
lemma shows that we can instead estimate powers of the finite rank correction.

\begin{lemma}
\label{lem: finite-rank correction}
Let $X$ be a Hilbert space.
\begin{enumerate}[(i)]
\item Let $A \in \mathcal{L}(X, X)$ be a bounded linear operator with range of
finite dimension at most $r \in \N$ and $I +A$ is invertible, then there exist
$c_1(A), \dots, c_{r+1}(A) \in \R$ such that
\begin{equation}
\label{eq: finite-rank correction}
   (I+A)^{-1}=I + \sum_{j=1}^{r+1} c_j(A) A^j.
\end{equation}
If $U \subset \CN$ such that  $(I + \gamma A)$ is invertible for all $\gamma \in
U$, then $\gamma \mapsto c_j(\gamma A)$ are continuous functions on $U$.
\item More generally, let $X = X_1 \oplus X_2$  be a fixed orthogonal
decomposition with  ${\rm dim} (X_1) \leq r$ then \eqref{eq: finite-rank
correction} holds for all $A$ for which $X_2 \subset {\rm ker} A$ and $(I+A)$ is
invertible. The coefficients can be written as $c_j(A) = d_j(\pi_{X_1} A
|_{X_1})$ where $\pi_{X_1} A |_{X_1} \colon X_1 \to X_1$ is the restriction and
projection of $A$ to $X_1$ and the $d_{j}$ are continuous  on the
finite-dimensional set $\{B \in \mathcal{L}(X_1, X_1) \colon (I+B) \text{ is
invertible} \}$.
%
%  OLD TEXT
% \item More generally, if there is a family of operators $(A_\alpha)_\alpha$ and a fixed orthogonal decomposition $X = X_1 \oplus X_2$ such that ${\rm dim} (X_1) \leq r$ and $X_2 \subset {\rm ker} A_\alpha$, then the $c_j$ can be written as $c_j=d_j(B_\alpha)$, where
%    $B_\alpha=\pi_{X_1} A_{\alpha} |_{X_1} \colon X_1 \to X_1$
% is the restriction and projection of $A_\alpha$
%  to $X_1$ and the $d_j$ are continuous functions on the (finite-dimensional) set $\{B \in L(X_1) \colon (I+B) \text{ is invertible} \}$.
%
\end{enumerate}
\end{lemma}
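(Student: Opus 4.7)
The key observation driving both parts is that an operator whose action factors through a finite-dimensional subspace of dimension $\leq r$ satisfies a polynomial identity of the form $A\,p(A)=0$ with $\deg(t\,p(t))\leq r+1$ and zero constant term, and this allows one to invert $(I+A)$ by polynomial division in $A$.

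\textbf{Part (i).} Set $Y=\mathrm{range}(A)$, a closed subspace of dimension $\leq r$. Then $A(X)\subset Y$, so $A|_Y\colon Y\to Y$ acts on a finite-dimensional space and by Cayley--Hamilton satisfies its characteristic polynomial $p$, whose coefficients are real. Since $A$ maps $X$ into $Y$ and $p(A|_Y)=0$ on $Y$, one obtains the operator identity $A\,p(A)=0$ on $X$; setting $q(t)=t\,p(t)$ we have $\deg q\leq r+1$, $q(0)=0$, and $q(A)=0$. Any nonzero eigenvalue of $A$ is automatically one of $A|_Y$, so invertibility of $I+A$ forces $p(-1)\neq 0$, hence $q(-1)\neq 0$. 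Polynomial division yields $q(t)-q(-1)=(t+1)\,s(t)$ with $\deg s\leq r$; then $(I+A)\,s(A)=-q(-1)\,I$, so $(I+A)^{-1}=-s(A)/q(-1)$, and therefore
\begin{equation*}
   (I+A)^{-1}-I \;=\; -A(I+A)^{-1} \;=\; \frac{1}{q(-1)}\,A\,s(A) \;=\; \sum_{j=1}^{r+1} c_j(A)\,A^j,
\end{equation*}
giving the desired $c_j(A)$. For the continuity claim with $A$ replaced by $\gamma A$, the subspace $Y$ is independent of $\gamma$ (fix $Y=\mathrm{range}(A)$, which contains $\mathrm{range}(\gamma A)$ for every $\gamma$), so $\gamma\mapsto \gamma A|_Y$ is a polynomial map into the finite-dimensional space $\mathcal{L}(Y,Y)$; the coefficients of the characteristic polynomial $p_\gamma$, then of $s_\gamma$, are continuous in $\gamma$, and $q_\gamma(-1)$ is continuous and nonzero on $U$, hence each $c_j(\gamma A)$ is continuous on $U$.

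\textbf{Part (ii).} With respect to $X=X_1\oplus X_2$, the hypothesis $X_2\subset\ker A$ forces the block form
\begin{equation*}
   A \;=\; \begin{pmatrix} A_{11} & 0 \\ A_{21} & 0 \end{pmatrix}, \qquad A_{11}=\pi_{X_1}A|_{X_1},
\end{equation*}
and a short induction yields $A^k=\bigl(\begin{smallmatrix} A_{11}^k & 0 \\ A_{21}A_{11}^{k-1} & 0 \end{smallmatrix}\bigr)$ for $k\geq 1$. Consequently, for any polynomial $p$ with $p(A_{11})=0$, the $X_1$-component of $p(A)\,v$ vanishes for every $v\in X$, i.e.~$p(A)\,X\subset X_2$; multiplying on the left by $A$, which annihilates $X_2$, yields $A\,p(A)=0$ on $X$. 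Taking $p$ to be the characteristic polynomial of $A_{11}$ (of degree $\leq r$), the argument of (i) now goes through verbatim with $q(t)=t\,p(t)$: invertibility of $I+A$ is equivalent to $-1\notin\sigma(A_{11})$, hence to $p(-1)\neq 0$, and the extracted coefficients $c_j(A)$ depend, via $s$ and $q(-1)$, only on the coefficients of $p$ --- which are continuous functions of $A_{11}$. This gives the representation $c_j(A)=d_j(A_{11})$ with $d_j$ continuous on $\{B\in\mathcal{L}(X_1,X_1):I+B\text{ is invertible}\}$.

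\textbf{Main obstacle.} Everything reduces to routine polynomial algebra once the identity $A\,p(A)=0$ is in hand. The one step requiring genuine care is the verification of this identity in (ii), which relies crucially on the block-upper-triangular form forced by $X_2\subset\ker A$ together with the fact that $A$ annihilates $X_2$; a secondary (but easy) point is the promotion of ``$I+A$ invertible'' to the corresponding statement for the compression, which follows in both parts from the observation that the nonzero eigenvalues of $A$ and of its compression coincide.
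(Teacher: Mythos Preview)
Your proof is correct and follows essentially the same approach as the paper: both rely on Cayley--Hamilton applied to the finite-dimensional compression, followed by polynomial division of $q(t)=t\,p(t)$ by $(t+1)$ to extract the inverse. The only organizational difference is that the paper first treats the purely finite-dimensional case and then, for (ii), invokes an explicit block-inverse formula to reduce to it, whereas you lift the identity $A\,p(A)=0$ directly to $X$ and perform the polynomial algebra there---a minor and arguably cleaner variation on the same idea.
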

\begin{proof}
   The result is a consequence of the Cayley--Hamilton theorem;
   we give the complete proof in \S~\ref{sec:proof:finite-rank correction}.
\end{proof}

\subsection{Resolvent estimates}
\label{sec:resolvent_estimates}
\quad The goal of this section is to estimate $\Rr^t_z(u) = (z - \Fopd H^t(u) {\bf
F})^{-1}$, where
\[
   u \in \mathcal{U}:=\mathcal{U}(\underline\sigma, \overline\sigma, C) :=
      \big\{ u \in \Wi \,:\, u \text{ satisfies } \eqref{eq:logpluscondition} \text{ with } \underline\sigma, \overline\sigma\ \text{ and }\ |Du(\ell)| \leq C |\ell|_{l^0}^{-d}\big\}.
\]
This more stringent condition is sufficient for our purposes and considerably simplifies several
proofs. In particular, $C, \overline\sigma > 0$ sufficiently large and $\underline\sigma>0$ sufficiently small are fixed throughout this discussion and all constants in the following are allowed to depend on them.

As already hinted to above, a key idea is to split the difference of the Hamiltonians $\Hdef (u)-\Hhom$ into a sum of a large finite rank operator representing the defect core and a small but infinite rank part representing the far field. Let
\begin{equation} \label{eq:res:defn of HM}
   \< \HM(u) v, z\>
   = \sum_{\lvert \ell \rvert \leq M} \nabla^2V(0)[Dv(\ell), Dz(\ell)]
      +\sum_{\lvert \ell \rvert > M} \nabla^2V(Du(\ell))[Dv(\ell), Dz(\ell)],
\end{equation}
and define $\HMt(u) := (1-t) \Hhom  + t \HM(u)$ and $\RMt_z$ analogously. Then,
\begin{equation} \label{eq:RMt-Rhom}
   \begin{split}
   \RMt_z(u)-\Rhom_z
   &= \Big( (z-1)I - t \Fopd \big(\HM(u)-\Hhom\big) \Fop \Big)^{-1} - (z-1)^{-1} I
   \\&= (z-1)^{-1} \bigg\{
      \Big(I- \smfrac{t}{z-1} \Fopd \big( \HM(u)- \Hhom\big) \Fop \Big)^{-1}
   - I \bigg\}.
   \end{split}
\end{equation}
We now show that $A_M := \Fopd(\HM(u)-\Hhom )\Fop$ is small provided that
$M$ is sufficiently large. Starting with this lemma, we will heavily rely on the convenient notation $\lvert \ell \lvert^{-d}_{l^k}$ defined in \eqref{eq:defn_lognorms} for a decay rate up to $k$ logarithmic factors, as well as the notation $\Lr_{k}$ and $\Lr_{k}^M$  defined in \eqref{eq:defLr} and \eqref{eq:defLrM} for operator estimates.

\begin{lemma}
\label{lem: aux perturbation}
There exist $C_0, C_1, C_2 >0$ independent of $m$, $n$, $M$, and $u \in
\mathcal{U}$ such that
\begin{align}
\label{eq: small pertubation}
   \lvert (A_M)_{mn} \rvert &\leq C_0 \Lr_{1}^M(m,n), \qquad \text{where} \\[1mm]
\label{eq: auxfrobenius}
\sum\limits_{m,n \in \Lambda} \Lr_{1}^M(m,n)^2 &\leq C_1  \lvert M\rvert_{l^3}^{-d},
\qquad \text{and} \\
\label{eq: auxprod}
\sum\limits_{\ell \in \Lambda} \Lr_{1}^M(m,\ell) \Lr_{1}^M(\ell, n) &\leq C_2  \lvert M\rvert_{l^3}^{-d} \Lr_{1}^M(m,n).
\end{align}
\end{lemma}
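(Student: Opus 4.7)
\smallskip\noindent\textbf{Proof plan.}
The plan is to attack the three estimates in turn, beginning with the pointwise bound \eqref{eq: small pertubation} and then using it (with generic lattice convolution bounds) to deduce \eqref{eq: auxfrobenius} and \eqref{eq: auxprod}. Throughout, the central technical tool is the decay estimate $|D_\rho F(\ell)|\lesssim |\ell|_{l^0}^{1-d-1}=|\ell|_{l^0}^{-d}$ from Lemma~\ref{th:properties_F}(i), together with the a priori bound $|Du(\ell)|\leq C |\ell|_{l^0}^{-d}$ built into the definition of $\mathcal U$.

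For \eqref{eq: small pertubation}, first I would write
$$\langle (\HM(u)-\Hhom)v,z\rangle = \sum_{|\ell|>M} W_\ell[Dv(\ell),Dz(\ell)], \qquad W_\ell := \nabla^2 V(Du(\ell))-\nabla^2 V(0),$$
so that $A_M = \sum_{|\ell|>M} \Fopd L_\ell \Fop$, where each local operator $L_\ell$ is supported on $\{\ell+\rho : \rho\in\Rc\cup\{0\}\}$ and has operator norm bounded by $|W_\ell|\lesssim |Du(\ell)|\lesssim |\ell|_{l^0}^{-d}$. Conjugating by $\Fop$ and using that $\Fop$ acts as a convolution with $F$ up to a constant shift, together with $|D_\rho F|\lesssim |\cdot|_{l^0}^{-d}$, gives
$$|(A_M)_{mn}|\;\lesssim\; \sum_{|\ell|>M} |m-\ell|_{l^0}^{-d}\,|\ell|_{l^0}^{-d}\,|\ell-n|_{l^0}^{-d}.$$
The restriction $|\ell|>M$ allows us to replace $|\ell|_{l^0}^{-d}$ by $|\ell|_{l^0,M}^{-d}$. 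The remaining three-factor lattice convolution is handled by the now standard splitting: the contribution from $|\ell-m|\leq\tfrac12(|m|+|n-m|)$ is bounded using $|\ell|_{l^0,M}^{-d}\lesssim |m|_{l^1,M}^{-d}$ and $\sum_\ell |m-\ell|_{l^0}^{-d}|n-\ell|_{l^0}^{-d}\lesssim |n-m|_{l^1}^{-d}$, producing the first term of $\Lr_1^M(m,n)$; the symmetric region near $n$ produces the second, and the far-field region near $0$ produces the third. The logarithms arise precisely from the borderline integrability of $|x|^{-d}$ at infinity.

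For \eqref{eq: auxfrobenius}, I would expand the square of the three-term sum $\Lr_1^M(m,n)^2$ and bound each of the nine resulting products. The elementary observation is
$$\sum_{n\in\Lambda} |n|_{l^1,M}^{-2d} \;\lesssim\; |M|_{l^3}^{-d},$$
obtained by splitting into $|n|\leq M$ (contributing $\sim M^d\cdot |M|_{l^1}^{-2d}=|M|_{l^3}^{-d}$) and $|n|>M$ (a convergent tail). The other one-variable sums $\sum_m |m-n|_{l^1}^{-2d}$ and $\sum_m |m|_{l^1,M}^{-2d}|m-n|_{l^1}^{-2d}$ are uniformly bounded by an absolute constant. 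Summing first in the ``free'' variable and then in the one weighted by $|\,\cdot\,|_{l^1,M}^{-d}$ yields the desired $|M|_{l^3}^{-d}$ bound.

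For \eqref{eq: auxprod}, expand both factors and bound the nine resulting sums using the two core lattice convolution estimates
$$\sum_{\ell\in\Lambda} |a-\ell|_{l^1}^{-d}\,|b-\ell|_{l^1}^{-d} \;\lesssim\; |a-b|_{l^1}^{-d}\log(e+|a-b|), \qquad \sum_{\ell\in\Lambda} |\ell|_{l^1,M}^{-d}\,|a-\ell|_{l^1}^{-d} \;\lesssim\; |M|_{l^3}^{-d}\,|a|_{l^1,M}^{-d},$$
and the analogues with the roles reversed. A case distinction (which factor is weighted by $|\cdot|_{l^1,M}^{-d}$) then matches each of the nine contributions to one of the three terms of $\Lr_1^M(m,n)$, always extracting one factor of $|M|_{l^3}^{-d}$ coming from the sum that traverses the region where $|\ell|$ or $|\ell-\cdot|$ is large. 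Absorbing logarithmic losses into the $l^1\to l^3$ adjustment in the $|M|$ weight is where the exponent $3$ in the bound $|M|_{l^3}^{-d}$ is consumed.

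The main obstacle, common to all three parts, is the careful bookkeeping of logarithmic factors: the kernel $F$ decays at rate $|\cdot|_{l^0}^{-d}$, but the target bound is expressed in terms of the $l^1$-enhanced norms, and every convolution on the lattice can cost up to one logarithm. Keeping track of exactly how many logarithms are spent — so that the final constant is indeed of size $|M|_{l^3}^{-d}$ rather than something strictly larger — requires choosing the splitting of the summation domains so that the borderline regions (where two of the three distances are comparable) are always controlled by the same norm that appears on the right-hand side of $\Lr_1^M$.
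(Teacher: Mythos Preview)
Your overall strategy matches the paper's: write $(A_M)_{mn}$ as a three-factor lattice sum, then expand $\Lr_1^M(m,\ell)\Lr_1^M(\ell,n)$ into nine terms and control each with elementary convolution bounds. However, one of your two ``core'' convolution estimates is simply false. You claim
\[
   \sum_{\ell\in\Lambda} |\ell|_{l^1,M}^{-d}\,|a-\ell|_{l^1}^{-d} \;\lesssim\; |M|_{l^3}^{-d}\,|a|_{l^1,M}^{-d},
\]
but the correct bound (this is \eqref{eq: auxiliary estimates eq3} with $\alpha=\beta=1$) is only $\lesssim |a|_{l^3,M}^{-d}$. To see that your version fails, take $|a|\gg M$: the left side is $\sim |a|_{l^3}^{-d}$ while your right side is $\sim |M|_{l^3}^{-d}|a|_{l^1}^{-d}$, which is smaller by a factor of $M^{-d}$ up to logs. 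With this estimate gone, several of the nine cross-terms in \eqref{eq: auxprod} no longer collapse directly into $|M|_{l^3}^{-d}\Lr_1^M(m,n)$ the way you describe; you cannot extract both the $|M|^{-d}$ smallness and a factor $|a|_{l^1,M}^{-d}$ from a single two-factor convolution at borderline decay. (Your first core estimate is also off by one logarithm: the correct bound is $|a-b|_{l^3}^{-d}$, not $|a-b|_{l^2}^{-d}$.)

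What the paper does instead is to keep track of which products carry a factor $|\ell|_{l^\cdot,M}^{-2d}$ (supercritical decay) versus $|\ell|_{l^\cdot,M}^{-d}$ (borderline). For the supercritical pieces it invokes \eqref{eq: auxiliary estimates eq4} and \eqref{eq: auxiliary estimates eq7cor1}, which legitimately produce a product $|M|_{l^\cdot}^{-p}\cdot |m|_{l^\cdot,M}^{-d}$; for the borderline pieces it uses the three-factor estimate \eqref{eq: auxiliary estimates eq6cor3}. Crucially, the paper first proves the \emph{sharper} intermediate bound
\[
   \sum_{\ell}\Lr_1^M(m,\ell)\Lr_1^M(\ell,n)\;\lesssim\; |n|_{l^1,M}^{-d}|m|_{l^1,M}^{-d}\big(|M|_{l^3}^{-d}+|m-n|_{l^3}^{-d}\big),
\]
and only then deduces $\lesssim |M|_{l^3}^{-d}\Lr_1^M(m,n)$; the passage from the sharper bound to \eqref{eq: auxprod} uses that $|m-n|\lesssim \max(|m|,|n|,M)$ in the relevant regimes, which absorbs the extra logs. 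The Frobenius bound \eqref{eq: auxfrobenius} is then obtained as a corollary of this sharper estimate by setting $m=n$, rather than by a separate nine-term expansion. Your direct argument for \eqref{eq: auxfrobenius} is fine, but for \eqref{eq: auxprod} you need to replace the false estimate by the correct toolkit above.
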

\begin{proof}
Many detailed sums we need here and in the following are collected in the appendix in Section \ref{sec: auxiliary estimates}. Specifically, \eqref{eq: auxiliary estimates eq6cor2} yields \eqref{eq: small pertubation}:
\begin{align*}
\lvert (A_M)_{mn} \rvert &\leq \sum_{\lvert \ell \rvert > M} \lvert \nabla^2 V(Du(\ell)) - \nabla^2V(0)\rvert \lvert DF(\ell -m) \rvert \lvert DF(\ell-n)\rvert \\
&\lesssim \sum_{\lvert \ell \rvert > M} \lvert \ell \rvert_{l^0}^{-d} \lvert \ell -m \rvert_{l^0}^{-d} \lvert \ell-n \rvert_{l^0}^{-d}\\
&\lesssim \Lr_{1}^M(m,n).
\end{align*}
%Furthermore, according to \eqref{eq: auxsum01} in Lemma \ref{lem: auxiliary estimates},
%\begin{align*}
%\sum\limits_{m,n \in \Lambda} \Lr_{1}^M(m,n)^2 &\lesssim \sum\limits_{m,n \in \Lambda} \lvert n-m \rvert^{-2d}_{l^{2}} \lvert n \rvert^{-2d}_{l^{2},M} + \lvert n-m \rvert^{-2d}_{l^{2}} \lvert m \rvert^{-2d}_{l^{2},M} + \lvert m \rvert^{-2d}_{l^{2},M} \lvert n \rvert^{-2d}_{l^{2},M}\\
%&\lesssim \sum\limits_{n \in \Lambda} \lvert n \rvert^{-2d}_{l^{2},M} + \Big(\sum\limits_{n \in \Lambda}\lvert n \rvert^{-2d}_{l^{2},M}\Big)^2\\
%&\lesssim \lvert M \rvert_{l^{3}}^{-d}.
%\end{align*}

For \eqref{eq: auxprod} we estimate
\begin{align}
   \notag
\sum_{\ell \in \Lambda} \Lr_{1}^M(m,\ell) \Lr_{1}^M(\ell,n) &\lesssim \lvert n \rvert_{l^1, M}^{-d} \lvert m \rvert_{l^1, M}^{-d} \sum_{\ell \in \Lambda} (\lvert \ell \rvert_{l^1, M}^{-d} + \lvert m-\ell \rvert_{l^1}^{-d})(\lvert \ell \rvert_{l^1, M}^{-d} + \lvert n-\ell \rvert_{l^1}^{-d})  \\
\notag
&\quad + \lvert n \rvert_{l^1, M}^{-d} \sum_{\ell \in \Lambda} \lvert \ell \rvert_{l^1, M}^{-d} \lvert m-\ell \rvert_{l^1}^{-d} (\lvert \ell \rvert_{l^1, M}^{-d} + \lvert n-\ell \rvert_{l^1}^{-d}) \\
\notag
&\quad + \lvert m \rvert_{l^1, M}^{-d} \sum_{\ell \in \Lambda} \lvert \ell \rvert_{l^1, M}^{-d} \lvert n-\ell \rvert_{l^1}^{-d} (\lvert \ell \rvert_{l^1, M}^{-d} + \lvert m-\ell \rvert_{l^1}^{-d}) \\
   \label{eq:proof:aux perturbation:1}
&\quad +\sum_{\ell \in \Lambda} \lvert \ell \rvert_{l^2, M}^{-2d} \lvert m-\ell \rvert_{l^1}^{-d} \lvert n-\ell \rvert_{l^1}^{-d}.
\end{align}
We look at each of the sums in detail. According to \eqref{eq: auxiliary estimates eq1} and \eqref{eq: auxiliary estimates eq3} we have the estimates
\begin{align*}
\sum_{\ell \in \Lambda} \lvert \ell \rvert_{l^2, M}^{-2d} &\lesssim \lvert M \rvert_{l^2}^{-d} , \\
\sum_{\ell \in \Lambda} \lvert \ell \rvert_{l^1, M}^{-d} \lvert n-\ell \rvert_{l^1}^{-d} &\lesssim \lvert n \rvert_{l^3,M}^{-d} \lesssim \lvert M \rvert_{l^3}^{-d},
\\
\sum_{\ell \in \Lambda} \lvert m-\ell \rvert_{l^1}^{-d} \lvert \ell \rvert_{l^1,M}^{-d} &\lesssim \lvert m \rvert_{l^3,M}^{-d} \lesssim \lvert M \rvert_{l^3}^{-d},
\\
\sum_{\ell \in \Lambda} \lvert m-\ell \rvert_{l^1}^{-d} \lvert n-\ell \rvert_{l^1}^{-d} &\lesssim \lvert n-m \rvert_{l^3}^{-d},
\end{align*}
Furthermore, according to \eqref{eq: auxiliary estimates eq4} and \eqref{eq: auxiliary estimates eq6cor3}  we also have
\begin{align*}
\sum_{\ell \in \Lambda} \lvert \ell \rvert_{l^2, M}^{-2d} \lvert m-\ell \rvert_{l^1}^{-d} &\lesssim \lvert m\rvert_{l^1,M}^{-d} \lvert M\rvert_{l^3}^{-d} , \\
\sum_{\ell \in \Lambda} \lvert \ell \rvert_{l^1, M}^{-d} \lvert m-\ell \rvert_{l^1}^{-d} \lvert n-\ell \rvert_{l^1}^{-d} &\lesssim \lvert m\rvert_{l^1,M}^{-d} \lvert n\rvert_{l^3,M}^{-d} + \lvert m-n\rvert_{l^3}^{-d} \lvert m\rvert_{l^1,M}^{-d},
\end{align*}
and analogously for $m$ and $n$ reversed. At last, according to \eqref{eq: auxiliary estimates eq7cor1}, we also have
\begin{align*}
 \sum\limits_{\ell\in \Lambda}\lvert \ell \rvert_{l^2,M}^{-2d} \lvert \ell-n \rvert_{l^1}^{-d} \lvert \ell-m \rvert_{l^1}^{-d}\lesssim  \lvert n\rvert_{l^1,M}^{-d} \lvert m\rvert_{l^1,M}^{-d} ( \lvert M\rvert_{l^3}^{-d} + \lvert m-n\rvert_{l^3}^{-d}).
\end{align*}

Inserting these intermediate estimates into \eqref{eq:proof:aux perturbation:1}
we get \eqref{eq: auxprod}, as 
\begin{align}
\sum\limits_{\ell \in \Lambda} \Lr_{1}^M(m,\ell) \Lr_{1}^M(\ell, n) &\lesssim  \lvert n\rvert_{l^1,M}^{-d} \lvert m\rvert_{l^1,M}^{-d} ( \lvert M\rvert_{l^3}^{-d} + \lvert m-n\rvert_{l^3}^{-d}) \label{eq:sharperthansubmulti} \\
&\lesssim \lvert M\rvert_{l^3}^{-d} \Lr_{1}^M(m,n). \nonumber
\end{align}
Finally, summing over $m=n$ in \eqref{eq:sharperthansubmulti},
and using \eqref{eq: auxiliary estimates eq1}, we deduce
\eqref{eq: auxfrobenius}:
\begin{align*}
\sum\limits_{m \in \Lambda} \sum\limits_{\ell \in \Lambda} \Lr_{1}^M(m,\ell) \Lr_{1}^M(\ell, m) &\lesssim  \sum\limits_{m \in \Lambda} \lvert m\rvert_{l^2,M}^{-2d}
% \\ &
\lesssim \lvert M\rvert_{l^3}^{-d}. \qedhere
\end{align*}
% according to \eqref{eq: auxiliary estimates eq1}.
\end{proof}

\begin{proposition}
   \label{prop: resolvent new version}
   There exists a constant $C_3 > 0$ such that, for all $u \in \mathcal{U}$, $t \in [0, 1]$, $z \in \mathcal{C}$,
   \begin{equation}
   \Big|\big[\Rr_z^t(u)-\Rhom_z \big]_{minj}\Big| \leq C_3 \Lr_1(m,n).
   \end{equation}
\end{proposition}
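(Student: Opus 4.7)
My overall strategy follows the hint in the text: decompose the Hessian difference $H^t(u) - \Hhom$ into a finite-rank ``defect core'' contribution plus a small ``far-field'' tail. Concretely, with $\HM$ as in \eqref{eq:res:defn of HM} and $\HMt(u) := (1-t)\Hhom + t\HM(u)$, I write $H^t(u) = \HMt(u) + t(\Hdef(u)-\HM(u))$, which gives
\[
   \big|[\Rr^t_z - \Rhom_z]_{mn}\big|
   \leq \big|[\RMt_z - \Rhom_z]_{mn}\big| + \big|[\Rr^t_z - \RMt_z]_{mn}\big|.
\]
The cutoff $M$ will be fixed once and for all below, depending only on the contour $\mathcal{C}$ and the constants defining $\mathcal{U}$; any resulting $M$-dependent constant is then admissible in $C_3$.

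For $\RMt_z - \Rhom_z$, observe that $\Fopd\HMt\Fop = I + tA_M$ with $A_M := \Fopd(\HM(u)-\Hhom)\Fop$, so the explicit formula \eqref{eq:RMt-Rhom} expresses $\RMt_z - \Rhom_z$ as a Neumann series $\sum_{k\geq 1}(z-1)^{-k-1}t^k A_M^k$. By the Frobenius bound \eqref{eq: auxfrobenius}, $\|A_M\|_{\ell^2\to\ell^2} \leq \|A_M\|_F$ tends to zero as $M \to \infty$, so fixing $M$ sufficiently large (uniformly in $(u,t,z)$, using that $|z-1|$ is bounded below on $\mathcal{C}$) ensures uniform convergence of the series. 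Induction using the entrywise bound \eqref{eq: small pertubation} together with the submultiplicative-type estimate \eqref{eq: auxprod} then yields $|(A_M^k)_{mn}| \leq C_0(C_0 C_2 |M|^{-d}_{l^3})^{k-1}\Lr_1^M(m,n)$, and summing the geometric series gives
\[
   \big|[\RMt_z - \Rhom_z]_{mn}\big| \lesssim \Lr_1^M(m,n) \leq \Lr_1(m,n).
\]

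For $\Rr^t_z - \RMt_z$, set $P_M := \Fopd t(\Hdef(u)-\HM(u))\Fop$. Since $(\Hdef(u)-\HM(u))_{kl}$ vanishes unless $k,l \in B_{M+\rcut}$, the operator $P_M$ has finite rank $r = r(M)$ on $\ell^2$. The factorization $z - \Fopd H^t \Fop = (\RMt_z)^{-1}(I - \RMt_z P_M)$ gives $\Rr^t_z = (I - \RMt_z P_M)^{-1}\RMt_z$, and invertibility of $I - \RMt_z P_M$ follows from the existence of $\Rr^t_z$ guaranteed by Lemma~\ref{th:spectrum_bound}. Applying Lemma \ref{lem: finite-rank correction}(ii) yields
\[
   \Rr^t_z - \RMt_z = \sum_{j=1}^{r+1} c_j(-\RMt_z P_M)(-\RMt_z P_M)^j \RMt_z,
\]
with scalars $c_j$ bounded uniformly in $(u,t,z)$ by continuity of the $c_j$ on compact subsets of the invertibility domain and compactness of the parameter set. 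Each entry $[(\RMt_z P_M)^j \RMt_z]_{mn}$ is then controlled by combining the bound $|(\RMt_z)_{mn}| \leq |z-1|^{-1}\delta_{mn} + C\Lr_1(m,n)$ from the previous step with an entrywise decay estimate for $(P_M)_{kl}$ derived from the spatial localization of $\Hdef - \HM$ in $B_{M+\rcut}$ together with the discrete-gradient decay $|DF(\ell)| \lesssim |\ell|^{-d}_{l^0}$ of Lemma~\ref{th:properties_F}; the auxiliary sum estimates in \S\ref{sec: auxiliary estimates} show that composition with $\RMt_z$ preserves the $\Lr_1(m,n)$ decay class.

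The main obstacle is the bookkeeping in the second step: one must verify that the iterated products $(\RMt_z P_M)^j \RMt_z$ stay in the decay class $\Lr_1(m,n)$ with a constant independent of $u, t, z$. Because $M$ is fixed and $P_M$ is spatially localised, $M$-dependent constants are absorbable into $C_3$, but care is required to verify that every convolution-type sum appearing in the compositions produces a bound no weaker than $\Lr_1(m,n)$; this is precisely what the auxiliary sum estimates in \S\ref{sec: auxiliary estimates} are designed to deliver.
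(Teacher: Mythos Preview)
Your proposal is correct and follows essentially the same route as the paper: the same splitting via $\HMt$, the same Neumann-series argument for $\RMt_z-\Rhom_z$ (with the same inductive use of \eqref{eq: small pertubation}--\eqref{eq: auxprod}), and the same finite-rank correction for $\Rr^t_z-\RMt_z$ via Lemma~\ref{lem: finite-rank correction}. One point deserves more care: your appeal to ``compactness of the parameter set'' does not work as stated, since $\mathcal{U}$ is not compact. To get the $c_j$ uniformly bounded you must invoke Lemma~\ref{lem: finite-rank correction}(ii) with a \emph{fixed} decomposition $X_1\oplus X_2$ independent of $(u,t,z)$; the paper takes $X_2=\{w\in\ell^2:D(\Fop w)(\ell)=0\text{ for all }|\ell|\le M\}$, which lies in $\ker(-\RMt_z P_M)$ for every parameter choice. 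Then $\pi_{X_1}(-\RMt_z P_M)|_{X_1}$ ranges over a bounded subset of the finite-dimensional space $\mathcal{L}(X_1)$ on which $\|(I+\cdot)^{-1}\|$ is uniformly bounded (because $\Rr^t_z$ and $(\RMt_z)^{-1}$ are), hence over a compact subset of the invertibility domain, and continuity of the $d_j$ gives the required uniformity.
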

\begin{proof}
We split $\Rr_z^t - \Rhom_z = (\Rr_z^t - \RMt_z) + (\RMt_z - \Rhom_z)$. To
estimate the first group we will use that $H^t - \HMt$ has finite rank. To
estimate the second group we will use that $\HMt - \Hhom$ is small.

We begin by estimating $\RMt_z - \Rhom_z$.
Recall from \eqref{eq:RMt-Rhom} that
\begin{equation*}
   \RMt_z(u)-\Rhom_z
   = (z-1)^{-1} \Big( \Big(I-\smfrac{t}{z-1} A_M \Big)^{-1} - I \Big),
\end{equation*}
with $A_M = \Fopd (\HM(u)-\Hhom(0) ) \Fop$.

We can show that for $M$ sufficiently large the associated Neumann series
converges, from which we can deduce not only that $(I-\frac{t}{z-1}A_M)^{-1}$
(and hence also $\RMt_z(u)$) is well-defined but also obtain decay estimates.
Indeed, we can bound the Frobenius norm by
\begin{align*}
\lVert A_M \rVert_F^2 &\leq C_0^2 d^2 \sum_{m,n \in \Lambda} \Lr_{1}^M(m,n)^2
\leq C_0^2 d^2 C_1 \lvert M \rvert^{-d}_{l^{3}},
\end{align*}
according to Lemma \ref{lem: aux perturbation}.
As the Frobenius norm is sub-multiplicative, for $M$ sufficiently large, the Neumann series
\[
   \Big(I-\smfrac{t}{z-1}A_M \Big)^{-1}
   = \sum_{k=0}^{\infty} \frac{t^k}{(z-1)^k}A_M^k
\]
converges strongly in the Frobenius norm, uniformly in $z \in \mathcal{C}, t \in [0, 1]$
and $u \in \mathcal{U}_\epsilon$.
%
% and therefore $\RM_z(w)$ exist for $M$ so large, such that $\frac{C_0^2 d^2 C_1 \lvert M \rvert^{-d}_{l^{3}}}{\lvert z-1 \rvert} \leq \frac{1}{2}$ uniformly in $z$.
%
From Lemma~\ref{lem: aux perturbation} we can moreover deduce that
\[\lvert (A_M^k)_{mn} \rvert \leq (dC_2 \lvert M\rvert_{l^3}^{-d})^{k-1} C_0^k \Lr_{1}^M(m,n), \]
and hence
\begin{align*}
\bigg| \sum_{k=0}^\infty \frac{t^k (A_M^k)_{mn}}{(z-1)^k}  - I_{mn} \bigg| &\leq \Lr_{1}^M(n,m) \sum_{k=1}^\infty \frac{C_0^k C_2^{k-1} d^{k-1} (\lvert M\rvert_{l^3}^{-d})^{k-1}}{\lvert z-1 \rvert^k}.
\end{align*}
For $M$ large enough the series on the right-hand side converges
 % $\frac{C_0 C_2 d \lvert M\rvert_{l^3}^{-d}}{\lvert z-1 \rvert} \leq \frac{1}{2}$
uniformly in $z$, and therefore
\begin{equation} \label{eq:decay-estimate-RMt}
   \Big|\big[\RMt_z(u)-\Rhom_z\big]_{mn}\Big|
   \lesssim \Lr_{1}^M(m,n).
\end{equation}

It remains to estimate $\Rr^t_z - \RMt_z$. We begin by rewriting
\begin{align*}
\Rr_z^t(u) = \Big(I + \RMt_z(u) \Fopd \big(\HMt(u)-\Hdef^t(u) \big) \Fop \Big)^{-1}\RMt_z(u)
   =: \big( I + B^{M,t} \big)^{-1} \RMt_z(u).
\end{align*}
Lemma~\ref{th:spectrum_bound} implies that the resolvent $\Rr_z^t(u)$ exists for
all $z \in \mathcal{C}, t \in [0, 1], u \in \mathcal{U}$ and hence the inverse on the
right hand side exists as well.

Moreover, $B^{M,t}$ has finite-dimensional range since
clearly this is the case for $\HMt(u)-\Hdef^t(u)$.
More precisely, if we set $X_2 = \{ u \in \ell^2 \colon D(\Fop u)(\ell)=0
\text{ for all } \lvert \ell \rvert \leq M\}$, then
$X_2 \subset {\rm ker}(B^{M,t})$, while $X_1 := X_2^{\perp}$ is finite dimensional. According to Lemma \ref{lem: finite-rank correction} it follows that
\begin{equation} \label{eq:finite-rank-expansion-BMt}
   \big(I + B^{M,t} \big)^{-1} =
   I + \sum_{j=1}^{r+1}  \big(B^{M,t}\big)^{j} d_j,
\end{equation}
with $d_j$ depending continuously on the projected and restricted
operators $\pi_{X_1} B^{M,t}|_{X_1}$. In particular, these constants remain
uniformly bounded in $z, t$ and $u \in \mathcal{U}$.
Therefore, we only have to estimate
\[
   \big(B^{M,t}\big)^j \RMt_z
   = \Big(\RMt_z \Fopd (\HMt - H^t)\Fop\Big)^{j} \RMt_z
\]
for $1 \leq j \leq r+1$. To that end, we note that similarly as in
Lemma~\ref{lem: aux perturbation}
\begin{align} \label{eq:estimateforlargefiniterankpart1}
\lvert (\Fopd (H^{t,M}-H^t) \Fop)_{mn} \rvert
&\lesssim  t \sum_{\lvert \ell \rvert \leq M} \lvert \ell \rvert_{l^0}^{-d} \lvert \ell -m \rvert_{l^0}^{-d} \lvert \ell-n \rvert_{l^0}^{-d} \nonumber \\
&\leq   \sum_{\ell} \lvert \ell \rvert_{l^0}^{-d} \lvert \ell -m \rvert_{l^0}^{-d} \lvert \ell-n \rvert_{l^0}^{-d} \nonumber \\
&\lesssim  \Lr_{1}(m,n),
\end{align}
based on \eqref{eq: auxiliary estimates eq6cor2}. Recall also from \eqref{eq:decay-estimate-RMt} that
\begin{equation} \label{eq:estimateforlargefiniterankpart2}
   \lvert (\RMt_z(u)- \Rhom_z)_{mn} \rvert
   \lesssim \Lr_{1}^M (m,n)  \leq \Lr_{1}(m,n),
\end{equation}
hence we can now use \eqref{eq: auxprod} with $M=0$ to deduce
\begin{equation*}
\Big| \big[ \big(\RMt_z(u) \Fopd (H^{t,M}-H^t) \Fop \big)^{j} \RMt_z(u)
      \big]_{mn} \Big| \lesssim \Lr_{1}(m,n),
\end{equation*}
where the implied constant is independent of $z \in \mathcal{C}, t \in [0, 1]$
and $u \in \mathcal{U}$. Combined with
\eqref{eq:finite-rank-expansion-BMt} this completes the proof.
\end{proof}

% !TEX root = dffe.tex

\section{Locality Estimates for $S$}
\label{sec: locality}
In the following, will use the definition
\begin{equation} \label{eq:defSell2}
   \mathcal{S}^+_\ell(u) := -\smfrac{1}{2}{\rm Trace} \Big[\log^+ \big(\Fopd \Hdef(u) \Fop\big)\Big]_{\ell\ell},
\end{equation}
for $u$ satisfying \eqref{eq:logpluscondition}. Of course, we have $\mathcal{S}^+_\ell(u) =\mathcal{S}_\ell(u)$ included as a special case if  \eqref{eq:logcondition} is true.

Let us start with the regularity claim.
\begin{proof}[Proof of Theorem \ref{theo: main result} (1)]
According to Lemma \ref{lem:energyregular}, $\Hdef(u), \Hhom(u) : \Wi \to \mathcal{L}(\Wi, (\Wi)')$ are $(p-2)$-times continuously Fréchet differentiable. Due to Lemma \ref{th:properties_F} and Lemma \ref{th:spectrum_bound}, $\Rhom_z(0)$ and $\Rdef_z(\us)$ exist. As the set of invertible linear operators is open and inverting is smooth, we find that $\Rhom_z(u), \Rdef_z(\us+u): B_\delta(0) \subset \Wi \to \mathcal{L}(\ell^2, \ell^2)$ to be $(p-2)$-times continuously Fréchet differentiable for $\delta>0$ small enough. All of this can be done uniformly in $z \in \mathcal{C}$. Therefore, the same regularity holds true for $\log\big(\Fopd \Hdef(u) \Fop\big)$, $\log\big(\Fopd \Hhom(u) \Fop\big)$, and any of their components.
\end{proof}

Based on $\Hdef^t(u) = (1-t) \Hhom(0) + t \Hdef(u)$ for $u \in \mathcal{U}$, define
\[\SS_\ell^t(u) := -\smfrac{1}{2} {\rm Trace} \Big[\logp \big(\Fopd \Hdef^t(u) \Fop\big)\Big]_{\ell\ell},\]
then
\[\SS^+_\ell(u) = \SS_\ell^0(u) + \frac{\partial \SS_\ell^0(u) }{\partial t} + \int_0^1 (1-t) \frac{\partial^2 \SS_\ell^t(u) }{\partial^2 t}\,dt.\]

Indeed, we directly check that $t \mapsto \SS_\ell^t$ is twice differentiable with
\begin{equation*}
\frac{\partial \SS_\ell^t(u) }{\partial t} = -\frac{1}{2}  \frac{1}{2\pi i} \oint_{\mathcal{C}} \log z\, {\rm Trace} \big( \Rr_z^t\Fopd (\Hdef(u)-\Hhom(0)) \Fop\Rr_z^t \big)_{\ell \ell}\,dz
\end{equation*}
and
\begin{equation*}
\frac{\partial^2 \SS_\ell^t(u) }{\partial t^2} =  -\frac{1}{2} 2\frac{1}{2\pi i} \oint_{\mathcal{C}} \log z\, {\rm Trace} \big( \Rr_z^t\Fopd (\Hdef(u)-\Hhom(0)) \Fop\Rr_z^t\Fopd (\Hdef(u)-\Hhom(0)) \Fop\Rr_z^t \big)_{\ell \ell}\,dz.
\end{equation*}
In particular, we find $\SS_\ell^0(u) = 0$ and
\begin{align*}
\frac{\partial \SS_\ell^0(u) }{\partial t} &= -\frac{1}{2} \frac{1}{2\pi i} \oint_{\mathcal{C}} \frac{\log z}{(z-1)^2} {\rm Trace} \big( \Fopd (\Hdef(u)-\Hhom(0)) \Fop \big)_{\ell \ell}\,dz \\
&= -\frac{1}{2} {\rm Trace} \big( \Fopd (\Hdef(u)-\Hhom(0)) \Fop \big)_{\ell \ell}.
\end{align*}
We can then write
\begin{align*}
\frac{\partial \SS_\ell^0(u) }{\partial t} &= -\frac{1}{2} {\rm Trace} \big( \Fopd \langle \delta \Hhom(0),u\rangle \Fop \big)_{\ell \ell}\\
&\quad  -\frac{1}{2}\int_0^1 (1-s) {\rm Trace} \big( \Fopd \langle \delta^2 \Hhom(su)u,u\rangle \Fop \big)_{\ell \ell}  \,ds\\
&\quad -\frac{1}{2}{\rm Trace} \big( \Fopd \big( \Hdef(u)-\Hhom(u)\big) \Fop \big)_{\ell \ell}.
\end{align*}
Also note that
\begin{align*}
\langle \delta \Shom_\ell(0),u \rangle &= -\frac{1}{2} \frac{1}{2\pi i} \oint_{\mathcal{C}} \log z\, {\rm Trace} \big( \Rhom_z \Fopd \langle \delta \Hhom(0),u\rangle \Fop \Rhom_z \big)_{\ell \ell}\,dz\\
&=-\frac{1}{2} {\rm Trace} \big( \Fopd \langle \delta \Hhom(0),u\rangle \Fop \big)_{\ell \ell}.
\end{align*}
Overall, we have decomposed $\SS^+_\ell(u)$ into
\begin{equation}
      \label{eq:decompose S+}
\SS^+_\ell(u) = \langle \delta \Shom_\ell(0),u \rangle+ \SS_{\ell,1}(u)+ \SS_{\ell,2}(u)+\SS_{\ell,3}(u),
\end{equation}
with
\begin{align*}
\SS_{\ell,1}(u) &:=  -\frac{1}{2}\int_0^1 (1-s) {\rm Trace} \big( \Fopd \langle \delta^2 \Hhom(su)u,u\rangle \Fop \big)_{\ell \ell}  \,ds,\\
\SS_{\ell,2}(u) &:=-\frac{1}{2} {\rm Trace} \big( \Fopd \big( \Hdef(u)-\Hhom(u)\big) \Fop \big)_{\ell \ell},\\
\SS_{\ell,3}(u) &:=-\frac{1}{2} \int_0^1 \oint_{\mathcal{C}} (1-t) \frac{2}{2\pi i}  \log z \\
 &\qquad {\rm Trace} \big( \Rr_z^t\Fopd (\Hdef(u)-\Hhom(0)) \Fop\Rr_z^t\Fopd (\Hdef(u)-\Hhom(0)) \Fop\Rr_z^t \big)_{\ell \ell}\,dz\,dt. \nonumber
\end{align*}

This decomposition will be useful in light of the properties we establish
next:

\begin{proposition} \label{prop: localitydecomposedentropy}
For $u \in \mathcal{U}$
\begin{equation}\label{eq: localitydecomposedentropy1}
\lvert \langle \delta \Shom_\ell(0),u \rangle \rvert \lesssim \lvert \ell \rvert_{l^0}^{-d},
\end{equation}
but
\begin{equation}\label{eq: localitydecomposedentropy2}
\lvert \SS_{\ell,i}(u) \rvert \lesssim \lvert \ell \rvert_{l^0}^{-2d},\quad \text{for } i \in\{1,2\},
\end{equation}
and
\begin{equation}\label{eq: localitydecomposedentropy3}
\lvert \SS_{\ell,3}(u) \rvert \lesssim \lvert \ell \rvert_{l^2}^{-2d}.
\end{equation}

In particular, the sum
\[\SS^+(u):=\sum_\ell \Big(\SS^+_\ell(u) - \langle \delta \Shom_\ell(0),u \rangle \Big)\]
converges absolutely.
\end{proposition}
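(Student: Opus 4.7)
The plan is to bound each of the four summands on the right-hand side of the decomposition \eqref{eq:decompose S+} separately, relying on three ingredients: the decay $|D_{\brho}F(\ell)| \lesssim |\ell|^{1-d-j}_{l^0}$ from Lemma \ref{th:properties_F}(i); the defining decay $|Du(\ell)| \lesssim |\ell|^{-d}_{l^0}$ for $u \in \mathcal{U}$; and the resolvent estimate of Proposition \ref{prop: resolvent new version}, together with the convolution-type sums collected in the appendix.

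For \eqref{eq: localitydecomposedentropy1} I would expand the diagonal block $[\Fopd\langle\delta\Hhom(0),u\rangle\Fop]_{\ell\ell}$ explicitly as a sum over lattice sites $m \in \L$ of terms of the form $\nabla^3 V(0)[Du(m), DF(m-\ell), DF(m-\ell)]$. The pointwise bound $|DF(m-\ell)|^2 |Du(m)| \lesssim |m-\ell|^{-2d}_{l^0}|m|^{-d}_{l^0}$ reduces the claim to the sum estimate $\sum_m |m-\ell|^{-2d}_{l^0}|m|^{-d}_{l^0} \lesssim |\ell|^{-d}_{l^0}$ (up to polylog factors), in which the dominant contribution comes from the region $|m|\sim|\ell|$. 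The same strategy gives the $\SS_{\ell,1}$ bound: since $\langle\delta^2\Hhom(su)u,u\rangle$ is quadratic rather than linear in $Du$, the sum becomes $\sum_m |m-\ell|^{-2d}_{l^0}|m|^{-2d}_{l^0} \lesssim |\ell|^{-2d}_{l^0}$. For $\SS_{\ell,2}$, the assumption $V_\ell \equiv V$ for $|\ell| > \rcut$ makes $\Hdef(u) - \Hhom(u)$ supported on the defect core, so $|\SS_{\ell,2}(u)| \lesssim \sum_{|m|\leq\rcut}|DF(m-\ell)|^2 \lesssim |\ell|^{-2d}_{l^0}$ for $|\ell| \gg \rcut$ and bounded otherwise.

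The bound \eqref{eq: localitydecomposedentropy3} on $\SS_{\ell,3}$ is the principal difficulty. I would first establish $|B_{mn}| \lesssim \Lr_1(m,n)$ for the operator $B := \Fopd(\Hdef(u)-\Hhom(0))\Fop$, repeating the argument of \eqref{eq: small pertubation} but without the $M$-truncation: the bound $|\nabla^2V_\ell(Du(\ell)) - \nabla^2V(0)| \lesssim |\ell|^{-d}_{l^0}$ for $|\ell| > \rcut$ (and $\lesssim 1$ on the core) combined with \eqref{eq: auxiliary estimates eq6cor2} yields the claimed decay. Next I would decompose $\Rr_z^t = (z-1)^{-1} I + (\Rr_z^t - \Rhom_z)$ in each of the three slots of $(\Rr_z^t B \Rr_z^t B \Rr_z^t)_{\ell\ell}$, producing a sum of eight terms. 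The leading one is $(z-1)^{-3}(B^2)_{\ell\ell}$, whose magnitude is controlled by $\sum_m \Lr_1(\ell,m)^2 \lesssim |\ell|^{-2d}_{l^2}$; this follows from the sharpened $\Lr_1$-convolution estimate \eqref{eq:sharperthansubmulti} specialised to $M = 0$ and $m = n = \ell$, noting that $|\ell|^{-d}_{l^1}\cdot|\ell|^{-d}_{l^1} = |\ell|^{-2d}_{l^2}$. The seven subleading terms each contain at least one factor of $(\Rr_z^t - \Rhom_z)$ (decaying like $\Lr_1$ by Proposition \ref{prop: resolvent new version}) and reduce to iterated $\Lr_1$-convolutions, yielding the same $|\ell|^{-2d}_{l^2}$ rate by repeated use of the same estimate. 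Uniformity of all bounds in $z \in \mathcal{C}$ and $t \in [0,1]$ lets the $dz$- and $dt$-integrals pass through. Absolute convergence of $\sum_\ell (\SS^+_\ell(u) - \langle\delta\Shom_\ell(0),u\rangle)$ then follows at once from $\sum_\ell |\ell|^{-2d}_{l^2} < \infty$.

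The main obstacle is the bookkeeping in the $\SS_{\ell,3}$ estimate: there are eight resolvent-expansion terms to handle, and the logarithmic factors in $\Lr_1$ risk accumulating through iterated convolutions. The sharpened intermediate bound \eqref{eq:sharperthansubmulti}, which preserves the separated structure $|n|^{-d}_{l^1,M}|m|^{-d}_{l^1,M}$ rather than collapsing back into a sub-multiplicative $\Lr_1$-bound, appears essential to prevent the log power from exceeding the claimed $l^2$ count.
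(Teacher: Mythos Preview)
Your proposal is correct and matches the paper's argument essentially line for line for \eqref{eq: localitydecomposedentropy1} and \eqref{eq: localitydecomposedentropy2}. For \eqref{eq: localitydecomposedentropy3} the paper avoids the eight-term expansion: since $|(\Rr_z^t)_{mn}| \lesssim \delta_{mn} + \Lr_1(m,n)$ and $\Lr_1$ is submultiplicative (Lemma~\ref{lem: aux perturbation} with $M=0$), one collapses the five-factor product $(\Rr_z^t B \Rr_z^t B \Rr_z^t)_{\ell\ell}$ directly to $\sum_m \Lr_1(\ell,m)\Lr_1(m,\ell)$ and then applies \eqref{eq:sharperthansubmulti} once; your explicit route reaches the same place with more bookkeeping. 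One small correction to your closing diagnosis: submultiplicativity already returns $\Lr_1$ (not $\Lr_k$ with growing $k$), so logs do not accumulate through the iterated convolutions; the reason \eqref{eq:sharperthansubmulti} is indispensable is that the naive submultiplicative endpoint $\Lr_1(\ell,\ell)\lesssim 1$ carries no decay in $\ell$, and only the separated form $|n|_{l^1}^{-d}|m|_{l^1}^{-d}$ recovers the needed $|\ell|_{l^2}^{-2d}$ on the diagonal.
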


First, let us look more closely at the variations of $\Hhom$. Remember that $\Hhom(u) = \delta^2 \Ehom(u)$. We can write its components as
\[\Hhom(u)_{minj} := (\Hhom(u)_{mn})_{ij} = \sum_{\xi \in \Lambda} \nabla^2 V (Du(\xi))[D(\delta_m e_i)(\xi), D(\delta_n e_j)(\xi)].\]
Accordingly, the first variation is
\[ \big[\langle \delta \Hhom(u), v \rangle \big]_{minj}= \sum_{\xi \in \Lambda} \nabla^3 V (Du(\xi))[D(\delta_m e_i)(\xi), D(\delta_n e_j)(\xi), Dv(\xi)].\]

Similarly, for the second variation of $\Hdef$ we will use the notation
\[\big[ \langle\delta^2 \Hhom (u)v,w \rangle \big]_{minj} = \sum_{\xi \in \Lambda}  \nabla^4 V(Du(\xi))[D(\delta_m e_i)(\xi), D(\delta_n e_j)(\xi),Dv(\xi),Dw(\xi)].\]

\begin{lemma} For all $t \in [0,1]$ uniformly, it holds that
\label{lem: F H F}
\begin{align}
\Big\lvert \big[ \Fopd \langle \delta \Hhom(tu),v\rangle \Fop \big]_{mn} \Big\rvert &\lesssim \sum_{\xi \in \Lambda} |\xi-m|_{l^0}^{-d}\, |\xi-n|_{l^0}^{-d}\, \lvert Dv(\xi) \rvert,\label{eq: F dHhom F}
\\ \Big\lvert \big[ \Fopd \langle \delta^2 \Hhom(tu)v,w\rangle \Fop \big]_{mn} \Big\rvert &\lesssim \sum_{\xi \in \Lambda} |\xi-m|_{l^0}^{-d}\, |\xi-n|_{l^0}^{-d}\,  \lvert Dv(\xi) \rvert \lvert\,  Dw(\xi) \rvert. \label{eq: FddHhomF}
\\ \Big\lvert \big[ \Fopd (\Hdef(u)-\Hhom(u)) \Fop \big]_{mn} \Big\rvert &\lesssim |m|_{l^0}^{-d}\, |n|_{l^0}^{-d}. \label{eq: FHdef-HhomF}
\end{align}
\end{lemma}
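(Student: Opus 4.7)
The plan is to unpack the matrix elements of $\Fopd X \Fop$ in terms of the action of $\Fop$ on the canonical basis vectors $\delta_m e_i$, and then reduce every bound to the pointwise decay estimate $|D_\rho F(\ell)| \lesssim |\ell|_{l^0}^{-d}$ supplied by Lemma~\ref{th:properties_F}(i) with $j=1$. Using $\Fop^* = \Fop$ (which follows from the symmetry of $F$, itself a consequence of the inversion symmetry of $\hat h(k)^{-1/2}$) together with the definition of the Hessian components, for any $v \in \Wi$ and $u$ in the relevant neighbourhood I would write
\begin{equation*}
   \big[ \Fopd \langle \delta \Hhom(tu), v\rangle \Fop \big]_{minj}
   = \sum_{\xi \in \Lambda} \nabla^3 V(tDu(\xi))\big[D(\Fop \delta_m e_i)(\xi),\, D(\Fop \delta_n e_j)(\xi),\, Dv(\xi)\big],
\end{equation*}
and analogously for the fourth variation. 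The crucial point is that the constant shift $F(-m)$ in the definition \eqref{eq:defn_Fop} of $\Fop$ drops out under any finite difference, so
\begin{equation*}
   D_\rho (\Fop \delta_m e_i)(\xi) = (D_\rho F(\xi - m))\, e_i,
\end{equation*}
which, by Lemma~\ref{th:properties_F}(i), satisfies $|D_\rho (\Fop \delta_m e_i)(\xi)| \lesssim |\xi-m|_{l^0}^{-d}$ uniformly in $\rho \in \Rc$.

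The next step is to observe that for $u \in \mathcal{U}$ and $t \in [0,1]$ the argument $tDu(\xi)$ remains bounded in $\ell^\infty$, so by the $C^p$-regularity of $V$ with $p\geq 4$ the tensors $\nabla^3 V(tDu(\xi))$ and $\nabla^4 V(tDu(\xi))$ stay uniformly bounded. Substituting the pointwise bound on $D\Fop \delta_m e_i$ into the trilinear, respectively quadrilinear, form immediately yields \eqref{eq: F dHhom F} and \eqref{eq: FddHhomF}, with the constants depending only on $V$ and on the set $\mathcal{U}$. No exchange of summation issues arise because all the sums over $\xi$ are absolutely convergent: the bound $|\xi-m|_{l^0}^{-d} |\xi-n|_{l^0}^{-d}$ is summable, and $Dv, Dw \in \ell^2 \subset \ell^\infty$ (or we simply absorb them into the estimate).

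For \eqref{eq: FHdef-HhomF} the key additional input is the hypothesis that $V_\ell \equiv V$ whenever $|\ell| > \rcut$. Consequently the bilinear form $\Hdef(u) - \Hhom(u)$ reduces to a finite sum,
\begin{equation*}
   \big[ \Fopd (\Hdef(u) - \Hhom(u)) \Fop \big]_{minj}
   = \sum_{|\xi| \leq \rcut} \big( \nabla^2 V_\xi(Du(\xi)) - \nabla^2 V(Du(\xi)) \big)\big[D(\Fop \delta_m e_i)(\xi), D(\Fop \delta_n e_j)(\xi)\big].
\end{equation*}
Applying the $F$-decay estimate site by site gives a bound of the form $\sum_{|\xi| \leq \rcut} |\xi-m|_{l^0}^{-d} |\xi-n|_{l^0}^{-d}$, and since $\rcut$ is fixed one has $|\xi-m|_{l^0}^{-d} \lesssim |m|_{l^0}^{-d}$ and $|\xi-n|_{l^0}^{-d} \lesssim |n|_{l^0}^{-d}$ uniformly over this finite set, yielding $|m|_{l^0}^{-d} |n|_{l^0}^{-d}$ as required.

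I expect no genuine obstacle: the one place to be careful is the bookkeeping around the shift $F(-m)$ in $\Fop$, which is eliminated by the finite differences that appear naturally in the Hessians. Once that is observed, the proof is a direct substitution of the Lemma~\ref{th:properties_F}(i) decay bounds into the multilinear expressions for the variations of $\Ehom$ and $\Edef$, together with the local (near-defect) support of $\Hdef - \Hhom$ in the last claim.
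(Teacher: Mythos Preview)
Your proposal is correct and follows essentially the same route as the paper's proof: write the $(m,n)$-block via $D(\Fop\delta_m e_i)(\xi)=DF(\xi-m)e_i$, invoke Lemma~\ref{th:properties_F}(i) for the $|\xi-m|_{l^0}^{-d}$ decay, use uniform bounds on $\nabla^3 V,\nabla^4 V$, and for \eqref{eq: FHdef-HhomF} exploit that $V_\xi=V$ for $|\xi|>\rcut$ to reduce to a finite sum. Your explicit remark that the shift $F(-m)$ in \eqref{eq:defn_Fop} drops out under $D_\rho$ is a helpful clarification the paper leaves implicit.
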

\begin{proof}
We have
\[\big[ \Fopd \delta \Hhom(tu)(\xi) \Fop \big]_{minj} = \nabla^3V(tDu(\xi))[DF_{\cdot i}(\xi-m),DF_{\cdot j}(\xi-n)], \]
and $\lvert DF(\xi) \rvert \lesssim \lvert \xi \rvert_{l^0}^{-d}$ according to Lemma~\ref{th:properties_F}. The same is true for the second variation with $\nabla^4 V$. As $V=V_\xi$ for $\lvert \xi \rvert \geq r_{\rm cut}$, we find
\begin{align*}
\Big\lvert \big[ \Fopd (\Hdef(u)-\Hhom(u)) \Fop \big]_{minj} \Big\rvert &\lesssim  \sum_{\lvert \xi \rvert < r_{\rm cut}} \Big(\nabla^2V_\xi(Du(\xi))-\nabla^2V(Du(\xi))\Big)[DF_{\cdot i}(\xi-m),DF_{\cdot j}(\xi-n)]\\
&\lesssim \sum_{\lvert \xi \rvert < r_{\rm cut}}|\xi-m|_{l^0}^{-d}\, |\xi-n|_{l^0}^{-d}\\
&\lesssim |m|_{l^0}^{-d}\, |n|_{l^0}^{-d}. \qedhere
\end{align*}
\end{proof}
We now have all the tools to prove Proposition \ref{prop: localitydecomposedentropy}.

\begin{proof}[Proof of Proposition \ref{prop: localitydecomposedentropy}]
Let us begin with the first order term. Using \eqref{eq: F dHhom F}, $u \in \mathcal{U}$,
and \eqref{eq: auxiliary estimates eq4} we find that
\begin{align*}
\lvert \langle \delta \Shom_\ell(0),u \rangle \rvert &\lesssim \big\lvert \sum_{\xi \in \Lambda} {\rm Trace} \big( \Fopd \delta \Hhom(0)(\xi)[Du(\xi)] \Fop \big)_{\ell \ell} \big\rvert\\
&\lesssim \sum_{\xi \in \Lambda} \lvert \xi - \ell \rvert_{l^0}^{-2d} \lvert Du(\xi) \rvert\\
&\lesssim \sum_{\xi \in \Lambda} \lvert \xi - \ell \rvert_{l^0}^{-2d} \lvert \xi \rvert_{l^0}^{-d}\\
&\lesssim \lvert \ell \rvert_{l^0}^{-d}.
\end{align*}
This proves \eqref{eq: localitydecomposedentropy1}. Equation \eqref{eq: localitydecomposedentropy2} for $\SS_{\ell,2}(u)$ is already included in \eqref{eq: FHdef-HhomF} in Lemma \ref{lem: F H F}.

To estimate $\SS_{\ell,1}(u)$ we can use \eqref{eq: FddHhomF}
and \eqref{eq: auxiliary estimates eq5} to see that
\begin{align*}
\Big\lvert \big( \Fopd \langle \delta^2 \Hhom(su)u,u\rangle \Fop \big)_{\ell \ell} \Big\lvert &\lesssim \sum_{\xi \in \Lambda} \lvert \xi - \ell \rvert_{l^0}^{-2d} \lvert Du(\xi) \rvert^2\\
&\lesssim \sum_{\xi \in \Lambda} \lvert \xi - \ell \rvert_{l^0}^{-2d} \lvert \xi \rvert_{l^0}^{-2d}\\
&\lesssim \lvert \ell \rvert_{l^0}^{-2d}.
\end{align*}

The last remaining claim \eqref{eq: localitydecomposedentropy3}
%about $\SS_{\ell,3}(u)$,
requires the resolvent estimates from \S~\ref{sec:resolventestimates}. We have
\begin{align*}
\SS_{\ell,3}(u) &= -\frac{1}{2} \int_0^1 (1-t) \frac{\partial^2 \SS_\ell^t(u) }{\partial^2 t}\,dt\\
&=-\frac{1}{2} \int_0^1 \oint_{\mathcal{C}} (1-t) \frac{2}{2\pi i}  \log z\, {\rm Trace} \big(  \\
& \hspace{2cm} \Rr_z^t\Fopd (\Hdef(u)-\Hhom(0)) \Fop\Rr_z^t\Fopd (\Hdef(u)-\Hhom(0)) \Fop\Rr_z^t \Big)_{\ell \ell}\,dz\,dt.
\end{align*}
We already know from Proposition \ref{prop: resolvent new version} that
\[\Big|\big(\Rr_z^t(u)-\Rhom_z \big)_{mn}\Big| \lesssim \Lr_1(m,n),  \]
with $\Rhom_z = (z-1)^{-1} I$. Furthermore, according to Lemma \ref{lem: F H F}
and \eqref{eq: auxiliary estimates eq6cor2}
\begin{align*}
\Big|\big(\Fopd (\Hdef(u)-\Hhom(0)) \Fop\big)_{minj}\Big| &\lesssim \lvert m \rvert_{l^0}^{-d}\lvert n \rvert_{l^0}^{-d} + \Big|\big(\Fopd (\Hhom(u)-\Hhom(0)) \Fop\big)_{minj}\Big| \\
&\lesssim \lvert m \rvert_{l^0}^{-d}\lvert n \rvert_{l^0}^{-d} + \sum_{\xi \in \Lambda} \lvert m-\xi \rvert_{l^0}^{-d}\lvert n-\xi \rvert_{l^0}^{-d} \lvert Du(\xi)\rvert \\
&\lesssim \lvert m \rvert_{l^0}^{-d}\lvert n \rvert_{l^0}^{-d} + \sum_{\xi \in \Lambda} \lvert m-\xi \rvert_{l^0}^{-d}\lvert n-\xi \rvert_{l^0}^{-d} \lvert \xi\rvert_{l^0}^{-d} \\
&\lesssim \Lr_1(m,n).
\end{align*}

$\Lr_1$ is submultiplicative up to a constant, in the sense that
(see Lemma \ref{lem: aux perturbation})
\[\sum_m \Lr_1(\ell,m) \Lr_1(m,n) \lesssim \Lr_1(\ell,n). \]
 As also
\[\sum_m \Rhom_z(\ell,m) \Lr_1(m,n) \lesssim \Lr_1(\ell,n), \]
we can apply the submultiplicativity several times, and use \eqref{eq:sharperthansubmulti}, to find
\begin{align*}
\lvert \SS_{\ell,3}(u) \rvert &\lesssim \sum_{m \in \Lambda} \Lr_1(\ell,m) \Lr_1(m,\ell)
% \\ &
\lesssim \lvert \ell \rvert^{-2d}_{l^{2}}. \qedhere
\end{align*}
\end{proof}

\begin{proof}[Proof of Theorem \ref{theo: main result} (2)]
As $\bar{u} \in \mathcal{U}$ with $S^+_\ell(\bar{u})=S_\ell(\bar{u})$ for all $\ell$, we can directly apply Proposition \ref{prop: localitydecomposedentropy} to see that
\[\sum_\ell \Big\lvert \SS_\ell(\bar{u}) - \langle \delta \Shom_\ell(0),\bar{u} \rangle \Big\rvert < \infty.  \qedhere\]
\end{proof}

% !TEX root = dffe.tex

\section{Periodic Cell Problem and Thermodynamic Limit}
\label{sec:limit}
\subsection{Discrete Fourier Transform}
\label{sec:limit:prelims}
Recall that $\L= \mA \Z^d$, $\L_N= \mB (-N,N]^d \cap \L$, and that $\mA, \mB$ are non-singular with $\mA^{-1}\mB \in \Z^{d \times d}$. We will extend that notation and later also write $\L_t= \mB (- t ,t ]^d\cap \L$ for $t \in \R$, $t >0$, to conveniently discuss smaller and larger sections of the lattice. Based on the periodic cell, we will also use the short notation
\begin{align*}
\lvert \ell \rvert^{-\alpha}_{l^k, \L_N} &:= \max_{z \in \Z^d} \lvert \ell + 2N \mB z \rvert^{-\alpha}_{l^k}\\
\Lr_{k, \L_N}(n,m) &:= |n|_{l^k, \L_N}^{-d}|n-m|_{l^k, \L_N}^{-d}+|m|_{l^k, \L_N}^{-d}|n-m|_{l^k, \L_N}^{-d}+|n|_{l^k, \L_N}^{-d}|m|_{l^k, \L_N}^{-d},
\end{align*}
for estimates respecting the periodicity of the supercell approximation.

We wish to define a Fourier transform of functions $u : \L_N \to \R^m$. To that
end we characterize the dual group of $\L_N$. We expect that the following lemma
is known; indeed, special cases such as cubic domains for fcc or bcc crystals
are commonly used for FFT implementations~\cite{CsD:VMV:2008}. Lacking a clear
source for the general case $\mA \neq \mB$, we included a proof nonetheless.
\begin{lemma} \label{lem:dualgroup}
All the characters on $\L_N$ (i.e., the group homomorphisms $(\L_N, +) \to (\CN \setminus \{0\}, \cdot)$) are given by
\[\chi_k(\ell) = e^{i \ell k},\quad k \in \frac{\pi}{N} \mB^{-T}  \Z^d \cap \pi \mA^{-T} (-1,1]^d =: \B_N.\]
\end{lemma}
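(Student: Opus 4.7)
The plan is to identify $(\L_N,+)$ with the finite abelian quotient group $\L/(2N\mB\Z^d)$---this is consistent with the $\L_N$-periodicity condition $u(\ell+2N\mB z)=u(\ell)$ defining $\Wper_N$---and then combine three ingredients: (i) verify that each $\chi_k$, $k\in\B_N$, descends to a well-defined character; (ii) show injectivity of the map $k\mapsto \chi_k$ on $\B_N$; (iii) count, and invoke the fact that a finite abelian group has exactly as many characters as elements, so that the injection must be onto.

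For (i), the homomorphism property is immediate from $e^{ik\cdot(\ell_1+\ell_2)}=e^{ik\cdot\ell_1}e^{ik\cdot\ell_2}$. Well-definedness on the quotient is equivalent to $e^{i k\cdot 2N\mB z}=1$ for every $z\in\Z^d$, i.e.\ $Nk^T\mB z\in\pi\Z$ for every $z\in\Z^d$, which rearranges to $k\in\frac{\pi}{N}\mB^{-T}\Z^d$ --- precisely the first condition in the definition of $\B_N$. For (ii), if $\chi_{k_1}\equiv\chi_{k_2}$ on $\L_N$, then by periodicity $e^{i(k_1-k_2)\cdot\ell}=1$ for all $\ell\in\L=\mA\Z^d$, so $k_1-k_2\in 2\pi\mA^{-T}\Z^d$. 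Since $\pi\mA^{-T}(-1,1]^d$ is a fundamental domain for $2\pi\mA^{-T}\Z^d$, the assumption $k_1,k_2\in\B_N$ forces $k_1=k_2$.

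For (iii), since $\mA^{-1}\mB\in\Z^{d\times d}$, the finite group $\L_N=\mA\Z^d/(2N\mB\Z^d)\cong \Z^d/(2N\mA^{-1}\mB)\Z^d$ has order $(2N)^d |\det(\mA^{-1}\mB)|$. On the other hand, the lattice $\frac{\pi}{N}\mB^{-T}\Z^d$ has covolume $\pi^d/(N^d|\det\mB|)$ while $\pi\mA^{-T}(-1,1]^d$ is a fundamental domain of $2\pi\mA^{-T}\Z^d$, of volume $(2\pi)^d/|\det\mA|$. Comparing,
\[
 |\B_N| \;=\; \frac{(2\pi)^d/|\det\mA|}{\pi^d/(N^d|\det\mB|)} \;=\; (2N)^d|\det(\mA^{-1}\mB)|,
\]
matching the order of $\L_N$ exactly (the half-open choice $(-1,1]^d$ is what makes the count exact rather than only asymptotic). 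Since any finite abelian group admits exactly as many characters as elements, the injective map $k\mapsto\chi_k$ from $\B_N$ into the dual group is automatically a bijection.

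There is no substantive obstacle here; the only item needing a touch of care is the volume-to-cardinality computation, where one must track the factors $|\det\mA|$, $|\det\mB|$ and the half-open convention consistently so that the count of $\B_N$ really equals the group order rather than differing by a boundary correction.
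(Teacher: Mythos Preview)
Your proof is correct and follows essentially the same three-step strategy as the paper: verify that each $\chi_k$ is a well-defined character, show that distinct $k\in\B_N$ give distinct characters, and conclude by a cardinality count using that a finite abelian group has as many characters as elements. The only differences are cosmetic: the paper first treats the case of the unit cell $G=\mB(0,1]^d\cap\L$ and then rescales by $2N$, whereas you work directly with $\L_N$; and you spell out the volume/covolume computation for $|\B_N|$ while the paper simply asserts $|G|=|\hat G|$. Your counting step implicitly uses that $2\pi\mA^{-T}\Z^d$ is a sublattice of $\frac{\pi}{N}\mB^{-T}\Z^d$ (which follows from $\mA^{-1}\mB\in\Z^{d\times d}$), so that a fundamental domain of the coarser lattice contains exactly the index number of points of the finer one; you might state this explicitly, but the argument is sound.
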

\begin{proof}
First we show that the characters on $G= \mB (0,1]^d \cap \L$ are precisely given by $\chi_k$ with $k \in 2 \pi \mB^{-T} \Z^d \cap \pi \mA^{-T} (-1,1]^d =: \hat{G}$. Indeed, as $e^{i k\mB e_j} = 1$ for $k \in 2 \pi \mB^{-T} \Z^d$ and all $j$, the $\chi_k$ with $k \in 2 \pi \mB^{-T} \Z^d$ are all characters on $G$. Furthermore, $\chi_k = \chi_{k'}$ if and only if $e^{i (k-k')\ell}=1$ for all $\ell \in G$. Since $k, k' \in 2 \pi \mB^{-T} \Z^d$, this is equivalent to $e^{i (k-k')\ell}=1$ for all $\ell \in \mA \Z^d$. This is true if and only if $k - k' \in 2 \pi \mA^{-T} \Z^d$. In particular, all the $\chi_k$ with $k \in \hat{G}$ are different characters. As also $\lvert G \rvert = \lvert \hat{G} \rvert$, these are already all characters.

Choosing $\mB'= 2N\mB$ and shifting $G$ by multiples of $\mB$ gives the desired result.
\end{proof}
\begin{corollary}
With $\B_N$ defined in Lemma~\ref{lem:dualgroup} we have
\begin{align}
\sum_{\ell \in \L_N} e^{i \ell (k-k')} &= \delta_{k k'} \lvert \L_N \rvert \quad \forall k, k' \in \B_N, \qquad \text{and}  \label{eq:fouriersum1} \\
\sum_{k \in \B_N} e^{i (\ell-\ell')k} &= \delta_{\ell \ell'} \lvert \B_N \rvert \quad \forall \ell, \ell' \in \Lambda_N \label{eq:fouriersum2}.
\end{align}
\end{corollary}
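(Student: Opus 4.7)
The plan is to recognise both identities as instances of the standard orthogonality of characters on a finite abelian group. Lemma~\ref{lem:dualgroup} has already identified the character group of $\L_N$ (viewed as the quotient $\L/(2N\mB\Z^d)$) with $\B_N$ via $k\mapsto\chi_k$, so the two sums are, respectively, the inner product of two characters over the group and the ``dual'' inner product over the character group; both identities are then forced by Pontryagin duality for finite abelian groups. A bare-hands proof avoiding that language is very short, so I would simply carry it out by the usual shift trick.

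For the first identity, I would first observe that because $k,k'\in\B_N\subset \frac{\pi}{N}\mB^{-T}\Z^d$, one has $(k-k')\cdot 2N\mB z\in 2\pi\Z$ for all $z\in\Z^d$, so the function $\ell\mapsto e^{i\ell(k-k')}$ is genuinely $2N\mB\Z^d$-periodic, and the sum $S_{k,k'}:=\sum_{\ell\in\L_N}e^{i\ell(k-k')}$ depends only on the coset structure of $\L_N$. Shifting the summation index by any $\ell_0\in\L_N$ gives $S_{k,k'}=e^{i\ell_0(k-k')}S_{k,k'}$. If $k=k'$ the sum is clearly $|\L_N|$; if $k\neq k'$, then by the uniqueness part of Lemma~\ref{lem:dualgroup} the characters $\chi_k$ and $\chi_{k'}$ differ on $\L_N$, hence there exists some $\ell_0\in\L_N$ with $e^{i\ell_0(k-k')}\neq 1$, which forces $S_{k,k'}=0$.

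For the second identity I would use the same shift argument but on the dual side. Set $T_{\ell,\ell'}:=\sum_{k\in\B_N}e^{i(\ell-\ell')k}$. For any $k_0\in\B_N$ the map $k\mapsto k+k_0$ is a bijection of $\B_N$ (interpreted modulo $2\pi\mA^{-T}\Z^d$), and $e^{i(\ell-\ell')\cdot 2\pi\mA^{-T}z}=1$ since $\ell-\ell'\in\mA\Z^d$, so $T_{\ell,\ell'}=e^{i(\ell-\ell')k_0}T_{\ell,\ell'}$. If $\ell=\ell'$ one gets $T=|\B_N|$ trivially; otherwise one needs a $k_0\in\B_N$ with $e^{i(\ell-\ell')k_0}\neq 1$. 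Existence of such $k_0$ follows because the counting argument in Lemma~\ref{lem:dualgroup} shows $|\B_N|=|\L_N|$, so the map $\L_N\to\mathbb{C}^{\B_N}$ given by $\ell\mapsto(\chi_k(\ell))_{k\in\B_N}$ must be injective (for otherwise the characters would fail to separate points, contradicting that the first orthogonality relation makes $(|\L_N|^{-1/2}\chi_k)_{k\in\B_N}$ an orthonormal system of size equal to the dimension of $\mathbb{C}^{\L_N}$).

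The only mildly delicate point is this last separation argument; an even cleaner route is to note that the first identity, once established, shows that the $|\L_N|\times|\B_N|$ matrix $(e^{i\ell k})_{\ell,k}$ has orthogonal rows of squared norm $|\L_N|$, and since $|\L_N|=|\B_N|$ it is a square matrix, so its columns are orthogonal as well, yielding the second identity immediately. I would take this shortcut in the final write-up.
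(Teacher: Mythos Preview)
Your proof is correct and is essentially the paper's approach made explicit: the paper simply cites that the characters of a finite abelian group form an orthogonal basis and then invokes Pontryagin duality for the second identity, while you unpack this via the standard shift trick and the square-matrix argument. One cosmetic slip: in your shortcut, the first identity gives orthogonality of the \emph{columns} of $(e^{i\ell k})_{\ell,k}$ (indexed by $k$), not the rows, but the conclusion is unaffected since the matrix is square.
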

\begin{proof}
Identity \eqref{eq:fouriersum1} follows directly from Lemma \ref{lem:dualgroup}, as the set of characters on any finite Abelian group $G$ forms an orthogonal basis of the functions $G \to \CN$, see \cite[Thm.\,3.2.2]{Luong2009}. Identity \eqref{eq:fouriersum2} follows for the same reason, using the Pontryagin duality theorem.
\end{proof}

We can now define the {\em discrete Fourier transform} by
\begin{align*}
   \hat{g}(k) &= \sum_{\ell \in \L_N} e^{i k \cdot \ell} g(\ell), \qquad \text{for } k \in \B_N.
\end{align*}
According to \eqref{eq:fouriersum2}, the inverse is given by
\begin{align*}
   g(\ell) &= \frac{1}{\lvert \B_N \rvert}\sum_{k \in \B_N} e^{-i k \cdot \ell} \hat{g}(k), \qquad \text{for } \ell \in \L_N,
\end{align*}
with $\lvert \B_N \rvert = \lvert \L_N \rvert=(2N)^d \lvert \det(\mA^{-1} \mB) \rvert$. Although we use the same notation as for the semi-discrete Fourier transform, it will always be clear from context which one is meant.

Given $f : \Lambda \to \R^m$, for which the SDFT $\hat{f}$ is
well-defined, we can obtain a $\L_N$-periodic ``projection'' $f_N : \L_N \to
\R^m$ via
\begin{equation}   \label{eq:defn periodic projection}
   f_N(\ell) := \frac{1}{\lvert \B_N \rvert} \sum_{k \in \B_N} e^{- i k \cdot \ell} \hat{f}(k).
\end{equation}
% where $\hat{f}$ is the semi-discrete Fourier transform of $f$.

\begin{lemma}  \label{th:trapezoidal rule}
   Suppose that $f : \L \to \R^m$ with $|f(\ell)| \lesssim |\ell|_{l^0}^{-\alpha}$
   where $\alpha > d$ (in particular, $\hat{f} \in L^\infty(\B)$), then
   \begin{equation*}
      \| f - f_N \|_{\ell^\infty(\L_N)} \lesssim N^{-\alpha}.
   \end{equation*}
\end{lemma}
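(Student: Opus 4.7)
The plan is a standard aliasing/Poisson-type calculation, using the orthogonality relations from Lemma \ref{lem:dualgroup} in place of a trapezoidal-rule quadrature estimate. Since $|f(\ell)| \lesssim |\ell|_{l^0}^{-\alpha}$ with $\alpha > d$, we have $f \in \ell^1(\L)$, so the SDFT $\hat f(k) = \sum_{m \in \L} e^{ik\cdot m} f(m)$ is well-defined and continuous on $\B$.

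First I would substitute the SDFT into the definition \eqref{eq:defn periodic projection} of $f_N$ and exchange the two (absolutely convergent) summations:
\begin{equation*}
   f_N(\ell) \;=\; \frac{1}{|\B_N|} \sum_{k \in \B_N} e^{-ik\cdot\ell} \sum_{m \in \L} e^{ik\cdot m} f(m)
   \;=\; \sum_{m \in \L} f(m) \cdot \frac{1}{|\B_N|} \sum_{k \in \B_N} e^{ik\cdot (m-\ell)}.
\end{equation*}
The inner sum depends on $m-\ell$ only through its image in $\L/(2N\mB \Z^d) \cong \L_N$ (because each $k \in \B_N$ satisfies $e^{ik\cdot 2N\mB z} = 1$ for every $z \in \Z^d$), so by the orthogonality relation \eqref{eq:fouriersum2} applied in this quotient, it equals $1$ if $m \equiv \ell \pmod{2N\mB\Z^d}$ and $0$ otherwise. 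This gives the aliasing identity
\begin{equation*}
   f_N(\ell) \;=\; \sum_{z \in \Z^d} f(\ell + 2N\mB z),
\end{equation*}
and hence, isolating the $z = 0$ term,
\begin{equation*}
   f_N(\ell) - f(\ell) \;=\; \sum_{z \in \Z^d \setminus \{0\}} f(\ell + 2N\mB z).
\end{equation*}

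For $\ell \in \L_N$ we have $\mB^{-1}\ell \in (-N,N]^d$, so for any $z \neq 0$ in $\Z^d$ the triangle inequality gives $|\mB^{-1}\ell + 2Nz|_\infty \geq 2N|z|_\infty - N \geq N|z|_\infty$, and therefore $|\ell + 2N\mB z| \geq c N |z|$ for some $c > 0$ depending only on $\mB$. Plugging this into the decay assumption and using $\alpha > d$,
\begin{equation*}
   |f_N(\ell) - f(\ell)|
   \;\lesssim\; \sum_{z \neq 0} (1 + cN|z|)^{-\alpha}
   \;\lesssim\; N^{-\alpha} \sum_{z \neq 0} |z|^{-\alpha}
   \;\lesssim\; N^{-\alpha},
\end{equation*}
uniformly in $\ell \in \L_N$, which is the desired estimate.

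The only non-routine step is the aliasing identity: one needs the orthogonality relation \eqref{eq:fouriersum2} to actually be a statement about characters on the quotient $\L/(2N\mB\Z^d)$, so that it can be applied to arbitrary $m \in \L$ rather than only $m \in \L_N$. This is exactly what Lemma \ref{lem:dualgroup} provides, so no additional work is required beyond verifying the periodicity of $e^{ik\cdot\,}$ under shifts in $2N\mB\Z^d$.
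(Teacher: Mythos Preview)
Your proof is correct and follows essentially the same approach as the paper: establish the Poisson summation identity $f_N(\ell) = \sum_{z \in \Z^d} f(\ell + 2N\mB z)$, then bound the tail $\sum_{z \neq 0}$ using the decay of $f$ and the fact that $|\ell + 2N\mB z| \gtrsim N|z|$ for $\ell \in \L_N$. The only difference is that you spell out the derivation of the Poisson identity via the orthogonality relation \eqref{eq:fouriersum2}, whereas the paper simply asserts it can be checked directly.
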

\begin{proof}
As $f$ is summable over $\L$, one can directly check the {\em Poisson summation formula}
\[f_N(\ell) = \sum_{z \in \Z^d} f(\ell + 2N \mB z).\]
Employing the decay $|f(\ell)| \lesssim (1+|\ell|)^{-\alpha}$,
\begin{align*}
   |f(\ell) - f_N(\ell)|
   &= \bg| \sum_{z \in \Z^d \setminus \{0\}} f(\ell + 2N\mB z) \bg| \\
      &\lesssim \sum_{\ell \in \L \setminus \{0\}} |\ell + 2N\mB z|^{-\alpha} \\
      &\lesssim N^{-\alpha} \sum_{\ell \in \L \setminus \{0\}}
            \Big| \smfrac{\mB^{-1} \ell}{N} + 2 z\Big|^{-\alpha} \\
      &\lesssim  N^{-\alpha},
\end{align*}
where the sum is finite due to $\alpha > d$ and the estimate is uniform due to $\lvert \smfrac{\mB^{-1}\ell}{N} \rvert \leq 1$.
\end{proof}

\subsection{Periodic projection of $F$}
\label{sec:limit:estimateFN}
Recall the definition of $F$ from \eqref{eq:defn_F_fourier} via its SDFT $\hat{F}(k) =
[\sum_{\rho\in\Rc'} 4\sin^2(\frac{1}{2}k\cdot\rho)A_\rho]^{-1/2}$. Note that $\hat{F}(0)$ is undefined, but this is only related to the constant part of $F_N$. Therefore, we slightly modify \eqref{eq:defn periodic
projection}, to define its periodic projection via
\begin{equation}   \label{eq:defn FN}
   F_{N}(\ell) := \frac{1}{\lvert \B_N \rvert} \sum_{k \in \B_N\setminus\{0\}}
      e^{- i k\cdot \ell} \hat F(k).
\end{equation}
$D^2 F_N$ is then the periodic projection of $D^2 F$ according
to definition \eqref{eq:defn periodic projection}.
\begin{lemma}   \label{th:error estimate FN}
   There exist constants $C_1, C_2$, independent of $N$ such that
   \begin{align*}
      \| DF - DF_N \|_{\ell^\infty(\L_N)} &\leq C_1 N^{-d}, \qquad
            \text{and in particular} \\
      |DF_N(\ell)| &\leq C_2 |\ell|_{l^0, \L_N}^{-d} \qquad \text{for } \ell \in \L.
   \end{align*}
\end{lemma}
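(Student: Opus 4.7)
The plan is to establish $\|DF - DF_N\|_{\ell^\infty(\L_N)} \lesssim N^{-d}$ by combining three ingredients: a sharp bound on the second difference $D^2 F - D^2 F_N$, the exact parity $F(-\ell)=F(\ell)$ and $F_N(-\ell)=F_N(\ell)$, and a telescoping argument along a discrete path. Since Lemma~\ref{th:properties_F}(i) with $j=2$ gives $|D^2 F(\ell)| \lesssim |\ell|_{l^0}^{-d-1}$, and since taking two differences eliminates the constant shift dropped in the definition \eqref{eq:defn FN}, the second difference $D^2 F_N$ is the periodic projection of $D^2 F$ (as noted in the paragraph preceding this lemma). Lemma~\ref{th:trapezoidal rule} with $\alpha=d+1 > d$ therefore yields
\begin{equation*}
   \|D^2 F - D^2 F_N\|_{\ell^\infty(\L_N)} \lesssim N^{-d-1}.
\end{equation*}
This bound controls the variation of the error $E(\ell) := D_\rho F(\ell) - D_\rho F_N(\ell)$ from one site to a neighbor via the identity $E(\ell+e)-E(\ell) = D_e D_\rho F(\ell) - D_e D_\rho F_N(\ell)$.

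Next, the parity $F(-\ell)=F(\ell)$ follows directly from $\hat F(-k)=\hat F(k)$, which is immediate from the $\sin^2$ structure in \eqref{eq:fourierhamiltoniansin} forcing $\hat h(-k)=\hat h(k)$. The corresponding identity $F_N(-\ell)=F_N(\ell)$ is obtained by noting that $\B_N$, regarded as a subset of the Brillouin torus $\R^d/(2\pi\mA^{-T}\Z^d)$, is invariant under $k \mapsto -k$, while the summand in \eqref{eq:defn FN} is torus-invariant for $\ell\in\L$; hence the sum is invariant under $\ell\mapsto -\ell$. Consequently both $D_\rho F$ and $D_\rho F_N$ satisfy the antisymmetry $D_\rho F(-\rho-\ell) = -D_\rho F(\ell)$ on $\L$, so
\begin{equation*}
   E(-\rho-\ell) = -E(\ell) \qquad \text{for all } \ell \in \L.
\end{equation*}

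To conclude, I would pick $\ell^* \in \L$ to be the lattice point nearest $-\rho/2$, so that $\ell^*$ and $-\rho-\ell^*$ are within $O(1)$ of each other and both lie in $\L_N$ once $N$ is sufficiently large. The antisymmetry $E(\ell^*) + E(-\rho-\ell^*) = 0$ combined with $|E(\ell^*)-E(-\rho-\ell^*)| \lesssim \|D^2 F - D^2 F_N\|_{\ell^\infty(\L_N)} \lesssim N^{-d-1}$ (from telescoping across the $O(1)$-length path between them) then gives $|E(\ell^*)|\lesssim N^{-d-1}$. For any $\ell \in \L_N$, telescoping along a path in $\L_N$ of length $\lesssim N$, taken straight in $\mB$-coordinates say, yields $|E(\ell)-E(\ell^*)| \lesssim N \cdot N^{-d-1} = N^{-d}$, and therefore $|E(\ell)|\lesssim N^{-d}$. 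The second bound then follows from the triangle inequality together with Lemma~\ref{th:properties_F}(i), using that $|\ell|_{l^0,\L_N}^{-d} \sim \max(|\ell|_{l^0}^{-d}, N^{-d})$ on $\L_N$. The only subtlety worth flagging is the parity $F_N(-\ell)=F_N(\ell)$: one must treat $\B_N$ as a subset of the Brillouin torus rather than of $\R^d$ so that the change of variable $k \mapsto -k$ preserves the summation set; with this properly set up, no single step of the argument presents a serious obstacle.
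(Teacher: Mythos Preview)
Your proof is correct but follows a genuinely different route from the paper's. Both arguments begin identically, applying Lemma~\ref{th:trapezoidal rule} to the second differences to obtain $\|D^2 F - D^2 F_N\|_{\ell^\infty(\L_N)} \lesssim N^{-d-1}$. From there the paper invokes a discrete Poincar\'e inequality on $\L_N$, which bounds $\|D_\rho F - D_\rho F_N - C_N\|_{\ell^\infty(\L_N)} \lesssim N \cdot N^{-d-1}$ where $C_N$ is the mean of $D_\rho F - D_\rho F_N$ over $\L_N$, and then separately estimates $|C_N| \lesssim N^{-d}$ via a discrete summation-by-parts (using that periodicity kills the mean of $D_\rho F_N$, and the boundary contribution of $\sum_{\L_N} D_\rho F$ is $O(N^{d-1} \cdot N^{1-d})$). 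Your approach instead exploits the evenness $\hat F(-k)=\hat F(k)$ to obtain the antisymmetry $E(-\rho-\ell)=-E(\ell)$, which pins $|E(\ell^*)|\lesssim N^{-d-1}$ at a single anchor site and then telescopes from there. Your method is somewhat more elementary---it avoids the mean estimate and the boundary summation---at the cost of relying on the point symmetry of $V$ (via $\hat h(-k)=\hat h(k)$), which the paper's argument does not need. Your handling of the parity of $F_N$ via the Brillouin torus is the right way to deal with the half-open fundamental domain $\B_N$.
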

\begin{proof}
   We cannot employ Lemma~\ref{th:trapezoidal rule} directly since $|DF(\ell)|
   \lesssim |\ell|_{l^0}^{-d}$ but no faster. Instead, we first estimate $D^2F - D^2
   F_N$.

   Let $\rho_1, \rho_2 \in \Rc$, $f(\ell) := D_{\rho_1} D_{\rho_2} F(\ell)$, and
   $f_N$ its periodic projection \eqref{eq:defn periodic projection}, then it is
   easy to see that in fact $f_N(\ell) = D_{\rho_1} D_{\rho_2} F_N(\ell)$.
   According to Lemma~\ref{th:properties_F}, $|f(\ell)| \lesssim |\ell|_{l^0}^{-1-d}$
   and hence Lemma~\ref{th:trapezoidal rule} yields
   $\| f - f_N \|_{\ell^\infty(\L_{N})} \lesssim N^{-1-d}$. Stated in terms of
   $D^2 F$ we have
   \begin{equation} \label{eq:proof:err est FN:D2F}
      \| D^2 F(\ell) - D^2 F_N(\ell) \|_{\ell^\infty(\L_{N})} \lesssim N^{-1-d}.
   \end{equation}

   To obtain the estimate for $DF - DF_N$ we first note that the following
   discrete Poincaré inequality is easy to establish: As for all $g : \Lambda_N \to \R^m$ we clearly have
   \[\lvert g(x) - g(y) \rvert \leq C N \lVert Dg \rVert_{\ell^\infty(\Lambda_N)}\quad \text{for all } x \in \Lambda_N,\ y \in \Lambda_{N}, \]
   it follows that
   \begin{equation} \label{eq:proof:err est FN:poincare}
      \| g - \<g\>_{\L_N} \|_{\ell^\infty(\L_N)} \leq C N \| Dg \|_{\ell^\infty(\L_N)},
   \end{equation}
   where $\<g\>_{\L_N} = \frac{1}{\lvert \L_N \rvert} \sum_{\ell \in \L_N} g(\ell)$.

   Fix $\rho \in \Rc$ and let $C_N :=  \< D_\rho F-D_\rho F_N \>_{\L_N}$, then combining
   \eqref{eq:proof:err est FN:D2F} and \eqref{eq:proof:err est FN:poincare}
   we obtain
   \begin{align*}
      \| D_\rho F - D_\rho F_N \|_{\ell^\infty(\L_N)}
      &\leq   \| D_\rho F - D_\rho F_N - C_N \|_{\ell^\infty(\L_N)}
               + | C_N | \\
      & \lesssim N \| D D_\rho F - DD_\rho F_N \|_{\ell^\infty(\L_N)}
               + | C_N |
    \lesssim N^{-d} + \big| C_N \big|.
   \end{align*}
   It thus remains to estimate $C_N$.

   Periodicity of $F_N$ implies that $\< D_\rho F_N \>_{\L_N} = 0$, hence,
   \[
      C_N = \frac{1}{\lvert \L_N \rvert} \sum_{\ell \in \L_N} D_\rho F(\ell).
   \]
   Using discrete summation by parts we see that
   \[
      |C_N| = \frac{1}{\lvert \L_N \rvert} \bigg| \sum_{\ell \in (\L_N + \rho) \setminus \L_N} F(\ell)
            - \sum_{\ell \in \L_N \setminus (\L_N + \rho)} F(\ell) \bigg|
      \lesssim N^{-d} N^{d-1} N^{1-d} = N^{-d} \qedhere
   \]
\end{proof}

\subsection{Spectral properties in the periodic setting}
\label{sec:limit:spectralpropertiesFN}
We can now make the definition of $\SS_{N,\ell}$ in \eqref{eq:def SNell}
rigorous by specifying $\Fop_N$ via $F_N$ and proving Lemma \ref{lem:FopN}. In analogy with \eqref{eq:defn_Fop} but with a different constant part, we define
\begin{equation}   \label{eq:defn FopN}
   (\Fop_N f)(\ell) := \sum_{n \in \L_N} F_N(\ell-n) f(n).
\end{equation}
\begin{proof}[Proof of Lemma \ref{lem:FopN}]
Since $\sum_{\ell \in \L_N} F_N(\ell)=0$ we directly see that $\pi_N \Fop_N =0$ and $\Fop_N \pi_N =0$, that is \eqref{eq:FopN3}. As
\[( \Fop_N f, g )_{\ell^2(\L_N)}=  \sum_{n,\ell \in \L_N} F_N(\ell-n) f(n)g(\ell)\]
and $F_N(\ell)=F_N(-\ell)$, $\Fop_N$ is self-adjoint, establishing \eqref{eq:FopN1}. For $k \in \B_N \backslash \{0\}$ we have $\widehat{\Fop_N f}(k) =  \hat{F}(k) \hat{f}(k)$, while $\widehat{\Fop_N f}(0)=0$. Hence,
\begin{align*}
( \Fop_N \Hhom_N \Fop_N f, g )_{\ell^2(\L_N)} &= ( \Hhom_N \Fop_N f, \Fop_N g )_{\ell^2(\L_N)}\\
&= \sum_{\ell \in \L_N} \nabla^2V(0)[D(\Fop_N f)(\ell),D(\Fop_N g)(\ell)]\\
&= \frac{1}{\lvert \B_N \rvert} \sum_{k \in \B_N \backslash\{0\}} (\hat{F}(k)\hat{f}(k))^* \hat{h}(k) \hat{F}(k)\hat{g}(k)\\
&= \frac{1}{\lvert \B_N \rvert} \sum_{k \in \B_N \backslash\{0\}} \hat{f}(k)^*\hat{g}(k)\\
&= (f, g )_{\ell^2(\L_N)} - \frac{1}{\lvert \B_N \rvert} \hat{f}(0)^*\hat{g}(0)\\
&= ((I-\pi_N)f, g )_{\ell^2(\L_N)}.
\end{align*}
This shows \eqref{eq:FopN2} and completes the proof.
\end{proof}

In particular, if $\pi_N v =0$, then
\begin{equation} \label{eq:FNnormequivalence}
c \lVert D \Fop_N v \rVert_{\ell^2}^2 \leq (\Hhom_N \Fop_N v, \Fop_Nv )_{\ell^2(\L_N)} = \lVert v \rVert_{\ell^2}^2 \leq c' \lVert D \Fop_N v \rVert_{\ell^2}^2,
\end{equation}
based on Lemma \ref{thm:pbc stab}. Furthermore, if $\us_N$ is the solution from Theorem \ref{thm:pbc convergence}, then we can combine Lemma \ref{thm:pbc stab} with \eqref{eq:FNnormequivalence} to see that
\begin{equation*}
2\underline\sigma \lVert v \rVert_{\ell^2}^2 \leq (\Fop_N\Hdef_N \Fop_N v, v )_{\ell^2(\L_N)} \leq \frac{\overline\sigma}{2} \lVert v \rVert_{\ell^2}^2,
\end{equation*}
for some $\underline\sigma, \overline\sigma >0$ and any $v$ with $\pi_N v =0$. Therefore
\[\sigma(\Fop_N\Hdef_N(\us_N)\Fop_N + \pi_N) \subset [2\underline\sigma, \smfrac{1}{2}\overline\sigma]. \]
A perturbation argument as in Lemma \ref{th:spectrum_bound} then shows that
\begin{equation}
\sigma (\Fop_N\Hdef_N(u)\Fop_N + \pi_N) \subset [\underline\sigma, \overline\sigma],  \label{eq:FNHFNspectrum}
\end{equation}
for all $u$ with $\lVert Du - D \us_N \rVert_{\ell^2(\Lambda_N)} \leq \epsilon$.
Based on $\Fop_N \pi_N = \pi_N \Fop_N =0$, we have the resolvent identity \[\big( z-\big(\Fop_N\Hdef_N(u)\Fop_N + \pi_N\big)\big)^{-1} - \big( z-\Fop_N\Hdef_N(u)\Fop_N\big)^{-1} = (z-1)^{-1} \pi_N z^{-1}  ,\] which implies
\begin{align}
   \nonumber
\mathcal{S}_{N,\ell}(u) &= -\frac{1}{2} {\rm Trace} \Big[\log \big( \Fop_N\Hdef_N(u)\Fop_N + \pi_N\big)\Big]_{\ell\ell}\\
\nonumber
   &= -\frac{1}{2}\frac{1}{2\pi i}{\rm Trace}  \oint_{\mathcal{C}} \log z \Big[\Big( z-\big(\Fop_N\Hdef_N(u)\Fop_N + \pi_N\big)\Big)^{-1}\Big]_{\ell\ell} \, dz \\
   \nonumber
   &= -\frac{1}{2}\frac{1}{2\pi i}{\rm Trace}  \oint_{\mathcal{C}} \log z \Big[\big( z-\Fop_N\Hdef_N(u)\Fop_N\big)^{-1}+ (z-1)^{-1}\pi_N z^{-1}\Big]_{\ell\ell} \, dz\\
   \nonumber
   &= -\frac{1}{2}\frac{1}{2\pi i}{\rm Trace}  \oint_{\mathcal{C}} \log z \Big[\big( z-\Fop_N\Hdef_N(u)\Fop_N\big)^{-1}\Big]_{\ell\ell} \, dz\\
   &= -\frac{1}{2}{\rm Trace} \logp\, (\Fop_N\Hdef_N(u)\Fop_N)_{\ell \ell},
   \label{eq:calculation above}
\end{align}
as $\log(1)=0$.

For the sake of generality, in the following, we will use the definition
\begin{equation} \label{eq:defSNell2}
   \mathcal{S}_{N,\ell}^+(u) := -\smfrac{1}{2}{\rm Trace} \Big[\logp \big(\Fop_N\Hdef_N(u)\Fop_N\big)\Big]_{\ell\ell},\quad  \mathcal{S}_{N}^+(u) := \sum_{\ell \in \L_N} \mathcal{S}_{N,\ell}^+(u),
\end{equation}
for $u$ satisfying
\begin{equation} \label{eq:logpNcondition}
\sigma\big( \Fop_N \Hdef_N(u) \Fop_N\big) \cap (0, \infty) \subset [ \underline\sigma, \overline\sigma].\end{equation}
Due to the calculation in \eqref{eq:calculation above}, $\mathcal{S}_{N,\ell}(u) = \mathcal{S}^+_{N,\ell}(u)$ is included as a special case for $u$ with $\lVert Du - D \us_N \rVert_{\ell^2} \leq \epsilon$. This generalization allows us to include saddle points in \S\,\ref{sec:saddle}. We also look at a more general sequence. Let us consider any $u_N \in \Wper_N$, $u_\infty \in \Wi$ with
\begin{equation} \label{eq:uNuinftyAssumptions}
\begin{split}
   \lvert D u_\infty (\ell) \rvert &\lesssim \lvert \ell \rvert^{-d}_{l^0},  \\
   \lVert Du_N - Du_\infty \rVert_{\ell^\infty(\Lambda_N)} &\lesssim N^{-d},  \\
   \sigma\big( \Fop_N \Hdef_N(u_N) \Fop_N + \pi_N \big) \cap (-\underline\sigma, \infty) &\subset [ 2\underline\sigma, \overline\sigma/2],\\
   \sigma\big( \Fopd \Hdef(u_\infty ) \Fop \big) \cap (-\underline\sigma, \infty) &\subset [ 2\underline\sigma, \overline\sigma/2].
\end{split}
\end{equation}
In particular, for any $u$ with $\lVert Du - D u_N \rVert_{\ell^2(\Lambda_N)} \leq \epsilon$, \eqref{eq:logpNcondition} is true and $\mathcal{S}_{N,\ell}^+(u)$ is defined according to \eqref{eq:defSNell2}. Similarly, for the limit we have $B_\epsilon(u_\infty) \subset \mathcal{U}$ according to Lemma \ref{th:spectrum_bound}.

\subsection{Resolvent estimates}
\label{sec:limit:resolvent estimates}
\def\vN{v_N}
\def\vNper{v^{\rm per}_N}
Before we can proceed with the convergence analysis for the entropies, we need
to establish decay estimates for the periodic resolvent operators, analogous to
Proposition~\ref{prop: resolvent new version}.

We first introduce a compactly supported $\vN \approx u_\infty$ that allows us to relate $u_\infty$ to the periodic case. To do that we use a previously developed cut-off operator $T_R$.
\begin{lemma}\cite[Lemma 3.2]{2018-uniform} \label{th:TR_estimates}
For all $R \geq R_0$, with some sufficiently large $R_0$, there exist cut-off operators $T_R$ such that for all $2 \leq q \leq \infty$, $u :\L_R \to \R^m$, we have $T_Ru : \L \to \R^m$ and
\begin{align}
    \label{eq:TR_estimates:global}
\lVert D T_R u \rVert_{\ell^q} &\leq C \lVert D u \rVert_{\ell^q(\Lambda_R)},\\
\label{eq:TR_estimates:err1}
\lVert D T_R u -Du \rVert_{\ell^q(\Lambda_R)}  &\leq C \lVert D u \rVert_{\ell^q(\Lambda_R \setminus \Lambda_{R/2})}.
\end{align}
Furthermore, $D T_R u(\ell)=0$ for $\lvert \ell \rvert \geq R$ and $D T_R u(\ell)= Du(\ell) $ for $\lvert \ell \rvert \leq R/2$.
\end{lemma}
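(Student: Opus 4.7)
The plan is to construct $T_R$ as a scalar cutoff multiplied by a gauge-fixed representative of $u$, and then derive the two estimates from the discrete Leibniz rule and a discrete Poincar\'e inequality on an annulus.

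First I would pick a scalar cutoff $\eta_R : \Lambda \to [0,1]$ that equals $1$ on $\Lambda_{R/2 + \rcut}$, vanishes outside $\Lambda_{R - \rcut}$, and satisfies $|D\eta_R(\ell)| \lesssim R^{-1}$ pointwise (for example, a radially linear interpolant of a piecewise linear function of $|\mB^{-1}\ell|$, suitably mollified to respect the parallelotope geometry of $\Lambda_R$). The $\rcut$-buffer on each side is built in so that once we multiply by $\eta_R$, the interaction stencil never crosses the transition region when $\ell$ sits in the complementary zones. Since only $Du$ carries intrinsic information, I must also fix a gauge for $u$: I would set $\tilde u := u - c_R$, where $c_R \in \R^m$ is the mean of $u$ over a slightly fattened annulus $A_R' \supset A_R := \Lambda_{R-\rcut} \setminus \Lambda_{R/2+\rcut}$, and then define
\[
   (T_R u)(\ell) := \eta_R(\ell)\, \tilde u(\ell).
\]

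The pointwise boundary behaviour is then immediate from the design of $\eta_R$: for $|\ell| \leq R/2$ the stencil $\ell + \Rc$ lies in the region where $\eta_R \equiv 1$, so $DT_R u(\ell) = D\tilde u(\ell) = Du(\ell)$; for $|\ell| \geq R$ the stencil lies where $\eta_R \equiv 0$, so $DT_R u(\ell) = 0$. Both \eqref{eq:TR_estimates:global} and \eqref{eq:TR_estimates:err1} therefore reduce to controlling $\|DT_R u\|_{\ell^q(A_R)}$, because $DT_R u - Du$ is supported inside $A_R$ and outside the annulus the claims are trivial. Applying the discrete Leibniz rule
\[
   D_\rho(\eta_R \tilde u)(\ell)
   = \eta_R(\ell+\rho)\,D_\rho \tilde u(\ell) + \tilde u(\ell)\, D_\rho \eta_R(\ell),
\]
the first term is bounded pointwise by $|Du(\ell)|$ and the second by $R^{-1}|\tilde u(\ell)|$. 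Invoking a discrete Poincar\'e inequality $\|\tilde u\|_{\ell^q(A_R)} \lesssim R\,\|Du\|_{\ell^q(A_R')}$ on the annulus gives
\[
   \|DT_R u\|_{\ell^q(A_R)} \lesssim \|Du\|_{\ell^q(A_R')},
\]
from which \eqref{eq:TR_estimates:global} and \eqref{eq:TR_estimates:err1} follow after combining with the trivial contributions inside $\Lambda_{R/2}$ and outside $\Lambda_R$.

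The main obstacle is the annular Poincar\'e inequality, which must hold for every $q \in [2,\infty]$ with the sharp scaling $R^1$ (not $R^{d/q+1}$). For $q=\infty$ it can be proved by bounding oscillations along lattice paths: since $A_R'$ is connected with graph diameter $O(R)$, any two sites can be joined by a path along which $\tilde u$ telescopes into a sum of at most $O(R)$ finite-difference terms. For finite $q$ the cleanest route is to represent $\tilde u(\ell) - c_R$ as a discrete convolution of $Du$ against a kernel supported in $A_R'$, and estimate it by Young's inequality; the kernel's $\ell^1$ norm scales like $R$, which is exactly what is needed. The rest of the argument is purely algebraic and requires only careful bookkeeping around the $\rcut$-buffer so that the stencil of $D$ never sees the transition of $\eta_R$ when we want $DT_R u$ to equal $Du$ or $0$.
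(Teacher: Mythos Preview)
The paper does not prove this lemma at all; it simply cites it as \cite[Lemma 3.2]{2018-uniform} and uses the operator $T_R$ as a black box. Your construction---a scalar cutoff times a gauge-fixed representative, followed by the discrete Leibniz rule and an annular Poincar\'e inequality---is precisely the standard argument and is almost certainly what the cited reference does. The reasoning is sound: the $\rcut$-buffer on each side of the transition region handles the stencil of $D$, the Leibniz decomposition isolates the only nontrivial term $R^{-1}\tilde u$, and the annular Poincar\'e inequality converts that back into $\|Du\|_{\ell^q}$ on the annulus with no loss in $R$. One small point to tighten: make sure the fattened annulus $A_R'$ stays inside $\Lambda_R\setminus\Lambda_{R/2}$, so that the right-hand side of \eqref{eq:TR_estimates:err1} really only sees $Du$ on that set; since the buffer you built in is already $\rcut$ wide and the Poincar\'e argument needs at most one stencil-width of fattening, this is automatic for $R\geq R_0$ large.
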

Crucially, for $R \leq N$, $T_Ru$ can also be interpreted as a periodic function. We can then define $\vN : \L \to \R^m$ by $v_N := T_{N/2} u_\infty$ to find
\begin{subequations}
\begin{align}
   \label{eq:limit:vN:support}
   & {\rm supp}(D\vN) \subset \L_{N/2},  \\
   % \label{eq:limit:vN:equal-u}
   % & D\vN(\ell) = D u_\infty (\ell) \quad \text{for } \ell \in \L \cap B_{\theta N} \\
   \label{eq:limit:vN:error}
   & \|D\vN - D u_\infty \|_{\ell^\infty} \lesssim N^{-d}, \\
   \label{eq:limit:vN:decay}
   & \vN \in \mathcal{U}\quad \text{  for  all sufficiently large } N.
\end{align}
\end{subequations}
Here we used Lemma \ref{th:TR_estimates} and Lemma \ref{th:spectrum_bound}. We can also interpret $\vN$ as a periodic function, in which case we rename it $\vNper \in \Wper_N$ for additional clarity. The uniform convergence rate in
\eqref{eq:uNuinftyAssumptions} and \eqref{eq:limit:vN:error} then imply
\begin{equation}
   \label{eq:limit:vNper:error}
   \| D\vNper - D u_N \|_{\ell^\infty(\L_N)} \lesssim N^{-d}.
\end{equation}

In particular, $\vNper$ satisfies \eqref{eq:logpNcondition} and we can use the definition \eqref{eq:defSNell2}.

\begin{lemma}  \label{th:resolvent estimate vNper}
   For $N$ sufficiently large, and $z \in \mathcal{C}$, the resolvent
   \[
      \Rr_{N,z}(\vNper) := \big(z I_{\ell^2(\Lambda_N)} - \Fop_N \Hdef_N(\vNper) \Fop_N\big)^{-1}
   \]
   is well-defined and
   \begin{equation}  \label{eq:limit:resolvent estimate vNper}
      \big| \big[ \Rr_{N,z}(\vNper) - \Rhom_{N,z} \big]_{\ell n} \big|
      \lesssim \Lr_{1, \L_N}(\ell, n),
   \end{equation}
   where
   \begin{equation*}
   \Rhom_{N,z}:= \big(z I_{\ell^2(\Lambda_N)} - \Fop_N \Hhom_N \Fop_N\big)^{-1} = (z-1)^{-1} I_{\ell^2(\Lambda_N)}.
   \end{equation*}
\end{lemma}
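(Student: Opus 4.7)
The plan is to transcribe the argument of Proposition~\ref{prop: resolvent new version} into the periodic setting, replacing the infinite-lattice square-root Green's function $F$ by its periodic projection $F_N$ and the locality weight $\Lr_1$ by the periodic weight $\Lr_{1,\L_N}$. Existence of $\Rr_{N,z}(\vNper)$ for every $z \in \mathcal{C}$ (which does not enclose $0$) will follow by combining the uniform spectrum bound \eqref{eq:FNHFNspectrum} at $u = \us_N$ with $\|D\vNper - D\us_N\|_{\ell^\infty(\L_N)} \lesssim N^{-d}$ — a consequence of \eqref{eq:limit:vNper:error} applied with $u_N = \us_N$ — and a perturbation argument identical to that of Lemma~\ref{th:spectrum_bound}.

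I would then split
\[
\Rr_{N,z}(\vNper) - \Rhom_{N,z} = \bigl(\Rr_{N,z}(\vNper) - \RMt_{N,z}\bigr) + \bigl(\RMt_{N,z} - \Rhom_{N,z}\bigr),
\]
where $\RMt_{N,z} := (z - \Fop_N \HMt_N(\vNper) \Fop_N)^{-1}$ and $\HMt_N(u) := (1-t)\Hhom_N + t\HM_N(u)$ with $\HM_N$ the periodic restriction of \eqref{eq:res:defn of HM}. For the far-field term, set $A_M^N := \Fop_N(\HM_N(\vNper) - \Hhom_N)\Fop_N$. The decay $|DF_N(\ell)| \lesssim |\ell|^{-d}_{l^0,\L_N}$ from Lemma~\ref{th:error estimate FN}, combined with $|D\vNper(\ell)| \lesssim |\ell|^{-d}_{l^0,\L_N}$ (from \eqref{eq:uNuinftyAssumptions} and the support property \eqref{eq:limit:vN:support}), yields the periodic analogue of Lemma~\ref{lem: aux perturbation}, i.e.\ $|(A_M^N)_{mn}| \lesssim \Lr^M_{1,\L_N}(m,n)$ for an appropriately defined periodic weight $\Lr^M_{1,\L_N}$, together with Frobenius and sub-multiplicative bounds controlled by $|M|^{-d}_{l^3}$. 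For $M$ large, the Neumann series for $(I - \tfrac{t}{z-1} A_M^N)^{-1}$ then converges uniformly in $z, t, N$ and produces $|(\RMt_{N,z} - \Rhom_{N,z})_{mn}| \lesssim \Lr^M_{1,\L_N}(m,n) \leq \Lr_{1,\L_N}(m,n)$, exactly as in \eqref{eq:decay-estimate-RMt}.

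For the near-field term, I would write $\Rr_{N,z}(\vNper) = (I + B_N^{M,t})^{-1} \RMt_{N,z}$ with $B_N^{M,t} := \RMt_{N,z}\Fop_N(\HMt_N(\vNper) - H_N^t(\vNper))\Fop_N$ and $H_N^t(u) := (1-t)\Hhom_N + t\Hdef_N(u)$. The difference $\HMt_N - H_N^t$ is supported on the core sites $|\ell| \leq \max\{M,\rcut\}$, so $B_N^{M,t}$ has range contained in a finite-dimensional subspace of $\ell^2(\L_N)$ whose dimension is bounded independently of $N$. The spectral bound guarantees that $I + B_N^{M,t}$ is invertible, and Lemma~\ref{lem: finite-rank correction}(ii) yields the finite expansion $(I + B_N^{M,t})^{-1} = I + \sum_{j=1}^{r+1} d_j^N (B_N^{M,t})^j$ with coefficients bounded uniformly in $z,t,N$. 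The entrywise $\Lr_{1,\L_N}$-estimates \eqref{eq:estimateforlargefiniterankpart1}--\eqref{eq:estimateforlargefiniterankpart2} carry over (using $F_N$ in place of $F$), and periodic sub-multiplicativity closes the argument.

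The principal technical obstacle is the $N$-uniform verification of the periodic analogues of Lemma~\ref{lem: aux perturbation} and of the auxiliary sums collected in the appendix. These should reduce to their non-periodic counterparts via the unfolding $|\ell|^{-\alpha}_{l^k,\L_N} = \max_{z \in \Z^d}|\ell + 2N\mB z|^{-\alpha}_{l^k}$ and restriction to a single period cell, but one must carefully check that cross-contributions between distinct periodic images do not introduce spurious $N$-dependent logarithms — the combinatorial analogue of the Poisson-summation control used in the proof of Lemma~\ref{th:trapezoidal rule}.
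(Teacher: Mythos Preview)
Your proposal is correct and matches the paper's own approach: the paper explicitly states that, in light of the $F_N$-estimates from Lemma~\ref{th:error estimate FN}, the proof is analogous to that of Proposition~\ref{prop: resolvent new version} and is therefore omitted. Your outline---far-field Neumann series controlled by the periodic analogue of Lemma~\ref{lem: aux perturbation}, near-field finite-rank correction via Lemma~\ref{lem: finite-rank correction}, and existence via the spectral gap already recorded just before the lemma (note that in the general setting you should cite \eqref{eq:uNuinftyAssumptions} and \eqref{eq:logpNcondition} rather than the minimum-specific \eqref{eq:FNHFNspectrum})---is exactly the intended transcription.
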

\begin{proof}
   In light of the estimates on $F_N$ that we established in Lemma~\ref{th:error
   estimate FN} this proof is analogous to the proof of Proposition~\ref{prop:
   resolvent new version} and is hence omitted.
\end{proof}

Treating $u_N$ as a perturbation to $\vNper$, we also obtain a decay
estimate on $\Rr_{N, z} (\vNper + s (u_N-\vNper))$.

\begin{lemma}  \label{th:resolvent estimate uNper}
   For $N$ sufficiently large and $u \in {\rm conv} \{ u_N, \vNper\}$ the
   resolvent $\Rr_{N, z}(u) := (z - \Fop_N H_N(u) \Fop_N)^{-1}$ is well-defined
   and
   \begin{align}
      \label{eq:limit:resolvent perturbation uNper}
      \big| \big[ \Rr_{N,z}(u) - \Rr_{N,z}(\vNper) \big]_{nm} \big|
      &\lesssim N^{-d} |n - m|_{l^5, \L_N}^{-d}, \qquad \text{and, in particular,} \\
      \label{eq:limit:resolvent estimate uNper}
      \big| \big[ \Rr_{N,z}(u) - \Rhom_{N,z} \big]_{nm} \big|
      &\lesssim \Lr_{1,\L_N}(n,m) + N^{-d} |n - m|_{l^5, \L_N}^{-d}.
   \end{align}
\end{lemma}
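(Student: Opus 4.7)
The strategy is to treat $H_N(u) - H_N(\vNper)$ as a perturbation of size $O(N^{-d})$ and use a resolvent identity to transfer decay information from $\Rr_{N,z}(\vNper)$, controlled by Lemma~\ref{th:resolvent estimate vNper}, to $\Rr_{N,z}(u)$. The second estimate \eqref{eq:limit:resolvent estimate uNper} then follows by combining \eqref{eq:limit:resolvent perturbation uNper} with Lemma~\ref{th:resolvent estimate vNper} via the triangle inequality, so the heart of the matter is \eqref{eq:limit:resolvent perturbation uNper}.

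First, I would verify that $\Rr_{N,z}(u)$ is well-defined for every $u \in {\rm conv}\{u_N,\vNper\}$ and every $z \in \mathcal{C}$. Since by \eqref{eq:limit:vNper:error} we have $\|Du - Du_N\|_{\ell^\infty(\L_N)} \lesssim N^{-d}$, a repetition in the periodic setting of the argument in Lemma~\ref{th:spectrum_bound}, starting from the spectrum assumption in \eqref{eq:uNuinftyAssumptions} and using that $V_\xi \in C^3$, yields $\sigma(\Fop_N H_N(u)\Fop_N) \cap (0,\infty) \subset [\underline\sigma,\overline\sigma]$ uniformly for $N$ large enough. Hence the contour $\mathcal{C}$ encloses the relevant spectrum for all such $u$.

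Next, set $B_N := \Fop_N (H_N(u) - H_N(\vNper))\Fop_N$. Reasoning exactly as in Lemma~\ref{lem: F H F} but replacing $F$ by $F_N$ (using the uniform decay $|DF_N(\ell)| \lesssim |\ell|_{l^0,\L_N}^{-d}$ from Lemma~\ref{th:error estimate FN}) and bounding $|\nabla^2 V_\xi(Du(\xi)) - \nabla^2 V_\xi(D\vNper(\xi))| \lesssim \|Du - D\vNper\|_{\ell^\infty(\L_N)} \lesssim N^{-d}$, I would obtain
\[
   |[B_N]_{pq}| \lesssim N^{-d} \sum_{\xi \in \L_N}
      |\xi - p|_{l^0,\L_N}^{-d}\,|\xi - q|_{l^0,\L_N}^{-d}
   \lesssim N^{-d}\, |p-q|_{l^2,\L_N}^{-d},
\]
the last step being a periodic analogue of the auxiliary estimate behind \eqref{eq: auxiliary estimates eq3}. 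In particular, the Schur-type bound induced by the above pointwise estimate shows that $B_N$ has operator norm $O(N^{-d})$.

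From the first resolvent identity applied twice we obtain the Neumann expansion
\[
   \Rr_{N,z}(u) - \Rr_{N,z}(\vNper)
   = \sum_{k \ge 1} \bigl(\Rr_{N,z}(\vNper)\,B_N\bigr)^{k}\,\Rr_{N,z}(\vNper),
\]
which converges since $B_N$ is small and $\Rr_{N,z}(\vNper)$ is uniformly bounded in $z \in \mathcal{C}$. To estimate the terms componentwise I would plug in the decomposition $[\Rr_{N,z}(\vNper)]_{pq} = (z-1)^{-1}\delta_{pq} + R_1(p,q)$ with $|R_1(p,q)| \lesssim \Lr_{1,\L_N}(p,q)$ from Lemma~\ref{th:resolvent estimate vNper}, and then convolve with the bound on $[B_N]_{pq}$ above. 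Using a periodic version of the sub-multiplicativity and sum estimates collected in the appendix, in particular counterparts of \eqref{eq: auxprod} and \eqref{eq: auxiliary estimates eq6cor3}, each convolution costs at most a bounded number of additional logarithmic factors, and the $k=1$ term dominates since higher powers pick up additional factors of $\|B_N\|$ of order $N^{-d}$. The result is
\[
   \bigl| [\Rr_{N,z}(u) - \Rr_{N,z}(\vNper)]_{nm} \bigr|
   \lesssim N^{-d}\, |n-m|_{l^5,\L_N}^{-d},
\]
with the index $5$ accommodating the accumulated logarithmic corrections.

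The main obstacle, and the only part requiring genuine care rather than mechanical bookkeeping, is to track logarithmic losses through the multiple convolutions on the torus in a way that remains uniform in $N$ and $z \in \mathcal{C}$, and to confirm that the Neumann tail is dominated by $N^{-2d}$ times a comparable profile and hence does not spoil the stated bound. Once the periodic analogues of the appendix's convolution lemmas are in place, the rest is a direct repetition of the arguments already used in the proof of Proposition~\ref{prop: resolvent new version}.
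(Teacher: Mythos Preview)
Your proposal is correct and follows essentially the same approach as the paper: write the resolvent identity $\Rr_{N,z}(u) = (I - \Rr_{N,z}(\vNper)B_N)^{-1}\Rr_{N,z}(\vNper)$, expand as a Neumann series using the $O(N^{-d})$ smallness of the Hessian perturbation, and propagate the componentwise decay through the series via the periodic convolution estimates. The only cosmetic difference is that the paper absorbs the resolvent into the perturbation operator first (defining $A_N := \Rr_{N,z}(\vNper)\Fop_N(H_N(\vNper)-H_N(u))\Fop_N$) and controls convergence via the Frobenius norm $\|A_N\|_F^2 \lesssim N^{-d}$, thereby obtaining well-definedness of $\Rr_{N,z}(u)$ as a by-product of the Neumann series rather than via a separate spectral perturbation argument.
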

\begin{proof}

We write
\begin{equation*}
\Rr_{N,z}(u) = \Big[ I_{\ell^2(\Lambda_N)} + \Rr_{N,z}(\vNper) \Fop_N (H_N(\vNper)-H_N(u)) \Fop_N  \Big]^{-1} \Rr_{N,z}(\vNper).
\end{equation*}
The resolvent on the left is well-defined if and only if the inverse on the right exists, which is the case if
\[A_N:= \Rr_{N,z}(\vNper) \Fop_N (H_N(\vNper)-H_N(u)) \Fop_N \]
is sufficiently small in the Frobenius norm.

We first calculate
\begin{align*}
& \hspace{-1.5cm} \lvert (\Fop_N (H_N(\vNper)-H_N(u)) \Fop_N)_{ij} \rvert \\
&\lesssim  \sum_\ell (\nabla^2 V_{\ell}(D\vNper(\ell))-\nabla^2 V_{\ell}(Du(\ell)))[DF_N(\ell-i),DF_N(\ell-j)]\\
&\lesssim  \sum_\ell \lvert D\vNper(\ell)-Du(\ell) \rvert \lvert \ell-i \rvert_{l^0, \L_N}^{-d}  \lvert \ell-j \rvert_{l^0, \L_N}^{-d}\\
&\lesssim  N^{-d} \sum_\ell \lvert \ell-i \rvert_{l^0, \L_N}^{-d}  \lvert \ell-j \rvert_{l^0, \L_N}^{-d}.
\end{align*}
Therefore, using the estimate \eqref{eq: auxiliary estimates eq6cor4} and \eqref{eq: auxiliary estimates eq3},
\begin{align}
\lvert (A_N)_{ij} \rvert &\lesssim  \sum_{m, \ell} (\delta_{im} + \Lr_{1, \L_N}(i,m)) N^{-d} \lvert \ell-i \rvert_{l^0, \L_N}^{-d}  \lvert \ell-j \rvert_{l^0, \L_N}^{-d} \nonumber \\
&\lesssim N^{-d} \sum_{ \ell} \lvert \ell-i \rvert_{l^2, \L_N}^{-d}  \lvert \ell-j \rvert_{l^0, \L_N}^{-d} \nonumber\\
&\lesssim N^{-d} \lvert i-j \rvert_{l^3, \L_N}^{-d} \label{eq:limit:resolvent perturbation uNper aux2}.
\end{align}

We can thus estimate the Frobenius norm as
\begin{align*}
\lVert A_N \rVert^2_F &\lesssim N^{-2d}\sum_{i,j} \lvert i-j \rvert_{l^6, \L_N}^{-2d} \lesssim N^{-d}.
\end{align*}
In particular, for $N$ large enough, the resolvent $\Rr_{N,z}(u)$ exists and is given by the Neumann series
\begin{equation}
\Rr_{N,z}(u) - \Rr_{N,z}(\vNper) = \Big(\sum_{k=1}^{\infty} (-A_N)^k \Big) \Rr_{N,z}(\vNper).
\end{equation}
Let us now use the easier estimate \eqref{eq:limit:resolvent perturbation uNper aux2} to estimate products. We have
\begin{align*}
\sum_m N^{-d} \lvert m-i\rvert_{l^3, \L_N}^{-d} N^{-d} \lvert j-m\rvert_{l^3, \L_N}^{-d} & \lesssim N^{-2d} \lvert j-i\rvert_{l^7, \L_N}^{-d} \lesssim \lvert N\rvert_{l^4}^{-d} \lvert j-i\rvert_{l^3, \L_N}^{-d} N^{-d},
\end{align*}
again according to \eqref{eq: auxiliary estimates eq3}. Therefore,
\begin{align*}
\lvert (A_N^k)_{ij} \rvert &\leq  C^k (\lvert N\rvert_{l^4, \L_N}^{-d})^{k-1} \lvert j-i\rvert_{l^3, \L_N}^{-d} N^{-d}
\end{align*}
for some constant $C>0$. Hence, using \eqref{eq: auxiliary estimates eq6cor4},
\begin{align*}
\big| \big[ \Rr_{N,z}(u) - \Rhom_{N,z} \big]_{nm} \big| &\lesssim \sum_{k=1}^\infty \sum_{i \in \Lambda_N} C^k (\lvert N\rvert_{l^4, \L_N}^{-d})^{k-1} \lvert i-n\rvert_{l^3, \L_N}^{-d} N^{-d} (\delta_{im} + \Lr_{1,\L_N}(i,m))\\
&\lesssim \sum_{i \in \Lambda_N} \lvert i-n\rvert_{l^3, \L_N}^{-d} N^{-d} (\delta_{im} + \Lr_1(i,m))\\
&\lesssim \lvert m-n\rvert_{l^5, \L_N}^{-d} N^{-d}. \qedhere
\end{align*}
\end{proof}

\subsection{Entropy error estimates}
\label{sec:limit:site entropy errors}
Our aim is the proof of Theorem~\ref{theo: main
result}(3); that is, a convergence rate for $\SS_N(\us_N) - \SS(\us)$. For the sake of generality we prove the following more general statement.
\begin{proposition} \label{prop:generalconvergencerate}
For $u_N, u_\infty$ satisfying \eqref{eq:uNuinftyAssumptions},
\begin{equation}
\lvert \SS_N^+(u_N) - \SS^+(u_\infty) \rvert \lesssim \lvert N \rvert^{-d}_{l^5}.
\end{equation}
\end{proposition}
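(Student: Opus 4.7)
The plan is to control the difference by inserting a compactly supported intermediary. Let $v_N := T_{N/2} u_\infty$ as constructed in \S\,\ref{sec:limit:resolvent estimates}, viewed either as a function on $\L$ or (because $\mathrm{supp}(Dv_N) \subset \L_{N/2}$) as the periodic function $v_N^{\rm per} \in \Wper_N$. Applying the triangle inequality,
\begin{equation*}
   \bigl| \SS_N^+(u_N) - \SS^+(u_\infty) \bigr|
   \le \underbrace{\bigl| \SS_N^+(u_N) - \SS_N^+(v_N^{\rm per}) \bigr|}_{=: A}
   + \underbrace{\bigl| \SS_N^+(v_N^{\rm per}) - \SS^+(v_N) \bigr|}_{=: B}
   + \underbrace{\bigl| \SS^+(v_N) - \SS^+(u_\infty) \bigr|}_{=: C},
\end{equation*}
so it suffices to bound each of $A$, $B$, $C$ by $|N|_{l^5}^{-d}$.

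For the perturbation terms $A$ and $C$, I would use the fact that both $Du_N-Dv_N^{\rm per}$ (from \eqref{eq:limit:vNper:error}) and $Dv_N - Du_\infty$ (from \eqref{eq:limit:vN:error}) are $O(N^{-d})$ in $\ell^\infty$. Writing the entropy difference as $\int_0^1 \tfrac{d}{ds}\SS^+_{(\cdot)}(u_s)\,ds$ along the linear interpolation, the contour representation from \S\,\ref{sec:funccalc} gives a trace against $\Rr_z \Fopd \langle\delta H, u_1-u_0\rangle \Fop \Rr_z$ (and its periodic analogue). One then combines the resolvent decay estimates (Lemma~\ref{th:resolvent estimate uNper} in the periodic case, Proposition~\ref{prop: resolvent new version} in the infinite case) with an analogue of Lemma~\ref{lem: F H F} applied to the interpolation increment. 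The increment produces an extra factor $N^{-d}$ (since $|D(u_1-u_0)(\xi)| \lesssim N^{-d}$), and summing the site entries using the $\Lr_1$-type sums already used in Proposition~\ref{prop: localitydecomposedentropy} (together with the auxiliary estimates in \S\,\ref{sec: auxiliary estimates}) yields the required bound, with the logarithmic exponent inflating from summation. Note that in $C$ the renormalising term $\langle \delta\Shom_\ell(0), u_\infty - v_N\rangle$ must be subtracted site-wise to preserve summability, exactly in the spirit of Remark~\ref{rem: main theorem}.

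The main obstacle is term $B$: here we pass from the periodic operator $\Fop_N H_N \Fop_N$ on $\Wper_N$ to the infinite operator $\Fopd H \Fop$ on $\ell^2(\L)$. Since $v_N$ is compactly supported in $\L_{N/2}$, far inside the supercell the matrix blocks of $\Hdef_N(v_N^{\rm per})$ and $\Hdef(v_N)$ coincide; the only discrepancies come from (i) the square-root kernel error $\|DF_N - DF\|_{\ell^\infty(\L_N)} \lesssim N^{-d}$ of Lemma~\ref{th:error estimate FN}, and (ii) the replacement of $|\cdot|_{l^k}$ by its periodised version $|\cdot|_{l^k,\L_N}$ in the resolvent estimates. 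I would represent $\SS_{N,\ell}^+(v_N^{\rm per})$ and $\SS_\ell^+(v_N)$ via the respective contour integrals of $\log^+$, decompose the integrand by expanding the resolvents around the $\Rhom$ (respectively $\Rhom_N$) Neumann baseline as in the proof of Proposition~\ref{prop: resolvent new version}, and then match term-by-term. For sites $|\ell| \gtrsim N/2$ outside the support of $v_N$, one invokes the already-proved locality (Proposition~\ref{prop: localitydecomposedentropy}) to see that both $\SS^+_{N,\ell}(v_N^{\rm per})$ and $\SS_\ell^+(v_N)$ are individually $\lesssim |\ell|_{l^2,\L_N}^{-2d}$, whose tail beyond $N/2$ contributes at most $|N|_{l^5}^{-d}$ (here the renormalisation $\langle \delta \Shom_\ell(0), v_N\rangle$ vanishes in the periodic sum by cancellation and in the infinite sum by Remark~\ref{rem: main theorem} applied to compactly supported $v_N$).

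For sites $|\ell| \lesssim N/2$, the difference of the two site entropies reduces to a finite collection of contour integrals containing factors of the form $[\Fop_N - \Fop]_{\cdot\,\cdot}$ or $[\Rhom_{N,z} - \Rhom_z]_{\cdot\,\cdot}$, each of which is $O(N^{-d})$ by Lemmas~\ref{th:error estimate FN} and \ref{th:resolvent estimate vNper}, times rapidly decaying kernels controllable by the sum estimates of \S\,\ref{sec: auxiliary estimates}. Aggregating across the $O(N^d)$ inner sites gives an $|N|_{l^5}^{-d}$ bound after absorbing logarithmic factors from the $\Lr_1$ convolutions. The cleanest accounting is to express each piece in the $\Lr_{1,\L_N}$-calculus introduced in \S\,\ref{sec:limit:prelims} and reduce to products of the same form as in Lemma~\ref{lem: aux perturbation}, which is submultiplicative up to a logarithmic loss, thereby explaining the final $|N|_{l^5}^{-d}$ rate.
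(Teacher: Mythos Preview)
Your proposal is essentially correct and follows the paper's approach: the same three-term splitting via the compactly supported intermediary $v_N = T_{N/2}u_\infty$, with the three pieces bounded exactly as in Corollaries~\ref{th:limit:SvN-Su}, \ref{th:limit:SvN-SuN}, and~\ref{th:SN-S}. The only organisational difference is that for the two perturbation terms (your $A$ and $C$) the paper expands $\SS^+_\ell$ to \emph{second} order about the endpoint (see \eqref{eq:limit:AN+BN} and \eqref{eq:limit:pAN+pBN}), separating a linear piece evaluated at the fixed state from a quadratic remainder, whereas you integrate the first derivative along the interpolation; both work since Proposition~\ref{prop: resolvent new version} and Lemma~\ref{th:resolvent estimate uNper} hold uniformly for $u_s\in\mathcal{U}$, though the paper's choice makes the bookkeeping slightly cleaner and also makes explicit that for the infinite-lattice term one must use the pointwise decay $|Dw(n)|\lesssim |n|^{-d}_{l^0,N}$ rather than merely the uniform $N^{-d}$ bound.
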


To prove this statement, we  split the entropy error into
\begin{align} \label{eq:limit:split of S error}
   \SS^+(u_\infty) - \SS^+_N(u_N)
   &= \big( \SS^+(u_\infty) - \SS^+(\vN) \big)
   + \big( \SS^+(\vN) - \SS^+_N(\vNper) \big)
   + \big( \SS^+_N(\vNper) - \SS^+_N(u_N) \big).
   %  \\
   % %
   % \notag
   % &=: {\rm T}_1 + {\rm T}_2 + {\rm T}_3.
\end{align}
% We will show that each group can be bounded by $N^{-d}$, up to a logarithmic factor.
% As already mentioned
% in \cco{x-ref to results}, this is likely suboptimal. We discuss this issue in detail
% in \cco{x-ref}.

\subsubsection{The term $\SS^+(u_\infty) - \SS^+(\vN)$}
We investigate the term $\SS^+(u_\infty) - \SS^+(\vN)$ first. Substituting $w := \vN-u_\infty$
we rewrite this as
\begin{align}
   \notag
   \SS^+(\vN) - \SS^+(u_\infty)
   &=
   \sum_{\ell \in \L} \Big( \SS^+_\ell(\vN) - \SS^+_\ell(u_\infty)
                  - \big\< \delta \Shom_\ell(0), w \big\> \Big)
   \\ \notag &=
   \sum_{\ell \in \L}
      \big\< \delta \SS^+_\ell(u_\infty) - \delta \Shom_\ell(0), w \big\>
    + \sum_{\ell \in \L}
      \int_{0}^1 (1-s) \big\< \delta^2 \SS^+_\ell(u_\infty+sw) w, w \big\> \,ds
   \\ &= {\bf A}_N + {\bf B}_N.
   \label{eq:limit:AN+BN}
\end{align}

To estimate ${\bf A}_N$ we decompose it into
 ${\bf A}_N = \sum_\ell {\bf A}_{N,\ell}$ where
\begin{equation*}
   {\bf A}_{N,\ell} = \big\< \delta \SS^+_\ell(u_\infty) - \delta \Shom_\ell(0), w \big\>
      = -\frac{1}{2} \frac{1}{2\pi i} \oint_{\mathcal{C}}
      {\rm Trace} \big\< \delta [\Rr_z]_{\ell\ell} - \delta [\Rhom_z]_{\ell\ell}, w \big\> \, dz,
\end{equation*}
where we write $\Rr_z =\Rr_z(u_\infty)$ for simplicity. The resolvent variations can be written as
\begin{align}
   \label{eq:limit:dRll}
   \big\< \delta [\Rr_z]_{\ell\ell}, w \big\>
   &=
   \Big[ \Rr_z \Fopd \< \delta H(u_\infty), w \> \Fop \Rr_z \Big]_{\ell\ell} \\
   \notag
   &= (z-1)^{-2} \Big[ \Fopd \< \delta H(u_\infty), w \> \Fop \Big]_{\ell\ell}
      + 2(z-1)^{-1} \Big[ (\Rr_z-\Rhom_z) \Fopd \< \delta H(u_\infty), w \> \Fop  \Big]_{\ell\ell} \\
       \notag
   & \qquad
      + \Big[ (\Rr_z-\Rhom_z) \Fopd \< \delta H(u_\infty), w \> \Fop (\Rr_z-\Rhom_z) \Big]_{\ell\ell}, \qquad \text{and} \\
   \label{eq:limit:dRhomll}
   \big\< \delta [\Rhom_z]_{\ell\ell}, w \big\>
   &=
   (z-1)^{-2} \Big[ \Fopd \< \delta \Hhom(0), w \> \Fop \Big]_{\ell\ell}.
\end{align}
These expressions highlight the key estimates that we now require.

\begin{lemma} \label{th:convergence:FdHF_ab}
   We have the estimates
   \begin{align}
      \label{eq:convergence:sum_FdHF_ll}
      \sum_{\ell \in \L} \Big| \Big[
            \Fopd \< \delta H(u_\infty) - \delta \Hhom(0), w \> \Fop
         \Big]_{\ell\ell} \Big|
      &\lesssim \lvert N\rvert_{l^1}^{-d}, \\
      \label{eq:convergence:sum_RFdHF_ll}
      \sum_{\ell \in \L} \Big| \Big[
            (\Rr_z-\Rhom_z) \Fopd \< \delta H(u_\infty), w \> \Fop
         \Big]_{\ell\ell} \Big|
      &\lesssim \lvert N \rvert^{-d}_{l^3}, \qquad \text{and} \\
      \label{eq:convergence:sum_RFdHFR_ll}
      \sum_{\ell \in \L} \Big| \Big[
            (\Rr_z-\Rhom_z) \Fopd \< \delta H(u_\infty), w \> \Fop (\Rr_z-\Rhom_z)
         \Big]_{\ell\ell} \Big|
      &\lesssim \lvert N \rvert^{-d}_{l^5}.
   \end{align}
   In particular,
   \[
      | {\bf A}_N | \lesssim \lvert N \rvert^{-d}_{l^5}.
   \]
\end{lemma}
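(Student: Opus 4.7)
My plan hinges on two observations about $w := \vN - u_\infty$. First, by Lemma \ref{th:TR_estimates} and the definition $\vN = T_{N/2} u_\infty$, one has $Dw(\ell) = 0$ for $|\ell| \leq N/4$, while $|Dw(\ell)| \lesssim |\ell|^{-d}_{l^0}$ holds uniformly; so every sum involving $|Dw(\xi)|$ is automatically a tail sum over $|\xi| \geq N/4$. Second, I decompose
\[
\delta H(u_\infty) = \delta\Hhom(0) + \bigl(\delta\Hhom(u_\infty) - \delta\Hhom(0)\bigr) + \delta\bigl(H(u_\infty) - \Hhom(u_\infty)\bigr),
\]
and note that the defect-core piece $\delta(H - \Hhom)(u_\infty)$ is the variation of an expression supported in $|\xi| < \rcut$, so its pairing with $w$ picks up only $Dw(\xi)$ for $|\xi| < \rcut$, which vanishes once $N \geq 4\rcut$. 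Hence in all three estimates I may replace $\delta H(u_\infty)$ by $\delta\Hhom(u_\infty)$; in \eqref{eq:convergence:sum_FdHF_ll} I moreover use $\delta\Hhom(u_\infty) - \delta\Hhom(0) = \int_0^1 \delta^2\Hhom(s u_\infty)[u_\infty, \cdot, \cdot]\, ds$.

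For \eqref{eq:convergence:sum_FdHF_ll}, I apply bound \eqref{eq: FddHhomF} of Lemma \ref{lem: F H F} to the integrand, giving a diagonal estimate $\sum_\xi |\xi - \ell|^{-2d}_{l^0}\, |Du_\infty(\xi)|\, |Dw(\xi)|$. Swapping the $\ell$ and $\xi$ sums, the inner $\ell$-sum is a uniformly bounded constant (since $2d > d$), leaving a tail sum over $|\xi| \geq N/4$ of $|\xi|^{-2d}_{l^0}$, which is controlled by $|N|^{-d}_{l^1}$ via the standard tail estimates in the appendix.

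For \eqref{eq:convergence:sum_RFdHF_ll} I combine the resolvent decay $|(\Rr_z - \Rhom_z)_{\ell m}| \lesssim \Lr_1(\ell, m)$ from Proposition \ref{prop: resolvent new version} with the sandwich bound \eqref{eq: F dHhom F}, producing a triple sum $\sum_{\ell, m, \xi} \Lr_1(\ell, m)\, |\xi - m|^{-d}_{l^0}\, |\xi - \ell|^{-d}_{l^0}\, |Dw(\xi)|$. I execute the summations in the order $m, \ell, \xi$, invoking the refined convolution inequalities from the proof of Lemma \ref{lem: aux perturbation} (cf.\ \eqref{eq:sharperthansubmulti}) so that each convolution of $\Lr_1$ against a $|\cdot|^{-d}_{l^0}$ kernel produces two additional logarithmic factors. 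Collapsing the $m$- and $\ell$-sums reduces the problem to a tail sum in $\xi$ that yields $|N|^{-d}_{l^3}$.

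Estimate \eqref{eq:convergence:sum_RFdHFR_ll} follows the same template but carries a second $(\Rr_z - \Rhom_z)$ factor, contributing one further $\Lr_1$-convolution and thus two additional log factors, for the bound $|N|^{-d}_{l^5}$. Inserting the three bounds into the resolvent variation formulas \eqref{eq:limit:dRll}--\eqref{eq:limit:dRhomll} and integrating over the compact contour $\mathcal{C}$ then gives $|{\bf A}_N| \lesssim |N|^{-d}_{l^5}$. The principal obstacle is the bookkeeping of the iterated $\Lr_1$-convolutions: naive submultiplicativity accumulates more than the claimed number of logarithmic factors, so the sharper auxiliary estimates must be applied in the exact order that respects the internal structure of $\Lr_1$, ensuring the final log exponents stabilise at $1, 3, 5$ rather than blowing up.
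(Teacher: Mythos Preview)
Your proposal is correct and follows the same route as the paper: bound the sandwich $\Fopd\langle\delta H,w\rangle\Fop$ pointwise via Lemma~\ref{lem: F H F}, combine with the resolvent decay $\Lr_1$ from Proposition~\ref{prop: resolvent new version}, collapse the spatial convolutions via the auxiliary estimates of \S\ref{sec: auxiliary estimates}, and finish with a tail sum in the $Dw$ variable. Your explicit observation that $Dw(\xi)=0$ for $|\xi|\le N/4$ (and hence that the defect-core piece $\delta(H-\Hhom)$ drops out entirely) is a mild sharpening: the paper instead uses the uniform bound $|Dw(n)|\lesssim |n|^{-d}_{l^0,N}$ from \eqref{eq:limit:vN:error}, which costs one extra logarithm in the final tail sums but lands on the same stated rates $|N|^{-d}_{l^1},|N|^{-d}_{l^3},|N|^{-d}_{l^5}$.
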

\begin{proof}
   %%%%%%%%%%%%%%%%%%%%%%%%%%%%%%%%%%%%%%%%%%%%%%%%%%%%%%%%%%%%%%%%%%%%%%%%
   {\it Proof of \eqref{eq:convergence:sum_FdHF_ll}:} We first estimate the site contribution by
   \begin{align*}
      & \hspace{-2cm} \Big[ \Fopd \< \delta H(u_\infty) - \delta \Hhom(0), w \> \Fop \Big]_{\ell\ell}
      \\ &=
      \sum_{n \in \L} \big( \nabla^3 V_\ell(D u_\infty (n)) - \nabla^3 V(0) \big)
            \big[ Dw(n), DF(n - \ell), DF(n - \ell) \big]
      \\ &\lesssim
      \sum_{n \in \L} |D u_\infty (n)| \, |Dw(n)| \, |DF(n-\ell)|^2
   \end{align*}
   Summing over $\ell$ and substituting $|DF(n-\ell)| \lesssim |n-\ell|_{l^0}^{-d}$ and $|D u_\infty (n)|\lesssim |n|_{l^0}^{-d}$ , yields
   \begin{align*}
      \sum_{\ell \in \L} \Big| \Big[
            \Fopd \< \delta H(u_\infty) - \delta \Hhom(0), w \> \Fop
         \Big]_{\ell\ell} \Big|
      &\lesssim \sum_{n \in \L} \sum_{\ell \in \L} |n|_{l^0}^{-d}
        |n-\ell|_{l^0}^{-2d} \lvert Dw(n) \rvert\\
      &\lesssim \sum_{n \in \L} |n|_{l^0}^{-d} |n|_{l^0,N}^{-d}
      \\&\lesssim
      \sum_{n \in \L_N} |n|_{l^0}^{-d} N^{-d} +  \sum_{n \in \L \setminus \L_N} |n|_{l^0}^{-2d}.\\
      &\lesssim \lvert N \rvert^{-d}_{l^1}.
   \end{align*}

   %%%%%%%%%%%%%%%%%%%%%%%%%%%%%%%%%%%%%%%%%%%%%%%%%%%%%%%%%%%%%%%%%%%%%%%%
   {\it Proof of \eqref{eq:convergence:sum_RFdHF_ll}:}
   Arguing as in the first part of the proof of
   \eqref{eq:convergence:sum_FdHF_ll}, employing Proposition~\ref{prop:
   resolvent new version} to estimate $\Rr_z-\Rhom_z$, we obtain
   \begin{align*}
      \sum_{\ell \in \L} \Big| \Big[
            (\Rr_z-\Rhom_z) \Fopd \< \delta H(u_\infty), w \> \Fop
         \Big]_{\ell\ell} \Big|
      &\lesssim \sum_{\ell, n, m \in \L}
      \Lr_1(\ell, m) |Dw(n)| |DF(n-\ell)| |DF(n-m)| \\
      &\lesssim \sum_{n \in \L} |Dw(n)| \sum_{\ell, m \in \L}
         \Lr_1(\ell, m) |n-\ell|_{l^0}^{-d} |n-m|_{l^0}^{-d}.
   \end{align*}
   As
   \begin{equation*}
   \sum_{\ell \in \L}  \Lr_1(\ell, m) |n-\ell|_{l^0}^{-d} \lesssim \Lr_2(n,m)
   \end{equation*}
   according to \eqref{eq: auxiliary estimates eq6cor4}, we see that
   \begin{align}
      \sum_{\ell, m \in \L}  &\Lr_1(\ell, m) |n-\ell|_{l^0}^{-d} |n-m|_{l^0}^{-d}\nonumber\\
      &\lesssim  \sum_{ m \in \L} \Lr_2(n, m) |n-m|_{l^0}^{-d} \nonumber\\
      &\lesssim  |n|_{l^2}^{-d} \sum_{ m \in \L} (|n-m|_{l^2}^{-d} + |m|_{l^2}^{-d}) |n-m|_{l^0}^{-d} + \sum_{ m \in \L} |n-m|_{l^2}^{-2d}  |m|_{l^2}^{-d} \nonumber\\
      &\lesssim |n|_{l^2}^{-d} + |n|_{l^5}^{-2d} + |n|_{l^2}^{-d}\nonumber\\
      &\lesssim |n|_{l^2}^{-d}, \label{eq:convergence:proofs:doublesum_L2_d_d}
   \end{align}
   where we also used \eqref{eq: auxiliary estimates eq3} and \eqref{eq: auxiliary estimates eq5}. Therefore,
   \[
      \sum_{\ell \in \L} \Big| \Big[
         (\Rr_z-\Rhom_z) \Fopd \< \delta H(u_\infty), w \> \Fop
      \Big]_{\ell\ell} \Big|
      \lesssim \sum_n |n|_{l^0,N}^{-d} |n|_{l^2}^{-d} \lesssim \lvert N \rvert^{-d}_{l^3}.
   \]

   %%%%%%%%%%%%%%%%%%%%%%%%%%%%%%%%%%%%%%%%%%%%%%%%%%%%%%%%%%%%%%%%%%%%%%%%
   {\it Proof of \eqref{eq:convergence:sum_RFdHFR_ll}:} The proof of this
   estimate is entirely analogous to that of
   \eqref{eq:convergence:sum_RFdHF_ll}, and only requires replacing the estimate
   \eqref{eq:convergence:proofs:doublesum_L2_d_d} with
   \begin{align*}
      \sum_{\ell,a,b \in \L}  \Lr_1(\ell, a) |n-a|_{l^0}^{-d} |n-b|_{l^0}^{-d} \Lr_1(\ell, b)
      &\lesssim  \sum_{ \ell \in \L} \Lr_2(\ell,n)^2 \nonumber\\
      &\lesssim  \sum_{ \ell \in \L} |n|_{l^4}^{-2d} |\ell|_{l^4}^{-2d} + |n-\ell|_{l^4}^{-2d} |\ell|_{l^4}^{-2d}+ |n|_{l^4}^{-2d} |n-\ell|_{l^4}^{-2d}\nonumber\\
      &\lesssim |n|_{l^4}^{-2d},
   \end{align*}
based on \eqref{eq: auxiliary estimates eq6cor4} and \eqref{eq: auxiliary estimates eq5}.

   %%%%%%%%%%%%%%%%%%%%%%%%%%%%%%%%%%%%%%%%%%%%%%%%%%%%%%%%%%%%%%%%%%%%%%%%
   Finally, the result $|{\bf A}_N| \lesssim \lvert N \rvert^{-d}_{l^5}$ is an immediate
   consequence of
   \eqref{eq:convergence:sum_FdHF_ll}--\eqref{eq:convergence:sum_RFdHFR_ll}.
\end{proof}

We now turn to the second term in \eqref{eq:limit:AN+BN},
${\bf B}_N = \sum_\ell {\bf B}_{N,\ell}$ where
\begin{align*}
   {\bf B}_{N,\ell}
   = -\frac{1}{2} \int_0^1 (1-s) \frac{1}{2\pi i} \oint_{\mathcal{C}} \log z\,
      {\rm Trace} \big\< \delta^2 [\Rr_z(u_\infty+s w)]_{\ell\ell}\, w, w \big\>
   \, dz \, ds,
\end{align*}
thus we now need to estimate the second variation of the resolvents.
Let $u_s := u_\infty+s w$, then
\begin{align*}
   \Big[\big\< \delta^2 \Rr_z(u_s)\, w, w \big\>\Big]_{\ell\ell}
   &=
   \Big[2\Rr_z(u_s) \Fopd \big\< \delta H(u_s), w \big\> \Fop
      \Rr_z(u_s) \Fopd \big\< \delta H(u_s), w \big\> \Fop \Rr_z(u_s)\Big]_{\ell\ell} \\
   & \qquad + \Big[\Rr_z(u_s) \Fopd \big\< \delta^2 H(u_s) w, w \big\> \Fop
                  \Rr_z(u_s)\Big]_{\ell\ell} \\
   &=: {\bf B}_{N,\ell}^{(1)} + {\bf B}_{N,\ell}^{(2)}.
\end{align*}

\begin{lemma} \label{th:limit:estimate BN}
   For sufficiently large $N$, we have the estimates
   \begin{align}
      \label{eq:limit:estimate BN1}
      \sum_{\ell\in\L} \big| {\bf B}_{N,\ell}^{(1)} \big| &\lesssim N^{-d}, \qquad \text{and} \\
      \label{eq:limit:estimate BN2}
      \sum_{\ell\in\L} \big| {\bf B}_{N,\ell}^{(2)} \big| &\lesssim N^{-d};
      \qquad \text{and in particular} \\
      \label{eq:limit:estimate BN}
      | {\bf B}_N | &\lesssim N^{-d},
   \end{align}
   with the implied constants independent of $s, z, N$.
\end{lemma}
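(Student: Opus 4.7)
The plan is to expand each resolvent as $\Rr_z(u_s) = \Rhom_z + (\Rr_z(u_s) - \Rhom_z)$ with $\Rhom_z = (z-1)^{-1}I$, and to estimate each of the resulting contributions separately using Proposition \ref{prop: resolvent new version} (for the resolvent difference) together with Lemma \ref{lem: F H F} (for the $\Fopd\langle\delta^j H,\cdot\rangle\Fop$ factors). The source of smallness is the structure of $w = \vN - u_\infty$: by Lemma \ref{th:TR_estimates} and \eqref{eq:limit:vN:support}--\eqref{eq:limit:vN:error}, $Dw$ is supported in $\{|\xi| \geq N/4\}$ and satisfies $|Dw(\xi)| \lesssim |\xi|_{l^0}^{-d}$, so in particular $\sum_\xi |Dw(\xi)|^2 \lesssim N^{-d}$ (up to logarithmic factors). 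Since the contour $\mathcal{C}$ is compact and avoids $1$, pointwise bounds uniform in $z$ and $s$ immediately yield bounds on $\mathbf{B}_N$.

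For $\mathbf{B}_{N,\ell}^{(2)}$, the ``all-$\Rhom_z$'' term reduces to $(z-1)^{-2}[\Fopd \langle \delta^2 H(u_s)w,w\rangle \Fop]_{\ell\ell}$. Decomposing $\delta^2 H = \delta^2 \Hhom + \delta^2(\Hdef - \Hhom)$, the homogeneous part is estimated via Lemma \ref{lem: F H F} by $\sum_\xi |\xi-\ell|_{l^0}^{-2d} |Dw(\xi)|^2$; summing over $\ell$ uses $\sum_\ell |\xi-\ell|_{l^0}^{-2d} \lesssim 1$ (by \eqref{eq: auxiliary estimates eq1}), reducing the estimate to $\sum_\xi |Dw(\xi)|^2 \lesssim N^{-d}$. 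The defective correction $\delta^2(\Hdef-\Hhom)(u_s)$ is supported inside $\{|\ell|\leq\rcut\}$, so its contribution sums over $\ell$ as $\sum_\ell |\ell|_{l^0}^{-2d}$ multiplied by the fixed-core factor $|Dw|^2|_{\mathrm{core}} \lesssim N^{-2d}$, giving $O(N^{-d})$. Cross terms with one or two factors of $\Rr_z-\Rhom_z$ are handled by inserting the $\Lr_1(m,n)$ decay bound from Proposition \ref{prop: resolvent new version} and applying the submultiplicativity-type inequality $\sum_k \Lr_1(m,k)\Lr_1(k,n) \lesssim \Lr_1(m,n)$ from Lemma \ref{lem: aux perturbation}.

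For $\mathbf{B}_{N,\ell}^{(1)}$, the leading contribution after summing over $\ell$ is $2(z-1)^{-3}\,\mathrm{Trace}(A^2)$ with $A := \Fopd \langle \delta H(u_s), w\rangle \Fop$. Estimating $A_{\ell m}$ via the first estimate of Lemma \ref{lem: F H F} and expanding the trace as $\sum_{\ell,m} |A_{\ell m}|^2$, one uses $\sum_\ell |\xi-\ell|_{l^0}^{-d}|\xi'-\ell|_{l^0}^{-d} \lesssim |\xi-\xi'|_{l^1}^{-d}$ from \eqref{eq: auxiliary estimates eq3} to reduce matters to
\[
\sum_{\xi,\xi'} |Dw(\xi)|\,|Dw(\xi')|\,|\xi-\xi'|_{l^1}^{-2d}.
\]
Restricting to $|\xi|,|\xi'| \geq N/4$ and using $|Dw| \lesssim |\cdot|_{l^0}^{-d}$ together with a further auxiliary estimate then gives $O(N^{-d})$ up to logs. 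The cross terms involving factors of $\Rr_z - \Rhom_z$ are treated by inserting $\Lr_1$ bounds, invoking submultiplicativity repeatedly, and combining with the support/decay of $Dw$ exactly as in the proof of $\SS_{\ell,3}$ in Proposition \ref{prop: localitydecomposedentropy}.

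The main obstacle is bookkeeping: expanding two (respectively three) resolvents generates many cross terms, each requiring a tailored combination of the appendix estimates on products of the decay kernels $|\cdot|_{l^k}^{-d}$ and $\Lr_1$. The scheme is in the same spirit as Lemma \ref{th:convergence:FdHF_ab} and the estimate of $\SS_{\ell,3}$, but the key new ingredient is that $Dw$ is \emph{supported} on the complement of $\L_{N/4}$, which converts its $\ell^2$-norm into the global factor of $N^{-d}$ that drives the convergence rate.
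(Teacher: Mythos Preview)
Your proposal is correct and relies on the same ingredients as the paper (the resolvent decay of Proposition~\ref{prop: resolvent new version}, the kernel bounds of Lemma~\ref{lem: F H F}, and the smallness $\|Dw\|_{\ell^2}^2 \lesssim N^{-d}$), but the organisation differs in a way worth pointing out.

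You expand each resolvent as $\Rhom_z + (\Rr_z - \Rhom_z)$, which for ${\bf B}^{(1)}$ produces eight cross terms to be treated individually. The paper instead never expands: it uses the single pointwise bound $|[\Rr_z(u_s)]_{m\ell}| \lesssim \delta_{m\ell} + \Lr_1(m,\ell)$ and immediately absorbs each resolvent factor into the adjacent $DF$ factor via \eqref{eq: auxiliary estimates eq6cor4}, obtaining $|(\Rr_z(u_s)A)_{mn}| \lesssim \sum_\xi |Dw(\xi)|\,|\xi-m|_{l^2}^{-d}|\xi-n|_{l^0}^{-d}$ in one step. This collapses all your cross terms at once (at the price of a few extra logarithmic factors in intermediate kernels, which are harmless). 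The resulting bound $\sum_\ell|{\bf B}_{N,\ell}^{(1)}| \lesssim \sum_{\xi,\eta}|Dw(\xi)||Dw(\eta)||\xi-\eta|_{l^8}^{-2d}$ is then handled not via the support of $Dw$ but by splitting the $(\xi,\eta)$-domain into $\L_{2N}^2$ (where $|Dw|\lesssim N^{-d}$ uniformly), the far field $(\L\setminus\L_N)^2$ (where $|Dw|\lesssim|\cdot|^{-d}$), and the mixed region. Your support-based argument is an equally valid alternative, and for the leading $\mathrm{Trace}(A^2)$ term actually slightly cleaner in the log count. The paper also does not split $\delta^2 H$ into homogeneous and defect parts for ${\bf B}^{(2)}$; it simply uses $V_\ell\in C^4$ uniformly.
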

\begin{proof}
   %%%%%%%%%%%%%%%%%%%%%%%%%%%%%%%%%%%%%%%%%%%%%%%%%%%%%%%%%%%%%%%%%%%%%%%%
   {\it Proof of \eqref{eq:limit:estimate BN2}:}
   According to Proposition \ref{prop: resolvent new version}
   and \eqref{eq: auxiliary estimates eq6cor4} we know that
\begin{align} \label{eq:limit:estimate BN2 aux1}
\big\lvert \big[\Rr_z(u_s)\big]_{m\ell} \big\rvert &\lesssim \delta_{m \ell} + \Lr_1(m,\ell),
   \qquad \text{and} \\
 \label{eq:limit:estimate BN2 aux2}
\sum_m(\delta_{m \ell} + \Lr_1(m,\ell))|n-m|_{l^0}^{-d} &\lesssim \lvert n -\ell \rvert_{l^2}^{-d}.
\end{align}
Analogously to the proof of Lemma~\ref{th:convergence:FdHF_ab}, we then calculate
   \begin{align*}
      \sum_{\ell \in \L} \Big| \Big[
            &\Rr_z(u_s)\Fopd \< \delta^2 H(u_s)w, w \> \Fop\Rr_z(u_s)
         \Big]_{\ell\ell} \Big|\\
      &\lesssim
      \sum_{n \in \L} |Dw(n)|^2
      \sum_{\ell,m,k \in \L}  |n-m|_{l^0}^{-d} |n-k|_{l^0}^{-d} (\delta_{m\ell} +\Lr_1(m,\ell)) (\delta_{k\ell} +\Lr_1(k, \ell))\\
      &\lesssim
      \sum_{n \in \L} |Dw(n)|^2  \sum_{\ell \in \L}  |n-\ell|_{l^4}^{-2d}
      \\&\lesssim
      \| Dw \|_{\ell^2}^2\\
      &\lesssim N^{-d}.
   \end{align*}

   %%%%%%%%%%%%%%%%%%%%%%%%%%%%%%%%%%%%%%%%%%%%%%%%%%%%%%%%%%%%%%%%%%%%%%%%
   {\it Proof of \eqref{eq:limit:estimate BN1}:}
   Throughout this proof let $A = \Fopd
   \big\< \delta H(u_s), w \big\> \Fop$, then
   \[{\bf B}_{N,\ell}^{(1)} =  \big[ 2\Rr_z(u_s) A \Rr_z(u_s) A \Rr_z(u_s) \big]_{\ell\ell}.\]
We  use \eqref{eq:limit:estimate BN2 aux1} and \eqref{eq:limit:estimate BN2 aux2}, as well as
\begin{align*}
      |A_{mn}|
      &= \bigg| \sum_{\xi \in \L} \nabla^3 V(Du_s(\xi))[ Dw(\xi), DF(\xi-m), DF(\xi-j) ]\bigg| \\
      &\lesssim \sum_{\xi \in \L} |Dw(\xi)| |\xi-m|_{l^0}^{-d} |\xi-n|_{l^0}^{-d}
   \end{align*}
   to deduce that
\begin{align*}
      |(\Rr_z(u_s)A)_{mn}|
      &= \bigg| \sum_{\xi,k \in \L} (\delta_{mk} + \Lr_1(m,k)) |Dw(\xi)| |\xi-k|_{l^0}^{-d} |\xi-n|_{l^0}^{-d} \\
      &\lesssim \sum_{\xi \in \L} |Dw(\xi)| |\xi-m|_{l^2}^{-d} |\xi-n|_{l^0}^{-d}.
   \end{align*}
Therefore, using also \eqref{eq: auxiliary estimates eq3} and \eqref{eq: auxiliary estimates eq6cor4}
\begin{align*}
\big\lvert {\bf B}_{N,\ell}^{(1)} \big\rvert &=  \big\lvert \big[ 2\Rr_z(u_s) A \Rr_z(u_s) A \Rr_z(u_s) \big]_{\ell\ell} \big\lvert\\
&\lesssim \sum_{\xi, \eta, k,m} |Dw(\xi)| |\xi-\ell|_{l^2}^{-d} |\xi-k|_{l^0}^{-d} |Dw(\eta)| |\eta-k|_{l^2}^{-d} |\eta-m|_{l^0}^{-d} (\delta_{m\ell} + \Lr_1(m,\ell))\\
&\lesssim \sum_{\xi, \eta} |Dw(\xi)| |\xi-\ell|_{l^2}^{-d} |\xi-\eta|_{l^3}^{-d} |Dw(\eta)| |\eta-\ell|_{l^2}^{-d}.
   \end{align*}
Summing over $\ell$ and applying \eqref{eq: auxiliary estimates eq3} again then gives
\begin{align*}
\sum_\ell\big\lvert {\bf B}_{N,\ell}^{(1)} \big\rvert &= \sum_\ell \big\lvert \big[ 2\Rr_z(u_s) A \Rr_z(u_s) A \Rr_z(u_s) \big]_{\ell\ell} \big\lvert\\
 &\lesssim \sum_{\xi, \eta, \ell} |Dw(\xi)| |\xi-\ell|_{l^2}^{-d} |\xi-\eta|_{l^3}^{-d} |Dw(\eta)| |\eta-\ell|_{l^2}^{-d}\\
&\lesssim \sum_{\xi, \eta} |Dw(\xi)| |Dw(\eta)| |\eta-\xi|_{l^8}^{-2d}.
   \end{align*}
Let us split the domain of the sum. First,
\begin{align*}
\sum_{\xi, \eta \in \L_{2N}} |Dw(\xi)| |Dw(\eta)| |\eta-\xi|_{l^8}^{-2d}
&\lesssim N^{-2d} \sum_{\xi, \eta \in \L_{2N}} |\eta-\xi|_{l^8}^{-2d}
\lesssim N^{-d}.
\end{align*}
For the mixed terms we use \eqref{eq: auxiliary estimates eq1} and $\lvert \eta \rvert \sim \lvert \eta - \xi\rvert$
\begin{align}
\sum_{ \xi \in \L_{N},  \eta \in \L \backslash \L_{2N}} |Dw(\xi)| |Dw(\eta)| |\eta-\xi|_{l^8}^{-2d} &\lesssim  \sum_{\xi \in \L_{N}} |Dw(\xi)| \sum_{\eta \in \L \backslash \L_{2N}}   \lvert \eta \rvert_{l^0}^{-d} |\eta-\xi|_{l^8}^{-2d} \nonumber \\
&\lesssim  \sum_{\xi \in \L_{N}} |Dw(\xi)| \sum_{\eta \in \L \backslash \L_{2N}}   \lvert \eta \rvert_{l^8}^{-3d}  \nonumber \\
 &\lesssim  \sum_{\xi \in \L_{N}}  |Dw(\xi)| \lvert N \rvert_{l^8}^{-2d} \nonumber  \\
 &\lesssim \lvert N \rvert_{l^8}^{-2d} N^{d} N^{-d} = \lvert N \rvert_{l^8}^{-2d} \label{eq:limit:BN1 farfield1}
\end{align}
and, due to \eqref{eq: auxiliary estimates eq4} and \eqref{eq: auxiliary estimates eq1},
\begin{align}
\sum_{\xi,\eta \in \L \backslash \L_N} |Dw(\xi)| |Dw(\eta)| |\eta-\xi|_{l^8}^{-2d} &\lesssim  \sum_{\xi,\eta \in \L \backslash \L_N} \lvert \xi \rvert_{l^0}^{-d} \lvert \eta \rvert_{l^0}^{-d} |\eta-\xi|_{l^8}^{-2d}\nonumber  \\
&\lesssim  \sum_{\xi \in \L \backslash \L_N} \lvert \xi \rvert_{l^0}^{-2d}
% \nonumber  \\ &
\lesssim N^{-d} \label{eq:limit:BN1 farfield2}.
\end{align}
 In summary, we have shown that
\[\sum_\ell\big\lvert {\bf B}_{N,\ell}^{(1)} \big\rvert \lesssim N^{-d} .
   \]

   Finally, the estimate \eqref{eq:limit:estimate BN} is an immediate consequence
   of \eqref{eq:limit:estimate BN1} and \eqref{eq:limit:estimate BN2}.
\end{proof}

\begin{corollary} \label{th:limit:SvN-Su}
   For $N$ sufficiently large,
   \[
      \big| \SS^+(\vN) - \SS^+(u_\infty) \big|
      \lesssim
      \lvert N \rvert^{-d}_{l^5}.
   \]
\end{corollary}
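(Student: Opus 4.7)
The plan is to assemble this corollary directly from the ingredients just developed. By the decomposition \eqref{eq:limit:AN+BN}, already set up via the substitution $w = \vN - u_\infty$ and a first-order Taylor expansion of $\SS^+_\ell$ at $u_\infty$, we have
\[
   \SS^+(\vN) - \SS^+(u_\infty) = {\bf A}_N + {\bf B}_N,
\]
where ${\bf A}_N$ collects the first variations (renormalised by subtraction of $\langle \delta \Shom_\ell(0), w\rangle$, which is what makes the sum over $\ell$ convergent) and ${\bf B}_N$ is the integral remainder involving second variations of $\SS^+_\ell$.

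First I would invoke Lemma~\ref{th:convergence:FdHF_ab}, which after expanding $\langle \delta[\Rr_z - \Rhom_z]_{\ell\ell}, w\rangle$ into the three terms shown in \eqref{eq:limit:dRll}--\eqref{eq:limit:dRhomll} and summing each contribution over $\ell$ via the sum estimates \eqref{eq:convergence:sum_FdHF_ll}--\eqref{eq:convergence:sum_RFdHFR_ll}, yields
\[
   |{\bf A}_N| \lesssim \lvert N \rvert^{-d}_{l^5}.
\]
Then I would apply Lemma~\ref{th:limit:estimate BN}, whose splitting ${\bf B}_{N,\ell} = {\bf B}_{N,\ell}^{(1)} + {\bf B}_{N,\ell}^{(2)}$ and the two bounds \eqref{eq:limit:estimate BN1}--\eqref{eq:limit:estimate BN2} give
\[
   |{\bf B}_N| \lesssim N^{-d}.
\]

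Combining these two inequalities and using the elementary fact that $N^{-d} \leq \lvert N \rvert^{-d}_{l^5}$ (since $\lvert N\rvert^{-d}_{l^5} = (N+1)^{-d}\log^5(e+N)$), I conclude
\[
   \big| \SS^+(\vN) - \SS^+(u_\infty) \big| \leq |{\bf A}_N| + |{\bf B}_N| \lesssim \lvert N\rvert^{-d}_{l^5},
\]
which is exactly the stated bound. There is no real obstacle here: the entire corollary is a bookkeeping step consolidating the two technical lemmas, with the rate governed by the slower-decaying ${\bf A}_N$ contribution which is in turn dominated by the resolvent-variation term \eqref{eq:convergence:sum_RFdHFR_ll}. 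The only point worth flagging is the uniformity of the implied constants in $s \in [0,1]$ and $z \in \mathcal{C}$, which has already been arranged in the hypotheses of Lemma~\ref{th:limit:estimate BN} so that the contour integration and the $s$-integration in the Taylor remainder produce only harmless additional $O(1)$ factors.
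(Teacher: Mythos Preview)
Your proposal is correct and follows essentially the same approach as the paper: the paper's proof simply states that the result follows by combining Lemma~\ref{th:convergence:FdHF_ab} and Lemma~\ref{th:limit:estimate BN}, which is exactly what you do via the decomposition \eqref{eq:limit:AN+BN}. Your additional remarks on uniformity in $s$ and $z$ and on the comparison $N^{-d} \lesssim \lvert N\rvert^{-d}_{l^5}$ are accurate supporting details.
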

\begin{proof}
   This result follows by combining Lemma~\ref{th:convergence:FdHF_ab}
   and Lemma~\ref{th:limit:estimate BN}.
\end{proof}

\subsubsection{The term $\SS^+_N(u_N) - \SS^+_N(\vNper)$}
\label{sec:limit:SNuN-SNvNper}
Recalling the error split \eqref{eq:limit:split of S error} we now turn to
$\SS^+_N(\vNper) - \SS^+_N(u_N)$, the periodic analogue of $\SS(\vN) - \SS(u_\infty)$.
Recall that in estimating the latter, we relied on the uniform estimate $\|D\vN-D u_\infty\|_{\ell^\infty} \lesssim N^{-d}$, as well as the far field estimate $\lvert D u_\infty \rvert \leq \lvert \ell \rvert_{l^0}^{-d}$.

As the analogous estimate $\|D\vNper - Du_N\|_{\ell^\infty} \lesssim N^{-d}$ holds and the far field estimates are no longer needed, the estimates for $\SS^+_N(\vNper) -\SS^+_N(u_N)$ are therefore, for the most part, analogous. Hence, we will skip many details.

The key difference is that the $N^{-d} |n-m|_{l^5, \L_N}^{-d}$ in the periodic resolvent
estimate, Lemma \ref{th:resolvent estimate uNper}, gives some additional terms.

To justify these claims, we decompose the new error term similarly to the previous one. Let $w_N := u_N - \vNper$. Using the periodicity,
we have
\begin{align}
\sum_{\ell \in \L_N} \big\< \delta \Shom_{N,\ell}(0), w_N \big\> &= -\frac{1}{2}\sum_{\ell,n \in \L_N} \nabla^3V(0)[Dw_N(n), DF_N(n-\ell),DF_N(n-\ell)] \nonumber \\
&= -\frac{1}{2}\sum_{m \in \L_N} \nabla^3V(0)\Big[\sum_{n \in \L_N}Dw_N(n), DF_N(m),DF_N(m)\Big] \nonumber \\
&= 0. \label{eq:periodfirstvarioanvanishes}
\end{align}
Hence we can write
\begin{align}
   \notag
   \SS_N^+(u_N) - \SS^+_N(\vNper)
   &=
   \sum_{\ell \in \L_N} \Big( \SS^+_{N,\ell}(u_N) - \SS^+_{N,\ell}(\vNper)
                  - \big\< \delta \Shom_{N,\ell}(0), w_N \big\> \Big)
   %
   % \\ \notag &=
   % \sum_{\ell \in \L_N} \Big( \SS^+_{N,\ell}(u_N) - \SS^+_{N,\ell}(\vNper)
   %                - \big\< \delta \Shom_{N,\ell}(0), w_N \big\> \Big)
   % %
   \\ \notag & \hspace{-2cm} =
   \sum_{\ell \in \L_N}
      \big\< \delta \SS^+_{N,\ell}(\vNper) - \delta \Shom_{N,\ell}(0), w_N \big\>
   % \\ \notag &  \qquad \qquad
   + \sum_{\ell \in \L_N}
      \int_{0}^1 (1-s) \big\< \delta^2 \SS^+_{N,\ell}(\vNper+sw_N) w_N, w_N \big\> \,ds
   \\ &= {\bf pA}_N + {\bf pB}_N.
   \label{eq:limit:pAN+pBN}
\end{align}

Note in particular that we have expanded $\SS^+_{N,\ell}$ around $\vNper$ instead
of $u_N$. Since the decay estimate for $\Rr_{N,z}(\vNper)$ is equivalent to
that for $\Rr_z(u_\infty)$ according to Lemma \ref{th:resolvent estimate vNper}, it follows that we can repeat the proof of Lemma~\ref{th:convergence:FdHF_ab} almost verbatim to obtain the following
result.
\begin{lemma} \label{th:limit:pAN}
   For $N$ sufficiently large, $|{\bf pA}_N | \lesssim \lvert N\rvert_{l^5}^{-d}$.
\end{lemma}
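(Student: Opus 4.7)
The plan is to mimic the proof of Lemma~\ref{th:convergence:FdHF_ab} almost line by line, replacing the infinite-lattice objects by their periodic analogues. First, by the contour integral representation \eqref{eq:log-FHtF-contour} adapted to the periodic setting, one has
\[
   {\bf pA}_{N,\ell}
   = -\frac{1}{2} \frac{1}{2\pi i} \oint_{\mathcal{C}}
      \log z\, {\rm Trace}\, \big\< \delta [\Rr_{N,z}(\vNper)]_{\ell\ell} - \delta [\Rhom_{N,z}]_{\ell\ell}, w_N \big\>\, dz,
\]
where $w_N := u_N - \vNper$. The first-order variation expands exactly as in \eqref{eq:limit:dRll}--\eqref{eq:limit:dRhomll}, so we decompose
\[
   \big\<\delta[\Rr_{N,z}(\vNper)]_{\ell\ell} - \delta[\Rhom_{N,z}]_{\ell\ell}, w_N\big\>
   = {\rm (I)}_\ell + {\rm (II)}_\ell + {\rm (III)}_\ell,
\]
where ${\rm (I)}_\ell$ contains only the difference $\Fop_N^* \<\delta H_N(\vNper)-\delta \Hhom_N(0), w_N\> \Fop_N$, ${\rm (II)}_\ell$ is the cross term involving one factor of $\Rr_{N,z}(\vNper)-\Rhom_{N,z}$, and ${\rm (III)}_\ell$ involves two such factors.

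The essential inputs are (i) $|DF_N(n)| \lesssim |n|_{l^0,\L_N}^{-d}$ from Lemma~\ref{th:error estimate FN}, (ii) $|D\vNper(n)| \lesssim |n|_{l^0,\L_N}^{-d}$ by construction of $\vNper$ via Lemma~\ref{th:TR_estimates}, (iii) the uniform estimate $\|Dw_N\|_{\ell^\infty(\L_N)} \lesssim N^{-d}$ from \eqref{eq:limit:vNper:error}, and (iv) the periodic resolvent estimate $|[\Rr_{N,z}(\vNper) - \Rhom_{N,z}]_{mn}| \lesssim \Lr_{1,\L_N}(m,n)$ from Lemma~\ref{th:resolvent estimate vNper}. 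Bounding ${\rm (I)}_\ell$ proceeds exactly as in \eqref{eq:convergence:sum_FdHF_ll}: we estimate
\[
   \sum_{\ell\in\L_N}|{\rm (I)}_\ell|
   \lesssim \sum_{n\in\L_N} |D\vNper(n)|\,|Dw_N(n)| \sum_{\ell\in\L_N} |n-\ell|_{l^0,\L_N}^{-2d}
   \lesssim N^{-d} \sum_{n\in\L_N} |n|_{l^0,\L_N}^{-d}
   \lesssim |N|_{l^1}^{-d},
\]
using the periodic analogues of \eqref{eq: auxiliary estimates eq1} and \eqref{eq: auxiliary estimates eq3}. The estimate of ${\rm (II)}_\ell$ is the direct analogue of \eqref{eq:convergence:sum_RFdHF_ll}: an extra factor $\Lr_{1,\L_N}(\ell,m)$ is summed against $|n-m|_{l^0,\L_N}^{-d}|n-\ell|_{l^0,\L_N}^{-d}$ via the submultiplicativity-type bound \eqref{eq: auxiliary estimates eq6cor4}, yielding a bound of order $|N|_{l^3}^{-d}$. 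For ${\rm (III)}_\ell$, replacing \eqref{eq:convergence:proofs:doublesum_L2_d_d} by its periodic version, where now both factors of $\Lr_{1,\L_N}$ appear and everything is symmetric, delivers $|N|_{l^5}^{-d}$, which is the dominant of the three.

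The main conceptual difference from the non-periodic setting is that $w_N$ is no longer supported in the annulus $\L\setminus\L_{N/2}$; instead, it is $\L_N$-periodic but uniformly small of order $N^{-d}$. Consequently the far-field splitting used in \eqref{eq:limit:BN1 farfield1}--\eqref{eq:limit:BN1 farfield2} is replaced throughout by the simpler pointwise estimate $|Dw_N| \lesssim N^{-d}$ combined with the fact that the summation domain $\L_N$ is finite of volume $\lesssim N^d$. The technical obstacle to verify, which I expect to be the only non-routine point, is that the auxiliary sum estimates in Section~\ref{sec: auxiliary estimates} involving $|\cdot|_{l^k}$ admit the $\L_N$-periodic analogues with $|\cdot|_{l^k,\L_N}$ and $\Lr_{k,\L_N}$; this is standard but must be checked (the sums become finite, with all inverse-polynomial bounds obtained by integrating over the fundamental cell and controlling the boundary contributions). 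Combining the three bounds yields $|{\bf pA}_N| \lesssim |N|_{l^5}^{-d}$, completing the argument.
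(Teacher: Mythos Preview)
Your proposal is correct and takes essentially the same approach as the paper, which explicitly states that the proof of Lemma~\ref{th:limit:pAN} is obtained by repeating the proof of Lemma~\ref{th:convergence:FdHF_ab} almost verbatim, using the periodic resolvent estimate from Lemma~\ref{th:resolvent estimate vNper} in place of Proposition~\ref{prop: resolvent new version}. You have correctly identified the three-term decomposition, the relevant inputs, and the one structural simplification (the uniform smallness of $Dw_N$ replacing the annular support of $Dw$).
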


We can therefore turn immediately towards the second term,
${\bf pB}_N = \sum_{\ell \in \L_N} {\bf pB}_{N,\ell}$, where
\begin{align*}
   {\bf pB}_{N,\ell}
   = -\frac{1}{2} \int_0^1 (1-s) \frac{1}{2\pi i} \oint_{\mathcal{C}} \log z\,
      {\rm Trace} \big\< \delta^2 [\Rr_{N,z}(v_s)]_{\ell\ell}\, w_N, w_N \big\>
   \, dz \, ds,
\end{align*}
$v_s := \vNper + s u_N$ and
\begin{align*}
   \Big[\big\< \delta^2 [\Rr_{N,z}(v_s)]_{\ell\ell}\, w, w \big\>\Big]_{\ell\ell}
   &=
   \Big[2\Rr_{N,z}(v_s) \Fopd \big\< \delta H_N(v_s), w_N \big\> \Fop
      \Rr_{N,z}(v_s) \Fopd \big\< \delta H_N(v_s), w_N \big\> \Fop \Rr_{N,z}(v_s)\Big]_{\ell\ell} \\
   & \qquad + \Big[\Rr_{N,z}(v_s) \Fopd \big\< \delta^2 H_N(v_s) w_N, w_N \big\> \Fop
                  \Rr_{N,z}(v_s)\Big]_{\ell\ell}.
                   \\
   &=: {\bf pB}_{N,\ell}^{(1)} + {\bf pB}_{N,\ell}^{(2)}.
\end{align*}

\begin{lemma} \label{th:limit:estimate pBN}
   For sufficiently large $N$, we have
   \begin{align}
      \label{eq:limit:estimate pBN1+pBN2}
      &\sum_{\ell\in\L} \big| {\bf pB}_{N,\ell}^{(1)} \big| \lesssim N^{-d}, \qquad
      \text{and} \qquad
      \sum_{\ell\in\L} \big| {\bf pB}_{N,\ell}^{(2)} \big| \lesssim N^{-d}; \\
      \label{eq:limit:estimate pBN}
      &\text{and, in particular, } \qquad
      | {\bf pB}_N | \lesssim N^{-d},
   \end{align}
   with the implied constants independent of $s, z, N$.
\end{lemma}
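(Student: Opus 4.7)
The plan is to mirror the proof of Lemma~\ref{th:limit:estimate BN} line-by-line, replacing the ingredients from the infinite-lattice setting by their periodic analogues. Specifically, the infinite-lattice resolvent bound from Proposition~\ref{prop: resolvent new version} will be replaced by Lemma~\ref{th:resolvent estimate uNper}, giving
\[
   \big|[\Rr_{N,z}(v_s)]_{m\ell}\big|
   \lesssim \delta_{m\ell} + \Lr_{1,\L_N}(m,\ell) + N^{-d}\,|m-\ell|_{l^5,\L_N}^{-d},
\]
and the far-field decay $|Du_\infty(\xi)|\lesssim |\xi|_{l^0}^{-d}$ used in the infinite case will be replaced by the uniform estimate $\|Dw_N\|_{\ell^\infty(\L_N)}\lesssim N^{-d}$ coming from \eqref{eq:limit:vNper:error}. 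Since $w_N$ is $\L_N$-periodic and bounded, the simple consequence $\|Dw_N\|_{\ell^2(\L_N)}^2 \lesssim N^{-d}$ is precisely the sharp quantity needed to replicate the $O(N^{-d})$ bound of \eqref{eq:limit:estimate BN2}. All the auxiliary convolution and summation estimates of \S\ref{sec: auxiliary estimates} admit periodic analogues (with $|\cdot|_{l^k}$ replaced by $|\cdot|_{l^k,\L_N}$), which I will invoke without further comment.

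First I will handle $\mathbf{pB}_{N,\ell}^{(2)}$. Using the expansion of $\delta^2 H_N(v_s)$ in the same way as \eqref{eq: FddHhomF}, one gets
\[
   \big|[\Fopd\<\delta^2 H_N(v_s) w_N,w_N\>\Fop]_{mn}\big|
   \lesssim \sum_{\xi\in\L_N} |\xi-m|_{l^0,\L_N}^{-d}|\xi-n|_{l^0,\L_N}^{-d} |Dw_N(\xi)|^2.
\]
Composing with two copies of $\Rr_{N,z}(v_s)$ via the above resolvent bound and summing over $\ell\in\L_N$ produces exactly the sum $\sum_\xi |Dw_N(\xi)|^2 \sum_\ell |\xi-\ell|_{l^4,\L_N}^{-2d}$, which is controlled by $\|Dw_N\|_{\ell^2(\L_N)}^2 \lesssim N^{-d}$. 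The extra contribution $N^{-d}|m-\ell|_{l^5,\L_N}^{-d}$ from the resolvent is summable over $\ell$ uniformly, so it contributes only a harmless additional $N^{-d}$ factor and does not worsen the rate.

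For $\mathbf{pB}_{N,\ell}^{(1)}$, which is the triple resolvent/double-$\delta H$ term, I proceed exactly as in the proof of \eqref{eq:limit:estimate BN1}: set $A := \Fopd\<\delta H_N(v_s),w_N\>\Fop$, estimate $|A_{mn}|\lesssim \sum_\xi |Dw_N(\xi)||\xi-m|_{l^0,\L_N}^{-d}|\xi-n|_{l^0,\L_N}^{-d}$, iterate the periodic convolution bounds \eqref{eq: auxiliary estimates eq3}, \eqref{eq: auxiliary estimates eq6cor4} to obtain $|(\Rr_{N,z}(v_s) A)_{mn}|\lesssim \sum_\xi |Dw_N(\xi)||\xi-m|_{l^2,\L_N}^{-d}|\xi-n|_{l^0,\L_N}^{-d}$, and finally
\[
   \sum_\ell \big|\mathbf{pB}_{N,\ell}^{(1)}\big|
   \lesssim \sum_{\xi,\eta\in\L_N} |Dw_N(\xi)|\,|Dw_N(\eta)|\,|\eta-\xi|_{l^8,\L_N}^{-2d}.
\]
Using $\|Dw_N\|_{\ell^\infty(\L_N)}\lesssim N^{-d}$ together with $\sum_{\xi,\eta\in\L_N}|\eta-\xi|_{l^8,\L_N}^{-2d}\lesssim N^d$ yields the desired $O(N^{-d})$ bound. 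The far-field splits \eqref{eq:limit:BN1 farfield1}--\eqref{eq:limit:BN1 farfield2} of the infinite-lattice proof disappear completely here because everything lives on $\L_N$.

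The only genuinely new bookkeeping, and the step I would spend most care on, is tracking the extra $N^{-d}|m-\ell|_{l^5,\L_N}^{-d}$ term from Lemma~\ref{th:resolvent estimate uNper} across the three resolvents in $\mathbf{pB}^{(1)}_{N,\ell}$. This cross-multiplies into up to $2^3$ distinct terms, but each one contains at least one factor of $N^{-d}$ that can be pulled out, leaving a convolution of periodic kernels that sums uniformly (using the periodic analogues of \eqref{eq: auxiliary estimates eq3}, \eqref{eq: auxiliary estimates eq6cor4}). Since the ``pure $\Lr_{1,\L_N}$'' term already gives $O(N^{-d})$, these mixed terms are at worst of the same order, and the statement \eqref{eq:limit:estimate pBN1+pBN2} follows; \eqref{eq:limit:estimate pBN} is then immediate.
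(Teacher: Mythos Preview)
Your proposal is correct and follows essentially the same route as the paper. The one place where the paper is slightly cleaner is in handling the extra $N^{-d}|m-\ell|_{l^5,\L_N}^{-d}$ term from Lemma~\ref{th:resolvent estimate uNper}: rather than expanding into $2^3$ cross-terms as you suggest, the paper observes once and for all that
\[
   \sum_{m\in\L_N}\Big(\delta_{m\ell} + \Lr_{1,\L_N}(m,\ell) + N^{-d}|m-\ell|_{l^5,\L_N}^{-d}\Big)\,|n-m|_{l^0,\L_N}^{-d}
   \;\lesssim\; |n-\ell|_{l^2,\L_N}^{-d} + N^{-d}|n-\ell|_{l^6,\L_N}^{-d}
   \;\lesssim\; |n-\ell|_{l^2,\L_N}^{-d},
\]
the last step using that $N^{-d}\log^4(e+CN)\to 0$ on $\L_N$. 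Since this is exactly the periodic analogue of \eqref{eq:limit:estimate BN2 aux2}, the remainder of the argument for both $\mathbf{pB}_N^{(1)}$ and $\mathbf{pB}_N^{(2)}$ is then \emph{literally} identical to the infinite-lattice case, with no separate bookkeeping of cross-terms required.
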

\begin{proof}
Instead of \eqref{eq:limit:estimate BN2 aux1} and \eqref{eq:limit:estimate BN2 aux2}, we now use that
\begin{equation} \label{eq:limit:estimate pBN2 aux1}
\big\lvert \big[\Rr_{N,z}(v_s)\big]_{m\ell} \big\rvert \lesssim \delta_{m \ell} + \Lr_{1,\L_N}(m,\ell)+ N^{-d} |m-\ell|_{l^5,\L_N}^{-d},
\end{equation}
as well as, \eqref{eq: auxiliary estimates eq6cor4} and \eqref{eq: auxiliary estimates eq3} to obtain
\begin{align}
\sum_{m \in \Lambda_N}&(\delta_{m \ell} + \Lr_{1,\L_N}(m,\ell)+ N^{-d} |m-\ell|_{l^5,\L_N}^{-d})|n-m|_{l^0,\L_N}^{-d}\nonumber \\
 &\lesssim \lvert n -\ell \rvert_{l^2,\L_N}^{-d} +N^{-d}\lvert n -\ell \rvert_{l^6,\L_N}^{-d}
 % \nonumber \\ &
 \lesssim \lvert n -\ell \rvert_{l^2,\L_N}^{-d}.\label{eq:limit:estimate pBN2 aux2}
\end{align}
 As the result in \eqref{eq:limit:estimate pBN2 aux2} is the same as in \eqref{eq:limit:estimate BN2 aux2}, the rest of the proof for ${\bf pB}_{N}^{(2)}$ stays the same. For ${\bf pB}_{N}^{(1)}$ we also get
\begin{equation*}
\sum_\ell\big\lvert {\bf pB}_{N,\ell}^{(1)} \big\rvert \lesssim \sum_{\xi, \eta \in \Lambda_N} |Dw_N(\xi)| |Dw_N(\eta)| |\eta-\xi|_{l^8,\L_N}^{-2d}
   \end{equation*}
   exactly as before. Of course, we do not need far field estimates now but only the simpler estimate
 \begin{align*}
\sum_\ell\big\lvert {\bf pB}_{N,\ell}^{(1)} \big\rvert &\lesssim \sum_{\xi, \eta \in \Lambda_N} |Dw_N(\xi)| |Dw_N(\eta)| |\eta-\xi|_{l^8,\L_N}^{-2d}\\
&\lesssim N^{-2d}\sum_{\xi, \eta \in \Lambda_N} |\eta-\xi|_{l^8,\L_N}^{-2d}
% \\&
      \lesssim N^{-d}. \qedhere
   \end{align*}
\end{proof}

\begin{corollary} \label{th:limit:SvN-SuN}
   For $N$ sufficiently large, we have $|\SS^+_N(\vNper) - \SS^+_N(u_N)| \lesssim
   \lvert N\rvert_{l^5}^{-d}$
\end{corollary}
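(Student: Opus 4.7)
The plan is straightforward: the corollary is a clean combination of the preceding two lemmas via the decomposition \eqref{eq:limit:pAN+pBN}. I would first observe that the splitting $\SS_N^+(u_N) - \SS^+_N(\vNper) = \mathbf{pA}_N + \mathbf{pB}_N$ follows from a second-order Taylor expansion of $u \mapsto \SS^+_{N,\ell}(u)$ around $\vNper$ in the direction $w_N = u_N - \vNper$. The key point justifying the insertion of the renormalization term $\langle \delta \Shom_{N,\ell}(0), w_N \rangle$ in the first-order part is the identity \eqref{eq:periodfirstvarioanvanishes}, namely $\sum_{\ell \in \L_N} \langle \delta \Shom_{N,\ell}(0), w_N \rangle = 0$, which holds by the periodicity of $w_N$ and of $F_N$ (and ultimately traces back to the vanishing of a finite symmetric lattice sum via translation invariance). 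This means the renormalized first-order piece equals the unrenormalized one after summing over $\ell \in \L_N$, yet is more amenable to estimation term-by-term.

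Then I would directly invoke Lemma \ref{th:limit:pAN}, yielding $|\mathbf{pA}_N| \lesssim |N|_{l^5}^{-d}$, and Lemma \ref{th:limit:estimate pBN}, yielding $|\mathbf{pB}_N| \lesssim N^{-d}$. Since $N^{-d} \leq |N|_{l^5}^{-d}$ (the latter only absorbs additional logarithmic factors), the triangle inequality gives
\[
   \bigl| \SS^+_N(\vNper) - \SS^+_N(u_N) \bigr|
   \leq |\mathbf{pA}_N| + |\mathbf{pB}_N|
   \lesssim |N|_{l^5}^{-d} + N^{-d}
   \lesssim |N|_{l^5}^{-d}.
\]

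There is no genuine obstacle at this stage since all substantive analysis, namely the resolvent perturbation estimates from Lemma \ref{th:resolvent estimate vNper} and Lemma \ref{th:resolvent estimate uNper} together with the summability arguments paralleling those in Lemmas \ref{th:convergence:FdHF_ab} and \ref{th:limit:estimate BN}, has already been carried out. The only item worth emphasizing in the write-up is the analogy with the infinite-lattice case: the decay rates in the periodic setting are controlled by the lattice-periodized norms $\Lr_{1,\L_N}$ plus the additional $N^{-d}|n-m|^{-d}_{l^5,\L_N}$ correction from Lemma \ref{th:resolvent estimate uNper}, but this extra term is summable to $O(N^{-d})$ and therefore subdominant to $|N|_{l^5}^{-d}$.
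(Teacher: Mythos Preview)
Your proposal is correct and matches the paper's approach exactly: the paper's proof simply states that the result follows by combining Lemma~\ref{th:limit:pAN} and Lemma~\ref{th:limit:estimate pBN} with the decomposition \eqref{eq:limit:pAN+pBN}. Your additional remarks about the role of \eqref{eq:periodfirstvarioanvanishes} in justifying the renormalized splitting are accurate and already implicit in the setup preceding \eqref{eq:limit:pAN+pBN}.
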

\begin{proof}
   The result follows by combining Lemma~\ref{th:limit:pAN} and
   Lemma~\ref{th:limit:estimate pBN} with \eqref{eq:limit:pAN+pBN}.
\end{proof}

\subsubsection{The term $\SS^+(\vN) - \SS^+_N(\vNper)$}
The final term from \eqref{eq:limit:split of S error} can be estimated by
comparing $\Rr_z(\vN)$ with $\Rr_{N,z}(\vNper)$, which we will reduce to the error estimate
for $F_N - F$ from Lemma~\ref{th:error estimate FN}.

We begin by recalling the expressions, valid for $N$ sufficiently large,
\begin{align*}
   \Rr_z(\vN) &= \big(z I_{\ell^2(\Lambda)} - \Fopd H(\vN) \Fop \big)^{-1}
            =: \big( z - A \big)^{-1}\\
   \Rr_{N,z}(\vNper) &= \big(z I_{\ell^2(\Lambda_N)} - \Fop_N H_N(\vN) \Fop_N \big)^{-1}
            =: \big( z - A_N \big)^{-1}
\end{align*}
We extend the ``matrix'' $A_N$ by defining $[A_N]_{minj} = \delta_{mn}
\delta_{ij}$ for $(m, n) \in \L^2 \setminus \L_N^2$, which induces a
corresponding extension of $\Rr_{N,z}(\vNper)$ to a $\Rr_z^{\rm ext}(\vNper)$ by $\Rhom_z$. This allows us
to compare
\begin{align*}
   \big[ \Rr_z(\vN) - \Rr_z^{\rm ext}(\vNper) \big]_{\ell\ell}
   &=
   \big[ \Rr_z(\vN) \big( A-A_N \big) \Rr_z^{\rm ext}(\vNper) \big]_{\ell\ell} \\
   &=
    (z-1)^{-2} \big[ A-A_N \big]_{\ell\ell} \\
 &\qquad
    +
    (z-1)^{-1} \Big[ (\Rr_z(\vN)-\Rhom_z) \big( A - A_N \big) \Big]_{\ell\ell} \\
 &\qquad
   +
   (z-1)^{-1} \Big[ \big( A - A_N \big) (\Rr_z^{\rm ext}(\vNper)-\Rhom_z) \Big]_{\ell\ell} \\
 &\qquad
    +
    \Big[ (\Rr_z(\vN)-\Rhom_z) \big( A -A_N \big) (\Rr_z^{\rm ext}(\vNper)-\Rhom_z) \Big]_{\ell\ell} \\
 &=: {\bf R}_\ell^{(1)} + {\bf R}_\ell^{(2)} + {\bf R}_\ell^{(3)} + {\bf R}_\ell^{(4)}.
\end{align*}

\begin{lemma}
   \begin{equation*}
      \big| [A_N - A]_{ij} \big| \lesssim
      \begin{cases}
         N^{-d} \sum_{n \in \L_{N/2}} |n|_{l^0}^{-d} \big( |j-n|_{l^0}^{-d} + |i-n|_{l^0}^{-d}\big), &
            \text{if } (i, j) \in \L_N^2, \\[2mm]
         \sum_{n \in \L_{N/2}} |n|_{l^0}^{-d} |j-n|_{l^0}^{-d} |i-n|_{l^0}^{-d}, &
            \text{if } (i, j) \in \L^2 \setminus \L_N^2.
      \end{cases}
   \end{equation*}
\end{lemma}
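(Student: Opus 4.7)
The strategy is to represent both $[A]_{ij}$ and $[A_N]_{ij}$ as explicit sums over lattice sites and to isolate a ``homogeneous'' part that cancels cleanly against its periodic counterpart, leaving only a localised ``defect'' contribution supported on $\L_{N/2}$. Writing
\begin{align*}
[A]_{ij} &= \sum_{\xi \in \L} \nabla^2 V_\xi(D\vN(\xi))[DF(\xi-i), DF(\xi-j)], \\
[A_N]_{ij} &= \sum_{\xi \in \L_N} \nabla^2 V_\xi(D\vNper(\xi))[DF_N(\xi-i), DF_N(\xi-j)] \quad \text{for } (i,j) \in \L_N^2,
\end{align*}
I would introduce $c(\xi) := \nabla^2 V_\xi(D\vN(\xi)) - \nabla^2 V(0)$. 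Since $D\vN$ is supported in $\L_{N/2}$ (cf.~\eqref{eq:limit:vN:support}) and $V_\xi \equiv V$ for $|\xi| > \rcut$, for $N$ large enough $c(\xi) = 0$ for $|\xi| > N/2$; moreover $|c(\xi)| \lesssim |D\vN(\xi)| + \mathbb{1}_{|\xi|\leq \rcut} \lesssim |\xi|_{l^0}^{-d}$. The same holds for $\vNper$ on $\L_N$.

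For $(i,j) \in \L^2 \setminus \L_N^2$, the extension gives $[A_N]_{ij} = \delta_{ij} I = [\Fopd \Hhom \Fop]_{ij}$ by Lemma~\ref{th:properties_F}(iii), so
\[
[A - A_N]_{ij} = [\Fopd(H(\vN) - \Hhom)\Fop]_{ij} = \sum_{\xi \in \L_{N/2}} c(\xi)\,[DF(\xi-i), DF(\xi-j)].
\]
Combining $|c(\xi)| \lesssim |\xi|_{l^0}^{-d}$ with $|DF(\ell)| \lesssim |\ell|_{l^0}^{-d}$ (Lemma~\ref{th:properties_F}(i)) immediately delivers the second case of the claim.

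For $(i,j) \in \L_N^2$ I would do the analogous subtraction, but now using both $\Fopd \Hhom \Fop = I$ and $\Fop_N \Hhom_N \Fop_N + \pi_N = I_{\Wper_N}$ (Lemma~\ref{lem:FopN}). The latter identity is precisely what is needed for the extended resolvent of $A_N$ to coincide with $\Rhom_z$ on the constant subspace as asserted, so absorbing $\pi_N$ into $A_N$ for the extension is consistent. After this cancellation the homogeneous contributions from $\xi \in \L_N \setminus \L_{N/2}$ and $\xi \in \L \setminus \L_N$ vanish (as $c(\xi)=0$ there), and we are left with
\[
[A - A_N]_{ij} = \sum_{\xi \in \L_{N/2}} c(\xi) \Big\{[DF(\xi-i), DF(\xi-j)] - [DF_N(\xi-i), DF_N(\xi-j)]\Big\}.
\]
Using the bilinear identity $[a,b] - [a',b'] = [a-a',b] + [a',b-b']$, Lemma~\ref{th:error estimate FN} (giving $|DF - DF_N|_{\ell^\infty(\L_N)} \lesssim N^{-d}$ and $|DF_N(\ell)| \lesssim |\ell|_{l^0,\L_N}^{-d}$), and $|DF(\ell)| \lesssim |\ell|_{l^0}^{-d}$ then yields the first bound.

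The main obstacle is the mismatch between the periodic norm $|\cdot|_{l^0,\L_N}$ entering the estimate for $DF_N$ and the non-periodic $|\cdot|_{l^0}$ required by the claim. For $\xi \in \L_{N/2}$ and $i \in \L_N$ one has $\xi - i \in \L_{3N/2}$, and replacing it by its $\L_N$-representative via the periodicity of $DF_N$ produces an error controlled by $|DF - DF_N|_{\ell^\infty(\L_N)} \lesssim N^{-d}$ plus an extra $|\xi-i|_{l^0,\L_N}^{-d} - |\xi-i|_{l^0}^{-d} = O(N^{-d})$ gap (since any nonzero periodic shift moves by $\geq N/2$). The resulting lower-order contribution is at most $N^{-2d}\sum_{\xi\in\L_{N/2}}|\xi|_{l^0}^{-d} \lesssim N^{-2d}\log N$, and this is absorbed into the principal estimate because the $n=0$ term of $N^{-d}\sum_n |n|_{l^0}^{-d}(|i-n|_{l^0}^{-d}+|j-n|_{l^0}^{-d})$ is already of order $N^{-d}(|i|_{l^0}^{-d}+|j|_{l^0}^{-d}) \gtrsim N^{-2d}$, with the full sum providing the logarithmic room needed.
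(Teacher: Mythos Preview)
Your approach matches the paper's: both subtract the identity (equivalently, the homogeneous contribution) from $A$ and $A_N$ so that the difference localises to $\L_{N/2}$, then apply the bilinear splitting $[a,b]-[a',b']=[a-a',b]+[a',b-b']$ together with Lemma~\ref{th:error estimate FN}. The paper handles the periodic versus non-periodic distance issue you discuss at the end with the terse remark that $i-n,\, j-n \in \L_{3N/2}$, and it does not comment on the $\pi_N$ term at all.
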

\begin{proof}
   For $(i,j) \in \L_N^2$ we calculate
   \begin{align*}
      [A_N - A]_{ij}
      &= \big[ (A_N - I) - (A-I)\big]_{ij}  \\
      % &= \sum_{n \in \L_N}
      %    \big(\nabla^2 V(D\vNper(n)) - \nabla^2 V(0)\big)[DF_N(i-n), DF_N(j-n)]
      %    -
      %    \sum_{n \in \L}
      %    \big(\nabla^2 V(D\vN(n)) - \nabla^2 V(0)\big)[DF(i-n), DF(j-n)] \\
      &= \sum_{n \in \L_{N/2}}
         \big(\nabla^2 V_n(D\vNper(n)) - \nabla^2 V(0)\big)[DF_N(i-n), DF_N(j-n)] \\
         & \qquad -
         \sum_{n \in \L_{N/2}}
         \big(\nabla^2 V_n(D\vN(n)) - \nabla^2 V(0)\big)[DF(i-n), DF(j-n)],
   \end{align*}
   where we have used the fact that $D\vN(n) = 0$ for $n \in \L \setminus
   \L_{N/2}$. Observing that $D\vNper(n) = D\vN(n)$ for $n \in \L_{N/2}$ and recalling
   that $|D\vN(n)| \lesssim |n|_{\ell^0}^{-d}$, we obtain
   \begin{align*}
      \big| [A_N - A]_{ij} \big|
      &\lesssim
      \sum_{n \in \L_{N/2}}
         |D\vN(n)|\, \Big( |DF_N(i-n) - DF(i-n)| \,|DF(j-n)| \\[-3mm]
      & \hspace{3.5cm} + |DF_N(i-n)| \, |DF_N(j-n) - DF(j-n)| \Big) \\[1mm]
      &\lesssim
      N^{-d} \sum_{n \in \L_{N/2}}
         |n|_{\ell^0}^{-d} \Big( |j-n|_{\ell^0}^{-d} + |i-n|_{\ell^0}^{-d} \Big).
      %
      % &\lesssim
      % N^{-d} \big(|j|_{\ell^0}^{-d} + |i|_{\ell^0}^{-d}\big),
   \end{align*}
   where we used that $j-n,\, i-n \in \Lambda_{3N/2}$.
   This completes the case $(i, j) \in \L_N^2$.

   In the case $(i,j) \in \L^2 \setminus \L_N^2$ we simply have
   \begin{align*}
      \big| [A_N - A]_{ij} \big|  &= \big| [I-A]_{ij} \big| \\
   &\lesssim
   \sum_{n \in \L_{N/2}}
      |D\vN(n)|\,  |DF(i-n)| \,|DF(j-n)| \\
   &\lesssim
   \sum_{n \in \L_{N/2}}
      |n|_{l^0}^{-d}\, |i-n|_{l^0}^{-d} \, |j-n|_{l^0}^{-d}  \qedhere
   \end{align*}
\end{proof}

\begin{lemma}  \label{th:SN-S-resolvent-estimates}
   For all $j \in \{1,2,3,4\}$ we have the estimate
   \begin{equation*}
      \sum_{\ell \in \L} \big| {\bf R}_\ell^{(j)} \big| \lesssim \lvert N \rvert_{l^3}^{-d}.
   \end{equation*}
\end{lemma}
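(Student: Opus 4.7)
My plan is to handle the four contributions ${\bf R}^{(1)}, \dots, {\bf R}^{(4)}$ separately, in each case combining the case-split bound on $|[A-A_N]_{ij}|$ from the preceding lemma with the appropriate resolvent decay estimate. The key enabling observations are: (i) $\Rr_z^{\rm ext}(\vNper)-\Rhom_z$ is supported in $\L_N^2$ and there equals $\Rr_{N,z}(\vNper)-\Rhom_{N,z}$, so by Lemma~\ref{th:resolvent estimate vNper} its entries are controlled by $\Lr_{1,\L_N}$; (ii) $\Rr_z(\vN)-\Rhom_z$ obeys the $\Lr_1$-bound of Proposition~\ref{prop: resolvent new version}; (iii) the support of $D\vN$ lies in $\L_{N/2}$, so the auxiliary sum over $n$ defining $[A-A_N]_{ij}$ runs only over $\L_{N/2}$ (and in particular $|n|\lesssim N$).

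For ${\bf R}^{(1)}$ I would split the diagonal according to whether $\ell\in\L_N$ or $\ell\in\L\setminus\L_N$. In the interior case the prefactor $N^{-d}$ from the lemma is explicit and the remaining double sum $\sum_{\ell\in\L_N}\sum_{n\in\L_{N/2}}|n|_{l^0}^{-d}|\ell-n|_{l^0}^{-d}$ is bounded via \eqref{eq: auxiliary estimates eq3} and \eqref{eq: auxiliary estimates eq1} at the cost of at most $\log^2 N$. In the exterior case I exploit $|\ell-n|\sim|\ell|$ for $\ell\notin\L_N$, $|n|\leq N/2$, to gain an additional $|\ell|^{-d}$ factor, whose sum over $\L\setminus\L_N$ contributes the desired $N^{-d}$.

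For ${\bf R}^{(2)}$ and ${\bf R}^{(3)}$, each sum has the structure $\sum_{\ell,m} R_{\ell m}\,P_{m\ell}$ with one resolvent factor $R$ and one factor $P=A-A_N$. I would substitute the case-split bound for $P$ and, in the interior regime, perform the inner $(\ell,m)$-sum using the submultiplicativity-type estimate \eqref{eq: auxiliary estimates eq6cor4} to reduce $\Lr_1(\ell,m)\,|\ell-n|_{l^0}^{-d}$ (or the analogue with $m$) to a $\Lr_2$-type decay in $(\cdot,n)$. The remaining sum against $|n|_{l^0}^{-d}$ for $n\in\L_{N/2}$ is resolved by \eqref{eq: auxiliary estimates eq3}. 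In the exterior regime, the full convolution form of $P$ together with the bound $|\ell|\gtrsim|\ell-n|$ provides the extra decay needed to reach $N^{-d}_{l^3}$ after summation; here the asymmetry between ${\bf R}^{(2)}$ and ${\bf R}^{(3)}$ only amounts to swapping the roles of $\Lr_1$ and $\Lr_{1,\L_N}$, with no essential difference.

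For ${\bf R}^{(4)}$ the triple product is treated by two successive applications of the submultiplicative bound for $\Lr_1$ (and for $\Lr_{1,\L_N}$ on the periodic side), entirely in the spirit of the estimate for ${\bf B}_{N,\ell}^{(1)}$ in Lemma~\ref{th:limit:estimate BN}: one first absorbs the $|i-n|_{l^0}^{-d}$ and $|j-n|_{l^0}^{-d}$ factors inside $P$ into the two flanking resolvents, producing a bound of the form $\Lr_{k}(n,\ell)\,\Lr_{k}(\ell,n)$ up to a few logarithmic powers, and then sums in $\ell$ and in $n\in\L_{N/2}$. The main technical obstacle is \emph{not} in any single estimate but in the careful bookkeeping of logarithmic factors at the interface between the interior ($N^{-d}$ prefactor) and exterior (full-convolution) regimes of $A-A_N$, where the resolvent decay and the $A-A_N$ decay must be matched without losing more than $\log^3 N$; exploiting that $\mathrm{supp}(D\vN)\subset\L_{N/2}$ (so that for far-field outer indices $i,j$ one of $|i-n|,|j-n|$ is comparable to $\max(|i|,|j|)$) is what makes this possible.
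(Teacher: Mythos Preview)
Your proposal is correct and follows essentially the same route as the paper's proof: the same case split of $|[A-A_N]_{ij}|$ into $(i,j)\in\L_N^2$ versus $(i,j)\in\L^2\setminus\L_N^2$, the same use of Proposition~\ref{prop: resolvent new version} and Lemma~\ref{th:resolvent estimate vNper} for the two resolvent factors, and the same reduction via \eqref{eq: auxiliary estimates eq6cor4} and the submultiplicativity of $\Lr_1$. The only cosmetic difference is that for ${\bf R}^{(4)}$ the paper first sums over the diagonal index $\ell$ (collapsing $\Lr_1(\ell,i)\Lr_1(j,\ell)$ to $\Lr_1(i,j)$) and only then absorbs the $|i-n|^{-d}$, $|j-n|^{-d}$ factors, whereas you absorb into $n$-dependent quantities first; both orders work. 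Your care in distinguishing $\Lr_1$ from $\Lr_{1,\L_N}$ for ${\bf R}^{(3)}$ is appropriate---the paper treats this point tacitly by bounding both ${\bf R}^{(2)}$ and ${\bf R}^{(3)}$ by the same $\Lr_1$-expression, which is justified since the extended periodic resolvent vanishes outside $\L_N^2$ and the periodic-image corrections on $\L_N^2$ only introduce harmless shifts.
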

\begin{proof}
   According to \eqref{eq: auxiliary estimates eq2} we can estimate
   \begin{align*}
      \sum_{\ell \in \L} \big|{\bf R}_\ell^{(1)}\big|
      &\lesssim
      \sum_{n \in \L_{N/2}} |n|_{l^0}^{-d} \bigg(
            N^{-d} \sum_{\ell \in \L_N} |\ell-n|_{l^0}^{-d}
            + \sum_{\ell \in \L\setminus \L_N} |\ell-n|_{l^0}^{-2d} \bigg) \\
      &\lesssim
      \sum_{n \in \L_{N/2}} |n|_{l^0}^{-d} \big( \lvert N \rvert_{l^1}^{-d} + \lvert N \rvert_{l^0}^{-d} \big)
      % \\ &
      \lesssim \lvert N \rvert_{l^2}^{-d}.
   \end{align*}
 Furthermore, using \eqref{eq: auxiliary estimates eq6cor4}, \eqref{eq: auxiliary estimates eq2}, and \eqref{eq: auxiliary estimates eq3},
   \begin{align*}
      \sum_{\ell \in \L} \big|{\bf R}_\ell^{(2)}\big| +
       \sum_{\ell \in \L} \big|{\bf R}_\ell^{(3)}\big|
      &\lesssim
         \sum_{\ell,i \in \L}
            \Lr_1(\ell,i) \big| [A_N - A]_{i\ell} \big| \\
      &\lesssim
      \sum_{n \in \L_{N/2}} |n|_{l^0}^{-d} \bigg(
         N^{-d} \sum_{(\ell, i) \in \L_N^2}
            \Lr_1(\ell,i) \big( |i-n|_{l^0}^{-d} + |\ell-n|_{l^0}^{-d} \big) \\
         & \hspace{1cm} +
         \sum_{(\ell, i) \in \L^2 \setminus \L_N^2}
            \Lr_1(\ell,i) |i-n|_{l^0}^{-d} |\ell-n|_{l^0}^{-d} \bigg)\\
            &\lesssim
      \sum_{n \in \L_{N/2}} |n|_{l^0}^{-d} \bigg(
         N^{-d} \sum_{i \in \L_N} \Lr_2(n,i) + \sum_{i \in \L \setminus \L_N} \Lr_2(n,i)\,  |i-n|_{l^0}^{-d}  \bigg)\\
         &\lesssim
      \sum_{n \in \L_{N/2}} |n|_{l^0}^{-d} \bigg(
         \lvert N \rvert_{l^3}^{-d} |n|_{l^5}^{-d}  + \sum_{i \in \L \setminus \L_N} |n|_{l^2}^{-d} |i|_{l^2}^{-2d}  \bigg)\\
            &\lesssim
      \sum_{n \in \L_{N/2}} |n|_{l^0}^{-d} \bigg( \lvert N \rvert_{l^3}^{-d} |n|_{l^5}^{-d} + \lvert N \rvert_{l^2}^{-d} |n|_{l^2}^{-d}\bigg)
      % \\ &
      \lesssim \lvert N \rvert_{l^3}^{-d}.
   \end{align*}
 And finally,
   \begin{align*}
      \sum_{\ell \in \L} \big|{\bf R}_\ell^{(4)}\big|
      &\lesssim \sum_{\ell,i,j \in \L} \Lr_1(\ell,i) \big| [A_N - A]_{ij} \big| \Lr_1(j,\ell) \\
      &\lesssim \sum_{\ell \in \L} \sum_{n \in \L_{N/2}} |n|_{l^0}^{-d} \bigg( N^{-d} \sum_{(i,j) \in \L_N^2} \Lr_1(j,\ell) \Lr_1(\ell,i)\big( |j-n|_{l^0}^{-d}+ |i-n|_{l^0}^{-d}\big)\\
      &\qquad  + \sum_{(i,j) \in \L^2 \setminus \L_N^2} \Lr_1(j,\ell) \Lr_1(\ell,i) |j-n|_{l^0}^{-d} |i-n|_{l^0}^{-d}  \bigg)\\
       &\lesssim \sum_{n \in \L_{N/2}} |n|_{l^0}^{-d} \bigg( N^{-d} \sum_{(i,j) \in \L_N^2} \Lr_1(i,j)\big( |j-n|_{l^0}^{-d}+ |i-n|_{l^0}^{-d}\big)\\
      &\qquad  + \sum_{(i,j) \in \L^2 \setminus \L_N^2} \Lr_1(i,j) |j-n|_{l^0}^{-d} |i-n|_{l^0}^{-d}  \bigg)\\
      &\lesssim \sum_{n \in \L_{N/2}} |n|_{l^0}^{-d} \bigg( N^{-d} \sum_{i \in \L_N} \Lr_2(i,n) + \sum_{i \in \L \setminus \L_N} \Lr_2(i,n) |i-n|_{l^0}^{-d}  \bigg)\\
       &\lesssim \lvert N \rvert_{l^3}^{-d},
   \end{align*}
   where we used the submultiplicativity of $\Lr_1$ and \eqref{eq: auxiliary estimates eq6cor4}, as well as the end of the previous estimate.
\end{proof}

\begin{corollary} \label{th:SN-S}
   For $N$ sufficiently large, $|\SS^+_N(\vNper) - \SS^+(\vN)| \lesssim \lvert N \rvert_{l^3}^{-d}$.
\end{corollary}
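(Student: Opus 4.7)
The plan is to express the difference $\SS^+(\vN) - \SS^+_N(\vNper)$ as a single contour integral of the resolvent difference $\Rr_z(\vN) - \Rr_z^{\rm ext}(\vNper)$, and then conclude by summing the four-term decomposition that has just been estimated in Lemma~\ref{th:SN-S-resolvent-estimates}.

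Since $\vN$ has compact support we have $\vN \in \Wc$, so by Remark~\ref{rem: main theorem} the renormalisation term satisfies $\sum_{\ell \in \L} \langle \delta \Shom_\ell(0), \vN \rangle = 0$ with the sum absolutely convergent, and Proposition~\ref{prop: localitydecomposedentropy} (applied to $\vN \in \mathcal{U}$) implies that $\sum_{\ell \in \L} \SS^+_\ell(\vN)$ is also absolutely convergent. Hence Fubini together with the contour representation \eqref{eq:log-FHtF-contour} yields
\[
\SS^+(\vN) = -\smfrac{1}{2} \smfrac{1}{2\pi i} \oint_{\mathcal{C}} \log z \sum_{\ell \in \L} {\rm Trace}\, \big[\Rr_z(\vN)\big]_{\ell\ell}\, dz,
\]
and analogously
\[
\SS^+_N(\vNper) = -\smfrac{1}{2} \smfrac{1}{2\pi i} \oint_{\mathcal{C}} \log z \sum_{\ell \in \L_N} {\rm Trace}\, \big[\Rr_{N,z}(\vNper)\big]_{\ell\ell}\, dz.
\]

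To place both expressions on the common index set $\L$, I would use the extension $\Rr_z^{\rm ext}(\vNper)$ introduced just before the corollary, which equals $\Rhom_z = (z-1)^{-1} I$ on $\L \setminus \L_N$. This extension does not change the value of $\SS^+_N(\vNper)$, because each additional diagonal entry contributes a multiple of $\tfrac{1}{2\pi i} \oint_{\mathcal{C}} \log z\,(z-1)^{-1}\, dz = \log 1 = 0$ (the contour encloses $z=1$ but not the origin). Subtracting,
\[
\SS^+(\vN) - \SS^+_N(\vNper) = -\smfrac{1}{2} \smfrac{1}{2\pi i} \oint_{\mathcal{C}} \log z \sum_{\ell \in \L} {\rm Trace}\, \big[\Rr_z(\vN) - \Rr_z^{\rm ext}(\vNper)\big]_{\ell\ell}\, dz.
\]

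Finally, I would substitute the splitting $[\Rr_z(\vN) - \Rr_z^{\rm ext}(\vNper)]_{\ell\ell} = {\bf R}_\ell^{(1)} + {\bf R}_\ell^{(2)} + {\bf R}_\ell^{(3)} + {\bf R}_\ell^{(4)}$ established above, apply Lemma~\ref{th:SN-S-resolvent-estimates} term by term (the bounds $\sum_\ell |{\bf R}_\ell^{(j)}| \lesssim \lvert N \rvert_{l^3}^{-d}$ are uniform in $z \in \mathcal{C}$), and close by noting that $|\log z|$ and the length of $\mathcal{C}$ are bounded by absolute constants. The main analytic difficulty -- the separate treatment of $A_N - A$ on the sub-blocks $\L_N^2$ (where it is $O(N^{-d})$ thanks to the uniform error bound $\|DF - DF_N\|_{\ell^\infty(\L_N)} \lesssim N^{-d}$) versus on $\L^2 \setminus \L_N^2$ (where it decays because $\vN$ is supported in $\L_{N/2}$) -- has already been dealt with in the resolvent estimate, so the present corollary reduces to bookkeeping.
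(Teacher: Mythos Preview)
Your overall strategy is sound and in fact somewhat cleaner than the paper's, but the presentation contains a genuine gap at the Fubini step. The identity
\[
\SS^+(\vN) = -\smfrac{1}{2}\smfrac{1}{2\pi i}\oint_{\mathcal{C}}\log z\,\sum_{\ell\in\L}{\rm Trace}\,[\Rr_z(\vN)]_{\ell\ell}\,dz
\]
is not meaningful: for every fixed $z\in\mathcal{C}$ one has $[\Rr_z(\vN)]_{\ell\ell}\to (z-1)^{-1}I$ as $|\ell|\to\infty$, so the inner sum diverges, and the Fubini hypothesis $\sum_\ell\oint|\log z|\,|[\Rr_z(\vN)]_{\ell\ell}|\,|dz|<\infty$ fails as well. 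The cancellation that makes $\sum_\ell\SS^+_\ell(\vN)$ finite comes from the contour integral of $(z-1)^{-1}$ vanishing, and this cancellation is destroyed once you move the sum inside the integral. (Relatedly, Proposition~\ref{prop: localitydecomposedentropy} applied to $\vN\in\mathcal{U}$ only gives $|\SS^+_\ell(\vN)|\lesssim|\ell|_{l^0}^{-d}$; the absolute summability of $\sum_\ell\SS^+_\ell(\vN)$ really uses $\vN\in\Wc$, not the proposition.)

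The fix is simply to subtract \emph{before} interchanging: for each $\ell\in\L$ one has
\[
\SS^+_\ell(\vN)-\mathbf{1}_{\L_N}(\ell)\,\SS^+_{N,\ell}(\vNper)
= -\smfrac{1}{2}\smfrac{1}{2\pi i}\oint_{\mathcal{C}}\log z\,{\rm Trace}\,[\Rr_z(\vN)-\Rr_z^{\rm ext}(\vNper)]_{\ell\ell}\,dz,
\]
since for $\ell\notin\L_N$ the extension contributes $\oint\log z\,(z-1)^{-1}\,dz=0$. Summing over $\ell$ and \emph{then} applying Fubini is legitimate because Lemma~\ref{th:SN-S-resolvent-estimates} gives $\sum_{\ell}|{\bf R}_\ell^{(j)}|\lesssim|N|_{l^3}^{-d}$ uniformly in $z\in\mathcal{C}$. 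With this correction your argument is complete.

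For comparison, the paper keeps the renormalisation throughout: it bounds the tail $\sum_{\ell\in\L\setminus\L_N}|\SS_\ell(\vN)-\langle\delta\Shom_\ell(0),\vN\rangle|$ via Proposition~\ref{prop: localitydecomposedentropy}, inserts $\sum_{\ell\in\L_N}\langle\delta\Shom_{N,\ell}(0),\vNper\rangle=0$, and is then left with two sums over $\L_N$, one handled by Lemma~\ref{th:SN-S-resolvent-estimates} and a separate first-order term $\sum_{\ell\in\L_N}|\langle\delta\Shom_\ell,\vN\rangle-\langle\delta\Shom_{N,\ell},\vNper\rangle|$ estimated directly from $\|DF-DF_N\|_{\ell^\infty}\lesssim N^{-d}$. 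Your route, once repaired, exploits $\vN\in\Wc$ to drop the renormalisation entirely and thereby avoids both the tail estimate and the explicit first-order comparison; the price is that you must justify the interchange on the difference rather than term by term.
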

\begin{proof}
According to Proposition \ref{prop: localitydecomposedentropy} and \eqref{eq: auxiliary estimates eq1}, we have
\begin{equation*}
\sum_{\ell \in \L \backslash \L_N} \lvert \SS_\ell(v_N) - \langle \delta \Shom_\ell(0), v_N \rangle \rvert \lesssim \lvert N \rvert_{l^2}^{-d}.
\end{equation*}
Furthermore, we use that
\begin{equation*}
\sum_{\ell \in \Lambda_N} \langle \delta \Shom_{N,\ell},\vNper \rangle = 0
\end{equation*}
according to \eqref{eq:periodfirstvarioanvanishes}. Hence, we have
\begin{align}
|\SS_N(\vNper) - \SS(\vN)| &\lesssim \lvert N \rvert_{l^2}^{-d} + \sum_{\ell \in \Lambda_N} \lvert \SS_{\ell}(\vN) - \SS_{N,\ell}(\vNper) \rvert \nonumber \\
&\qquad + \sum_{\ell \in \Lambda_N} \lvert \langle \delta \Shom_{\ell},\vN \rangle - \langle \delta \Shom_{N,\ell},\vNper \rangle \rvert. \label{eq:SNvN-SvN}
\end{align}
Lemma~\ref{th:SN-S-resolvent-estimates} implies
\begin{align*}
\sum_{\ell \in \Lambda_N}  \lvert \SS_{\ell}(\vN) - \SS_{N,\ell}(\vNper) \rvert &\lesssim \sum_{\ell \in \Lambda_N}  \Big\lvert {\rm Trace}  \oint_{\mathcal{C}} \log z \big( \Rr_{z}(\vN)_{\ell \ell} - \Rr_{N,z}(\vNper)_{\ell \ell}\big) \,dz \Big\rvert\\
&= \sum_{\ell \in \Lambda_N}  \Big\lvert {\rm Trace}  \oint_{\mathcal{C}} \log z \big( \Rr_{z}(\vN) -\Rr^{\rm ext}_z(\vNper)\big)_{\ell \ell} \,dz \Big\rvert\\
&\lesssim \lvert N \rvert^{-d}_{l^3}.
\end{align*}
For the last term in \eqref{eq:SNvN-SvN} we calculate
\begin{align*}
\langle \delta \Shom_{\ell},\vN \rangle - \langle \delta \Shom_{N,\ell},\vNper \rangle &= -\smfrac{1}{2}{\rm Trace} (F  \langle \delta \Hhom ,\vN \rangle F)_{\ell \ell} +\smfrac{1}{2} {\rm Trace} (F_N  \langle \delta \Hhom_N ,\vNper \rangle F_N)_{\ell \ell}\\
&=-\smfrac{1}{2}{\rm Trace} \sum_{m \in \L_N} \nabla^3 V(0)\Big[D\vN(m),DF(m-\ell)+DF_N(m-\ell),\\
&\hspace{5cm} DF(m-\ell)-DF_N(m-\ell)\Big].
\end{align*}
With Lemma \ref{th:error estimate FN} we therefore obtain
\begin{align*}
\sum_{\ell \in \L_N} \lvert \langle \delta \Shom_{\ell},\vN \rangle - \langle \delta \Shom_{N,\ell},\vNper \rangle \rvert &\lesssim \sum_{\ell, m \in \L_N} \lvert m \rvert^{-d}_{l^0} \lvert \ell - m \rvert^{-d}_{l^0} N^{-d}\\
&\lesssim \lvert N\rvert^{-d}_{l^2}. \qedhere
\end{align*}
\end{proof}
\begin{proof}[Proof of Proposition \ref{prop:generalconvergencerate}]
The result is an immediate consequence of the splitting \eqref{eq:limit:split
   of S error} where the three individual terms are, respectively, estimated in
   Corollaries~\ref{th:limit:SvN-Su}, \ref{th:limit:SvN-SuN} and~\ref{th:SN-S}.
\end{proof}

\begin{proof}[Proof of Theorem~\ref{theo: main result}(3)]
Setting $u_N := \us_N$ and $u_\infty := \us$ satisfy \eqref{eq:uNuinftyAssumptions}, as well as $\SS_N(\us_N) = \SS^+_N(\us_N) $ and $\SS(\us) = \SS^+(\us)$. Therefore, the result is a consequence of Proposition \ref{prop:generalconvergencerate}.
\end{proof}

\section{Thermodynamic Limit of HTST}
\label{sec:saddle}
\subsection{Approximation of the Saddle point}
\label{sec:approx-saddle}
Recall our starting assumption in \eqref{eq:index-1-saddle-ell2} that there
exist $\usa, \bar\phi \in \Wi$, $\bar\phi \neq 0$, and $\bar\lambda < 0, c_0 > 0$ such that
\begin{equation} \label{eq:prftst:index-1-saddle-review}
   \begin{split}
      \delta \E(\usa) &= 0, \\
      \Hs \bar\phi &= \bar\lambda \bar\phi, \\
      \< \Hs v, v \> & \geq c_0 \| D v \|_{\ell^2}^2 \qquad \text{for all } v \in \dot{\W}^{1,2} \text{ with } \< v, \bar\phi \>_{\dot{\W}^{1,2}, (\dot{\W}^{1,2})'} = 0.
   \end{split}
\end{equation}
Since \cite[Thm 1]{EOS2016} in fact applies to all critical points and not only
minimisers, we again have
\[
   |D^j \usa(\ell)| \lesssim |\ell|^{1-d-j}
   \qquad \text{for $1 \leq j \leq p-2$}.
\]
Furthermore, we even have exponential decay of the unstable mode $\bar\phi$.

\begin{proposition} \label{eq:prftst:decay phi}
   Under Assumption \eqref{eq:prftst:index-1-saddle-review} we have
   \[
      |\bar\phi(\ell)| \lesssim \exp( - c |\ell|).
   \]
\end{proposition}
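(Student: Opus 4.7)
The plan is to apply a Combes--Thomas argument to $\Hs$ viewed as a bounded self-adjoint operator on $\ell^2(\L;\R^m)$, exploiting that $\bar\lambda<0$ lies strictly below the essential spectrum.

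First I would observe that since each $V_\ell$ has interaction range $\rcut$, the matrix entries $(\Hs)_{\ell m}$ vanish whenever $|\ell-m|>2\rcut$ and are uniformly bounded, so the Schur test makes $\Hs$ a bounded self-adjoint operator on $\ell^2$. The natural embeddings $(\Wi)^*\hookrightarrow\ell^2\hookrightarrow\Wi$ allow us to interpret $\bar\phi\in(\Wi)^*\subset\ell^2$, so the eigenvalue equation $\Hs\bar\phi=\bar\lambda\bar\phi$ takes place in $\ell^2$.

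Next I would verify that $\bar\lambda$ is an isolated eigenvalue of the $\ell^2$-operator $\Hs$ with spectral gap at least $|\bar\lambda|$. The Fourier symbol $\hat h(k)$ of $\Hhom$ is positive semidefinite by \textbf{(STAB)}, so $\sigma(\Hhom)\subset[0,\infty)$. The difference $\Hs-\Hhom$ has finite range, and for $|\ell|>\rcut$ its entries are bounded by $|\nabla^2V(Du^{\mathrm{sad}}(\ell))-\nabla^2V(0)|\lesssim|\ell|^{-d}$ using Theorem~\ref{thm: EOS2016}; since $\sum_\ell|\ell|^{-2d}<\infty$, this operator is Hilbert--Schmidt, hence compact. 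Weyl's theorem then yields $\sigma_{\mathrm{ess}}(\Hs)=\sigma_{\mathrm{ess}}(\Hhom)\subset[0,\infty)$, so $\bar\lambda<0$ is an isolated eigenvalue with $\mathrm{dist}(\bar\lambda,\sigma(\Hs)\setminus\{\bar\lambda\})\geq|\bar\lambda|$. The stability assumption on $\bar\phi^\perp$ further forces the $\bar\lambda$-eigenspace to be one-dimensional, spanned by $\bar\phi$.

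Then I would execute the Combes--Thomas argument. For $\eta\in\R^d$ with $|\eta|\le 1$ and small $\alpha>0$, consider the conjugated operator $\Hs_\alpha:=M_\alpha\Hs M_\alpha^{-1}$, where $(M_\alpha u)(\ell):=e^{\alpha\eta\cdot\ell}u(\ell)$. Since $\Hs$ has finite range, a Schur estimate gives $\|\Hs_\alpha-\Hs\|_{\ell^2\to\ell^2}\le C\alpha$ uniformly in $\eta$. Fix a circle $\gamma\subset\CN$ of radius $|\bar\lambda|/2$ around $\bar\lambda$: the resolvent $(\Hs-z)^{-1}$ is bounded by $2/|\bar\lambda|$ for $z\in\gamma$, hence $(\Hs_\alpha-z)^{-1}$ exists with uniform bound for $\alpha\leq\alpha_0$ small. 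Reading off matrix elements gives
\[
\bigl|[(\Hs-z)^{-1}]_{\ell m}\bigr|\le C\,e^{-\alpha\eta\cdot(\ell-m)},\qquad z\in\gamma,
\]
and optimizing over $\eta$ yields $\bigl|[(\Hs-z)^{-1}]_{\ell m}\bigr|\lesssim e^{-c|\ell-m|}$ for some $c>0$. Integrating over $\gamma$, the rank-one Riesz projector $P=\frac{1}{2\pi i}\oint_\gamma(z-\Hs)^{-1}\,dz$ onto $\mathrm{span}(\bar\phi)$ inherits the same pointwise bound. Picking any $m_0$ with $P\delta_{m_0}\neq 0$, we have $\bar\phi\propto P\delta_{m_0}$, so
\[
|\bar\phi(\ell)|\lesssim |P_{\ell m_0}|\lesssim e^{-c|\ell-m_0|}\lesssim e^{-c'|\ell|},
\]
which is the claimed estimate.

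The main obstacle is the spectral-gap step: the stability assumption is formulated through the $\Wi/(\Wi)^*$ duality pairing rather than the $\ell^2$ inner product, so one has to carefully verify that $\bar\lambda$ is indeed isolated as an element of the $\ell^2$-spectrum of $\Hs$. This relies on the Hilbert--Schmidt perturbation argument for $\Hs-\Hhom$, which in turn depends on the sharp pointwise decay of $Du^{\mathrm{sad}}$ furnished by Theorem~\ref{thm: EOS2016}. Once this is in place the Combes--Thomas estimate and the projector representation are essentially standard.
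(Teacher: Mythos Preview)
Your argument is correct and reaches the same conclusion via Combes--Thomas, but the route differs from the paper's. The paper does not invoke Weyl's theorem or a Riesz projector. Instead it introduces the auxiliary operator $(\Hs)^M$ from \eqref{eq:res:defn of HM}, which coincides with $\Hs$ in the far field but with $\Hhom$ inside $B_M$, and rewrites the eigenvalue equation as
\[
   \big((\Hs)^M-\bar\lambda I\big)\bar\phi \;=\; \big((\Hs)^M-\Hs\big)\bar\phi \;=:\; f,
\]
where $f$ has compact support. Since $\|(\Hs)^M-\Hhom\|\to 0$ as $M\to\infty$, for $M$ large $\sigma((\Hs)^M)\subset[0,\infty)$ and $\bar\lambda$ lies in the resolvent set; Combes--Thomas then gives exponential off-diagonal decay of $\big((\Hs)^M-\bar\lambda I\big)^{-1}$, and convolving against the compactly supported $f$ yields the claim directly.

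Compared with your approach, the paper's trick avoids having to locate $\bar\lambda$ as an isolated point of $\sigma(\Hs)$ and sidesteps the contour integral entirely; in particular it never needs to discuss the multiplicity of $\bar\lambda$ or reconcile the $\dot{\W}^{1,2}/(\dot{\W}^{1,2})'$ duality with $\ell^2$-orthogonality. Your route, on the other hand, is the textbook spectral-theoretic one and has the advantage of being reusable verbatim for any isolated eigenvalue below the essential spectrum. The Hilbert--Schmidt bound you give for $\Hs-\Hhom$ (using finite range and $|D\usa(\ell)|\lesssim|\ell|^{-d}$) is a clean way to get the compactness needed for Weyl; the paper instead hides this step inside the smallness of $(\Hs)^M-\Hhom$.
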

\begin{proof}
   We rewrite the eigenvalue equation as
   \[
      ((\Hs)^M - \bar\lambda I) \bar\phi = f := ((\Hs)^M - \Hs) \bar\phi,
   \]
   where $(\Hs)^M$ is defined by \eqref{eq:res:defn of HM} which ensures that
   $f$ is compactly supported.

   Since $\|(\Hs)^M - \Hhom\|_{\mathcal{L}(\Wi,(\Wi)')} \to 0$ as $M \to \infty$,
   it follows that, for $M$ sufficiently large, $\sigma((\Hs)^M) \subset [0, \infty)$.
   Since $\bar\lambda$ is negative, standard Coombe--Thomas type estimates
   (see e.g. \cite{2015-qmtb1} for an applicable result) yield
   \[
      \Big| \big[ ((\Hs)^M - \bar\lambda I)^{-1} \big]_{\ell m} \Big|
         \lesssim e^{ - \gamma |\ell - m| },
   \]
   for some $\gamma > 0$. The stated result now follows immediately.
\end{proof}

Next, we observe that the related $\Wi$-eigenvalue problem has the same structure.

\begin{proposition} \label{th:index-1-saddle-H1}
   There exist $\bar\psi \in \Wi$, $\bar\mu < 0$, $c_1 > 0$ such that
   \begin{equation}
      \label{eq:index-1-saddle-H1}
      \begin{split}
      \Hs \bar\psi &= \bar\mu \Hhom \bar\psi,  \\
      \< \Hs v, v \> & \geq c_1 \< \Hhom v, v \> \qquad \text{ whenever } \<\Hhom v, \bar\psi\> = 0.
      \end{split}
   \end{equation}
   Moreover, $|D^j \bar\psi(\ell)| \lesssim |\ell|^{1-d-j}$ for $1 \leq j \leq p-2$.
\end{proposition}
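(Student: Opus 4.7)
The plan is to transfer the generalized eigenvalue problem to a standard one on $\ell^2$ via the operator $\Fop$ of Lemma~\ref{th:properties_F}. Setting $\bar\psi = \Fop w$, the identity $\Fopd \Hhom \Fop = I$ converts $\Hs \bar\psi = \bar\mu \Hhom \bar\psi$ into $A w = \bar\mu w$, where $A := \Fopd \Hs \Fop$ is bounded and self-adjoint on $\ell^2$. Since $\Fop$ is surjective onto $\Wi$ (directly from its Fourier definition via $\hat h^{-1/2}$), $\Fopd$ is injective and the transfer is an equivalence: generalized eigenpairs correspond bijectively to eigenpairs of $A$.

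Two spectral facts about $A$ drive the proof. First, $A - I = \Fopd(\Hs - \Hhom)\Fop$ is compact on $\ell^2$: splitting $\Hs - \Hhom = (\Hs - (\Hs)^M) + ((\Hs)^M - \Hhom)$ as in \eqref{eq:res:defn of HM}, the first summand is of finite rank (supported on $|\ell| \leq M$), while Lemma~\ref{lem: aux perturbation} gives Frobenius-norm control $\|\Fopd((\Hs)^M - \Hhom)\Fop\|_F \lesssim |M|_{l^3}^{-d/2} \to 0$. Thus $A - I$ is a norm limit of finite-rank operators, so the essential spectrum of $A$ equals $\{1\}$. Second, the assumption \eqref{eq:prftst:index-1-saddle-review} translates into the lower bound $\<A u, u\> \geq c\, \|u\|_{\ell^2}^2$ whenever $\Fop u$ is $\bar\phi$-orthogonal in the $\Wi,(\Wi)^*$ pairing, using the norm equivalence $\|D\Fop u\|_{\ell^2}^2 \sim \|u\|_{\ell^2}^2$ that underlies Lemma~\ref{th:spectrum_bound}. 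Combining these, $A$ has a spectral gap at $0$, and its negative spectrum consists of isolated eigenvalues. Existence of a negative eigenvalue follows by picking $w_0 \in \ell^2$ with $\Fop w_0 = \bar\phi$, for which $\<A w_0, w_0\> = \bar\lambda \|\bar\phi\|^2_{\ell^2} < 0$. Uniqueness of the negative direction is a dimension count: if $A$ admitted two linearly independent negative eigenvectors $w_1, w_2$, then $V := \Fop(\mathrm{span}\{w_1, w_2\})$ would be a 2-dimensional subspace of $\Wi$ on which $\<\Hs \cdot, \cdot\>$ is negative definite, and hence would meet the $\bar\phi$-orthogonal complement nontrivially, contradicting \eqref{eq:prftst:index-1-saddle-review}.

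This yields a simple negative eigenvalue $\bar\mu$ with eigenvector $w \in \ell^2$, and I set $\bar\psi := \Fop w \in \Wi$. The positivity on the $\Hhom$-orthogonal complement is then immediate: if $v = \Fop u \in \Wi$ satisfies $\<\Hhom v, \bar\psi\> = 0$, then $\<u, w\>_{\ell^2} = \<\Fopd \Hhom \Fop u, w\>_{\ell^2} = \<\Hhom v, \bar\psi\> = 0$, so by the spectral gap $\<\Hs v, v\> = \<A u, u\> \geq c_1 \|u\|^2_{\ell^2} = c_1 \<\Hhom v, v\>$.

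Finally, for the decay estimate on $\bar\psi$, I would rewrite the eigenvalue equation as
\begin{equation*}
   \Hhom \bar\psi = \tfrac{1}{1-\bar\mu}\big(\Hhom - \Hs\big)\bar\psi,
\end{equation*}
noting that the coefficients of $\Hhom - \Hs$ involve $\nabla^2 V(0) - \nabla^2 V_\ell(D\usa(\ell))$, which decay as $|\ell|^{-d}$ by the already-established decay of $\usa$ from Theorem~\ref{thm: EOS2016}. Convolving with the lattice Green's function of $\Hhom$ and iterating, exactly as in the proof of Theorem~\ref{thm: EOS2016}, upgrades $\bar\psi \in \Wi$ to the algebraic decay $|D^j \bar\psi(\ell)| \lesssim |\ell|^{1-d-j}$ for $1 \leq j \leq p-2$. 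I expect this bootstrap to be the main technical obstacle, since the source term still contains $\bar\psi$ itself, so that the base regularity $D\bar\psi \in \ell^2$ must be improved through successive convolutions before each iteration produces a better pointwise decay rate.
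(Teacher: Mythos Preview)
Your proof is correct, but it takes a different route from the paper's. The paper argues variationally: it shows that the Rayleigh quotient $R(\psi) = \< \Hs \psi, \psi \> / \< \Hhom \psi, \psi \>$ is bounded below and that $\psi \mapsto \< \Hs \psi, \psi \>$ is weakly lower semicontinuous on $\Wi$ (using \eqref{eq:prftst:index-1-saddle-review} to control the negative direction), then extracts a minimiser $\bar\psi$ by the direct method and rules out $\< \Hhom \bar\psi, \bar\psi \> < 1$ by a short contradiction. Stability on the $\Hhom$-orthogonal complement is obtained by the same two-dimensional dimension count you use. The decay step is identical to yours: rewrite as $(1-\bar\mu)\Hhom \bar\psi = (\Hhom - \Hs)\bar\psi$ and invoke the regularity lemmas behind Theorem~\ref{thm: EOS2016}; the paper simply cites those lemmas rather than spelling out the bootstrap, so your concern about it being the main obstacle is slightly overstated.

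Your approach via $A = \Fopd \Hs \Fop$ and Weyl's theorem is more structural and arguably cleaner once the machinery of \S\ref{sec:resolventestimates} is in place: compactness of $A - I$ (from the Frobenius-norm bound of Lemma~\ref{lem: aux perturbation}, which only needs the decay $|D\usa(\ell)| \lesssim |\ell|^{-d}$) immediately pins the essential spectrum at $\{1\}$, and the codimension-one lower bound from \eqref{eq:prftst:index-1-saddle-review} then forces exactly one simple negative eigenvalue and a spectral gap at zero. The paper's variational argument, by contrast, is self-contained and does not rely on $\Fop$ or any of the resolvent estimates. Either works; yours reuses more of the paper's infrastructure, while the paper's is more elementary.
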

\begin{proof}
   {\it Step 1: Existence. }
As
\[\langle \Hs v,v \rangle \geq \bar\lambda  \frac{ (v,\bar\phi)_{\ell^2}^2}{(\bar\phi,\bar\phi)_{\ell^2}},\]
we can set $v=\psi-\psi_j$ for a sequence with $\psi_j \rightharpoonup \psi$ in $\Wi$ to find
\[\liminf_j \langle \Hs \psi_j,\psi_j \rangle \geq \langle \Hs \psi,\psi \rangle +\frac{\bar\lambda }{(\bar\phi,\bar\phi)_{\ell^2}} \limsup_j \lvert (\psi - \psi_j,\bar\phi)_{\ell^2}\rvert^2.\]
Additionally, the last term vanishes, as $\bar\phi = \bar\lambda^{-1} H_s \bar\phi \in (\Wi)'$.
We have thus shown that $\psi \mapsto \langle H_s \psi, \psi \rangle$ is weakly lower semi-continuous in $\Wi$.

Let $R(\psi) = \< \Hs \psi, \psi \> / \< \Hhom \psi, \psi \>$ be the associated
Rayleigh quotient for $\psi \in \Wi \setminus \{\psi\equiv c: c\in \R^d\}$. Then $R(\bar\phi) < 0$. Furthermore, we have $\Hs \in \mathcal{L}(\Wi, (\Wi)^*)$ which together with {\bf (STAB)} implies that
\[
   R(\psi) \geq \frac{- C \lVert D \psi \rVert_{\ell^2}^2}{c_0/2 \lVert D \psi \rVert_{\ell^2}^2} = \frac{-C}{c_0/2},
\]
where $C = \|\Hs\|_{\mathcal{L}}$; hence, $\inf R$ is finite.

Let $\psi_j$ be a minimising sequence with $\< \Hhom \psi_j, \psi_j \> = 1$ and
$R(\psi_j) \downarrow \inf R$. Then, up to extracting a subsequence, $D\psi_j
\rightharpoonup D\bar\psi$ weakly in $\ell^2$. If $\< \Hhom \bar\psi, \bar\psi \> = 1$, then $\bar\psi$ is a
minimiser of $R$ and the existence of a corresponding $\bar\mu<0$ for
\eqref{eq:index-1-saddle-H1} follows.

Set $\theta=\< \Hhom \bar\psi, \bar\psi \>$. As $\< \Hhom \psi, \psi \>$ is non-negative and weakly lower semi-continuous, we have $\theta \in [0,1]$. It remains to show, that $\theta =1$.
If we had $\theta \in (0,1)$, then
\[R(\bar\psi) =\frac{1}{\theta} \< \Hs \bar\psi, \bar\psi \> \leq \liminf \frac{1}{\theta} \< \Hs \psi_j, \psi_j \> \leq \frac{\inf R}{\theta} < \inf R, \]
a contradiction. As a last case, if $\theta =0$, then $\bar\psi$ would be constant. Using the weak lower semi-continuity of $v \mapsto \< \Hs v, v \>$ we have
\[\inf R = \lim_j R(\psi_j) = \lim_j \< \Hs \psi_j, \psi_j \>  \geq 0,\]
and hence obtain another contradiction. Thus $\bar\psi$ is a minimizer of $R$ and we can set $\bar\mu := R(\bar\psi)$.

%%%%%%%%%%%%%%%%%%%%%%%%%%%%%%%%%%%%%%%%%%%%%%%%%%%%%%%%%%%%
{\it Step 2: Stability. }
We now show that the
rest of the spectrum is bounded below by $c := c_0 / \|\Hhom\|_{\mathcal{L}}$, where $c_0$ is the constant from \eqref{eq:prftst:index-1-saddle-review}.
 First note that,
\begin{equation}
\label{eq:index-1-saddle-stabilityestimate}
   \< \Hs v, v \>  \geq c_0 \| D v \|_{\ell^2}^2 \geq c \langle \Hhom v,v \rangle \qquad \text{whenever } \< v, \bar\phi\>_{\dot{\W}^{1,2}, (\dot{\W}^{1,2})'}  = 0.
\end{equation}
If there were a non-constant $\varphi \in \Wi, \varepsilon > 0$ with $\<\Hhom \varphi,
\bar\psi\>=0$ and $R(\varphi) \leq c - \varepsilon$, then
\begin{align*}
\big\< \Hs (t \varphi + s \bar\psi), (t \varphi + s \bar\psi) \big\>
&= t^2 \< \Hs \varphi, \varphi \> +s^2 \< \Hs \bar\psi,\bar\psi \> +2st \bar\mu \< \Hhom \bar\psi, \varphi \> \\
&= t^2 \< \Hs \varphi, \varphi \> +s^2 \< \Hs \bar\psi,\bar\psi \>  \\
&\leq (c-\varepsilon) \Big( t^2 \< \Hhom \varphi, \varphi \> +s^2 \< \Hhom \bar\psi,\bar\psi \> \Big)\\
&= (c-\varepsilon) \< \Hhom (t \varphi + s \bar\psi), (t \varphi + s \bar\psi) \>.
\end{align*}
Since $W := \{t \varphi + s \bar\psi : s, t \in \R \}$ is two-dimensional, there exists $w \in W \setminus \{0\}$ such that $\< w, \bar\phi \>_{\dot{\W}^{1,2}, (\dot{\W}^{1,2})'} = 0$,
a contradiction to \eqref{eq:index-1-saddle-stabilityestimate}.

%%%%%%%%%%%%%%%%%%%%%%%%%%%%%%%%%%%%%%%%%%%%%%%%%%%%%%%%%%%%
{\it Step 3: Decay. }
To prove the decay of $\bar\psi$, we can write
\begin{equation*}
	(\Hs - \bar\mu \Hhom) \bar\psi = 0,
\end{equation*}
or, equivalently,
\begin{equation*}
	(1-\bar\mu) \Hhom \bar\psi = (\Hhom - \Hs) \bar\psi =: f,
\end{equation*}
where $1-\bar\mu>0$. We can rewrite the right-hand side as
\begin{align*}
   \< f, v \> &= \sum_{\ell \in \L}
      \big( \nabla^2 V(0) - \nabla^2 V(D\usa) \big) [ D\bar\psi(\ell), Dv(\ell) ] \\
   &=
   \sum_{\ell \in \L} g(\ell) \cdot Dv(\ell),
\end{align*}
with $|g(\ell)| \lesssim |D\usa(\ell)|\,|D\bar\psi(\ell)|$. An application of
\cite[Lemma 13 and Lemma 14]{EOS2016} now yields the stated decay
estimate.
\end{proof}

\def\lamneg{\bar\lambda}
\def\lamnegN{\bar\lambda_N}
\def\muneg{\bar\mu}
\def\munegN{\bar\mu_N}
\def\phineg{\bar\phi}
\def\phinegN{\bar\phi_N}
\def\psineg{\bar\psi}
\def\psinegN{\bar\psi_N}

We can now turn to the approximation results. We begin by citing a result
concerning the convergence of the displacement field. Recall that the cut-off operator $T_R$ was defined in Lemma \ref{th:TR_estimates}.

\begin{lemma} \label{th:tst:conv_usaN}
   (i) For $N$ sufficiently large there exist $\usaN \in \Usper_N$ such that $\delta
   \E_N(\usaN) = 0$ and
   \begin{equation} \label{eq:tst:rate_usaN}
      | \E_N(\usaN) - \E(\usa)|
      + \| D \usa_N - D\usa \|_{\ell^\infty} \lesssim N^{-d}.
   \end{equation}

   (ii) For $N$ sufficiently large, $\usa_N$, is an index-1 saddle, that is,
   there exists an orthogonal decomposition $\Usper_N = Q_{N,-} \oplus Q_{N,0}
   \oplus Q_{N,+}$ where $Q_{N,-} = {\rm span}\{ T_{N/2} \bar\psi \}$, $Q_{N,0}$ is the
   space of constant functions and there exists a constant $a_1>0$ such that
   \[
      \pm \< \Hdef_N(\usaN) v, v \> \geq a_1 \< \Hhom_N v, v \>
         \qquad \forall v \in Q_{N,\pm}.
   \]

   (iii) For $N$ sufficiently large, there also exists an $\ell^2$-orthogonal
   decomposition $\Usper_N = Q_{N,-}' \oplus_{\ell^2} Q_{N,0} \oplus_{\ell^2}
   Q_{N,+}'$ where $Q_{N,-} = {\rm span} \{ T_{N/2} \bar\phi \}$ and a constant
   $a_1' > 0$ such that
   \begin{align*}
      \< \Hdef_N(\usaN) v, v \>
      &\leq - a_1' \|v\|_{\ell^2}^2 \qquad \forall\, v \in Q_{N,-},
      \quad \text{and}   \\
      \< \Hdef_N(\usaN) v, v \> &\geq a_1' \| Dv\|_{\ell^2}^2
      \qquad \forall\, v \in Q_{N,+}.
   \end{align*}
\end{lemma}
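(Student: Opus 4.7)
I would follow the inverse-function-theorem / Galerkin framework used in \cite{2018-uniform} for minimisers, adapted to an index-$1$ saddle. Take $w_N := T_{N/2}\usa \in \Wper_N$ as the trial configuration. The decay $|D^j\usa(\ell)|\lesssim |\ell|^{1-d-j}$ (which, as noted, Theorem~\ref{thm: EOS2016} provides for any critical point) together with Lemma~\ref{th:TR_estimates} yields the consistency estimates $\|\delta\E_N(w_N)\|_{(\Wper_N)^*}\lesssim N^{-d}$ and $|\E_N(w_N)-\E(\usa)|\lesssim N^{-d}$: the bulk contribution vanishes because $\delta\E(\usa)=0$, and the annulus $\{N/2<|\ell|<N\}$ is controlled by the pointwise decay.

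\textbf{Saddle-type stability at $w_N$.} The crucial step is a quantitative version of \eqref{eq:index-1-saddle-H1} for $\Hdef_N(w_N)$. Set $\phi^0_N := T_{N/2}\psineg$ and let $Q_{N,+}$ be the $\Hhom_N$-orthogonal complement of $Q_{N,0}\oplus\mathrm{span}\{\phi^0_N\}$ in $\Wper_N$. Using the decay of $\psineg$ from Proposition~\ref{th:index-1-saddle-H1} one verifies that $\langle\Hdef_N(w_N)\phi^0_N,\phi^0_N\rangle/\langle\Hhom_N\phi^0_N,\phi^0_N\rangle \to \muneg < 0$. For $v\in Q_{N,+}$ I would transfer stability from the infinite lattice: extend $v$ to $\tilde v\in\Wi$ using a smooth cut-off (as in Lemma~\ref{th:TR_estimates}), subtract a rank-one correction along $\psineg$ to enforce $\langle\Hhom\tilde v,\psineg\rangle=0$ (the correction is small because $\langle\Hhom_N v,\phi^0_N\rangle = 0$ together with the decay of $\psineg$), apply \eqref{eq:index-1-saddle-H1}, and absorb $|\langle(\Hdef_N(w_N)-\Hdef(\usa))v,v\rangle|\lesssim \|Dw_N-D\usa\|_{\ell^\infty}\|Dv\|_{\ell^2}^2$. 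This gives $\langle\Hdef_N(w_N)v,v\rangle \geq a_1\langle\Hhom_N v,v\rangle$ uniformly for $N$ large.

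\textbf{Lyapunov--Schmidt construction of $\usaN$.} With this splitting, work on $\Wper_N$ modulo $Q_{N,0}$ and parametrise $u = w_N + t\phi^0_N + r$, $r\in Q_{N,+}$. The $Q_{N,+}$-projection of $\delta\E_N(u)=0$ is elliptic with ellipticity constant $a_1$, so a quantitative IFT yields $r=r(t)$ with $\|Dr(t)\|_{\ell^2}\lesssim t^2+N^{-d}$. The remaining scalar equation $g(t):=\langle\delta\E_N(w_N+t\phi^0_N+r(t)),\phi^0_N\rangle=0$ has $g(0)=O(N^{-d})$ and $g'(0)$ bounded away from zero by $|\muneg|\langle\Hhom_N\phi^0_N,\phi^0_N\rangle$, so a scalar Newton step produces a unique $t_N$ with $|t_N|\lesssim N^{-d}$. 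This gives $\usaN:=w_N+t_N\phi^0_N+r(t_N)$ with $\|D\usaN-D\usa\|_{\ell^2(\L_N)}\lesssim N^{-d/2}$; the claimed $\ell^\infty$ rate is obtained by upgrading via a residual argument as in \cite{2018-uniform}, and $|\E_N(\usaN)-\E(\usa)|\lesssim N^{-d}$ follows from Taylor expansion about $w_N$ using the consistency bound.

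\textbf{Spectral conclusions and main obstacle.} Parts (ii) and (iii) then follow by spectral perturbation once $\usaN$ is known: since $\|\Hdef_N(\usaN)-\Hdef_N(w_N)\|\lesssim N^{-d}$, the saddle-type decomposition at $w_N$ persists at $\usaN$ with a slightly worse but still positive constant, and the one-dimensional negative eigenspace is within $O(N^{-d})$ of $\mathrm{span}\{T_{N/2}\psineg\}$. Rotating the basis of this one-dimensional subspace onto $\phi^0_N$ (for the generalised eigenproblem of (ii)) or onto $T_{N/2}\phineg$ (for the $\ell^2$-eigenproblem of (iii), using the exponential decay in Proposition~\ref{eq:prftst:decay phi}) gives the stated $Q_{N,-}$ and $Q_{N,-}'$; positivity on the complement is preserved with a slightly reduced constant. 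The main technical hurdle is the stability transfer in the second paragraph: the infinite-lattice orthogonality is with respect to $\Hhom$ while the supercell also carries the constant subspace $Q_{N,0}$ (absent in $\Wi$), and the rank-one correction to enforce $\Hhom$-orthogonality of the extended test function must be controlled by the decay of $\psineg$ to beat the $N^{-d}$ consistency rate.
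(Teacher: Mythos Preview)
Your approach is essentially the one the paper has in mind: the paper does not give a self-contained argument but cites \cite[Theorem~3.14 and Lemma~3.10]{2018-uniform} for (i) and (ii), and says (iii) is analogous. What you have sketched is precisely the strategy of those cited results: truncate $\usa$ to obtain a trial state, establish a saddle-type stability decomposition at the trial state by transferring \eqref{eq:index-1-saddle-H1} via cut-off and a rank-one correction, and apply a quantitative inverse function theorem (in Lyapunov--Schmidt form) to obtain $\usaN$. The identification $Q_{N,-}={\rm span}\{T_{N/2}\psineg\}$ is exactly the choice made in the proof of \cite[Lemma~3.10]{2018-uniform}, and (iii) is the same argument with $\psineg$ replaced by $\phineg$ and the $\Hhom$-inner product replaced by $\ell^2$.

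One small correction: the consistency bound in the natural dual norm is $\|\delta\E_N(w_N)\|_{(\Wper_N)^*}\lesssim N^{-d/2}$, not $N^{-d}$ (the residual lives on an annulus of width $\sim N$ where $|D\usa|\lesssim N^{-d}$, giving an $\ell^2$ contribution of order $N^{-d/2}$). This is consistent with the $\ell^2$ strain error you state later; the $\ell^\infty$ rate $N^{-d}$ genuinely requires the upgrade step from \cite{2018-uniform} and cannot be read off the IFT directly. Your identified ``main obstacle'' (handling the constant subspace $Q_{N,0}$ and controlling the rank-one correction via the decay of $\psineg$) is real but standard, and is dealt with in \cite{2018-uniform} along the lines you describe.
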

\begin{proof}
   The existence of $\usaN$ and the convergence rate follows from
   \cite[Theorem 3.14]{2018-uniform}. The convergence rate for the energy
   is already contained in \cite{EOS2016}.

   The existence of the orthogonal decomposition (ii) is established in
   \cite[Lemma 3.10]{2018-uniform}. Our only claim that is not made explicit
   there is that $Q_{N,-} = {\rm span}\{ T_{N/2} \bar\phi \}$, but this is precisely
   the construction of $Q_{N,-}$ employed in the proof of \cite[Lemma
   3.10]{2018-uniform}.

   The proof of statement (iii) is very similar to the proof of (ii), following \cite{2018-uniform}.
\end{proof}

\begin{proposition}  \label{th:tst:convergence saddle}
   For $N$ sufficiently large, there exist $\phinegN, \psinegN \in \Usper_N$ and
   $\lamnegN, \munegN < 0$ such that
   \[
      \Hdef_N(\usaN) \phinegN = \lamnegN \phinegN,
      \qquad
      \Hdef_N(\usaN) \psinegN = \munegN \Hhom_N \psinegN,
   \]
   with convergence rates
   \begin{align}
      \label{eq:tst:convergence eval:phi_lam}
      \| \phinegN - \phineg \|_{\ell^2(\L_N)}
      + |\lamnegN - \lamneg| &\lesssim N^{-d}, \\
      \label{eq:tst:convergence eval:psi}
      \|D\psinegN - D\psineg \|_{\ell^2(\Lambda_N)}
      &\lesssim N^{-d/2}, \qquad \text{and} \\
      \label{eq:tst:convergence mu}
      |\munegN - \muneg| &\lesssim N^{-d}.
   \end{align}
   Moreover, there exists a constant $a > 0$, independent of $N$, such that
   \begin{align}
      \label{eq:index-1-saddle-H1-N:L2}
         \< \Hdef(\usaN) v, v \> & \geq a \< \HhomN v, v \> \qquad \text{ for } ( v, \phinegN)_{\ell^2(\Lambda_N)} = 0, \\
         \label{eq:index-1-saddle-H1-N:H1}
         \< \Hdef(\usaN) v, v \> & \geq a \< \HhomN v, v \> \qquad \text{ for } \<\HhomN v, \psinegN\> = 0.
   \end{align}
\end{proposition}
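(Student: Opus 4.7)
The plan is to handle the two eigenvalue problems by spectral perturbation arguments and then derive the stability statements from them combined with Lemma \ref{th:tst:conv_usaN}. In both cases I construct quasi-eigenfunctions by truncating the limit eigenvectors, bound the residuals, and use a Riesz projection to extract the true supercell eigenpair.

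For $(\lamnegN, \phinegN)$ I work directly in $\ell^2$. Set $\tilde\phi_N := T_{N/2}\phineg$, viewed as an element of $\Usper_N$. Because $\phineg$ decays exponentially (Proposition \ref{eq:prftst:decay phi}) and $\|D\usaN - D\usa\|_{\ell^\infty} \lesssim N^{-d}$, the residual $r_N := (\Hdef_N(\usaN) - \lamneg I)\tilde\phi_N$ satisfies $\|r_N\|_{\ell^2} \lesssim N^{-d}$: the contributions from the truncation region $|\ell| > N/4$ are exponentially small, while inside that region one gains $N^{-d}$ from the pointwise difference of the Hessians acting on a well-localized function. By Lemma \ref{th:tst:conv_usaN}(iii), $\Hdef_N(\usaN)$ has a uniform spectral gap between $\lamnegN < 0$ and the rest of its spectrum, so a Riesz projection on a fixed contour enclosing a small neighbourhood of $\lamneg$ (but not $0$ or any positive spectrum) is well-defined and bounded uniformly in $N$. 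Standard perturbation theory then gives, after normalization and sign choice,
\[
   \|\phinegN - \tilde\phi_N\|_{\ell^2} + |\lamnegN - \lamneg| \lesssim \|r_N\|_{\ell^2} \lesssim N^{-d},
\]
and \eqref{eq:tst:convergence eval:phi_lam} follows since $\|\tilde\phi_N - \phineg\|_{\ell^2(\Lambda_N)}$ is exponentially small.

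For $(\munegN, \psinegN)$ I conjugate by $\Fop$, respectively $\Fop_N + \pi_N$, to transfer the generalized problem to an $\ell^2$ problem: $\Hs\psineg = \muneg\Hhom\psineg$ is equivalent to $\Fopd \Hs \Fop w = \muneg w$ in $\ell^2$ with $w := (\Hhom)^{1/2}\psineg$, and analogously in the supercell. The resolvent estimates of \S\,\ref{sec:resolventestimates} and \S\,\ref{sec:limit:resolvent estimates}, together with Lemma \ref{th:error estimate FN}, show that the conjugated operators differ by a controlled perturbation, so the same Riesz-projection strategy applied to $\Fop_N \Hdef_N(\usaN)\Fop_N + \pi_N$ yields an isolated negative eigenvalue with $|\munegN - \muneg| \lesssim N^{-d}$; pulling back through $\Fop_N$ gives $\psinegN \in \Usper_N$. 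The slower rate $N^{-d/2}$ in \eqref{eq:tst:convergence eval:psi} reflects that $\psineg$ only decays algebraically: by Proposition \ref{th:index-1-saddle-H1}, $|D\psineg(\ell)| \lesssim |\ell|^{-d}$, so truncating the tail outside $\Lambda_{N/2}$ already costs $N^{-d/2}$ in the $\|D\cdot\|_{\ell^2}$ norm, and all other contributions (the pullback mismatch between $\Fop_N$ and $\Fop$, the residual bound analogous to the one above) turn out to be of the same order or better.

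The stability bounds \eqref{eq:index-1-saddle-H1-N:L2}--\eqref{eq:index-1-saddle-H1-N:H1} then follow from Lemma \ref{th:tst:conv_usaN}(ii)--(iii), which already provide quadratic lower bounds on the orthogonal complements of $T_{N/2}\phineg$ and $T_{N/2}\psineg$ respectively; since $\phinegN$ and $\psinegN$ differ from these truncations by the rates just established, a standard splitting argument (writing a test $v$ in the new orthogonal complement as a sum of a small component along the removed eigenvector plus a remainder in the old complement, and absorbing the cross-terms into the constant) transfers the stability to the complements of the actual eigenvectors. The main obstacle throughout is maintaining uniform spectral gaps and uniform resolvent bounds as $N \to \infty$, so that the Riesz projections live on a fixed contour and perturbation constants do not degenerate; this is precisely what the resolvent and operator estimates in \S\S\,\ref{sec:resolventestimates}--\ref{sec:limit} are designed to provide, and the delicate point for the second eigenpair is that the generalized problem must be transferred to $\ell^2$ via $\Fop$, which is where the algebraic-decay tail of $\psineg$ enters and fixes the final rate.
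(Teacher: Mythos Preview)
Your treatment of the $\ell^2$-eigenpair $(\lamnegN,\phinegN)$ is essentially the paper's: the paper uses a nonlinear system plus the inverse function theorem where you use a Riesz projection, but both rely on the same residual bound $\|(\Hdef_N(\usaN)-\lamneg)\,T_{N/2}\phineg\|_{\ell^2}\lesssim N^{-d}$ and the spectral gap from Lemma~\ref{th:tst:conv_usaN}(iii). The stability statements also follow the same route in both accounts.

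The gap is in your claim that the Riesz-projection strategy on the conjugated operator gives $|\munegN-\muneg|\lesssim N^{-d}$ directly. It does not. After conjugation, the relevant quasi-eigenvector in $\ell^2(\Lambda_N)$ is (up to constants) either a truncation of $w:=(\Hhom)^{1/2}\psineg$ or $T_{N/2}\psineg$ pulled back through $\Fop_N$; in either description, the algebraic decay $|D\psineg(\ell)|\lesssim|\ell|^{-d}$ (equivalently $|w(\ell)|\lesssim|\ell|_{l^1}^{-d}$) forces an $\ell^2$-truncation error of order $N^{-d/2}$, and the residual of the quasi-eigenvector is therefore $O(N^{-d/2})$. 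A Riesz projection (or any standard Bauer--Fike type bound) then only delivers $|\munegN-\muneg|\lesssim N^{-d/2}$. Note also that the resolvent estimates in \S\ref{sec:resolventestimates} and \S\ref{sec:limit} are proved for $z$ on the contour $\mathcal{C}$ encircling the positive spectrum, not for $z$ near $\muneg<0$, so invoking them for the Riesz contour around $\muneg$ would itself require a separate argument.

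The paper instead accepts the suboptimal rate $N^{-d/2}$ from the inverse function theorem and then recovers $N^{-d}$ by an Aubin--Nitsche-type doubling step: testing the difference $(\muneg-\munegN)\langle\Hhom_N\psinegN,T_{N/2}\psineg\rangle$ against both exact eigenvectors splits the error into a product of two $O(N^{-d/2})$ eigenvector errors (giving $N^{-d}$) plus a term controlled by $\|D\usaN-D\usa\|_{\ell^\infty}\lesssim N^{-d}$. This exploits the stationarity of the Rayleigh quotient at eigenvectors and is the genuinely missing ingredient in your sketch.
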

\begin{proof}
   These results follow from relatively standard perturbation arguments, hence we
   will keep this proof relatively brief. To simplify notation, let
   $\Hdef_N := \Hdef_N(\usaN)$ and $\Hdef := \Hdef(\usa)$.

   We first consider the $\ell^2$-eigenvalue problem. Let $\tilde\phi_N :=
   T_{N/2} \phineg / \|T_{N/2} \phineg\|_{\ell^2}$, then Lemma~\ref{eq:prftst:decay phi} implies
   \begin{equation} \label{eq:tst:conveval:TNphi}
      \| \tilde\phi_N - \phineg \|_{\ell^p} \lesssim e^{-c N},
   \end{equation}
   for some $c > 0$, and for all $p \in [1, \infty]$. This suggests that
   $\tilde\phi_N$ is an approximate eigenfunction; specifically, we
   can show that
   \begin{equation} \label{eq:tst:conveval:L2-residual}
      \big\| (\Hdef_N - \lamneg) \tilde\phi_N \big\|_{\ell^2}
      \lesssim N^{-d}.
   \end{equation}
   To see this, we split this residual into
   \begin{align*}
      \big\| (\Hdef_N - \lamneg) \tilde\phi_N \big\|_{\ell^2}
      &=
      \big\| (\Hdef(T_N \usaN) - \lamneg) \tilde\phi_N \big\|_{\ell^2} \\
      &\leq
      \big\| \Hdef(T_N \usaN) \tilde\phi_N -  \Hdef(\usa) \phineg \big\|_{\ell^2}
      + |\lamneg| \big\| \tilde\phi_N - \phineg \|_{\ell^2} \\
      &\leq
      \big\| \Hdef(T_N \usaN) \tilde\phi_N -  \Hdef(\usa) \phineg \big\|_{\ell^2}
      + C e^{-cN},
   \end{align*}
   where we used \eqref{eq:tst:conveval:TNphi} in the last step.
   The first term on the left-hand side can be
   readily estimated using \eqref{eq:tst:rate_usaN} to yield the
   rate \eqref{eq:tst:conveval:L2-residual}.

   We now write the $\ell^2$-eigenvalue problem as a nonlinear system,
   \[
      \< \mathcal{F}_N(\phi, \lambda), (w, \tau) \> :=
      \< \Hdef_N \phi - \lambda \phi, w \> +
      \smfrac12 (1 - \|\phi\|_{\ell^2}^2) \tau \overset{!}{=} 0,
   \]
   then \eqref{eq:tst:conveval:L2-residual} implies that
   \[
      \big|\< \mathcal{F}_N(\tilde\phi_N, \bar\lambda), (w, \tau) \> \big|
      \lesssim N^{-d} \|w\|_{\ell^2}.
   \]
   The linearisation of $\mathcal{F}_N$ is given by
   \[
      \big\< \delta\mathcal{F}_N(\phi, \lambda) (v,\varsigma), (w,\tau) \big\>
      =
      \big\< (\Hdef_N - \lambda)  v, w \big\>
      - \varsigma \< \phi, w \>_{\ell^2}
      - \tau \< \phi, v \>_{\ell^2}.
   \]
   It follows readily from Lemma~\ref{th:tst:conv_usaN}(iii), and
   \eqref{eq:tst:conveval:L2-residual} that $\delta\mathcal{F}_N(\tilde\phi_N,
   \bar\lambda)$ is a uniformly bounded isomorphism with uniformly bounded
   inverse. As also $\delta\mathcal{F}_N$ is uniformly continuous, an application of the inverse function theorem shows that there exist $\lamnegN, \phinegN$ such that
   \[
      \| \phinegN - \tilde\phi_N \|_{\ell^2}
         + |\lamnegN - \lamneg| \lesssim N^{-d}.
   \]
   This completes the proof of \eqref{eq:tst:convergence eval:phi_lam}.
   Moreover, Lemma~\ref{th:tst:conv_usaN}(iii) implies
   \eqref{eq:index-1-saddle-H1-N:L2}.

   We can now repeat the foregoing argument almost verbatim for the $\Hdef_N
   \psinegN = \munegN \Hhom_N \psinegN$ eigenvalue problem, employing Part (ii)
   instead of Part (iii) of Lemma~\ref{th:tst:conv_usaN}. The main difference is
   that the best approximation error now scales as
   \[
      \| D\psineg - DT_{N/2}\psineg \|_{\ell^2} \lesssim N^{-d/2},
   \]
   which leads to \eqref{eq:tst:convergence eval:psi}, \eqref{eq:index-1-saddle-H1-N:H1} as well as
   the suboptimal rate
   \[
      |\munegN - \muneg| \lesssim N^{-d/2}
   \]
   instead of \eqref{eq:tst:convergence mu}. To complete the proof we need
   to improve this to the optimal rate $O(N^{-d})$.

   Let $\tilde{\psi}_N := T_{N/2} \psineg$. Convergence of $\psinegN$, \eqref{eq:tst:convergence eval:psi}, implies that
   $\< \Hhom \psinegN, \tilde{\psi}_N \> \to 1$ as $N \to \infty$; hence, we
   can estimate
   \begin{align*}
      (\muneg - \munegN) \< \Hhom_N \psinegN, \tilde{\psi}_N \>
      &=
      \muneg \< \Hhom T_N \psinegN, \tilde{\psi}_N \> - \< \Hdef_N(\usaN) \psinegN, \tilde{\psi}_N \> \\
      &\hspace{-4cm} =
      \muneg \< \Hhom (\tilde{\psi}_N - \psineg), T_N \psinegN \>
         + \< \Hdef(\usa) \psineg, T_N \psinegN \>
         - \< \Hdef_N(\usaN) \psinegN, \tilde{\psi}_N \> \\
      &\hspace{-4cm} = \Big\{\muneg \< \Hhom (\tilde{\psi}_N - \psineg), T_N \psinegN \>
            - \< \Hdef(\usa) (\tilde{\psi}_N - \psineg),  T_N \psinegN \> \Big\} \\
      &
         + \Big\{ \< \Hdef(\usa) \tilde{\psi}_N, T_N \psinegN \>
         - \< \Hdef_N(\usaN) T_N \psinegN, \tilde{\psi}_N \> \Big\} \\
      &\hspace{-4cm} =: {\bf A}_1 + {\bf A}_2.
   \end{align*}
   The first term is readily bounded by
   \begin{align*}
      \big|{\bf A}_1 \big|
      &= \Big| \muneg \< \Hhom (\tilde{\psi}_N - \psineg), (T_N \psinegN - \psineg) \>
            - \< \Hdef(\usa) (\tilde{\psi}_N - \psineg),  (T_N \psinegN - \psineg) \> \Big| \\
      &\lesssim \| D\tilde{\psi}_N - D\psineg \|_{\ell^2} \|DT_N \psinegN - D\psineg \|_{\ell^2}
      \lesssim N^{-d/2} N^{-d/2} \lesssim N^{-d}.
   \end{align*}
   The second term is best written out in detail,
   \begin{align*}
      \big| {\bf A}_2 \big|
      &= \bigg| \sum_{\ell \in \L_{N/2}}
         \Big\< \big[\nabla^2 V_\ell(D\usa(\ell)) - \nabla^2 V_\ell(D\usaN(\ell)) \big]
            D\tilde{\psi}_N(\ell), D \psinegN(\ell) \Big\> \bigg|  \\
      &\lesssim \| D\usa - D \usaN \|_{\ell^\infty(\L_{N/2})}
         \| D\tilde{\psi}_N \|_{\ell^2} \| D\psinegN \|_{\ell^2}
      \lesssim N^{-d}.
   \end{align*}
   This establishes \eqref{eq:tst:convergence mu} and thus completes
   the proof.
\end{proof}

\begin{proof}[Proof of Theorem~\ref{th:main thm saddle}(1)]
   This is included in Proposition \ref{th:tst:convergence saddle}.
\end{proof}

\subsection{Convergence of the transition rate}
\label{sec:tst:k}
\def\SSs{\SS^{\rm +}}
\def\SSsN{\SS^{\rm +}_N}
We can now turn to the analysis of the transition rate,
\begin{align}
   \label{eq:defn-tst}
   \khtst_N &:= \exp\Big( - \beta \Delta  \F_N \Big) :=
   \exp\Big( - \beta \big( \Delta \E_N - \beta^{-1} \Delta \SS_N \big) \Big),
   \qquad \text{where}
   \\
   \notag
   \Delta\E_N &:= \E_N(\usaN) - \E_N(\us_N), \qquad \text{and}
   \\
   \notag
   \Delta\SS_N &:= \SS_N(\usaN) - \SS_N(\us_N)\\
   \notag &= -\smfrac{1}{2} \log \detp \HsN + \smfrac{1}{2} \log \detp H_N \\
   \notag &=
   -\smfrac{1}{2} \sum \log\lambda_j^{\rm saddle}
   +\smfrac{1}{2} \sum \log\lambda_j^{\rm min},
\end{align}
where $\lambda_j^{\rm min}$ and $\lambda_j^{\rm saddle}$ enumerate the positive eigenvalues of, respectively, $\Hdef_N$ and $\HsN$. We already know from Theorem~\ref{thm:pbc convergence} and
Proposition~\ref{th:tst:convergence saddle} that
\begin{equation} \label{eq:tst:convergence Delta E}
   |\Delta\E_N - \Delta \E| \lesssim N^{-d}
   \quad \text{where} \quad
   \Delta \E := \E(\usa) - \E(\us).
\end{equation}
From Theorem~\ref{theo: main result} we
know that
\begin{equation} \label{eq:tst:convergence SNmin}
   \big|\SS_N(\us_N) - \SS(\us)\big| \lesssim N^{-d} \log^5 N,
\end{equation}
hence, it now only remains to characterise the limit $\SS_N(\usaN) \to \SS(\usa)$ and
estimate the rate of convergence. Again, we want to use a localisation argument. To that end, we first rewrite $\SS_N(\usaN)$ in a way that then allows us to exploit
the functional calculus framework that we developed in the prior sections.
This will require us to consider the logarithm of negative numbers. Let us therefore look at the branch of the complex logarithm given by
\[
   \log r e^{i \varphi} := \log r + i \varphi, \qquad \text{for } r > 0,\ \varphi \in (-\pi/2,3\pi/2).
\]
The logarithm of a finite-dimensional, invertible, self-adjoint operator with spectral decomposition $A = \sum_j \alpha_j v_j \otimes v_j$, is then given by
\[
      \log A := \sum_j \log \alpha_j v_j \otimes v_j.
\]
Recalling the definitions of $\bar\lambda_N, \bar\mu_N$ from
Proposition~\ref{th:tst:convergence saddle}
and of $\Fop_N$ and $\pi_N$ from \S~\ref{sec:results:decompositionS}, we calculate
\begin{align}
   \notag
   \SS_N(\usaN) &=
   -\smfrac{1}{2} \sum \log\lambda_j^{\rm saddle}
   +\smfrac{1}{2} \sum \log\lambda_j^{\rm hom}\\
   \notag
   &= -\smfrac{1}{2}{\rm Trace} \log (\HsN + \pi_N) +\smfrac{1}{2} {\rm Trace} \log (\Hhom_N + \pi_N)  + \smfrac{1}{2} \log \bar\lambda_N\\
   \notag
   &= -\smfrac{1}{2}\log \frac{\det (\HsN + \pi_N)}{\det (\Hhom_N + \pi_N) }  + \smfrac{1}{2} \log \bar\lambda_N\\
   \notag
   &= -\smfrac{1}{2}\log \det (\Fop_N+\pi_N) (\HsN + \pi_N) (\Fop_N+\pi_N)  + \smfrac{1}{2} \log \bar\lambda_N \\
   \notag
   &= -\smfrac{1}{2}{\rm Trace} \log (\Fop_N\HsN\Fop_N+\pi_N) + \smfrac{1}{2} \log \bar\lambda_N \\
   \notag
   &= -\smfrac{1}{2}{\rm Trace} \logp\, (\Fop_N\HsN\Fop_N) - \smfrac{1}{2} \log \bar\mu_N + \smfrac{1}{2} \log \bar\lambda_N\\
   \label{eq:tst:SsN splitting}
   &= \sum_{\ell} \mathcal{S}^+_{N,\ell}(\usa_N) - \smfrac{1}{2} \log \bar\mu_N + \smfrac{1}{2} \log \bar\lambda_N,
\end{align}
based on the definition of $\mathcal{S}^+_{N,\ell}$ in \eqref{eq:defSNell2}. Note, that the formula $\log \det A = {\rm Trace} \log A$ is still true for the complex logarithm as there is only one negative eigenvalue. Otherwise a correction by a multiple of $2 \pi i$ would have been needed.

We already know that $\bar\mu_N, \bar\lambda_N$ converge as $N \to \infty$, and
from the definition of the complex logarithm we immediately also obtain
that
\begin{equation} \label{eq:tst:convergence log e-vals}
   | \log \bar\lambda_N - \log \bar\lambda |
   + |\log \bar\mu_N - \log \bar\mu| \lesssim N^{-d}.
\end{equation}

Finally, we must address the group
\[\mathcal{S}^+_{N}(\usa_N) := \sum_{\ell} \mathcal{S}^+_{N,\ell}(\usa_N).\]
Due to Proposition~\ref{th:tst:convergence saddle}, we have indeed $\sigma (\Fop_N\HsN\Fop_N) \setminus \{0,\bar\mu_N\} \subset [\underline\sigma,\overline\sigma]$ for $\underline\sigma>0$ small enough and $\overline\sigma>0$ large enough.

To define the limit, recall that for $u$ satisfying \eqref{eq:logpluscondition}
\[\SS^+_\ell(u) = -\smfrac{1}{2} {\rm Trace} \logp\,(\Fop \Hdef(u) \Fop)_{\ell \ell}.\]
As  $\usa \in \mathcal{U}$, we can apply \eqref{eq:decompose S+}
and Proposition \ref{prop: localitydecomposedentropy} to see that
\[\Big\lvert \SS^+_\ell(\usa) - \langle \delta \Shom_\ell(0),\usa \rangle \Big\rvert \lesssim \lvert \ell \rvert_{l^2}^{-2d},\]
and thus
\[\mathcal{S}^+(\usa) = \sum_{\ell \in \L} \Big(\SS_\ell(\usa) - \langle \delta \Shom_\ell(0),\usa \rangle \Big)\]
is well-defined.

\begin{lemma} \label{th:tst:convergence S+}
   For $N$ sufficiently large, let $\usaN$ be given by
   Proposition~\ref{th:tst:convergence saddle}, then
\begin{equation*}
\lvert \mathcal{S}^+(\usa) - \mathcal{S}^+_{N}(\usa_N) \rvert \lesssim N^{-d} \log^5 N.
\end{equation*}
\end{lemma}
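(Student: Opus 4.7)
The plan is to reduce the claim to Proposition~\ref{prop:generalconvergencerate} applied to the pair $(u_N, u_\infty) = (\usaN, \usa)$. That proposition gives exactly the bound $|\SS^+_N(\usaN) - \SS^+(\usa)| \lesssim \lvert N\rvert_{l^5}^{-d} \lesssim N^{-d}\log^5 N$, so the only work is to verify the four hypotheses in \eqref{eq:uNuinftyAssumptions} for this pair.

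The first two hypotheses are essentially free. The decay $|D\usa(\ell)|\lesssim |\ell|_{l^0}^{-d}$ follows from Theorem~\ref{thm: EOS2016} with $j=1$: $\usa\in\Wi$ is a critical point of $\Edef$ by \eqref{eq:index-1-saddle-ell2}, and the stability clause there implies the homogeneous stability \eqref{eq:stab homogeneous} needed to apply the theorem. The uniform rate $\|D\usaN - D\usa\|_{\ell^\infty(\L_N)}\lesssim N^{-d}$ is the content of \eqref{eq:tst:rate_usaN} in Lemma~\ref{th:tst:conv_usaN}(i).

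The substance of the proof lies in verifying the two spectral conditions. For the limit operator $A := \Fopd \Hs \Fop$ I plan to translate Proposition~\ref{th:index-1-saddle-H1} from the $\Hhom$-generalised eigenvalue problem into an ordinary eigenvalue problem on $\ell^2$. Writing $\bar w \in \ell^2$ for the unique element with $\Fop \bar w = \bar\psi$ (equivalently $\bar w = \Hhom^{1/2}\bar\psi$), the identity $\Fopd\Hhom\Fop = I$ from Lemma~\ref{th:properties_F}(iii) gives $A\bar w = \bar\mu\bar w$, and the $\Hhom$-orthogonality $\<\Hhom v,\bar\psi\>=0$ becomes the $\ell^2$-orthogonality $(w,\bar w)_{\ell^2}=0$ under the substitution $v=\Fop w$. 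The stability clause of Proposition~\ref{th:index-1-saddle-H1} then yields $\<Aw,w\>\geq c_1\|w\|_{\ell^2}^2$ on $\bar w^\perp$, while the upper bound follows from the $\mathcal{L}(\Wi,(\Wi)^*)$-continuity of $\Hs$. This establishes the spectral condition for $\Fopd\Hs\Fop$ with constants independent of $N$. The periodic analogue is handled identically: one uses $\Fop_N\Hhom_N\Fop_N + \pi_N = I$ from Lemma~\ref{lem:FopN} together with the stability statement \eqref{eq:index-1-saddle-H1-N:H1} of Proposition~\ref{th:tst:convergence saddle}, noting that the projector $\pi_N$ absorbs the constant-displacement kernel exactly as in \S~\ref{sec:limit:spectralpropertiesFN}.

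The main obstacle I anticipate is bookkeeping: one must choose a \emph{single} pair $\underline\sigma,\overline\sigma$ so that, uniformly in $N$, the negative eigenvalues $\bar\mu_N, \bar\mu$ lie below $-\underline\sigma$ while the remaining spectra lie in $[2\underline\sigma,\overline\sigma/2]$. The lower bound on $|\bar\mu_N|$ uses the convergence $\bar\mu_N\to\bar\mu<0$ from \eqref{eq:tst:convergence mu}, and the uniform lower bound on the rest of the spectrum uses the uniformity of the stability constants $c_1, a$ in Propositions~\ref{th:index-1-saddle-H1} and \ref{th:tst:convergence saddle}. Once these constants are pinned down, the four conditions of \eqref{eq:uNuinftyAssumptions} hold and Proposition~\ref{prop:generalconvergencerate} closes the argument.
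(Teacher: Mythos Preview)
Your proposal is correct and follows the paper's approach exactly: apply Proposition~\ref{prop:generalconvergencerate} with $(u_N,u_\infty)=(\usaN,\usa)$ after checking the hypotheses in \eqref{eq:uNuinftyAssumptions}. The paper's proof is a two-line invocation of that proposition, with the spectral conditions already noted in the discussion immediately preceding the lemma (namely $\sigma(\Fop_N\HsN\Fop_N)\setminus\{0,\bar\mu_N\}\subset[\underline\sigma,\overline\sigma]$ from Proposition~\ref{th:tst:convergence saddle}, and $\usa\in\mathcal{U}$); your explicit translation via $\Fop\bar w=\bar\psi$ and $\Fopd\Hhom\Fop=I$ simply spells out what the paper leaves implicit.
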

\begin{proof}
   Setting $u_N := \usa_N$ and $u_\infty := \usa$,
   \eqref{eq:uNuinftyAssumptions} is satisfied. Therefore, this result is a
   consequence of Proposition \ref{prop:generalconvergencerate}.
\end{proof}
We can now define
\begin{align}
   \label{eq:defktst}
   \ktst &:= \exp\Big( - \beta \big( \Delta \E - \beta^{-1} \Delta \SS \big) \Big),
   \qquad \text{where}
   \\
   \notag
   \Delta\E &:= \E(\usa) - \E(\us), \qquad \text{and}
   \\
   \notag
   \Delta\SS &:= \SS^+(\usa) - \SS(\us) - \smfrac{1}{2} \log \lvert \bar\mu \rvert + \smfrac{1}{2} \log \lvert \bar\lambda \rvert
\end{align}

\begin{proof}[Proof of Theorem \ref{th:main thm saddle} (2)]
   According to
   \eqref{eq:tst:convergence Delta E},
   \eqref{eq:tst:convergence SNmin},
   \eqref{eq:tst:convergence log e-vals},
   and Lemma~\ref{th:tst:convergence S+}. we have
   \[ |\Delta\E_N - \Delta \E| \lesssim N^{-d}, \quad |\Delta\SS_N - \Delta \SS| \lesssim N^{-d}\log^5(N). \]
   Using $\Delta \E>0$, we have
   \begin{align*}
   \lvert \ktst - \ktst_N \rvert &\lesssim |\Delta\SS_N - \Delta \SS| + |\Delta\E_N - \Delta \E| \sup_\beta \sup_{x \in [\Delta\E/2, 2 \Delta\E]}  \beta e^{- \beta x}\\
   &\lesssim N^{-d} \log^5(N) + N^{-d} \frac{2}{e \Delta\E}\\
   &\lesssim N^{-d} \log^5(N).
   \end{align*}
\end{proof}

% !TEX root = dffe.tex

\section{Appendix}
\label{sec: appendix}

\subsection{Proof of Lemma \ref{th:properties_F}}
\label{sec:proof_properties_F}
{\it Preliminaries: } Recall from \eqref{eq:F-transform_Hhom} the Fourier
representation of $\Hhom$. Expanding $\hat{h}(k)$ in \eqref{eq:fourierhamiltoniansin} as $k \to 0$ yields the
continuum (long wave-length) limit
\[
   \hat{h}^{\rm c}(k) = \sum_{\rho \in \Rc'} A_\rho (k \cdot \rho)^2,
\]
which is the symbol of a linear elliptic PDE operator of a linear elliptic
operator of the form
\[
   H^{\rm c} u := - \div \A \nabla u,
\]
where $\A$ is a fourth-order tensor and {\bf (STAB)} implies that it satisfies
the strong Legendre--Hadamard condition \cite{EOS2016,2012-M2AN-CBstab},
\[
   \sum_{\alpha,\beta,i,j} \A^{\alpha\beta}_{ij}\eta_i\eta_j\xi^\alpha\xi^\beta\geq c_0 |\eta|^2 |\xi|^2 \qquad \forall \eta, \xi \in \R^d,
\]
for some $c_0 > 0$.

Let $\hat{F}^{\rm c}(k) := [\hat{h}^{\rm c}]^{-1/2}$, then $\hat{F}^{\rm c} \in
C^\infty(\R^d \setminus \{0\})$ and it is $(-1)$-homogeneous. It now follows from
\cite[Theorem 6.2.1]{Morrey66}  (see also \cite{BHOdefectdevelopment} for a more detailed enactment of
Morrey's argument specific to our setting) that there exists $F^{\rm c} \in C^\infty(\R^d \setminus
\{0\})$ with symbol $\hat{F}^{\rm c}$ such that $F^{\rm c}$ is $(1-d)$-homogeneous. In particular,
\begin{equation}
\label{eq: continuous F}
   |\nabla^j F^{\rm c}(x)| \leq C |x|^{1-d-j} \quad \text{for } j \geq 0.
\end{equation}

We can now use the sharp decay bounds on $F^{\rm c}$ and the connection between
the symbols $\hat{F}(k)$ and $\hat{F}^{\rm c}(k)$ to modify the arguments
from \cite{EOS2016,OO2016}, to estimate the decay of $F$ as well.

\begin{proof}[Proof of Lemma \ref{th:properties_F}(i): decay estimates]
   Let $\hat\eta(k)\in C_c^\infty(\B)$ with $\hat\eta(k)=1$ in a neighbourhood
   of the origin. Then its inverse Fourier transform $\eta:=\F^{-1}[\hat\eta]\in
   C^\infty(\R^d)$ has super-algebraic decay \cite{book-Tref}. Therefore,
   $\eta \ast F^{\rm c}$ is well-defined,
   \begin{equation} \label{eq: aux est 1}
      |D_{\brho} (\eta \ast F^{\rm c})(\ell)|
      \leq C |\ell|_{l^0}^{1-d-j}
   \end{equation}
   and $\mathcal{F}[\eta \ast F^{\rm c}] = \hat\eta \hat{F}^{\rm c}$ is
   compactly supported in ${\rm BZ}$ and smooth except at the origin.

   Next we show that
   \begin{equation}
   \label{eq: aux est 2}
   |D_{\brho} (F - \eta\ast F^{\rm c})(\ell)|\leq C|\ell|_{l^0}^{-d-j},
   \end{equation}
   which, together with \eqref{eq: aux est 1}, implies the stated result.

   From the explicit representation of $\hat h(k)$ and $\hat h^{\rm c} (k)$ we have
   \begin{equation*}
      \big| |k|^{-2} \hat h(k) - |k|^{-2} \hat h^{\rm c} (k)\big| \leq C |k|^2.
   \end{equation*}
   Recall the {\bf (STAB)} implies that $|k|^{-2} \hat h(k)$
   and $|k|^{-2} \hat h^{\rm c} (k)$ are bounded above and below
   in ${\rm BZ}$, hence
   \begin{align*}
      \Big| |k|\, \hat h(k)^{-1/2} -
         |k| \hat h^{\rm c}(k)^{-1/2} \Big| &\leq C |k|^2,  \\
      \text{or, equivalently, } \qquad
      \big| \hat{F}(k) - \hat{F}^{\rm c}(k) \big| &\leq C |k|.
      % \big| \hat{F}(k) - \hat{F}^{\rm c}(k) \big| &\leq C |k|.
   \end{align*}
   Along similar lines, we can prove that
   \begin{equation*}
      \big| \nabla^m \hat F (k) - \nabla^m \hat F^{\rm c}(k) \big|
      \lesssim  |k|^{1-m}.
   \end{equation*}
   Applying \cite[Theorem 7 \& Corollary 8]{OO2016}, this implies \eqref{eq: aux
   est 2}.
\end{proof}

\begin{proof}[Proof of Lemma \ref{th:properties_F}(ii),(iii):]
   We need to show that $\Fop \colon \ell^2 \to \Wi$. Let $v
   \in \ell^2$. For a fixed $\ell$,
   \[\lvert F(l-m) -F(-m)\rvert \lesssim \lvert m \rvert_{l^0}^{-d}\]
   due to the decay for $DF$ established in part (i). Therefore, $F(l-\cdot) -F(-\cdot) \in \ell^2$ and $\Fop v(\ell)$ is defined for all $\ell$. Clearly, we also have $\Fop v(0)=0$.

   For any $\rho$ we find
   \[D_\rho (\Fop v)(\ell) = \sum_m  D_\rho F(\ell-m) v(m).\]
The Plancherel theorem then implies
\begin{align*}
D_\rho (\Fop v)(\ell) = \frac{1}{\lvert \B \rvert}\int_\B (e^{ik\cdot\rho}-1) \hat{F}(k) \hat{v}(k) e^{-ik\cdot \ell}\,dk
\end{align*}
As the Fourier-multiplier satisfies $(e^{ik\cdot\rho}-1) \hat{F}(k) \in L^\infty(\B)$, we find $D_\rho (\Fop v) \in\ell^2$ and thus $\Fop v \in \Wi$.

For $v,w \in \ell^2$ we calculate
   \begin{align*}
      \< \Fopd \Hhom \Fop v, w \>_{\ell^2}
      &= \< \Hhom (\Fop v), (\Fop w) \>_{(\Wi)',\Wi} \\
      &= \sum_\ell \nabla^2V(0)[D(\Fop v)(\ell), D(\Fop w)(\ell)] \\
      &= \frac{1}{\lvert \B \rvert}\int_{\rm \B} \nabla^2V(0)[\overline{((e^{ik\cdot\rho}-1) \hat{F}(k)\hat{v}(k))_{\rho \in \mathcal{R}}},((e^{ik\cdot\rho}-1) \hat{F}(k)\hat{w}(k))_{\rho \in \mathcal{R}}] \, dk \\
      &= \frac{1}{\lvert \B \rvert}\int_{\rm \B} (\hat{F} \hat{v})^* \hat{h} \hat{F} \hat{w} \, dk \\
      &= \frac{1}{\lvert \B \rvert}\int_{\rm \B} \hat{v}^* \hat{w} \, dk \\
      &=  \< v, w \>_{\ell^2},
   \end{align*}
   which proves (iii).
As
\[ \lVert Dw \lVert_{\ell^2}^2 \lesssim \<  \Hhom w, w \>_{\ell^2} \lesssim \lVert Dw \lVert_{\ell^2}^2 \]
for all $w \in\Wi$ according to \eqref{eq:stab homogeneous}, we can set $w= \Fop v$ to find
\[ \lVert D\Fop v \lVert_{\ell^2}^2 \lesssim \< \Fopd \Hhom \Fop v, v \>_{\ell^2} = \lVert v \rVert_{\ell^2} \lesssim \lVert D\Fop v \lVert_{\ell^2}^2. \]
In particular, $\Fop$ is one-to-one and continuous.
\end{proof}

\subsection{Proof of Lemma \ref{lem: finite-rank correction}}
\label{sec:proof:finite-rank correction}
We use arguments similar to those in \cite{Seg92}. Let us
start with the finite-dimensional case. For an $r \times r$ matrix $B$ let
\[
p_{B}(\lambda):=\det (\lambda I-B)=\lambda^r+c_1\lambda^{r-1}+\ldots+ c_{r-1}\lambda + c_r
\]
be the characteristic polynomial of $B$. The coefficients $c_k$ are of the form
\[
   c_k = c_k(B)= \rm tr(\Lambda^k B),
\]
where $\Lambda^k B$ is the $k$-th exterior power of $B$, i.e., a homogeneous
degree $k$ polynomial in the coefficients of $B$ which can be written as a sum of minors.

If $I+B$ is invertible, then
\[
\alpha:=p_B(-1)=(-1)^{r}+c_1(-1)^{r-1}+\ldots+c_r\neq 0.
\]
Therefore, there is a polynomial
 \[
\bar{p}_B(\lambda)=\lambda^{r}+\bar{c}_{1}\lambda^{r-1}+\ldots+\bar{c}_{r-1}\lambda + \alpha
\]
such that $\lambda p_B(\lambda)+\alpha =(1+\lambda)\bar{p}_B(\lambda)$. Indeed, the coefficients are given as
\[
\bar{c}_1=c_1-1, \bar{c}_2=c_2-\bar{c}_1,\ldots,\bar{c}_k=c_k-\bar{c}_{k-1},\ldots,\bar{c}_{r-1}=c_{r-1}-\bar{c}_{r-2}
\]
i.e.,
\[
\bar{c}_k=(-1)^k+\sum_{j=1}^k (-1)^{k-j}c_j.
\]
According to the Cayley-Hamilton theorem, $p_B(B)=0$. Therefore, $\alpha= (I + B) \bar{p}_B(B)$. Hence,
\begin{align}
(I+B)^{-1}&=\frac{1}{\alpha}\bar{p}_B(B)\nonumber
\\&=\frac{1}{\alpha}\Big(B^{r}+\bar{c}_1 B^{r-1}+\ldots+\bar{c}_{r-1}B + \alpha\Big)\nonumber
\\&=I+\frac{1}{\alpha}\Big(B^{r}+\bar{c}_1 B^{r-1}+\ldots+\bar{c}_{r-1}B\Big)\nonumber
\\&=I+\frac{B^{r}+\bar{c}_1 B^{r-1}+\ldots+\bar{c}_{r-1}B}{(-1)^{r}+c_1(-1)^{r-1}+\ldots+c_r}\nonumber
\\&=I + \sum_{k=1}^r \tilde{c}_k B^k.\label{eq: finite dimensional inverse}
\end{align}
A representation as desired with coefficients $\tilde{c}_k =\tilde{c}_k(B)$ depending continuously on $B$.

Now let us discuss the general case. We will immediately prove the main statement and (ii), as (i) is clearly a special case of (ii).

So let $X$ be a Hilbert space with orthogonal decomposition $X = X_1 \oplus X_2$ such that ${\rm dim} (X_1) \leq r$ and $X_2 \subset {\rm ker} A $ for an operator $A$. If $P_V: X \to X$ is the orthogonal projection onto $V$, we can the operators as $A=P_{X_1}A P_{X_1} + P_{X_2}A P_{X_1}$, as $P_{X_1}A P_{X_2}=P_{X_2}A P_{X_2}=0$. Let us write $B \colon X_1 \to X_1$ and $C \colon X_1 \to X_2$ for these restricted and projected operators. That means we have
\[A= \iota_{X_1} B \pi_{X_1} +\iota_{X_2} C  \pi_{X_1},\]
where $\iota_{X_i}\colon X_i \to X$ and $\pi_{X_i}\colon X_i \to X$ are the standard embedding and orthogonal projection. In particular, for $j \geq 1$ we have
\[A^j = A \iota_{X_1} B^{j-1} \pi_{X_1}.\]
If $I +A$ is invertible, then so is $I_{X_1} + B$ as $(I_{X_1} + B)^{-1} = \pi_{X_1} (I + A)^{-1}  \iota_{X_1}$.
We can also represent $(I + A)^{-1}$ in terms of  $(I_{X_1} + B)^{-1}$ as a block inverse by
\[(I + A)^{-1} = \iota_{X_1} (I_{X_1} + B)^{-1} \pi_{X_1} - \iota_{X_2} C(I_{X_1} + B)^{-1} \pi_{X_1} + \iota_{X_2}\pi_{X_2}.\]
In particular,
\begin{align*}
(I + A)^{-1} - I &= \iota_{X_1} \big( (I_{X_1} + B)^{-1}-I\big) \pi_{X_1} - \iota_{X_2} C(I_{X_1} + B)^{-1} \pi_{X_1}\\
&= \big(\iota_{X_1}- \iota_{X_2} C\big) \big( (I_{X_1} + B)^{-1}-I_{X_1}\big) \pi_{X_1} - \iota_{X_2} C \pi_{X_1}\\
&= \big(-\iota_{X_1}B- \iota_{X_2} C\big) \big( (I_{X_1} + B)^{-1}-I_{X_1}\big) \pi_{X_1} - \iota_{X_1}B \pi_{X_1} - \iota_{X_2} C \pi_{X_1}\\
&= -A \iota_{X_1} \big( (I_{X_1} + B)^{-1}-I_{X_1}\big) \pi_{X_1} - A\\
\end{align*}

According to \eqref{eq: finite dimensional inverse} we have
\[(I_{X_1}+B)^{-1}-I_{X_1} = \sum_{k=1}^r \tilde{c}_k B^k,\]
and hence,
\[A \iota_{X_1}\Big((I_{X_1}+B)^{-1}-I_{X_1}\Big)\pi_{X_1} = \sum_{k=1}^r \tilde{c}_k A^{k+1}.\]
Overall we have,
\[(I+A)^{-1} = I + \sum_{k=1}^{r+1} \hat{c}_k A^k \]
with $\hat{c}_1=-1$ and $\hat{c}_k = -\tilde{c}_{k-1}$ for $k \geq 2$. In particular, for a family $(A_\alpha)_\alpha$ of operators with the same orthogonal decomposition of $X$, the $\hat{c}_k$ are given as continuous functions of $B_\alpha=\pi_{X_1} A_\alpha \iota_{X_1} \in L(X_1)$.

\subsection{Auxiliary Estimates} \label{sec: auxiliary estimates}
We want to collect a few auxiliary estimates for certain sums that appear in a number of variations throughout.
\begin{lemma}
\label{lem: auxiliary estimates}
All the implied constants in the following are allowed to depend on the exponents $\alpha, \beta, \gamma, p$, as well as the dimension $d$, but not on the lattice points $n,m \in \L$, or the cut-off $M\geq 0$.
\begin{align}
\sum_{\ell \in \L} \lvert \ell \rvert_{l^\alpha, M}^{-p-d} &\lesssim \lvert M \rvert_{l^\alpha}^{-p}\quad  \text{ for all } p>0,\, \alpha\geq 0. \label{eq: auxiliary estimates eq1} \\
\sum_{\ell \in \L, \lvert \ell \rvert \leq M} \lvert \ell \rvert_{l^\alpha}^{-d} &\lesssim \lvert M \rvert_{l^{\alpha+1}}^{0}\quad  \text{ for all } \alpha\geq 0. \label{eq: auxiliary estimates eq2} \\
\sum_{\ell \in \L} \lvert \ell \rvert_{l^\alpha,M}^{-d}\lvert \ell -m \rvert_{l^\beta}^{-d} &\lesssim \lvert m \rvert_{l^{\alpha+\beta+1},M}^{-d}\quad  \text{ for all } \alpha, \beta\geq 0,\, m \in \L. \label{eq: auxiliary estimates eq3} \\
\sum_{\ell \in \L} \lvert \ell \rvert_{l^\alpha}^{-d}\lvert \ell -m \rvert_{l^\beta,M}^{-d-p} &\lesssim \lvert m \rvert_{l^\alpha,M}^{-d} \lvert M \rvert_{l^{\beta+1}}^{-p} \quad  \text{ for all } \alpha, \beta\geq 0,\, p>0,\, m \in \L. \label{eq: auxiliary estimates eq4} \\
\sum_{\ell \in \L} \lvert \ell \rvert_{l^\alpha}^{-d-p}\lvert \ell -m \rvert_{l^\beta}^{-d-p} &\lesssim \lvert m \rvert_{l^{\alpha}}^{-d-p}\quad  \text{ for all } \alpha \geq \beta\geq 0,\, p>0,\, m \in \L. \label{eq: auxiliary estimates eq5} \\
\sum_{\ell \in \L} \lvert \ell \rvert_{l^\alpha,M}^{-d} \lvert \ell -m \rvert_{l^\beta}^{-d} \lvert \ell-n \rvert_{l^\gamma}^{-d} &\lesssim  \lvert n \rvert_{l^\alpha,M}^{-d} \lvert m-n \rvert_{l^{\beta + \gamma+1}}^{-d} +  \lvert n \rvert_{l^\gamma,M}^{-d} \lvert m \rvert_{l^{\alpha+\beta +1},M}^{-d}\nonumber \\
 &\qquad  \text{ for all } \alpha,\beta, \gamma \geq 0,\, m,n \in \L \text{ with } \lvert n \rvert  \geq \lvert m \rvert. \label{eq: auxiliary estimates eq6}\\
\sum_{\ell \in \L} \lvert \ell \rvert_{l^\alpha,M}^{-d-p} \lvert \ell -m \rvert_{l^\beta}^{-d} \lvert \ell-n \rvert_{l^\gamma}^{-d} &\lesssim \lvert n \rvert_{l^\alpha,M}^{-d-p} \lvert m-n \rvert_{l^{\beta + \gamma+1}}^{-d} + \lvert n \rvert_{l^\gamma,M}^{-d} \lvert m \rvert_{l^{\beta},M}^{-d} \lvert M \rvert_{l^{\alpha+1}}^{-p}\nonumber \\
&\qquad  \text{ for all } \alpha,\beta, \gamma \geq 0,\, m,n \in \L \text{ with } \lvert n \rvert  \geq \lvert m \rvert. \label{eq: auxiliary estimates eq7}
\end{align}
As a special case, note that one can always take $M=0$, where one finds $\lvert \ell \rvert_{l^\alpha, M}^{-q} = \lvert \ell \rvert_{l^\alpha}^{-q}$ and $\lvert M \rvert_{l^\alpha}^{-q} = 1$.
\end{lemma}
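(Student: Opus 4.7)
The plan is to establish each estimate by a combination of (i) replacing the discrete lattice sums by continuous integrals (which costs only a bounded constant, since the weights $|\cdot|^{-n}_{l^k}$ are already regularised at the origin), and (ii) splitting the summation domain into regions determined by the triangle inequality, so that in each region one or more of the factors is essentially constant and can be pulled out.

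First I would establish the two baseline estimates \eqref{eq: auxiliary estimates eq1} and \eqref{eq: auxiliary estimates eq2} by direct computation. For \eqref{eq: auxiliary estimates eq1}, split according to $\{|\ell|\le M\}$ and $\{|\ell|>M\}$: on the inner ball the weight $|\ell|^{-p-d}_{l^\alpha,M}$ equals the constant $|M|^{-p-d}_{l^\alpha}$, so the contribution is bounded by $CM^d|M|^{-p-d}_{l^\alpha}\lesssim|M|^{-p}_{l^\alpha}$; on the complement the sum is comparable to $\int_{|x|>M}(1+|x|)^{-p-d}\log^\alpha(e+|x|)\,dx\lesssim|M|^{-p}_{l^\alpha}$ by the integral test. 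The sum in \eqref{eq: auxiliary estimates eq2} is the standard log-divergent sum truncated at $M$, yielding the extra logarithmic factor.

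For the convolution estimates \eqref{eq: auxiliary estimates eq3}--\eqref{eq: auxiliary estimates eq5}, I would decompose the summation index according to which of $|\ell|$, $|\ell-m|$ dominates: on $\{|\ell|\le|m|/2\}$ one has $|\ell-m|\sim|m|$ so that factor is pulled out; on $\{|\ell-m|\le|m|/2\}$ one has $|\ell|\sim|m|$ and that factor is pulled out; on the remaining region both factors are comparable to a multiple of $|m|^{-d}_{\bullet}$ or have additional decay. In each case, the residual one-variable sum falls under \eqref{eq: auxiliary estimates eq1} or \eqref{eq: auxiliary estimates eq2}, and the logarithmic gain $l^{\alpha+\beta+1}$ arises exactly from the borderline case where \eqref{eq: auxiliary estimates eq2} is invoked with index $\alpha+\beta$. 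The $M$-cutoff versions are handled by additionally splitting according to whether $|m|\lessgtr M$, using that $|\cdot|^{-d}_{l^\alpha,M}$ is simply frozen inside $B_M$.

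The triple-product estimates \eqref{eq: auxiliary estimates eq6}--\eqref{eq: auxiliary estimates eq7} are then obtained by iterating this device. Using $|n|\ge|m|$, the natural decomposition is into $\{|\ell-n|\le|n|/3\}$ (where $|\ell|\sim|n|$ and $|\ell-m|\sim|n-m|$, giving the first term on the right), $\{|\ell-m|\le|n|/3\}$ (where $|\ell|\sim|m|$, which under $|n|\ge|m|$ is dominated by the $|\ell-n|\sim|n|$ contribution via \eqref{eq: auxiliary estimates eq3}), and the complement $\{|\ell|\gtrsim|n|\}$ (on which the weight $|\ell|^{-d}_{l^\alpha,M}$ can be merged with the remaining two factors and summed via \eqref{eq: auxiliary estimates eq3}). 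Summing the three contributions yields the claimed right-hand side.

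The main obstacle is not the analytic content, which is a routine application of the triangle inequality and the integral test, but the precise bookkeeping of the logarithmic exponents $l^k$, especially at the critical cases where exponents sum exactly to $d$ and logarithmic divergences are produced by either the $M$-cutoff or the far-field region. A careful accounting in each of the above geometric regions, matching the logarithmic weights produced by \eqref{eq: auxiliary estimates eq2} against those claimed on the right-hand side, is what makes the statements tight enough to be used throughout the paper.
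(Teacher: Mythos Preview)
Your approach is essentially the same as the paper's: direct integral-test computations for \eqref{eq: auxiliary estimates eq1}--\eqref{eq: auxiliary estimates eq2}, and geometric region-splitting driven by the triangle inequality for the rest, with the $M$-cutoff handled by an additional case distinction $|m|\lessgtr M$.

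One point where your sketch is slightly looser than the paper's actual argument: in your treatment of \eqref{eq: auxiliary estimates eq6}, you claim that on $\{|\ell-n|\le|n|/3\}$ one has $|\ell-m|\sim|n-m|$. This is not true in general --- if $|m-n|$ is small compared to $|n|$, then $|\ell-m|$ can range over the whole interval $[0,|n|/3+|m-n|]$ within that ball. The paper resolves this by introducing a further case split according to whether $|m-n|\ge|n|/4$ or $|m-n|<|n|/4$; in the latter case the region $B_{|n|/2}(n)$ is handled by pulling out only the $|\ell|^{-d}_{l^\alpha,M}\sim|n|^{-d}_{l^\alpha,M}$ factor and applying \eqref{eq: auxiliary estimates eq3} to the remaining two-factor sum in $\ell$. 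This is exactly the ``precise bookkeeping'' you anticipate, so the gap is minor, but be aware that the three regions you list do not suffice without that sub-case.
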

\begin{corollary}
\label{cor: auxiliary estimates part2}
In particular, if follows that
\begin{align}
\sum_{\ell \in \L} \lvert \ell \rvert_{l^1}^{-d} \lvert \ell -m \rvert_{l^1}^{-d} \lvert \ell-n \rvert_{l^\gamma}^{-d} &\lesssim \Lr_{\gamma+2}(m,n) \quad  \text{ for all } \gamma\geq 0,\, n,m \in \L. \label{eq: auxiliary estimates eq6cor1}\\
\sum_{\ell \in \L, \lvert \ell \rvert \geq M} \lvert \ell \rvert_{l^0}^{-d} \lvert \ell -m \rvert_{l^0}^{-d} \lvert \ell-n \rvert_{l^0}^{-d} &\lesssim \Lr_{1}^M(m,n) \quad  \text{ for all } n,m \in \L.\label{eq: auxiliary estimates eq6cor2}\\
\sum_{\ell \in \L} \lvert \ell \rvert_{l^1, M}^{-d} \lvert m-\ell \rvert_{l^1}^{-d} \lvert n-\ell \rvert_{l^1}^{-d} &\lesssim \lvert m\rvert_{l^1,M}^{-d} \lvert m-n\rvert_{l^3}^{-d} + \lvert m\rvert_{l^1,M}^{-d} \lvert n\rvert_{l^3,M}^{-d} \quad  \text{ for all } n,m \in \L. \label{eq: auxiliary estimates eq6cor3}\\
\sum_{\ell \in \L}  \Lr_1(m,\ell) \lvert n-\ell \rvert_{l^\gamma}^{-d} &\lesssim  \Lr_{\gamma+2}(m,n) \lesssim \lvert m-n \rvert_{l^{\gamma+2}}^{-d} \quad  \text{ for all } \gamma\geq 0,\, n,m \in \L. \label{eq: auxiliary estimates eq6cor4}\\
 \sum\limits_{\ell\in \L}\lvert \ell \rvert_{l^2,M}^{-2d} \lvert \ell-m \rvert_{l^1}^{-d} \lvert \ell-n \rvert_{l^1}^{-d} &\lesssim  \lvert n\rvert_{l^1,M}^{-d} \lvert m\rvert_{l^1,M}^{-d} ( \lvert M\rvert_{l^3}^{-d} + \lvert m-n\rvert_{l^3}^{-d}) \label{eq: auxiliary estimates eq7cor1}.
\end{align}
\end{corollary}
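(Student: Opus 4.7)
The plan is to derive each of the five estimates in Corollary~\ref{cor: auxiliary estimates part2} as a direct specialization of Lemma \ref{lem: auxiliary estimates}, exploiting two auxiliary facts: (a) each left-hand side is symmetric in $m,n$, so we may assume without loss of generality that $|n|\geq |m|$; and (b) the function $|r|^{-d}_{l^k}=(|r|+1)^{-d}\log^k(e+|r|)$ is non-decreasing in $k$ and uniformly bounded above, so weaker weights on the right-hand side can always be absorbed into stronger ones.

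The first three estimates are essentially direct. For \eqref{eq: auxiliary estimates eq6cor1}, apply \eqref{eq: auxiliary estimates eq6} with $\alpha=\beta=1$, $M=0$, and observe that both resulting terms $|n|^{-d}_{l^1}|m-n|^{-d}_{l^{\gamma+2}}$ and $|n|^{-d}_{l^\gamma}|m|^{-d}_{l^{\gamma+2}}$ are bounded by terms of $\Lr_{\gamma+2}(m,n)$ via monotonicity in $k$. For \eqref{eq: auxiliary estimates eq6cor2}, first note that on $|\ell|\geq M$ we have $|\ell|^{-d}_{l^0}=|\ell|^{-d}_{l^0,M}$, so the restricted sum is bounded by the full sum $\sum_{\ell}|\ell|^{-d}_{l^0,M}|\ell-m|^{-d}_{l^0}|\ell-n|^{-d}_{l^0}$; then apply \eqref{eq: auxiliary estimates eq6} with $\alpha=\beta=\gamma=0$ and use monotonicity to identify the bound as $\Lr_{1}^M(m,n)$. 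For \eqref{eq: auxiliary estimates eq6cor3}, apply \eqref{eq: auxiliary estimates eq6} with $\alpha=\beta=\gamma=1$; the stated asymmetric form corresponds to one of the two symmetric assignments.

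For \eqref{eq: auxiliary estimates eq6cor4}, expand $\Lr_1(m,\ell)$ into its three summands: the summand with $|m|^{-d}_{l^1}$ factored out is handled by \eqref{eq: auxiliary estimates eq3}, the summand $|\ell|^{-d}_{l^1}|m-\ell|^{-d}_{l^1}|n-\ell|^{-d}_{l^\gamma}$ is handled by \eqref{eq: auxiliary estimates eq6cor1}, and the last summand again by \eqref{eq: auxiliary estimates eq3}; collecting yields $\Lr_{\gamma+2}(m,n)$. The final inequality $\Lr_{\gamma+2}(m,n)\lesssim |m-n|^{-d}_{l^{\gamma+2}}$ uses the uniform upper bound on $|r|^{-d}_{l^{\gamma+2}}$ together with the fact that $|m|\leq|n|$ implies $|m-n|\leq 2|n|$, hence $|m-n|^{-d}_{l^{\gamma+2}}\gtrsim |n|^{-d}_{l^{\gamma+2}}$, absorbing all three terms.

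The main obstacle is \eqref{eq: auxiliary estimates eq7cor1}, which is not a literal specialization of \eqref{eq: auxiliary estimates eq7}. The plan is to apply \eqref{eq: auxiliary estimates eq7} with $\alpha=2$, $p=d$, $\beta=\gamma=1$ (WLOG $|n|\geq|m|$), producing $|n|^{-2d}_{l^2,M}|m-n|^{-d}_{l^3}+|n|^{-d}_{l^1,M}|m|^{-d}_{l^1,M}|M|^{-d}_{l^3}$; the second term already appears in the target. For the first, use the identity $|n|^{-2d}_{l^2,M}=(|n|^{-d}_{l^1,M})^2$, so the required inequality reduces to $|n|^{-d}_{l^1,M}\lesssim |m|^{-d}_{l^1,M}$, or alternatively to absorbing $|m-n|^{-d}_{l^3}$ into $|M|^{-d}_{l^3}$. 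Split into two regimes: if $|m|\geq c|n|$ for a fixed $c>0$, then $(|m|+1)\sim(|n|+1)$ and $\log(e+|m|)\sim\log(e+|n|)$, which combined with the analogous comparison under the $M$-cap gives $|n|^{-d}_{l^1,M}\lesssim|m|^{-d}_{l^1,M}$. Otherwise $|m|<c|n|$, in which case $|n-m|\geq(1-c)|n|$ and a straightforward case distinction on whether $|n|\geq M$ or $|n|<M$ produces either $|n|^{-d}_{l^1,M}\lesssim|M|^{-d}_{l^1}=|m|^{-d}_{l^1,M}$ (when $|m|\leq M$) or a direct domination of the first term by the $|M|^{-d}_{l^3}$ contribution of the second. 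The bookkeeping across these sub-cases is the only part of the corollary that goes beyond one-line specializations of the lemma.
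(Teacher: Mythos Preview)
Your approach is correct and matches the paper's, which likewise derives each estimate by specializing Lemma~\ref{lem: auxiliary estimates} after reducing to $|n|\geq|m|$ by symmetry. Two small remarks: the second term from \eqref{eq: auxiliary estimates eq6} in your treatment of \eqref{eq: auxiliary estimates eq6cor1} should read $|n|^{-d}_{l^\gamma}|m|^{-d}_{l^{3}}$ (exponent $\alpha+\beta+1=3$, not $\gamma+2$), so for $\gamma<1$ pure monotonicity in $k$ does not suffice and you need the log-trade $|m|^{-d}_{l^{3}}\leq |m|^{-d}_{l^{2}}\log(e+|n|)$ that the paper uses; and for \eqref{eq: auxiliary estimates eq7cor1} the paper notes it follows immediately from \eqref{eq: auxiliary estimates eq7} by symmetry --- your case analysis is correct but heavier than needed, since $|n|\geq|m|$ together with the monotonicity of $r\mapsto(r+1)^{-d}\log(e+r)$ already gives $|n|^{-d}_{l^1,M}\leq|m|^{-d}_{l^1,M}$ directly.
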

\begin{proof}
To show \eqref{eq: auxiliary estimates eq6cor1} just note that we can estimate  $\lvert m \rvert_{l^{3}}^{-d} \lesssim \lvert m \rvert_{l^{2}}^{-d} \lvert n \rvert_{l^{1}}^{0}$ in \eqref{eq: auxiliary estimates eq6} for the case $\lvert n \rvert \geq \lvert m \rvert$. If on the other hand $\lvert m \rvert \geq \lvert n \rvert$, \eqref{eq: auxiliary estimates eq6} becomes
\[\sum_{\ell \in \L} \lvert \ell \rvert_{l^1}^{-d} \lvert \ell -m \rvert_{l^1}^{-d} \lvert \ell-n \rvert_{l^\gamma}^{-d} \lesssim  \lvert m \rvert_{l^1}^{-d} \lvert m-n \rvert_{l^{\gamma+2}}^{-d} +  \lvert m \rvert_{l^1}^{-d} \lvert n \rvert_{l^{\gamma +2}}^{-d}\]
which already gives the result.

\eqref{eq: auxiliary estimates eq6cor2} follows directly from \eqref{eq: auxiliary estimates eq6} as it is symmetric in $m$, $n$.

\eqref{eq: auxiliary estimates eq6cor3} directly follows from \eqref{eq: auxiliary estimates eq6} and its version with $m$, $n$ reversed, so \eqref{eq: auxiliary estimates eq6cor3} holds true for all $m,n \in \L$.

The first inequality in \eqref{eq: auxiliary estimates eq6cor4} is just a combination of \eqref{eq: auxiliary estimates eq6cor1} and \eqref{eq: auxiliary estimates eq3}. The second follows from $\lvert n-m \rvert_{l^0} \leq \lvert n \rvert_{l^0} \lvert m \rvert_{l^0}$.

\eqref{eq: auxiliary estimates eq7cor1} immediately follows from \eqref{eq: auxiliary estimates eq7} as it is symmetric in $m$, $n$.
\end{proof}

\begin{proof}[Proof of Lemma \ref{lem: auxiliary estimates}]
Let us start with \eqref{eq: auxiliary estimates eq1}. The statement is trivial if the sum is restricted to $\lvert \ell \rvert \leq M$. At the same time,
\[
\sum_{ \lvert \ell \rvert > M} \lvert \ell \rvert_{l^\alpha}^{-p-d} \lesssim \int_M^{\infty} \lvert r \rvert_{l^\alpha}^{-p-1}\,dr \lesssim  \lvert M \rvert_{l^\alpha}^{-p}.
\]
For \eqref{eq: auxiliary estimates eq2}, we estimate
\[
\sum_{ \lvert \ell \rvert \leq M} \lvert \ell \rvert_{l^\alpha}^{-d} \lesssim \log(e+M)^{\alpha} \int_1^{M+2} \lvert r \rvert^{-1}\,dr \lesssim  \lvert M \rvert_{l^{\alpha+1}}^{0}.
\]
In \eqref{eq: auxiliary estimates eq3}, first consider $\lvert m \rvert \leq M$. Then we can split the sum and estimate
\begin{align*}
\sum_{\ell} \lvert \ell \rvert_{l^\alpha,M}^{-d}\lvert \ell -m \rvert_{l^\beta}^{-d} &\lesssim \sum_{\lvert \ell \rvert > 2M} \lvert \ell \rvert_{l^{\alpha+\beta},M}^{-2d} + \lvert M \rvert_{l^\alpha}^{-d} \sum_{\lvert \ell \rvert \leq 2M} \lvert \ell -m \rvert_{l^\beta}^{-d}\\
&\leq \lvert M \rvert_{l^{\alpha+\beta+1}}^{-d},
\end{align*}
according to \eqref{eq: auxiliary estimates eq1} and \eqref{eq: auxiliary estimates eq2}. On the other hand, the case $\lvert m \rvert > M$ follows directly if we can show the entire statement for $M=0$. Splitting up the sum, we find
\begin{align*}
\sum_{\ell} \lvert \ell \rvert_{l^\alpha}^{-d}\lvert \ell -m \rvert_{l^\beta}^{-d} &\lesssim  \sum_{\ell \in B_{\frac{\lvert m \rvert}{3}}(0)} \lvert \ell \rvert_{l^\alpha}^{-d}\lvert m \rvert_{l^\beta}^{-d} + \sum_{\ell \in B_{\frac{\lvert m \rvert}{3}}(m)} \lvert m \rvert_{l^\alpha}^{-d}\lvert \ell -m \rvert_{l^\beta}^{-d}\\
&\qquad + \sum_{\ell \in B_{2\lvert m \rvert}(0)^c} \lvert \ell \rvert_{l^{\alpha+\beta}}^{-2d} + \lvert m \rvert_{l^{\alpha+\beta}}^{-2d} \lvert m \rvert_{l^{0}}^{d}\\
&\lesssim \lvert m \rvert_{l^{\alpha+\beta+1}}^{-d},
\end{align*}
according to \eqref{eq: auxiliary estimates eq1} and \eqref{eq: auxiliary estimates eq2}.

Now let us look at \eqref{eq: auxiliary estimates eq4}. First consider the case $\lvert m \rvert \leq M$. Then
\begin{align*}
\sum_{\ell} \lvert \ell \rvert_{l^\alpha}^{-d}\lvert \ell -m \rvert_{l^\beta,M}^{-d-p} &\lesssim \sum_{\ell \in B_{2M}(m)} \lvert \ell \rvert_{l^\alpha}^{-d}\lvert M \rvert_{l^\beta}^{-d-p} +  \sum_{\ell \in B_{2M}(m)^c} \lvert \ell \rvert_{l^\alpha}^{-d}\lvert \ell \rvert_{l^\beta}^{-d-p}\\
&\lesssim \lvert M \rvert_{l^{\alpha + \beta+1}}^{-d-p}\\
&\lesssim  \lvert m \rvert_{l^\alpha,M}^{-d} \lvert M \rvert_{l^{\beta+1}}^{-p}.
\end{align*}
If on the other hand $\lvert m \rvert > M$, we use the splitting from the proof of \eqref{eq: auxiliary estimates eq3}, to find
\begin{align*}
\sum_{\ell} \lvert \ell \rvert_{l^\alpha}^{-d}\lvert \ell -m \rvert_{l^\beta,M}^{-d-p} &\lesssim \sum_{\ell \in B_{\frac{\lvert m \rvert}{3}}(0)} \lvert \ell \rvert_{l^\alpha}^{-d}\lvert m \rvert_{l^\beta}^{-d-p} + \sum_{\ell \in B_{\frac{\lvert m \rvert}{3}}(m)} \lvert m \rvert_{l^\alpha}^{-d}\lvert \ell -m \rvert_{l^\beta,M}^{-d-p}\\
&\qquad + \sum_{\ell \in B_{2 \lvert m \rvert}(0)^c} \lvert \ell \rvert_{l^\alpha}^{-d}\lvert \ell \rvert_{l^\beta}^{-d-p} + \lvert m \rvert_{l^{\alpha+\beta}}^{-2d-p} \lvert m \rvert_{l^0}^{d}\\
&\lesssim \lvert m \rvert_{l^{\beta+ \alpha +1}}^{-d-p} + \lvert m \rvert_{l^\alpha}^{-d} \lvert M \rvert_{l^\beta}^{-p}\\
&\lesssim  \lvert m \rvert_{l^\alpha,M}^{-d} \lvert M \rvert_{l^{\beta+1}}^{-p}.
\end{align*}
The same splitting of the sum for \eqref{eq: auxiliary estimates eq5} gives
\begin{align*}
\sum_{\ell} \lvert \ell \rvert_{l^\alpha}^{-d-p}\lvert \ell -m \rvert_{l^\beta}^{-d-p} &\lesssim \sum_{\ell \in B_{\frac{\lvert m \rvert}{3}}(0)} \lvert \ell \rvert_{l^\alpha}^{-d-p}\lvert m \rvert_{l^\beta}^{-d-p} + \sum_{\ell \in B_{\frac{\lvert m \rvert}{3}}(m)} \lvert m \rvert_{l^\alpha}^{-d-p}\lvert \ell -m \rvert_{l^\beta}^{-d-p}\\
&\qquad + \sum_{\ell \in B_{2 \lvert m \rvert}(0)^c} \lvert \ell \rvert_{l^\alpha}^{-d-p}\lvert \ell \rvert_{l^\beta}^{-d-p} + \lvert m\rvert_{l^{\alpha+\beta}}^{-2d-2p}\lvert m \rvert_{l^0}^{d}\\
&\lesssim \lvert m \rvert_{l^\beta}^{-d-p} + \lvert m \rvert_{l^\alpha}^{-d-p} + \lvert m\rvert_{l^{\alpha+\beta}}^{-d-2p}\\
&\lesssim \lvert m \rvert_{l^\alpha}^{-d-p}.
\end{align*}
We get to \eqref{eq: auxiliary estimates eq6}. First, let $\lvert m \rvert, \lvert n \rvert \leq 2M$. Then
\begin{align*}
\sum_{\ell \in \L} \lvert \ell \rvert_{l^\alpha,M}^{-d} \lvert \ell -m \rvert_{l^\beta}^{-d} \lvert \ell-n \rvert_{l^\gamma}^{-d} &\lesssim \sum_{\ell \in B_{3M}(0)} \lvert M \rvert_{l^\alpha}^{-d} \lvert \ell -m \rvert_{l^\beta}^{-d} \lvert \ell-n \rvert_{l^\gamma}^{-d} + \sum_{\ell \in B_{3M}(0)^c} \lvert \ell \rvert_{l^{\alpha+\beta + \gamma}}^{-3d}\\
&\lesssim \lvert M \rvert_{l^\alpha}^{-d} \lvert m-n \rvert_{l^{\beta + \gamma +1}}^{-d}+ \lvert M \rvert_{l^{\alpha+\beta + \gamma}}^{-2d},
\end{align*}
according to \eqref{eq: auxiliary estimates eq1} and \eqref{eq: auxiliary estimates eq3}. Next, let $\lvert n \rvert \geq 2M$, $\lvert n \rvert \geq \lvert m \rvert$, and $\lvert m-n \rvert \geq \lvert n \rvert/4$. Then
\begin{align*}
\sum_{\ell \in \L} \lvert \ell \rvert_{l^\alpha,M}^{-d} \lvert \ell -m \rvert_{l^\beta}^{-d} \lvert \ell-n \rvert_{l^\gamma}^{-d} &\lesssim \sum_{\ell \in B_{\frac{\lvert n \rvert}{8}}(n)} \lvert n \rvert_{l^\alpha}^{-d} \lvert n \rvert_{l^\beta}^{-d} \lvert \ell-n \rvert_{l^\gamma}^{-d} + \sum_{\ell \in B_{\frac{\lvert n \rvert}{8}}(n)^c} \lvert \ell \rvert_{l^\alpha,M}^{-d} \lvert \ell -m \rvert_{l^\beta}^{-d} \lvert n \rvert_{l^\gamma}^{-d} \\
&\lesssim \lvert n \rvert_{l^{\alpha +\beta+\gamma +1}}^{-2d} + \lvert n \rvert_{l^\gamma}^{-d} \lvert m \rvert_{l^{\alpha+\beta +1},M}^{-d}.
\end{align*}
At last, let $\lvert n \rvert \geq 2M$ with $\lvert m-n \rvert < \lvert n \rvert/4$. Then,
\begin{align*}
\sum_{\ell \in \L} \lvert \ell \rvert_{l^\alpha,M}^{-d} \lvert \ell -m \rvert_{l^\beta}^{-d} \lvert \ell-n \rvert_{l^\gamma}^{-d} &\lesssim \sum_{\ell \in B_{\frac{\lvert n \rvert}{2}}(n)} \lvert n \rvert_{l^\alpha}^{-d} \lvert \ell -m \rvert_{l^\beta}^{-d} \lvert \ell-n \rvert_{l^\gamma}^{-d} + \sum_{\ell \in B_{2 \lvert n \rvert}(0) \backslash B_{\frac{\lvert n \rvert}{2}}(n)} \lvert \ell \rvert_{l^\alpha,M}^{-d} \lvert n \rvert_{l^\beta}^{-d} \lvert n \rvert_{l^\gamma}^{-d} \\
&\qquad + \sum_{\ell \in B_{2 \lvert n \rvert}(0)^c} \lvert \ell \rvert_{l^\alpha}^{-d} \lvert \ell \rvert_{l^\beta}^{-d} \lvert \ell \rvert_{l^\gamma}^{-d}\\
&\lesssim \lvert n \rvert_{l^\alpha}^{-d} \lvert m-n \rvert_{l^{\beta + \gamma+1}}^{-d} + \lvert n \rvert_{l^{\alpha+\beta + \gamma+1}}^{-2d}.
\end{align*}
Overall, we have shown that if $\lvert n \rvert \geq \lvert m \rvert$, then
\begin{align*}
\sum_{\ell \in \L} \lvert \ell \rvert_{l^\alpha,M}^{-d} \lvert \ell -m \rvert_{l^\beta}^{-d} \lvert \ell-n \rvert_{l^\gamma}^{-d} &\lesssim  \lvert n \rvert_{l^\alpha,M}^{-d} \lvert m-n \rvert_{l^{\beta + \gamma+1}}^{-d} +  \lvert n \rvert_{l^\gamma,M}^{-d} \lvert m \rvert_{l^{\alpha+\beta +1},M}^{-d}.
\end{align*}
That also means, that if $\lvert m \rvert \geq \lvert n \rvert$, then
\begin{align*}
\sum_{\ell \in \L} \lvert \ell \rvert_{l^\alpha,M}^{-d} \lvert \ell -m \rvert_{l^\beta}^{-d} \lvert \ell-n \rvert_{l^\gamma}^{-d} &\lesssim  \lvert m \rvert_{l^\alpha,M}^{-d} \lvert m-n \rvert_{l^{\beta + \gamma+1}}^{-d} +  \lvert m \rvert_{l^\beta,M}^{-d} \lvert n \rvert_{l^{\alpha+\gamma +1},M}^{-d}.
\end{align*}
We are only left with \eqref{eq: auxiliary estimates eq7}. As in the proof of \eqref{eq: auxiliary estimates eq6}, we find for $\lvert m \rvert, \lvert n \rvert \leq 2M$ that
\begin{align*}
\sum_{\ell \in \L} \lvert \ell \rvert_{l^\alpha,M}^{-d-p} \lvert \ell -m \rvert_{l^\beta}^{-d} \lvert \ell-n \rvert_{l^\gamma}^{-d} &\lesssim \lvert M \rvert_{l^\alpha}^{-d-p} \lvert m-n \rvert_{l^{\beta + \gamma +1}}^{-d}+ \lvert M \rvert_{l^{\alpha+\beta + \gamma}}^{-2d-p}.
\end{align*}
Also, for $\lvert n \rvert \geq 2M$, $\lvert n \rvert \geq \lvert m \rvert$, and $\lvert m-n \rvert \geq \lvert n \rvert/4$ we have
\begin{align*}
\sum_{\ell \in \L} \lvert \ell \rvert_{l^\alpha,M}^{-d-p} \lvert \ell -m \rvert_{l^\beta}^{-d} \lvert \ell-n \rvert_{l^\gamma}^{-d} &\lesssim \sum_{\ell \in B_{\frac{\lvert n \rvert}{8}}(n)} \lvert n \rvert_{l^\alpha}^{-d-p} \lvert n \rvert_{l^\beta}^{-d} \lvert \ell-n \rvert_{l^\gamma}^{-d} + \sum_{\ell \in B_{\frac{\lvert n \rvert}{8}}(n)^c} \lvert \ell \rvert_{l^\alpha,M}^{-d-p} \lvert \ell -m \rvert_{l^\beta}^{-d} \lvert n \rvert_{l^\gamma}^{-d} \\
&\lesssim \lvert n \rvert_{l^{\alpha +\beta+\gamma +1}}^{-2d-p} + \lvert n \rvert_{l^\gamma}^{-d} \lvert m \rvert_{l^{\beta},M}^{-d} \lvert M \rvert_{l^{\alpha+1}}^{-p},
\end{align*}
according to \eqref{eq: auxiliary estimates eq4}. At last, let $\lvert n \rvert \geq 2M$ with $\lvert m-n \rvert < \lvert n \rvert/4$. Then,
\begin{align*}
\sum_{\ell \in \L} \lvert \ell \rvert_{l^\alpha,M}^{-d-p} \lvert \ell -m \rvert_{l^\beta}^{-d} \lvert \ell-n \rvert_{l^\gamma}^{-d} &\lesssim \sum_{\ell \in B_{\frac{\lvert n \rvert}{2}}(n)} \lvert n \rvert_{l^\alpha}^{-d-p} \lvert \ell -m \rvert_{l^\beta}^{-d} \lvert \ell-n \rvert_{l^\gamma}^{-d} + \sum_{\ell \in B_{2 \lvert n \rvert}(0) \backslash B_{\frac{\lvert n \rvert}{2}}(n)} \lvert \ell \rvert_{l^\alpha,M}^{-d-p} \lvert n \rvert_{l^\beta}^{-d} \lvert n \rvert_{l^\gamma}^{-d} \\
&\qquad + \sum_{\ell \in B_{2 \lvert n \rvert}(0)^c} \lvert \ell \rvert_{l^\alpha}^{-d-p} \lvert \ell \rvert_{l^\beta}^{-d} \lvert \ell \rvert_{l^\gamma}^{-d}\\
&\lesssim \lvert n \rvert_{l^\alpha}^{-d-p} \lvert m-n \rvert_{l^{\beta + \gamma+1}}^{-d} + \lvert n \rvert_{l^{\beta + \gamma}}^{-2d} + \lvert n \rvert_{l^{\alpha+\beta + \gamma}}^{-2d-p}\\
&\lesssim \lvert n \rvert_{l^\alpha}^{-d-p} \lvert m-n \rvert_{l^{\beta + \gamma+1}}^{-d} + \lvert n \rvert_{l^{\beta + \gamma}}^{-2d}.
\end{align*}
Overall, we have shown that for $\lvert n \rvert \geq \lvert m \rvert$
\begin{align*}
\sum_{\ell \in \L} \lvert \ell \rvert_{l^\alpha,M}^{-d-p} \lvert \ell -m \rvert_{l^\beta}^{-d} \lvert \ell-n \rvert_{l^\gamma}^{-d} &\lesssim \lvert n \rvert_{l^\alpha,M}^{-d-p} \lvert m-n \rvert_{l^{\beta + \gamma+1}}^{-d} + \lvert n \rvert_{l^\gamma,M}^{-d} \lvert m \rvert_{l^{\beta},M}^{-d} \lvert M \rvert_{l^{\alpha+1}}^{-p}.
\end{align*}
\end{proof}

\bibliographystyle{alpha}
\bibliography{bib}

\end{document}